\documentclass[11pt]{article}
\usepackage[a4paper]{geometry}
\geometry{left=2cm,right=2cm,top=2.5cm,bottom=2cm}
\usepackage{amsmath,amsfonts,amssymb,amsthm,graphics,amscd}
\usepackage[color=green]{todonotes}
\usepackage{graphicx}
\usepackage{fontenc}

\def\F{{\mathbb F}}

\def\R{\mathcal{R}}

\def\N{\mathcal{N}}

\def\B{\mathcal{B}}

\def\F{\mathcal{F}}
\def\bN{\mathbb{N}}
\def\bR{\mathbb{R}}
\def\R{\mathbb{R}}

\usepackage{hyperref}
\usepackage{amsmath}
\newtheorem{theorem}{Theorem}[section]
\newtheorem{prop}[theorem]{Proposition}
\newtheorem{lemma}[theorem]{Lemma}
\newtheorem{corollary}[theorem]{Corollary}

\newtheorem{remark}[theorem]{Remark}
\theoremstyle{definition}
\newtheorem{definition}[theorem]{Definition}

\DeclareTextSymbol{\dj}{T1}{158}
\setlength{\parindent}{2em}
\setlength{\parskip}{1em}
\usepackage{gensymb}
\usepackage{dsfont}
\usepackage{tikz}
\usetikzlibrary{arrows.meta}
\makeatother
\usepackage{listings}
\usepackage{fancyhdr}
\usepackage[utf8]{inputenc} 
\usepackage[T1]{fontenc}
\numberwithin{equation}{section}
\bibliographystyle{plain}

\topmargin 0in
\oddsidemargin .01in
\textwidth 6.5in
\textheight 9in
\evensidemargin 1in
\addtolength{\voffset}{-.6in}
\addtolength{\textheight}{0.22in}
\parskip \medskipamount
\parindent      0pt

\newcommand{\pr}[1]{\mathbb{P}\!\left(#1\right)}
\newcommand{\E}[1]{\mathbb{E}\!\left[#1\right]}
\newcommand{\estart}[2]{\mathbb{E}_{#2}\!\left[#1\right]}
\newcommand{\prstart}[2]{\mathbb{P}_{#2}\!\left(#1\right)}
\newcommand{\prcond}[3]{\mathbb{P}_{#3}\!\left(#1\;\middle\vert\;#2\right)}
\newcommand{\econd}[2]{\mathbb{E}\!\left[#1\;\middle\vert\;#2\right]}
\newcommand{\escond}[3]{\mathbb{E}_{#3}\!\left[#1\;\middle\vert\;#2\right]}
\usepackage{xspace}
\newcommand{\Poincare}{Poincar\'e\xspace}
\newcommand{\random}{random\xspace}

\begin{document}

\title{\bf Cutoff for random walk on random graphs with a community structure}
\author{Jonathan Hermon
\thanks{
                University of British Columbia, Vancouver, CA. E-mail: {jhermon@math.ubc.ca}. }
        \and An{\dj}ela \v{S}arkovi\'c
        \thanks{University of Cambridge, Cambridge, UK. E-mail: {as2572@cam.ac.uk}}
\and Perla Sousi
\thanks{
University of Cambridge, Cambridge, UK. E-mail: {p.sousi@statslab.cam.ac.uk}}
}
\date{}
\maketitle

\begin{abstract}
We consider a variant of the configuration model with an embedded community structure and study the mixing properties of a simple random walk on it. Every vertex has an internal $\deg^{\text{int}}\geq 3$ and an outgoing $\deg^{\text{out}}$ number of half-edges. Given a stochastic matrix $Q$, we pick a random perfect matching of the half-edges subject to the constraint that each vertex $v$ has $\deg^{\text{int}}(v)$ neighbours inside its community and the proportion of outgoing half-edges from community~$i$ matched to a half-edge from community $j$ is $Q(i,j)$. Assuming the number of communities is constant and they all have comparable sizes, we prove the following dichotomy: simple random walk on the resulting graph exhibits cutoff if and only if the product of the Cheeger constant of~$Q$ times $\log n$ (where $n$ is the number of vertices) diverges.  

In~\cite{AnnasPaper}, Ben-Hamou established a dichotomy for cutoff for a non-backtracking random walk on a similar random graph model with $2$ communities. We prove that the same characterisation of cutoff holds for simple random walk. 
\newline
\newline
\emph{Keywords and phrases.} Configuration model, mixing time, cutoff, entropy, \newline
MSC 2010 \emph{subject classifications.} Primary 60F05, 60G50.
\end{abstract}


\section{Introduction}\label{secIntroduction}
In this paper we study the mixing time of a simple random walk on two different random graph models which are  generalisations of the configuration model incorporating a community structure. 

We first define the two random graph models that we will consider. The first one was defined by Anna Ben-Hamou in~\cite{AnnasPaper}.
\begin{definition}(2 communities model)\label{def:2CommounitiesRandomGraph}
Let $V$ be a set of vertices which is a disjoint union of sets $V_1$ and $V_2$ representing the two communities. Let $d:V\to \mathbb{N}\setminus\{0,1\}$ be a degree sequence specified in advance, such that $$\sum_{v\in V_i}d(v)=N_i,\,\,\,\,\,\,\,\, \mathrm{for}\,\,i\in\{0,1\},$$ where $N_1$ and $N_2$ are both even. Let $N=N_1+N_2,$ and let $p$ be a fixed even integer between $2$ and $\min\{N_1,N_2\}.$  We construct the model by first, for all $v\in V$, assigning $d(v)$ half-edges to the vertex~$v$. We then choose uniformly at random $p$ half-edges in each community and label them \emph{outgoing}. We label the rest of the half-edges \emph{internal}. Finally, the random graph is obtained by matching the outgoing half-edges in community $1$ with the outgoing half-edges in community $2$ uniformly at random and for $i\in \{1,2\}$ taking a uniform random perfect matching of the internal half-edges in  community $i$ (half-edges at vertices $v$ and $u$ which are matched form an edge between vertices $v$ and $u$). We write for $i\in \{1,2\}$, $\alpha_i=\frac{p}{N_i}$ and $n_i=|V_i|$ and set $\alpha=\alpha_1+\alpha_2$ and $n=n_1+n_2=|V|.$ 
\end{definition}

\begin{definition}[$m$-communities model]\label{def:kCommounitiesRandomGraph}
 We define a random graph $G_n$ consisting of $m\in \mathbb{N}$ communities as follows.
For $i\in\{1,2\ldots, m\}$ let $n_i\in \mathbb{N}$ and let $V_i$ be a set of vertices belonging to community $i$ with $|V_i|=n_i$. Denote $n=\sum_{i=1}^mn_i$ and $V=\bigcup_{i=1}^mV_i$. 
We are given a symmetric $m\times m$ matrix $E$ with non-negative integer entries and with all diagonal elements being even. 

Let $\deg^{\rm int}, \deg^{\rm out}:V\to \mathbb{N}_0$ be two fixed sequences satisfying that $\sum_{j\neq i}E_{i,j} = \sum_{v\in V_i}\deg^{\rm out}(v)$ and $E_{i,i}=\sum_{v\in V_i}\deg^{\rm int}(v)$ for all $i$. For every $i\in \{1,\ldots, m\}$ to each $v\in V_i$, we assign $\deg^{\rm int}(v)$ internal half-edges and $\deg^{\rm out}(v)$ outgoing half-edges.
We write $d(v)=\deg(v)= \deg^{\rm int}(v)+\deg^{\rm out}(v)$.

For $i\leq m$ we connect the internal half edges coming from vertices in community $i$ uniformly at random to each other. For every community $i$, out of $\sum_{j\neq i}E_{i,j}$ outgoing half-edges we pick $E_{i,j}$ half edges to connect to community $j\neq i$ uniformly at random (without replacement). We call the chosen edges $j$-half-edges. For $i\ne j$ we match the $j$-half-edges coming from community $i$ to the $i$-half-edges coming from community $j$ uniformly at random.

\end{definition}

We now recall the definition of mixing time for a Markov chain with transition matrix $P$ and invariant distribution $\pi$. The $\varepsilon$ mixing time is defined to be 
\[
t_{\rm mix}(\varepsilon) = \min\{ t\geq 0: \max_x\|P^t(x,\cdot) -\pi \|_{\rm{TV}}\leq \varepsilon\},
\]
where $\|\mu -\nu \|_{\rm TV} = \frac{1}{2}\sum_x|\mu(x)-\nu(x)|$ for $\mu$ and $\nu$ two probability measures. 

A sequence of Markov chains with mixing times $(t^{(n)}_{\rm mix}(\varepsilon))$ exhibits cutoff if for all $\varepsilon\in (0,1)$
\begin{align}\label{eq:defcutoff}
\frac{t^{(n)}_{\rm mix}(\varepsilon)}{t^{(n)}_{\rm mix}(1-\varepsilon)}\to 1 \quad \text{ as } n\to \infty.
\end{align}
We say that a sequence of graphs exhibits cutoff if the corresponding sequence of simple random walks exhibits cutoff. 
Let $G_n$ be a sequence of random graphs as in Definitions~\ref{def:2CommounitiesRandomGraph} and~\ref{def:kCommounitiesRandomGraph}. 
We say that an event $A$ happens with high probability and abbreviate it w.h.p., if $\pr{A}=1-o(1)$ as $n \to \infty$. We say that the sequence of random graphs $G_n$ exhibits cutoff with high probability if~\eqref{eq:defcutoff} holds in distribution. 

{\emph {Notation}}: Let $f,g:\mathbb{N}\to \R$ be two functions. We write $f(n)\lesssim g(n)$ if there is a constant $c>0$ such that for all $n$, $f(n)\le cg(n)$. We write $f(n)\asymp g(n)$, if we have that $f(n)\lesssim g(n)$ and $g(n)\lesssim f(n)$.

We first state our result for the $2$-communities model under the following assumptions as in~\cite{AnnasPaper}:
 \begin{align} \alpha\le 1 &&&&&&&&
 \mathrm{(there\, is\, a\, community\, structure)} \label{eq:ComStruct}\\
 N_1\asymp N_2\asymp N&&&&&&&&\mathrm{(communities\, have\,comparable\, sizes)} \label{eq:ComprableCom}\\ 
 \min_{v\in V}d(v)\ge 3&&&&&&&& \mathrm{(branching\, degree)} \label{eq:BranchDeg}\\
 \Delta=\max_{v\in V}d(v)=O(1)&&&&&&&&\mathrm{(sparse\, regime)}. \label{eq:Sparse}
 \end{align}

\begin{theorem}\label{thrm:Cutoff}\label{thrm:cutoffSimple2} Let $G_k=(V_k,E_k)$ be a sequence of the two communities random graphs on $n_k$ vertices, with $n_k\to \infty$ as $k\to \infty$, which satisfies the assumptions \eqref{eq:ComStruct}-\eqref{eq:Sparse}. For a simple random walk on $G_k$ the following holds. Let $t_{\mathrm{mix}}\left(G_k, \varepsilon\right)$ be $\varepsilon$ mixing time of the walk and let $\alpha_k$ be $\alpha$ from Definition~\ref{def:2CommounitiesRandomGraph} corresponding to the graph $G_k.$ For $\alpha_k\gg \frac{1}{\log |V_k|}$ we have that with high probability  the simple random walk on $G_k$ exhibits cutoff and $t_{\mathrm{mix}}\left(G_k,\frac{1}{4}\right)\asymp \log|V_k|.$

Moreover, in this case for all $\varepsilon\in(0,\frac{1}{2})$  there exists a constant $C(\varepsilon)$ such that with high probability
\[ t_{\mathrm{mix}}\left(G_k, \varepsilon\right)-t_{\mathrm{mix}}\left(G_k,1-\varepsilon\right)\le C( \varepsilon)\sqrt{\frac{ \log|V_k|}{\alpha_k}}.\]

Finally, for $\alpha_k\lesssim \frac{1}{\log |V_k|}$ we have that $t_{\mathrm{mix}}\left(G_k,\frac{1}{4}\right)\asymp\frac{1}{\alpha_k}$ and there is no cutoff. 
\end{theorem}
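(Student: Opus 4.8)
The walk evolves on two widely separated time scales. Inside one community the internal half-edges form a bounded-degree configuration model with minimum degree $\ge 3$, hence w.h.p.\ an expander, so internal equilibration takes time $\asymp\log n$. Separately, the community label $\bar X_t\in\{1,2\}$ of $X_t$ behaves, after an $O(\log n)$ burn-in, like a two-state chain with crossing probabilities $\asymp\alpha_1,\alpha_2$, so its marginal equilibrates on scale $\asymp 1/\alpha$; and by Cheeger's inequality $t_{\mathrm{rel}}(G)\ge 1/(2\Phi(G))\gtrsim 1/\alpha$, since the cut $\{V_1,V_2\}$ carries exactly $p$ edges and $\pi(V_1)\asymp 1/2$, so $\Phi(G)\le\Phi(V_1)=\alpha_1\asymp\alpha$. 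The dichotomy is the competition between $\log n$ and $1/\alpha$: when $\alpha\gg 1/\log n$ the internal scale dominates and the residual community fluctuations are $o(\log n)$, producing cutoff at $\asymp\log n$; when $\alpha\lesssim 1/\log n$ the cut is a genuine bottleneck whose slow exponential relaxation precludes cutoff, and $t_{\mathrm{mix}}\asymp 1/\alpha$.

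\textbf{Regime $\alpha_k\gg 1/\log n$.} For the upper bound I would run the entropy method for cutoff on the configuration model (Berestycki--Lubetzky--Peres--Sly; Ben-Hamou--Salez) on $G$ itself, whose local weak limit is a two-type branching tree (an internal edge keeps the type, an outgoing edge switches it, switching with probability $\asymp\alpha_i$ at a type-$i$ vertex); since every non-root vertex of the limit tree has at least two children, the SRW on it has an entropy rate $h=h(\alpha)$ bounded away from $0$ and $\infty$ uniformly in $\alpha\le 1$, and the method gives mixing by time $(1+o(1))\frac{\log n}{h}$. The cutoff window is governed by the fluctuations of $-\log P^t(X_0,X_t)$: writing the trajectory as an alternating sequence of community sojourns whose lengths are approximately independent and $\mathrm{Geom}(\asymp\alpha_i)$ in community $i$, entropy grows at rate $h_i$ during a sojourn in community $i$, so $-\log P^t(X_0,X_t)\approx h_1 L^1_t + h_2 L^2_t = h_2 t+(h_1-h_2)\,L^1_t$ with $L^i_t$ the time spent in community $i$; as $L^1_t$ is a sum of $\asymp\alpha t$ geometric lengths each of variance $\asymp 1/\alpha^2$, its variance is $\asymp t/\alpha$, so $\mathrm{Var}\big(-\log P^t(X_0,X_t)\big)\lesssim t/\alpha$, and translating to a time window (dividing by the drift $h$) bounds the cutoff window by $\lesssim\sqrt{t/\alpha}\asymp\sqrt{\log n/\alpha}$ at the mixing scale. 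This dominates the bare configuration-model window $\sqrt{\log n}$ (as $\alpha\le1$) and the burn-in/marginal contribution $\asymp 1/\alpha$ (as $\alpha\gg1/\log n$), and it is $o(\log n)$ precisely when $\alpha\gg 1/\log n$. For the matching lower bound on $t_{\mathrm{mix}}$ the crude ball estimate suffices: if $t\le c\log n$ with $c$ small, the walk is supported on at most $\Delta^{t+1}\le\sqrt n$ vertices, of total $\pi$-mass $o(1)$, so the $\mathrm{TV}$-distance is $1-o(1)$. Hence $t_{\mathrm{mix}}(G_k,1/4)\asymp\log n$, and the two-sided window control (mixing by $t_*+C(\varepsilon)\sqrt{\log n/\alpha}$ and non-mixing before $t_*-C(\varepsilon)\sqrt{\log n/\alpha}$, via concentration of $-\log P^t$ together with $\pi_{\max}\lesssim 1/n$) yields $t_{\mathrm{mix}}(\varepsilon)-t_{\mathrm{mix}}(1-\varepsilon)\le C(\Delta,\varepsilon)\sqrt{\log n/\alpha}$ and hence cutoff, all w.h.p.

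\textbf{Regime $\alpha_k\lesssim 1/\log n$.} Here $1/\alpha\gtrsim\log n$. For the upper bound $t_{\mathrm{mix}}(G_k,1/4)\lesssim 1/\alpha$, couple a walk from an arbitrary $v$ with a stationary walk: first couple their community labels in $O(1/\alpha)$ steps using the two-state approximation; once both sit in a common community, attempt the internal expander coupling, which needs $O(\log n)=O(1/\alpha)$ steps and fails only if one walk crosses the cut first, an event of probability $O(\alpha\log n)=O(1)$ per attempt, so a bounded number of attempts suffices and the total coupling time is $O(1/\alpha)$ w.h.p. For the lower bound, $t_{\mathrm{mix}}(G_k,1/4)\ge(t_{\mathrm{rel}}(G)-1)\log 2\gtrsim 1/\alpha$ by the Cheeger bound above, so $t_{\mathrm{mix}}(G_k,1/4)\asymp 1/\alpha$. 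For the absence of cutoff: for every $\varepsilon\in(0,1/4]$ the general inequality $t_{\mathrm{mix}}(\varepsilon)\ge(t_{\mathrm{rel}}(G)-1)\log\tfrac1{2\varepsilon}$, together with $t_{\mathrm{mix}}(1-\varepsilon)\le t_{\mathrm{mix}}(G_k,1/4)\lesssim 1/\alpha$ and $t_{\mathrm{rel}}(G)\gtrsim 1/\alpha$, gives $t_{\mathrm{mix}}(\varepsilon)/t_{\mathrm{mix}}(1-\varepsilon)\gtrsim\log\tfrac1{2\varepsilon}$ with an implied constant bounded below as $\alpha_k\to 0$; taking $\varepsilon$ small makes the ratio exceed $1$, so the quantity in \eqref{eq:defcutoff} does not converge to $1$.

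\textbf{Main obstacle.} The crux is the large-$\alpha$ regime: transplanting the entropy-method cutoff analysis to the two-type random graph --- robustly to the presence of a cut of conductance $\asymp\alpha$, which is absent in the classical configuration model --- and, more delicately, establishing a genuine two-sided window theorem rather than merely the order of $t_{\mathrm{mix}}$. This requires concentration of the relevant entropy/escape functionals of the SRW on the random community graphs, a careful coupling of the renewal structure of the community excursions with the internal mixing clock to pin the inflated $t/\alpha$ variance (as an upper bound for mixing and as the obstruction in the lower window bound), control of the tree-approximation error up to times $\asymp\log n$, and the usual handling of atypical vertices (low local expansion, short cycles, or all half-edges outgoing) so that they do not spoil the w.h.p.\ statements.
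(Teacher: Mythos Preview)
Your high-level picture is right, and in particular the identification of the relevant variance scale $t/\alpha$ for the entropy fluctuations matches the paper's key quantitative input. But there is a genuine gap that undermines both regimes of your sketch.

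\textbf{The internal graph is not an expander.} You write that ``inside one community the internal half-edges form a bounded-degree configuration model with minimum degree $\ge 3$, hence w.h.p.\ an expander.'' This is false for the $2$-community model of Definition~\ref{def:2CommounitiesRandomGraph}: there the assumption is only $d(v)\ge 3$ on the \emph{total} degree, and the outgoing half-edges are selected uniformly at random, so a vertex may have internal degree $0$, $1$, or $2$. When $\alpha\asymp 1$ a constant fraction of vertices have internal degree $\le 2$, and the community subgraph contains long paths of degree-$2$ vertices; its relaxation time is then polylogarithmic, not $O(1)$. (The internal-degree-$\ge 3$ assumption is made only in the $m$-community model, precisely so the decomposition technique of Theorem~\ref{thm:madrasrandall} applies there.) This breaks your ``internal expander coupling'' in the small-$\alpha$ regime and your implicit use of community-wise fast mixing in the large-$\alpha$ regime.

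The paper circumvents this by never appealing to the restricted graph directly. For $t_{\mathrm{rel}}\lesssim 1/\alpha$ it bounds the Dirichlet eigenvalue $\lambda(V_i)$ via a hitting-time argument (Lemma~\ref{lemma:DirEVCom1}): one couples $X$ with a walk on a \emph{rewired} graph $\widehat G_n$ in which the outgoing half-edges of community $i$ are rematched internally, so that $\widehat G_n|_{V_i}$ is a genuine min-degree-$3$ configuration model; the coupling holds until $X$ leaves $V_i$, which is exactly the event of interest. A separate argument (Lemma~\ref{lemma:DirEVContainingCom1}, Lemma~\ref{lemma:TimeInCom2fromKrootOfCom1}) handles sets containing all of one community by controlling the fraction of time spent in the other community via the tree coupling and the types chain. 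For the small-$\alpha$ upper bound, rather than a direct coupling the paper bounds $\max_x \mathbb E_x[T_A]$ for large sets $A$ by again exploiting the rewired graph and the set $D$ of vertices unlikely to cross in $O(\log n)$ steps. Your coupling sketch would need all of this machinery to be made rigorous.

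A secondary point: your derivation of the $t/\alpha$ variance via sojourn-time decomposition is heuristic, since the community label is not Markov. The paper makes this rigorous through the regeneration structure (Definition~\ref{def:RegTime}) and the associated finite-state types chain $\Sigma$, whose mixing time is shown to be $\lesssim 1/\alpha$ (Lemma~\ref{lemma:2comMixTypes}); the variance bound~\eqref{eq:VarY} then follows from a general decorrelation lemma for functionals of a Markov chain.
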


Ben-Hamou in~\cite{AnnasPaper} established the same dichotomy for cutoff for the non-backtracking random walk on the $2$-communities random graph. The result above answers her open question regarding the mixing of the simple random walk. In Section~\ref{sec:comparison} we establish that under certain assumptions the non-backtracking random walk mixes faster than the simple random walk in the cutoff regime. This gives a partial answer to a problem of comparing the mixing times in cutoff regime, which was suggested to us by Anna Ben-Hamou.

We now move on to the $m$-communities model. We first define a matrix $Q$ indexed by $\{1,\ldots, m\}^2$ via
\[
Q(i,j)=\frac{E_{i,j}}{\sum_{\ell}E_{i,\ell}}.
\]
In words, $Q$ is the transition matrix of a random walk on the graph obtained from $G_n$ by gluing together all vertices from the same community into a single vertex and keeping all edges.

For the $m$-communities model we make the following assumptions:
 \begin{align} 
 Q \text{ is an irreducible matrix } &&&&&&&&\mathrm{(connected \, \,graph)}\label{eq:mComStruct}\\ 
 \forall\ i\in\{1,\ldots m\},\,\, n_i\asymp n&&&&&&&&\mathrm{(communities\, have\,comparable\, size)} \label{eq:mComprableCom}\\ 
\min_{i\in\{1,\ldots m\}, v\in V_i}\deg^{\rm int}(v)\ge 3&&&&&&&& \mathrm{(branching\, degree\, inside\, the\, community)} \label{eq:mBranchDeg}\\
 \Delta=\max_{v\in V}d(v)=O(1)&&&&&&&&\mathrm{(sparse\, regime)} \label{eq:mSparse}
 \end{align}

We note that the 2-communities model cannot be obtained from the $m$-communities one by taking $m=2$. The main difference between the two models is that in the 2-communities model we choose at random which vertices will have edges to the other community. For the 2-communities model the internal degree of every vertex can be smaller than $3$.

We recall that the Cheeger constant of a transition matrix $P$ with invariant distribution $\pi$ is defined to be $$ \Phi_*=\min_{A:\pi(A)\le \frac{1}{2}}\frac{\sum_{x\in A,y\in A^c}\pi(x)P(x,y)}{\pi(A)}.$$
Since we assume that $Q$ is an irreducible matrix and it can be viewed as a random walk on a graph, it follows that it is reversible and it has a unique invariant distribution that we denote by $\pi_Q$. For every $i\in \{1,\ldots,m\}$ we have using the assumptions above
\[
\pi_Q(i) = \frac{\sum_\ell E_{i,\ell}}{\sum_{j,\ell}E_{j,\ell}}\geq \frac{3n_i}{\Delta n} \asymp 1.
\]

We can now state our main result for the $m$-communities random graph model.

\begin{theorem}\label{thrm:mCutoff}\label{thrm:cutoffSimplem} Let $G_k=(V_k,E_k)$ be a sequence of the $m$-communities random graphs on $n_k$ vertices, with $n_k\to \infty$ as $k\to \infty$, which satisfies the assumptions \eqref{eq:mComStruct}-\eqref{eq:mSparse}. Let $t_{\mathrm{mix}}\left(G_k, \varepsilon\right)$ be the $\varepsilon$-mixing time of the simple random walk on $G_k$ and let $\alpha_k$ be the Cheeger constant of the Markov chain~$Q$ corresponding to the graph $G_k.$ For $\alpha_k\gg \frac{1}{\log |V_k|}$ we have that with high probability the simple random walk on $G_k$ exhibits cutoff and $t_{\mathrm{mix}}\left(G_k,\frac{1}{4}\right)\asymp \log|V_k|.$

Moreover, in this case for all $\varepsilon\in(0,\frac{1}{2})$  there exists a constant $C(\varepsilon)$ such that with high probability
\[ t_{\mathrm{mix}}\left(G_k, \varepsilon\right)-t_{\mathrm{mix}}\left(G_k,1-\varepsilon\right)\le C(\varepsilon)\sqrt{\frac{ \log|V_k|}{\alpha_k}}.\] 
Finally, for $\alpha_k\lesssim \frac{1}{\log |V_k|}$, we have that with high probability $t_{\mathrm{mix}}\left(G_k,\frac{1}{4}\right)\asymp\frac{1}{\alpha_k}$ and there is no cutoff. \end{theorem}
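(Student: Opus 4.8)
\textbf{Proof proposal for Theorem~\ref{thrm:cutoffSimplem}.}

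The plan is to reduce the $m$-communities statement to a combination of the analysis of the simple random walk \emph{inside} a single community (which, conditioned on its internal matching, is essentially a random walk on a configuration model with minimum degree $\ge 3$, hence enjoys cutoff at an entropic time of order $\log n$ by the Lubetzky--Sly / Ben-Hamou--Salez technology) and the analysis of the \emph{coarse-grained} chain~$Q$ governing the motion between communities. First I would set up the two-scale picture: because $\Delta = O(1)$ and $n_i \asymp n$, w.h.p.\ the graph restricted to each community is a good expander and the walk equilibrates \emph{within} a community in time $O(\log n)$, while crossings of the ``boundary'' (the $E_{i,j}$ outgoing edges with $i\ne j$) happen at rate comparable to $\alpha_k = \Phi_*(Q)$ per step. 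The key quantitative input for the upper bound in the cutoff regime ($\alpha_k \gg 1/\log|V_k|$) is that the trace on the community-index process is, up to a time change, close to the chain $Q$, and that the within-community distribution has relaxed; summing the entropic CLT for the within-community displacement (variance $\asymp \log n$, giving the $\sqrt{\log n}$ window) with the fluctuations of the number of cross-community steps (which, when $\alpha_k \gg 1/\log n$, contribute a window of order $\sqrt{(\log n)/\alpha_k}$, dominating) yields both cutoff and the stated bound $C(\Delta,\varepsilon)\sqrt{(\log|V_k|)/\alpha_k}$ on the width.

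For the lower bound when $\alpha_k \gg 1/\log|V_k|$ I would use the standard distinguishing-statistic / Wilson-type argument: exhibit a function on $V_k$ (essentially a linear combination of community-indicators weighted by a near-eigenvector of $Q$, mixed with the local-geometry statistic used for the configuration model) whose distribution under $P^t(x,\cdot)$ separates from $\pi$ until time $(1-o(1))\log|V_k|$, and pair it with the entropic lower bound of Ben-Hamou--Salez for the within-community component. In the slow regime $\alpha_k \lesssim 1/\log|V_k|$, the behaviour is dominated by the bottleneck: the walk spends $\asymp 1/\alpha_k$ steps before it has crossed between communities enough times to equilibrate the community-index marginal, so $t_{\rm mix}(1/4) \asymp 1/\alpha_k$ follows from (i) a Cheeger/conductance lower bound $t_{\rm mix}(1/4) \gtrsim 1/\Phi_*(Q)$ transferred from $Q$ to $G_k$ via the comparison of the quotient chain with the full chain, and (ii) a matching upper bound obtained by noting that after $O(1/\alpha_k)$ steps the community-index process has mixed (spectral gap of $Q$ is $\gtrsim \alpha_k^2$, but a coupling / strong stationary time argument for the finite chain $Q$ gives the sharper $1/\alpha_k$ here since $m=O(1)$) while the within-community coordinate, mixing in $O(\log n) = o(1/\alpha_k)$ steps, has long since equilibrated; the absence of cutoff then follows because $t_{\rm mix}(\varepsilon)$ and $t_{\rm mix}(1-\varepsilon)$ are both $\asymp 1/\alpha_k$ with constants that genuinely depend on $\varepsilon$ (the quotient chain on $O(1)$ states does not exhibit cutoff).

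The main obstacle I expect is making the two-scale decoupling rigorous \emph{uniformly in the starting vertex} and \emph{on the random graph}: one must show that conditionally on the community-crossing times, the walk's position within a community is close to the within-community stationary distribution with an error that is summable over the $O(\log n / \alpha_k)$ relevant time scale, which requires good control (w.h.p.) on the $L^2$ or relative-entropy distance to equilibrium for the walk on the internal configuration-model graph started from an \emph{arbitrary} entry point, together with control on how the random entry point is distributed (it is biased by degree and by which outgoing half-edge was used). Technically this is handled by the entropic method: track $\mathcal{H}_t = -\log \pi(X_t) - H(t)$ where $H(t)$ is the expected value, show concentration of $-\log P^t(x,X_t)$ around its mean via an Azuma-type estimate along the walk's trajectory using the tree-like structure of the neighbourhood (here $\Delta=O(1)$ and girth $\to\infty$ w.h.p.\ are essential), and then the number of cross-edges traversed enters as an independent-ish additive contribution whose fluctuations are Gaussian of the claimed order. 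Carefully combining the community-level randomness of $Q$'s near-eigenfunctions with the vertex-level entropic CLT, and checking that the cross terms in the variance do not destroy the $\sqrt{(\log n)/\alpha_k}$ bound, is where the real work lies.
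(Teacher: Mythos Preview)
Your proposal has the right intuition but a genuine structural gap: the two-scale decoupling you describe --- equilibrate within a community, then cross, then equilibrate again --- only makes sense when crossings are rare compared to the within-community mixing time, i.e.\ when $\alpha_k$ is small. But the cutoff regime covers all $\alpha_k \gg 1/\log n$, including $\alpha_k \asymp 1$, where the walk crosses communities at constant rate and never has time to mix inside one before the next crossing. In that regime there is no clean separation of scales, and your ``conditionally on the crossing times the within-community position is close to stationary'' step simply fails. Relatedly, your explanation of the window $\sqrt{(\log n)/\alpha_k}$ as coming from ``fluctuations of the number of cross-community steps'' gives the wrong scaling: crossings happen at rate $\asymp\alpha_k$, so their count over time $t\asymp\log n$ has variance $\asymp \alpha_k\log n$, which would suggest a window $\sqrt{\alpha_k\log n}$, not $\sqrt{(\log n)/\alpha_k}$.

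The paper does not attempt a two-scale decoupling. Instead it works directly on the limiting multi-type random tree and defines regeneration times $\sigma_i$ (times when the walk crosses an edge for the first and last time, with a type constraint on the endpoints). The entropy of the walk decomposes as a sum of increments $Y_i$ between consecutive regenerations, but these increments are \emph{not} i.i.d.: their law depends on the type of the regeneration edge, so one gets a ``types Markov chain'' $\Sigma$ on $\{1,\dots,m\}^2$. The entire window calculation rests on bounding $t_{\mathrm{mix}}(\Sigma)\lesssim 1/\Phi_*^Q = 1/\alpha_k$, which then feeds into a decorrelation estimate giving $\mathrm{Var}\big(\sum_{i\le k} Y_i\big)\lesssim k/\alpha_k$ --- that is the true source of $\sqrt{(\log n)/\alpha_k}$. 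Bounding $t_{\mathrm{mix}}(\Sigma)$ is the hard step: $\Sigma$ is non-reversible with no explicit formula, and the paper compares its Poincar\'e constant to that of $Q$ through two auxiliary chains ($W$ and $Q_2$). For $t_{\mathrm{rel}}(G_k)\lesssim 1/\alpha_k$ in the $m$-communities model the paper uses the Madras--Randall decomposition (each community is an expander since internal degree $\ge 3$; the quotient chain is $Q$ with gap $\gtrsim\alpha_k$). For the non-cutoff regime the upper bound is via the spectral-profile technique plus this $t_{\mathrm{rel}}$ bound, not via a coupling on $Q$.
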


  For an irreducible reversible Markov chain on a finite state space with transition matrix P the absolute spectral gap $\gamma$ is defined as $$\gamma_*=1-\max\left\{|\lambda|: \lambda \text{ is an eigenvalue of } P\mathrm{\, with\,} \lambda\neq 1\right\}$$ and the absolute relaxation time is $t_{\mathrm{rel}}^*=\frac{1}{\gamma_*}.$ The spectral gap is defined to be 
  $$\gamma=1-\max\left\{\lambda: \lambda \text{ is an eigenvalue of } P\mathrm{\, with\,} \lambda\neq 1\right\}$$ and the relaxation time is  $t_{\rm rel}=\frac{1}{\gamma}$.

 \begin{prop}\label{prop:TRel} In the setup of Theorems~\ref{thrm:Cutoff} and~\ref{thrm:mCutoff}, there exists a constant $C>0$ such that with high probability $ \frac{1}{C\alpha}\le t_{\mathrm{rel}}\le t_{\mathrm{rel}}^*\le \frac{C}{\alpha}.$ 
 \end{prop}
 
The constants $C(\varepsilon)$ from Theorems~\ref{thrm:Cutoff} and~\ref{thrm:mCutoff} and $C$ from Proposition~\ref{prop:TRel} depend on the maximal degree $\Delta$ and on the minimal ratio of sizes of communities  $\min_{i,j\le m}n_i/n_j$. 

\subsection{Related work}
In this paper we establish cutoff at an entropic time, which has been a recurring theme in a lot of recent works on random walks and non-backtracking random walks on random graphs. For more related work and references we refer the reader to~\cite{PerlasPaper}. Some notable works include~\cite{RWonRG, NBRWvsSRW, PerlasPaper, RWAbelian, DiaconisChatterjee, randomlifts, BordenaveLacoin, CaputoSalzeBordenave, VarjuEberhard}.

\subsection{Organisation}

In Section~\ref{sec:overview} we give a detailed overview of the proof ideas and direct the reader to the relevant sections where the rigorous proofs are presented. We first prove the statements of Theorems~\ref{thrm:Cutoff} and~\ref{thrm:mCutoff} in the case of a lazy simple random walk, so we work with a lazy walk in Sections~\ref{secLimitTree} to~\ref{sec5}. In Section~\ref{section:simplefromlazy} we deduce the simple random walk case from the lazy one. In Section~\ref{secLimitTree} we define a multi type \random tree and prove concentration results on the speed and entropy of a lazy simple random walk on it. In Section~\ref{sec4} we prove bounds on the spectral profile (defined there) of $G_n$ and prove the bounds on $t_{\rm{rel}}$ from Proposition \ref{prop:TRel} for certain values of $\alpha$. In Section~\ref{sec5} we describe a coupling of the walk on the \random tree with the walk on the random graph $G_n$ using similar ideas as in~\cite{PerlasPaper}, and then we prove Theorems~\ref{thrm:Cutoff} and~\ref{thrm:mCutoff}. In Section~\ref{sec:comparison} we establish a comparison result between the mixing times of the simple and the non-backtracking random walk  on the $2$-communities model. Finally, as part of the appendix we present the proofs of some combinatorial statements.

\section{Overview}\label{sec:overview}

We give an overview of the proof ideas in the case when $\alpha\gg 1/\log n$. In this regime, we establish cutoff at an entropic time. There have been many examples of random walks on random graphs exhibiting cutoff at an entropic time. In this work we follow the general outline of~\cite{RWonRG},~\cite{PerlasPaper} and~\cite{NBRWvsSRW}, where the idea is to first bound the $\mathcal{L}^2$ distance at the entropic time and then use the Poincar\'e inequality to bring it down to $o(1)$. The main obstacles we have to overcome are the estimation of the relaxation time and the entropic concentration on the Benjamini-Schramm limit \footnote{In practice, we work with a certain proxy of the Benjamini-Schramm limit, which is more refined and hence more closely related to our random graph. Namely, our random tree depends on $n$. Had we worked with the proper Benjamini-Schramm limit then whenever $\alpha = o(1)$ we would never have crossed community edges in the random tree. This is similar to previous works on random walks on random graphs, such as \cite{RWonRG} and \cite{PerlasPaper}.}  of our random graphs.We explain how we bound the relaxation time for the $2$-communities model in Section~\ref{sec:boundtreloverview} and the entropic concentration for the~$m$-communities model in Section~\ref{sec:mixingtypesoverview}.

We now give a short overview of how the rest of proof is organised.
We first establish the quantitative entropic concentration as in \cite{RWonRG}, \cite{PerlasPaper} and~\cite{NBRWvsSRW} for the Benjamini-Schramm limit of $G_n$ which is a multi type \random tree (see Definitions~\ref{def:MTGWTree} and~\ref{def:2TGWTree}). Loosely speaking, we prove that if $\mathfrak{h}$ is the Avez entropy of the \random tree $T$ (which exists a.s. and is a constant) and $\mu_t^{T}$ is the distribution of $X_t$ given $T$, then\footnote{We do not prove this statement exactly, but instead we prove it for the loop erasure of the random walk and it ends up being sufficient for completing the proof.}
\begin{align}\label{eq:entropicconcentrationweneed}\mathbb{E}\left[ \left(\mathbb{E}[-\log \mu_t^{T} (X_t) \mid T] - \mathfrak{h}t \right)^2\right] \lesssim  \frac{t}{\alpha}.\end{align} 
We explain how to do this in the case of the $m$-communities model in Section~\ref{sec:mixingtypesoverview}.

The obtained quantitative entropic concentration is sufficiently strong in order to establish cutoff with high probability starting from a fixed sequence of starting points (by this we mean that in the definition of ``cutoff with high probability'' we do not consider the worst case starting points, but rather we consider the sequence of the ratios of the $\varepsilon$ and the $1-\varepsilon$ mixing times corresponding to this particular sequence of starting points, and require it to converge in distribution to 1 for all fixed $\varepsilon \in (0,1)$).
Namely, consider the time $t_0$ at which the annealed entropy of the random walk on the \random tree equals $\log n - C_{\varepsilon} \sqrt{\log n/\alpha}$. By the entropic concentration~\eqref{eq:entropicconcentrationweneed}, one can show that with probability at least $1-\varepsilon$ (jointly over $T$ and $(X_t)_{t \ge 0}$)
\[
\mu_{t_0}^{T}(X_{t_{0}})\cdot n \in \left[ \exp \left(\frac{1}{2}C_{\varepsilon} \sqrt{\frac{\log n}{\alpha}} \right) ,  \exp \left(\frac{3}{2}C_{\varepsilon} \sqrt{\frac{\log n}{\alpha} }\right)\right].
\]
In particular, on an event holding with probability at least $1-o(1)$ the random walk is supported on a set of cardinality $o(n)$.
As in  \cite{RWonRG} (as well as~\cite{NBRWvsSRW} and~\cite{PerlasPaper} which followed) we construct a coupling of the random graph rooted at the starting point of the walk, together with the random walk on this graph until time $t_0$, and the rooted \random tree together with the walk on it until time $t_0$. The coupling contains in particular a common ``good'' subtree which, loosely speaking, is contained in the set of vertices $x$ of the tree satisfying that 
\begin{align}\label{eq:goodsubtreeoverview}\mathrm{max}_{t \le t_0}\mu_{t}^{T}(x) \le \frac{1}{n}\cdot  \exp \left(\frac{3}{2}C_{\varepsilon} \sqrt{\frac{\log n}{\alpha}}\right)\end{align} and is such that with probability at least $1-2 \varepsilon$ (w.r.t.\ the aforementioned coupling, jointly over the tree, finite rooted random graph and the walks on both graphs) the event that walks on both graphs stay in this ``good'' subtree and are equal to one another until time $t_0$ holds. On this event the support of the walk on the finite random graph is of size $o(n)$ and hence it has total variation distance at least~$1-o(1)$ from the stationary distribution. By the triangle inequality, this easily implies that the $1-2\varepsilon-o(1)$ mixing time for this starting point is w.h.p.\ at least $t_0$. Conversely,~\eqref{eq:goodsubtreeoverview} together with the Poincar\'e inequality (i.e. exponential decay of the $\ell^2$ distance), the bound on the relaxation time (which we explain how to obtain in Section~\ref{sec:boundtreloverview}) and the spectral profile technique (which is a more refined version of the Poincar\'e inequality, see~\cite{SpectralProfilePaper} for details) allow us to show that with high probability the random walk on the finite graph starting from the considered starting point is well-mixed at time $t_0 + O(C_{\varepsilon} \sqrt{\frac{\log n}{\alpha}})=t_0+o(t_0)$. Here we are using the fact that we are considering a lazy simple random walk, as that is required in order to use the spectral profile technique. Finally to upgrade this result on cutoff from a typical starting point into one about worst-case starting point we follow the approach of~\cite{NBRWvsSRW} by considering $K$-roots which we define in Section~\ref{sec5}.

\subsection{Entropic concentration} \label{sec:mixingtypesoverview}

In Section~\ref{secLimitTree} we establish the existence of the speed and the Avez entropy, as well as prove a quantitative entropic concentration for the random walk on the limiting multi type \random tree. 
Following \cite{RWonRG} it is natural to define a regeneration time to be a time at which the random walk on the \random tree crosses an edge for the first and last time. Let $\sigma_i$ be the time of the $i$-th regeneration time. In the setup of~\cite{PerlasPaper} the regeneration times give rise to an i.i.d.\ decomposition of the pair $(T_{X_{\sigma_1}}, (X_t)_{t=\sigma_1}^{\infty} )$ into pairs~$(T_{X_{\sigma_i}} \setminus T_{X_{\sigma_{i+1}}}, (X_t)_{t=\sigma_i}^{\sigma_{i+1}-1}  )_{i \in \mathbb{N}}$, where~$T_a \setminus T_b$ is the subtree obtained by removing the induced subtree rooted at $b$ from the induced subtree rooted at $a$. In our setup we call the $i$-th regeneration time ``type $j \in [m]$'' if the label of $X_{\sigma_{i}}$ is $j$. The above i.i.d. decomposition from~\cite{PerlasPaper} can now be replaced with a ``Markov chain decomposition''. Namely, the distribution of $(T_{X_{\sigma_{i}}} \setminus T_{X_{\sigma_{i+1}}}, (X_t)_{t=\sigma_{i}}^{\sigma_{i+1}-1}  )$ depends on $(T_{X_{\sigma_{i-1}}} \setminus T_{X_{\sigma_{i}}}, (X_t)_{t=\sigma_{i-1}}^{\sigma_{i}-1}  )$ only through the label of $X_{\sigma_{i}-1}$ and $X_{\sigma_{i}}$. We note here that it can easily be seen by considering for instance the case in which there are three consecutive regeneration times, during which the label of the position of the walk changes twice, that in the case of the $m$ communities model, the decomposition cannot be made by only considering the labels of $X_{\sigma_i}$, but we also need the label of its parent, $X_{\sigma_{i}-1}$.

In order to establish the quantitative entropic concentration from~\eqref{eq:entropicconcentrationweneed} we exploit the aforementioned Markov chain decomposition and apply a certain general result about concentration for averages for stationary Markov chains which is also valid in the non-reversible setup (we do this in Lemma~\ref{lemma:Decorrelation}) involving the mixing time of the chain. In our setup we apply this to the non-reversible Markov chain with transition matrix $\Sigma$, which loosely speaking represents the transitions of types of the regeneration edges $(X_{\sigma_i-1},X_{\sigma_i})$, where $\sigma_i$ only correspond to certain ``special'' regeneration times (see Definition~\ref{def:RegTime}). Writing  $t_{\mathrm{mix}}(\Sigma)=t_{\mathrm{mix}}(\Sigma,1/4)$ for the total variation mixing time of $\Sigma$ and~$\mathfrak{h}$ for the Avez entropy of the \random tree $T$, we essentially obtain 
\[\mathbb{E}\left[ \left(\mathbb{E}[-\log \mu_t^{T} (X_t) \mid T] - \mathfrak{h}t \right)^2\right] \lesssim t\cdot  t_{\mathrm{mix}}(\Sigma).\] 
Therefore, by~\eqref{eq:entropicconcentrationweneed} it is enough to show that mixing time of $\Sigma$ can be bounded  from above (up to constants) by the inverse of  $\alpha$. This is easy to establish in the case of the two communities model and we explain here the difficulties which arise for the $m$-communities one.

The lack of reversibility of $\Sigma$, together with the fact that some of its transition probabilities may tend to zero poses several challenges, when attempting to show that the product of the mixing time of $\Sigma$ with its \Poincare constant (or with its Cheeger constant) is bounded, as well as when trying to compare its \Poincare or Cheeger constant to those of a different Markov chain (such as $Q$ or $Q_2$ described below, or see Definition~\ref{def:chainQ2}). 
Another difficulty is posed by the fact that $\Sigma$ and $Q$ are defined on different state spaces, hence it is not possible to directly compare their Cheeger or \Poincare constants, not to mention their mixing times. These are  generally harder to compare between two Markov chains, even when they are defined on the same probability space, are both reversible and one is obtained from the other by a bounded perturbation of the edge weights (see~\cite{Dingperessensitivity} and~\cite{Hermonsensitivityofmixing} for constructions showing that the mixing times can be of completely different order).

\subsection{Bounding $t_{\rm rel}$}\label{sec:boundtreloverview}

We explain how to bound the spectral gap in the $2$-communities model. It is easy to see that the bottleneck-ratio of a community of stationary probability at most $1/2$ is $\asymp \alpha$, and hence this immediately implies that the spectral gap is $\lesssim \alpha$. We are unaware of any general criterion which allows one to deduce that the relaxation time is comparable to the inverse of the Cheeger constant. This is often achieved by a comparison to another Markov chain which is better understood or by showing that both of them are comparable to the mixing time, which fails in our case when $\alpha \gg 1/\log n$.

It is tempting to apply the decomposition technique (which we can use for the $m$-communities model as the minimal internal degree is $3$, see \cite[Theorem~1.1]{madrasrandall} i.e.\ Theorem~\ref{thm:madrasrandall}). According to this technique, we can bound the relaxation time from above by the maximum of the relaxation times of the induced graphs on the two communities (this general technique involves taking the maximum also with the two state Markov chain obtained by contracting each community to a single state, but this is easily seen to be of order $1/\alpha$). We now explain why this technique fails in our case. Since the internal degree can be $2$ or $1$ it is not true that the restriction of the graph to each community is an expander. In fact, similarly to the giant component of a supercritical Erd\"os-R\'enyi graph (or a supercritical configuration model which has either a constant fraction of its vertices having degrees 1 or 2, or that the fraction of such vertices is taken to vanish arbitrarily slowly) when $\alpha \asymp 1$ (resp., when $\alpha$ vanishes slowly), one can show that the restriction to each community contains paths of length $\Theta(\log n)$ of degree 2 vertices (resp. contains such paths of length $o(\log n)$ but whose order can be made to be arbitrarily close to $\log n$), and so the relaxation time of the restriction is $\Omega \left( \left(\log n \right)^2 \right)$ (resp. of order which is arbitrarily close to this).

In order to circumvent the aforementioned difficulty in applying the usual decomposition theorem, we bound instead the Dirichlet eigenvalue of all sets of size at most $1/2$. In fact it is enough to bound it for sets $D$ containing community $1$ and a small proportion of community $2$ (or the other way around). We achieve this by bounding the tail of the time it takes to leave the set  $D$ by the Dirichlet eigenvalue of the second community and the probability that the chain starting from the first community spends smaller than $\delta$ proportion of time in community $2$ in the first $t$ steps, for some small constant $\delta$ and all large $t$. In order to bound the last probability, we construct a coupling between the random walk on the random graph and the random walk on the \random tree, in which new vertices of the random graph are revealed only when the random walk crosses an edge leading to them for the first time.



\section{Limiting Multi Type \random Tree}\label{secLimitTree}
In this section we only consider the lazy random walk. 
We start by defining a multi-type \random tree, which will be used to approximate the random graph $G_n$ consisting of $m$ communities. 
\begin{definition} \label{def:MTGWTree}
Let $\nu$ be a probability distribution on $\{1,\ldots, m\}.$
We define a multi-type \random tree~$T$ with $m$ types,  $1,2\ldots m$, and the distribution of the type of the root given by $\nu$, to be the infinite tree constructed as follows. The root $\rho$ is first chosen to be of type $i\in \{1,2,\ldots, m\}$ with probability $\nu(i)$. The root is then assigned the internal and outgoing degree of the vertex $v\in V_i$ with probability $\frac{\deg(v)}{\sum_{u\in V_i}\deg(u)}$. Each internal edge has an offspring of type $i$ and each outgoing edge connects to an offspring of type $j$, for $j\ne i$, independently with probability $\frac{E_{i,j}}{\sum_{k\ne i}E_{i,k}}.$  We now define the tree inductively. 

Suppose $T$ has been constructed up to level $\ell$ and we know the degree vectors of all nodes up to level $\ell-1$ as well as the types of all nodes of level $\ell$. To each node of level $\ell$ of type $i$ which is an offspring of a type $j\ne i$ node from level $\ell-1$, we assign the degree vector of vertex~$v$ in community $i$ of $G_n$ with probability $\frac{\deg^{\text{out}}(v)}{\sum_{k\ne i}E_{i,k}}$. The number of offspring of type $i$ of this node is then $\deg^{\text{int}}(v)$, while the number of outgoing offspring is $\deg^{\text{out}}(v)-1$ and each of the outgoing offspring is of type $s\ne i$ with probability $\frac{E_{i,s}}{\sum_{k\ne i}E_{i,k}}.$ For a node of level $\ell$ of type $i$ which is an offspring of a type $i$ node from level $\ell-1$, we assign the degree vector of vertex~$v$ in community $i$ of $G_n$ with probability $\frac{\deg^{\text{int}}(v)}{E_{i,i}}$. The number of offspring of type $i$ of this node is then $\deg^{\text{int}}(v)-1$, while the number of outgoing offspring is $\deg^{\text{out}}(v)$ and each of the outgoing offspring is of type $s\ne i$ with probability $\frac{E_{i,s}}{\sum_{k\ne i}E_{i,k}}.$

We let $\mathcal{T}$ be the topological space of all rooted bounded degree infinite trees and with vertices of types $\{1,\ldots, m\}$. The multi-type \random tree is then a random variable taking values in $\mathcal{T}$. We denote by $\text{MGW}_\nu$ the law of this random variable. We also let $\Theta:T\to \{1,2,\ldots, m\}$ be the function which maps each node of the tree to its type. 
\end{definition}

In the case of the random graph model on two communities from Definition \ref{def:2CommounitiesRandomGraph} the limiting tree is defined as follows.

\begin{definition} \label{def:2TGWTree}
We define a two-type \random tree $T$, with vertices of types $1$ and $2$ to be the infinite tree constructed as follows. The root $\rho$ is first chosen to be of type $1$ with probability $\frac{n_1}{n}$ and of  type $2$, otherwise (so with probability $\frac{n_2}{n}$). Each vertex of type $i$ is then assigned a random number of offspring by sampling its degree from a degree biased distribution of all degrees of vertices of type $i$ in $G_n$, in particular the degree is taken to be $d$ with probability $ \frac{d\sum_{v\in V_i}\mathds{1}(\deg(v)=d)}{\sum_{v\in V_i}\deg(v)}$   (degrees are between $3$ and $\Delta$). 
Each offspring vertex is then, independently of everything else, taken to be of the opposite type with probability $\alpha_i=\frac{p}{N_i}$ and of the same type $i$, otherwise.  We let $\mathcal{T}$ be the topological space of all rooted bounded degree infinite trees and with vertices of types $1$ and $2$. The two type \random tree is then a random variable taking values in $\mathcal{T}$. We also let $\Theta:T\to \{1,2\}$ be the function which maps each vertex to its type. 
\end{definition}

We let $d\left(x,y\right)$ be the graph distance between vertices $x$ and $y$. For a tree $T$ we denote by $T\left(v\right)$ the subtree of $T$ rooted at $v$, i.e. $T\left(v\right)=\left\{y\in T: d\left(\rho,y\right)=d\left(\rho,v\right)+d\left(v,y\right)\right\}$, and all $y\in T\left(v\right)$ are called offspring or descendants of $v$.  We write $\left\{x,y\right\}$ and $\left(x,y\right)$ for the undirected and directed edge in $T$, respectively. Let $p\left(x\right)$ be the parent of $x$, i.e. $p\left(x\right)$ is a vertex such that $d\left(x,p\left(x\right)\right)=1$ and $d\left(\rho,p\left(x\right)\right)+1=d\left(\rho,x\right)$. 
\\ For a discrete time Markov Chain $X$ we define the hitting time of vertex $x$ by $\tau_x=\min\left\{t\ge 0:  X_t=x\right\}$ and the first return time to $x$ as $\tau_x^+=\min\left\{t\ge 1: X_t=x\right\}$.

\begin{definition} \label{def:RegTime}
Let $X$ be a lazy simple random walk on an infinite tree $T$ with vertices of types~$1,\ldots, m$ starting from the root. A random time $\sigma$ is called a regeneration time if the random walk crosses the edge $\left\{X_{\sigma-1},X_{\sigma}\right\}$ for the first and last time at time~$\sigma$ and if the nodes $p(X_{\sigma-1}), X_{\sigma-1}, X_\sigma$ all have the same type. 
We say that the regeneration time $\sigma$ is of type $(i,i)$ for $i\in \{1,\ldots, m\}$, if $\Theta(X_\sigma)=i.$ \end{definition}

We note that the definition of regeneration times given above is crucial for the $m$-communities model. Indeed, it is used in the proof of Lemma~\ref{lemma:PiOfW} in order to upper bound $\pi_W$ and also in the proof of Proposition~\ref{thrm:MixingOfW}).

We call a graph uniformly transient if there exists a positive constant $c$ so that for all $x$
\[
\prstart{\tau_x^+=\infty}{x} \geq c.
\]
We start by stating the following well known result that a tree with vertex degrees lower bounded by $3$ is uniformly transient. Its proof follows by considering the distance of the walk from the root of the tree and stochastically dominating from below by a biased random walk on $\mathbb{Z}_+$.

  \begin{lemma} \label{lemma:ReturnProb}
There exist positive constants $c,c_1$ and $c_2$ so that the following holds. Let $T$ be an infinite tree with root $\rho$ and $\deg(v)\geq 3$ for all $v\in T$. Suppose that $X$ is a lazy simple random walk on $T$ and let $\mathbb{P}_x$ be the law of $X$ when it starts from $x$. Then for all $x\in T$ and all times~$t$ we have 
\begin{align*}
\prstart{\tau_{p\left(x\right)} \wedge \tau_x^+=\infty}{x}
\ge c \quad \text{ and } \quad \prstart{d\left(\rho,X_{t}\right)\le c_1t}{\rho}\le e^{-c_2t}.
\end{align*}
\end{lemma}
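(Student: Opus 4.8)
The plan is to prove the two estimates separately, both of which are classical facts about random walk on trees with branching at least $2$ (after the lazy step is accounted for).

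For the first inequality, $\prstart{\tau_{p(x)} \wedge \tau_x^+ = \infty}{x} \ge c$, I would first reduce to the non-lazy walk, since laziness only slows the walk down and does not affect which vertices are eventually visited; it changes the probability by at most a bounded factor, or one can argue directly with the lazy chain. The key observation is that by cutting the tree at the edge $\{x, p(x)\}$, the subtree $T(x)$ rooted at $x$ is itself an infinite tree in which $x$ has at least $2$ children (since $\deg(x) \ge 3$ in $T$ and we only removed the edge to the parent), and every other vertex has degree $\ge 3$. So it suffices to show: on any infinite rooted tree where the root has $\ge 2$ children and all other vertices have degree $\ge 3$, the walk started at the root returns to the root with probability bounded away from $1$, and moreover escapes to infinity without ever backtracking past the root. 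This is a standard flow / effective-resistance argument: the effective resistance from the root to infinity is bounded above by a geometric-type series because the number of disjoint paths at distance $k$ grows like $2^k$, so $R_{\mathrm{eff}}(\rho \leftrightarrow \infty) \le \sum_k 2^{-k} < \infty$, giving $\prstart{\tau_\rho^+ = \infty}{\rho} = 1/(\deg(\rho)\, R_{\mathrm{eff}}) \ge c$. Combining with the decomposition over the first step handles the ``$\wedge \tau_{p(x)}$'' part: with probability bounded below the first step stays inside $T(x)$, and then with probability bounded below the walk never returns to $x$, hence never reaches $p(x)$.

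For the second inequality, $\prstart{d(\rho, X_t) \le c_1 t}{\rho} \le e^{-c_2 t}$, the plan is a ballistic/large-deviations argument. The walk has positive speed on such trees: define regeneration-type increments or, more simply, consider the embedded process of distances from the root. Because of the uniform lower bound on degrees, at each step (when not making the lazy move, and when not at the root) the walk moves away from the root with conditional probability at least $\tfrac{2}{\Delta}$ — wait, that is not quite a martingale-with-drift statement directly since the walk can oscillate. The cleaner route: use a comparison with a biased random walk on $\mathbb{Z}$. From any vertex $v \ne \rho$, the probability the next step increases $d(\rho, \cdot)$ is at least $\tfrac{1}{2}\cdot\tfrac{2}{\Delta}$ (accounting for laziness) and at most... the point is that $d(\rho, X_t)$ stochastically dominates a lazy biased walk on $\mathbb{Z}_{\ge 0}$ with a reflecting boundary at $0$ and a fixed positive drift, and such a walk satisfies exactly this kind of exponential tail bound on being small — e.g. by a standard Azuma/Chernoff estimate on the dominating walk, or by noting that the number of distinct vertices visited is at least linear in $t$ with exponentially small failure probability and these are nested along a ray. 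To make the stochastic domination rigorous I would use the following device: enumerate the steps at which the walk is at a vertex it has not visited before; at such a step the conditional probability of moving to a new (deeper) vertex is bounded below, and the process can be coupled to a sequence of i.i.d. Bernoullis, then apply Cramér's theorem.

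The main obstacle I expect is making the stochastic-domination argument for the speed bound fully rigorous, because $d(\rho, X_t)$ is not itself a Markov chain and the backtracking excursions need to be controlled — one must argue that excursions toward the root are short with exponential tails (which itself follows from the transience/first inequality applied at every vertex) so that they do not destroy the linear growth. A clean way to organise this is via regeneration times: let $\sigma_1 < \sigma_2 < \cdots$ be the successive times the walk reaches a new maximal distance and never afterwards backtracks below it; by the first inequality (applied with the Markov property at each fresh vertex) the increments $\sigma_{i+1} - \sigma_i$ are stochastically dominated by i.i.d. random variables with exponential tails, and $d(\rho, X_{\sigma_i}) \ge i$. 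Then $\{d(\rho, X_t) \le c_1 t\} \subseteq \{$fewer than $c_1 t$ regenerations by time $t\} = \{\sigma_{\lceil c_1 t\rceil} > t\}$, and the latter has probability $\le e^{-c_2 t}$ by a standard large-deviation bound on sums of i.i.d. variables with exponential tails, provided $c_1$ is chosen smaller than the reciprocal of the mean increment. All constants depend only on the degree lower bound $3$ (and, through $\Delta$, the upper bound), which is consistent with the statement.
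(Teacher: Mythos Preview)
The paper does not actually prove this lemma: it is stated as a ``well known result'' and no argument is given. So there is no paper proof to compare against, and your outline is essentially what any standard proof looks like.

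Both of your arguments are correct, but you make the second one harder than necessary. Your first instinct --- compare $d(\rho,X_t)$ to a biased nearest-neighbour walk on $\mathbb{Z}$ --- is already fully rigorous and does not require the regeneration machinery you retreat to. The point is that, conditionally on the entire past, the increment $d(\rho,X_{t+1})-d(\rho,X_t)$ lies in $\{-1,0,+1\}$ with $\pr{-1}\le 1/(2\cdot 3)=1/6$ and $\pr{+1}\ge (\deg(X_t)-1)/(2\deg(X_t))\ge 1/3$ (and when $X_t=\rho$ the increment is in $\{0,+1\}$, which only helps). Hence each increment stochastically dominates an i.i.d.\ variable $Y$ with $\pr{Y=+1}=1/3$, $\pr{Y=-1}=1/6$, $\pr{Y=0}=1/2$, and a step-by-step coupling gives $d(\rho,X_t)\ge \sum_{i=1}^t Y_i$. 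Since $\E{Y}=1/6>0$, Cram\'er's bound for i.i.d.\ sums gives the exponential tail for any $c_1<1/6$. Your worry that ``$d(\rho,X_t)$ is not itself a Markov chain'' is immaterial: the coupling only needs the conditional law of each increment given the past to dominate a fixed law, which is exactly what you have. The regeneration argument you give at the end also works, but checking exponential tails of $\sigma_{i+1}-\sigma_i$ takes more effort than the one-line domination.

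Two small corrections. First, your constants should depend only on the lower bound $3$, not on $\Delta$: the lemma does not assume bounded degree, and indeed larger degrees only increase the drift and decrease the return probability (Rayleigh monotonicity, or the explicit computations above). Second, for the first inequality with the lazy walk, be explicit that a lazy step at time $1$ gives $\tau_x^+=1$; so one must first spend the factor $1/2$ on a non-lazy step and then the factor $(\deg(x)-1)/\deg(x)\ge 2/3$ on a step to a child, before invoking the escape probability from the child. Your sentence ``with probability bounded below the first step stays inside $T(x)$'' glosses over this.
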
 

In particular, the lemma above implies that the multi-type tree $T$ constructed as in Definition~\ref{def:MTGWTree} or~\ref{def:2TGWTree} is uniformly transient.

\begin{remark}
        \rm{
        Let $T$ be a multi-type \random tree as in Definition~\ref{def:MTGWTree} and let $X$ be a lazy simple random walk on $T$ started from the root. Then using the lemma above, it is easy to see that the probability that $X$ visits a vertex of type $i$ for the first time, then jumps to an offspring of type $i$, makes one more jump to an offspring of type $i$ and afterwards escapes forever is bounded from below by $c'$, where $c'$ is a positive constant. Therefore, almost surely there exists an infinite sequence of regeneration times that we denote by $(\sigma_k)_{k\in \mathbb{N}}$.
        }
\end{remark}

\begin{definition}\label{def:KBall}
For any graph $G=(V,E)$, vertex $x\in V$ and $K>0,$ we define a ball of radius $K$ around $x$ as $\mathcal{B}_K(x)=\mathcal{B}^G_K(x)=\left\{y\in V: d(x,y)\le K\right\}$ where for $x,y\in V,$ $d(x,y)$ is the graph distance of $x$ and $y$. We also define boundary of the ball as $\partial \mathcal{B}_K(x)=\mathcal{B}_K(x)\setminus \mathcal{B}_{K-1}(x).$
\end{definition}

The proof of the following lemma follows analogously to \cite[Lemma~3.6]{PerlasPaper} with the only difference being that here we need to condition on the type of the regeneration edge being crossed.

\begin{lemma}\label{lemma:RegIndep} 
Let $T$ be a multi-type \random tree with root $\rho$ and $m$ types as in Definition~\ref{def:MTGWTree} or~\ref{def:2TGWTree}. Fix $K\ge 0$ and let $T_0$ be a realisation of the first $K$ levels of $T$. Let $X$ be a lazy simple random walk on~$T$ started from $\rho$. Let $\sigma_0$ be the first time that $X$ reaches $\partial \mathcal{B}_K\left(\rho\right)$ and let  $\left(\sigma_i\right)_{i=1}^\infty$ be the  almost surely infinite sequence of regeneration times occurring after $\sigma_0$, where $\sigma_i$ is the $i$-th regeneration time for which ${\phi_i=d\left(\rho,X_{\sigma_i}\right)>K}$. Then, for $\theta\in \left\{1,\ldots, m\right\}$, conditional on $\left(\Theta\left(X_{\sigma_i-1}\right),\Theta\left(X_{\sigma_i}\right)\right)=(\theta,\theta)$ and $\mathcal{B}\left(\rho,K\right)=T_0,$ $\left(T\left(X_{\sigma_{i-1}}\right)\backslash T\left(X_{\sigma_{i}}\right),\left(X_t\right)_{\sigma_{i-1}\le t\le \sigma_{i}}\right)$ and $\left(T\left(X_{\sigma_i}\right)\backslash T\left(X_{\sigma_{i+1}}\right),\left(X_t\right)_{\sigma_i\le t\le \sigma_{i+1}}\right)$ are independent. Moreover, $\left(\sigma_i-\sigma_{i-1}\right)_{i\geq 1}$ and $\left(\phi_i-\phi_{i-1}\right)_{i\geq 1}$ have exponential tails. Let $T^a$ be the graph obtained by taking $T(v)\cup{\rho}$, for a random offspring~$v$ of $\rho$. Then, for all $i\geq 1$ and $\theta\in \left\{1\ldots, m\right\}$ conditional on $\left(\Theta\left(X_{\sigma_i-1}\right),\Theta\left(X_{\sigma_i}\right)\right)=\left(\Theta\left(\rho\right),\Theta(v)\right)=(\theta, \theta)$,  the pair $\left(T\left(X_{\sigma_i}\right), \left(X_t\right)_{t\ge \sigma_i}\right)$ has the law of $\left(T(v),\widetilde{X }\right)$, where $\widetilde{X}$ is a lazy simple random walk on $T^a$ started from $v$ conditional to never visit $\rho$.
\end{lemma}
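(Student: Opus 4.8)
The plan is to reduce everything to the single-edge regeneration structure already understood in \cite[Lemma~3.6]{PerlasPaper} and to its multi-type adaptation carried out in the earlier parts of this same lemma. First I would set up the combinatorial/probabilistic framework: a lazy simple random walk on the multi-type tree, started from the root, together with the sequence of regeneration times $(\sigma_i)$ and the associated ``regeneration edges'' $(X_{\sigma_i-1},X_{\sigma_i})$. Because the event ``$\sigma$ is a regeneration time of type $\theta=(a,b)$'' is measurable with respect to the portion of $T$ and of the trajectory up to time $\sigma$ (by Definition~\ref{def:RegTime}, it only requires that $p(X_{\sigma-1}),X_{\sigma-1},X_\sigma$ all have the same type, together with the no-backtrack / first-and-last-crossing condition), conditioning on $\{(\Theta(X_{\sigma_i-1}),\Theta(X_{\sigma_i}))=\theta\}$ is a conditioning on a past-measurable event, and the strong Markov property of the walk applies at the stopping time $\sigma_i$.

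The key steps, in order. (1) Establish that $\sigma_i$ is an (almost surely finite) stopping time for the filtration generated by $(X_t,\, T\text{-revealed-so-far})$; this is the standard ``look into the past to recognise a regeneration'' argument and uses the uniform transience from Lemma~\ref{lemma:ReturnProb} and the Remark to guarantee $\sigma_i<\infty$ a.s. (2) Apply the strong Markov property at $\sigma_i$ to decouple $(T(X_{\sigma_i}),(X_t)_{t\ge\sigma_i})$ from the ``past'' block $(T(X_{\sigma_{i-1}})\setminus T(X_{\sigma_i}),(X_t)_{\sigma_{i-1}\le t\le\sigma_i})$, observing that the latter is a function of the past and the former is a function of the future; conditioning on $\Theta(X_{\sigma_i-1}),\Theta(X_{\sigma_i})$ and on $\mathcal{B}(\rho,K)=T_0$ preserves this independence because these are also past-measurable. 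Then iterate: the blocks $(T(X_{\sigma_i})\setminus T(X_{\sigma_{i+1}}),(X_t)_{\sigma_i\le t\le\sigma_{i+1}})$ form (conditionally on the type sequence) a Markov-type chain whose one-step law depends only on $(\Theta(X_{\sigma_i-1}),\Theta(X_{\sigma_i}))$, which gives the stated pairwise independence of consecutive blocks. (3) Identify the conditional law of the ``future'' object: conditioned on the walk making a first-and-last crossing of the edge $(X_{\sigma_i-1},X_{\sigma_i})$ with the three relevant types equal to $(\Theta(\rho),\Theta(v))$, the subtree $T(X_{\sigma_i})$ rooted at $X_{\sigma_i}$ together with $\{X_{\sigma_i-1}\}$ has exactly the law of $T^a$ (a random offspring subtree of the root plus the root), by the recursive consistency built into Definition~\ref{def:MTGWTree}/\ref{def:2TGWTree} — the offspring-degree biasing is precisely arranged so that a subtree hanging off an edge of a given type is again an MGW tree with the right root-type law — and the walk from $\sigma_i$ onward, conditioned never to return to $X_{\sigma_i-1}$, is a lazy SRW on $T^a$ from $v$ conditioned to avoid $\rho$. (4) Derive the exponential tails of $\sigma_i-\sigma_{i-1}$ and $\phi_i-\phi_{i-1}=d(\rho,X_{\sigma_i})-d(\rho,X_{\sigma_{i-1}})$: between consecutive regeneration times the walk makes a geometrically-bounded number of excursions (each with uniformly positive escape probability by Lemma~\ref{lemma:ReturnProb}), and the displacement and time spent are each stochastically dominated by a geometric sum of random variables with exponential tails (using $\Delta=O(1)$ for the boundedness of local geometry).

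I expect the main obstacle to be Step (3), the precise law identification in the multi-type setting. In \cite{PerlasPaper} the single-type case makes the subtree-after-a-regeneration-edge literally an independent copy of the MGW tree; here the degree-biasing in Definition~\ref{def:MTGWTree} treats ``child reached by an internal edge'' and ``child reached by an outgoing edge'' differently, so one has to check carefully that conditioning on the type pair $(\Theta(X_{\sigma_i-1}),\Theta(X_{\sigma_i}))$ — equivalently on whether the regeneration edge is internal or outgoing, and on its endpoint types — exactly matches the recursive rule by which a level-$\ell$ node's subtree is generated, so that $T(X_{\sigma_i})$ is distributed as the subtree of a root's random child conditioned on the corresponding edge type, i.e.\ as $T^a$ conditioned on $(\Theta(\rho),\Theta(v))=\theta$. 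This is exactly the point flagged in the lemma's preamble (``the only difference being that here we need to condition on the type of the regeneration edge''), and making it rigorous amounts to unwinding the inductive construction in Definition~\ref{def:MTGWTree} and matching biasing factors; the rest is a faithful transcription of the argument in \cite[Lemma~3.6]{PerlasPaper}.
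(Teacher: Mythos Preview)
Your overall plan---reduce to \cite[Lemma~3.6]{PerlasPaper} and add type-conditioning---matches the paper exactly (the paper's proof is literally the one-line remark that the argument is identical to \cite[Lemma~3.6]{PerlasPaper} modulo conditioning on the type of the regeneration edge). However, your Step~(1) contains a genuine error that would make the argument fail as written.

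You claim that the event ``$\sigma$ is a regeneration time'' is measurable with respect to the tree and trajectory up to time $\sigma$, and hence that $\sigma_i$ is a stopping time for the natural filtration. This is false: by Definition~\ref{def:RegTime}, a regeneration time is one at which the walk crosses an edge for the first \emph{and last} time, and the ``last time'' condition depends on the entire future of the walk. So $\sigma_i$ is not a stopping time, and you cannot invoke the strong Markov property at $\sigma_i$ in the way you describe. The correct argument (which is what \cite{PerlasPaper} does, following the classical regeneration machinery for random walks on trees) proceeds differently: at any \emph{fresh} time $t$ (i.e.\ $X_t$ has never been visited before), the subtree $T(X_t)$ has not yet been revealed and is therefore independent of the past tree and trajectory, conditional on the type of the edge $(X_{t-1},X_t)$. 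The event ``the walk never returns to $X_{t-1}$ after time $t$'' is measurable with respect to $T(X_t)$ and $(X_s)_{s\ge t}$ only. Hence, conditioning on this future event---which is what makes $t$ a regeneration time---does not spoil the independence of past and future; it merely biases the law of the future pair $(T(X_t),(X_s)_{s\ge t})$ to that of a walk conditioned never to hit the parent. This is how the decoupling in Step~(2) and the law identification in Step~(3) actually go through, not via a stopping-time argument. Once you replace your Step~(1) with this reasoning, the rest of your outline (including the type-matching in Step~(3) and the exponential tails in Step~(4)) is correct.
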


From now on, slightly abusing notation, for an edge $e=(x,y)$ we will write $\Theta(e)=(\Theta(x),\Theta(y))$. Also for a regeneration time $\sigma$ we will write $\Theta(X_\sigma)=\Theta(X_{\sigma-1},X_\sigma)$. 

The above independence and stationarity conditional on the type of the regeneration edge gives that we can define the Markov chain corresponding to the types of regeneration edges as follows.

\begin{definition} \label{def:MCRegTypes}
Let $(\sigma_k)$ be the regeneration times as in Lemma~\ref{lemma:RegIndep}. We define the Markov chain~$\Sigma$  on the state space of types of regeneration edges, i.e.\ on $S= \{(i,i): i\in 
\left\{1, \ldots, m\right\}\}$, which, for $\theta_1,\theta_2\in S$, has the transition probabilities $$\Sigma(\theta_1,\theta_2)=\prcond{\Theta\left(X_{\sigma_2}\right)=\theta_2}{\Theta\left(X_{\sigma_1}\right)=\theta_1}{}.$$ 
By the assumption on $Q$ being irreducible, it follows that $\Sigma$ is an irreducible Markov chain, and since it takes values in a finite state space, it has a unique invariant distribution that we denote by~$\pi_{\Sigma}$. We call $\Sigma$ the types Markov chain.
\end{definition}

We can further define $\mathcal{Z}_{\theta_1,\theta_2}$ to be the law of the pair $(T(X_{\sigma_2})\setminus T(X_{\sigma_3}), (X_t)_{\sigma_2\leq t\leq \sigma_3})$ conditional on the types of the edges $(X_{\sigma_2-1},X_{\sigma_2})$ and $(X_{\sigma_3-1},X_{\sigma_3})$ being $\theta_1$ and $\theta_2$ respectively. 

The Markov chain $\Sigma$ defined above represents the types of the regeneration edges. Given the sequence of types of the regeneration edges, which we sample from this Markov chain, we can subsequently generate the tree and the walk between the two regeneration times, by sampling them from the distributions $\mathcal{Z}_{\Theta\left(X_{\sigma_i}\right), \Theta\left(X_{\sigma_{i+1}} \right)}.$ In the next section we will analyse the mixing time of the Markov chain $\Sigma$. 

\subsection{Mixing time of the types Markov chain}\label{secMixingW}
In this section we estimate the mixing time of the Markov chain $\Sigma$. 
We start with the case of two communities, where it is easy to estimate the mixing time. The rest of the section is then devoted to the multi community setting. 

\begin{lemma}\label{lemma:2comMixTypes} In the two communities model, there exists a constant $C$ depending only on the maximal degree $\Delta$ and $ \frac{n_1}{n_2}$  such that  
$$t_{{\mathrm{mix}}}^{{\mathrm{TV}}, \Sigma}\le \frac{C}{\alpha}.$$ 
\end{lemma}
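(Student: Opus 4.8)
The plan is to exploit the fact that in the two-communities model, the state space of $\Sigma$ is tiny: since a regeneration edge of type $(i,i)$ has $p(X_{\sigma-1}), X_{\sigma-1}, X_\sigma$ all of the same type $i$, the only possible types are $(1,1)$ and $(2,2)$, so $\Sigma$ is a two-state Markov chain. For a two-state chain with transition probabilities $\Sigma((1,1),(2,2)) = a$ and $\Sigma((2,2),(1,1)) = b$, the mixing time is governed by $a+b$: one has $t_{\mathrm{mix}}^{\mathrm{TV},\Sigma} \asymp 1/(a+b)$ (explicitly, the chain's distance to stationarity decays like $(1-a-b)^t$, so $t_{\mathrm{mix}} \lesssim 1/(a+b)$ as long as $a+b$ is bounded away from $2$, which holds here since both $a,b$ are bounded away from $1$ by the uniform transience/regeneration lower bound in the Remark following Lemma~\ref{lemma:ReturnProb}). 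So the whole lemma reduces to showing $a + b \gtrsim \alpha$, i.e. that the probability of a type-change between two consecutive regeneration edges is at least of order $\alpha_i$.

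The key steps, in order: First I would identify $a = \Sigma((1,1),(2,2))$ with the probability that, starting a regeneration from a type-$1$ regeneration edge (equivalently, by the stationarity-conditional-on-type statement in Lemma~\ref{lemma:RegIndep}, running the walk $\widetilde X$ on $T^a$ from $v$ conditioned never to hit $\rho$, with $v$ of type $1$), the next regeneration edge is of type $(2,2)$ rather than $(1,1)$. Second, I would lower bound this by constructing an explicit favorable scenario: from the current position, the walk takes a step to an offspring along an \emph{outgoing} edge (which crosses to type $2$) — this has probability $\asymp \alpha_1$ since each offspring is independently outgoing with probability $\alpha_1 = p/N_1$, combined with the degree being $O(1)$ — and then, conditionally on having crossed to type $2$, with probability bounded below by a constant $c'$ (the constant from the Remark) it makes two more steps to type-$2$ offspring and escapes forever, producing a regeneration edge of type $(2,2)$. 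Multiplying, $a \gtrsim \alpha_1 \asymp \alpha$, and symmetrically $b \gtrsim \alpha_2 \asymp \alpha$; under assumption~\eqref{eq:ComprableCom}, $\alpha_1 \asymp \alpha_2 \asymp \alpha$. Third, I would assemble: $t_{\mathrm{mix}}^{\mathrm{TV},\Sigma} \le \log(1/\varepsilon_0)/(a+b) \lesssim 1/\alpha$ with $\varepsilon_0$ the mixing threshold, where the constant absorbs $\Delta$ and the ratio $n_1/n_2$ through the implicit constants in $\alpha_i \asymp \alpha$ and in the degree-biased offspring distribution.

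The main obstacle — more a bookkeeping issue than a genuine difficulty — is handling the conditioning ``never visit $\rho$'' in $\widetilde X$ and the conditioning on the type of the previous regeneration edge when extracting the favorable event: I need that conditioning the walk to escape to infinity (never returning to $\rho$) does not destroy the $\asymp \alpha$ probability that an early step is along an outgoing edge, and does not kill the constant-probability escape-after-crossing event. This is fine because the escape event has probability bounded below by a constant regardless (Lemma~\ref{lemma:ReturnProb}), so conditioning on it inflates probabilities by at most a constant factor; and the outgoing-step-then-escape scenario is itself a way to escape, so it is compatible with the conditioning. A second, minor point is to make sure the favorable event actually yields a regeneration time with $\phi_i > K$ and matches the indexing in Definition~\ref{def:MCRegTypes} (that the constructed escape step is crossed ``for the first and last time''), which follows because after the walk escapes into the type-$2$ subtree it never crosses back. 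Everything else — the elementary two-state mixing computation and the union over the two type-changes — is routine.
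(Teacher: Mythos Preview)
Your proposal is correct and follows essentially the same route as the paper. The paper also reduces to the observation that $\Sigma$ is a two-state chain on $\{(1,1),(2,2)\}$, lower-bounds the off-diagonal transition probabilities by $c\alpha$ via exactly the explicit scenario you describe (cross to the other community, then two more steps of the new type, then escape), and concludes via a geometric hitting-time bound in place of your equivalent $(1-a-b)^t$ decay; your treatment of the conditioning on escape is more careful than the paper's, but the argument is the same.
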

\begin{proof} 
As the tree has only two types of vertices, the Markov chain $\Sigma$ has state space $\{(1,1), (2,2)\}$, of size two. We claim that it suffices to show that the transition probabilities of $\Sigma$ are all bounded from below by $c\alpha$, for some constant $c$. Indeed, then the hitting time of any state will be stochastically dominated by a geometric random variable of parameter $\gtrsim \alpha$, which implies the bound on the mixing time. 
Using Lemma~\ref{lemma:ReturnProb} we see that the probability to cross from a new vertex $v$  of type $i$ to an offspring of type $j$, then make two steps to two new offsprings of type $j$ and then escape to infinity is bounded from below by $\gtrsim \alpha$ if $i\ne j$ and by $\gtrsim 1$ if $i=j$. Conditioning on never returning to $v$ just increases this probability.
\end{proof}

The main result of this section is the following proposition which bounds the mixing time of $\Sigma$ in terms of the Cheeger constant~$\Phi_*^{Q}$ of the matrix $Q$.

\begin{prop} \label{thrm:MixingOfW} In the $m$-communities model, there exists a constant $C$ depending only on the number of communities $m$, the maximal degree $\Delta$ and $\min_{i,j\le m} \frac{n_i}{n_j}$  such that  
$$t_{{\mathrm{mix}}}^{{\mathrm{TV}}, \Sigma}\le \frac{C}{\Phi_*^{Q}}.$$ 
\end{prop}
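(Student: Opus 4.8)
The plan is to bound the mixing time of $\Sigma$ via its Cheeger constant, which would suffice by the standard bound $t_{\mathrm{mix}}^{\mathrm{TV},\Sigma} \lesssim \gamma_\Sigma^{-1}\log(1/\pi_{\min}^\Sigma)$ together with the fact (to be established) that $\pi_{\min}^\Sigma \gtrsim$ a positive constant and $\gamma_\Sigma \asymp \Phi_*^\Sigma$ — but since $\Sigma$ is non-reversible, one cannot use Cheeger's inequality directly on $\Sigma$. So instead I would follow the roadmap sketched in Section~\ref{sec:mixingtypesoverview}: compare $\Sigma$ to the tractable chain $Q_2$ (two consecutive steps of $Q$, with state space $\widehat\Omega_2 = \{(i,j): Q(i,j)>0\}$), whose mixing time equals that of $Q$ and for which $\Phi_*^{Q_2} \asymp \Phi_*^Q$ can be controlled. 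The comparison proceeds through an auxiliary chain $W$ that refines $\Sigma$ by tracking more types of regeneration edges, so that $\Sigma$ is the chain induced by $W$ on a subset of its state space; standard comparison results for induced chains then relate their Poincaré constants.

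The key steps, in order, would be: (1) Show $\min_i \pi_Q(i) \asymp 1$ (immediate from \eqref{eq:mComprableCom} and $\Delta = O(1)$, since $\pi_Q(i) \propto \sum_\ell E_{i,\ell} = \sum_{v\in V_i}\deg(v) \asymp n$), hence $\pi_{Q_2}((i,j)) = \pi_Q(i)Q(i,j) \gtrsim \Phi_*^Q$ on its support, and $\pi_{\min}^{Q_2} \gtrsim \Phi_*^{Q}$. (2) Establish that the spectral gap of the additive symmetrisation of $Q_2$ is $\asymp \Phi_*^{Q_2} \asymp \Phi_*^Q$; here one direction is Cheeger, and the other (spectral gap $\gtrsim$ Cheeger) must be argued using the special product-like structure of $Q_2$ and the comparability of stationary weights, relating the Dirichlet form of $Q_2$ to that of $Q$. (3) Define $W$ and show: $\min_x \pi_W(x)/\pi_{Q_2}(x) \asymp 1 \asymp \max_x \pi_W(x)/\pi_{Q_2}(x)$ on $\Omega_2$, and $\min_{(x,y)\in\widehat\Omega_2} W(x,y)/Q_2(x,y) \asymp 1$. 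These comparisons of edge and vertex weights are where the regeneration structure of Lemma~\ref{lemma:RegIndep} must be used: the transition $W(x,y)$ for adjacent states in $\widehat\Omega_2$ is, up to constants depending only on $\Delta$ and $m$, a product of escape/crossing probabilities from Lemma~\ref{lemma:ReturnProb}, each of which is $\asymp \alpha^{(\text{out})}$-type quantity matching the corresponding $Q$-transition probability (an internal step contributes $\asymp 1$, an outgoing step from community $i$ to $j$ contributes $\asymp E_{i,j}/\sum_\ell E_{i,\ell} = Q(i,j)$). (4) Use the standard Dirichlet-form comparison lemma to deduce $\gamma_W \gtrsim \gamma_{Q_2} \gtrsim \Phi_*^Q$ and $\pi_{\min}^W \gtrsim \Phi_*^Q$. (5) Use the comparison between a chain and its induced chain to get $\gamma_\Sigma \gtrsim \gamma_W$, and $\pi_{\min}^\Sigma \gtrsim$ (constant) since $\Sigma$'s state space has bounded size with all-positive $\pi_\Sigma$ — actually here one needs $\pi_\Sigma$ bounded below by a constant, which follows because $\Sigma$ has a finite state space of size $m^2$ and is irreducible with transition probabilities bounded below in terms of $\Phi_*^Q$, but more carefully one gets $\pi_{\min}^\Sigma \asymp 1$ from comparing with $\pi_W$ restricted appropriately. (6) Conclude $t_{\mathrm{mix}}^{\mathrm{TV},\Sigma} \lesssim \gamma_\Sigma^{-1}\log(1/\pi_{\min}^\Sigma) \lesssim (\Phi_*^Q)^{-1}$.

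The main obstacle, as the overview frankly admits, is step (3) together with step (2): one must carefully set up $W$ so that (a) every $Q_2$-transition is "realisable" as a $W$-transition with comparable probability, (b) the extra $W$-states and the transitions of $\Sigma$ that are impossible under $Q_2$ (because $\Sigma$ can change both coordinates at once, whereas $Q_2((a,b),(c,d))>0$ forces $b=c$) are absorbed correctly by viewing $\Sigma$ as the induced chain of $W$, and (c) the stationary measures stay comparable throughout — this last point is delicate because $\pi_W$ and $\pi_\Sigma$ are defined implicitly via the regeneration structure and have no closed form, so the comparison must go entirely through $\pi_{Q_2}$, which does have a closed form. Quantifying the edge-weight ratios in (3) requires translating the regeneration-time crossing probabilities into expressions matching $Q(i,j)$ up to constants, using uniform transience (Lemma~\ref{lemma:ReturnProb}) and the bounded-degree assumption \eqref{eq:mSparse} to control all the $O(1)$-many local configurations; I expect this to be the most calculation-heavy part, though each individual estimate is routine given the tools already in place.
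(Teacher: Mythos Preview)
Your proposal follows the same overall route as the paper --- compare $\Sigma$ to $Q_2$ through the auxiliary chain $W$, use the induced-chain inequality for $\gamma_\Sigma \ge \gamma_W$, and a Dirichlet-form comparison for $\gamma_W \gtrsim \gamma_{Q_2}$ --- but there is one genuine gap and two places where the paper's execution differs materially from your sketch.

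The gap is step (6). The bound $t_{\mathrm{mix}}^{\mathrm{TV},\Sigma} \lesssim \gamma_\Sigma^{-1}\log(1/\pi_{\min}^\Sigma)$ is the standard reversible bound, but you yourself note that $\Sigma$ is non-reversible, and for non-reversible chains the \Poincare constant (spectral gap of the additive symmetrisation) does \emph{not} in general control TV mixing --- a deterministic cycle has positive \Poincare constant and infinite mixing time. The paper closes this by first establishing that $\Sigma$ has uniformly positive holding probability, $\Sigma((i,i),(i,i)) \ge c > 0$ for every $i$ (inherited from the comparison $W((i,i),(i,i)) \gtrsim Q_2((i,i),(i,i)) \asymp 1$ in Lemma~\ref{lemma:PiOfW} and the fact that $\Sigma$ is the induced chain of $W$), and then invoking the Montenegro--Tetali bound that converts laziness plus \Poincare constant into exponential TV decay. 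Without the laziness step your argument does not close.

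Two further remarks. In step (2), your plan to ``relate the Dirichlet form of $Q_2$ to that of $Q$'' is not how the paper proceeds, and it is unclear such a direct comparison gives the linear (rather than quadratic) bound $\gamma_{Q_2} \gtrsim \Phi_*^Q$. The paper instead bounds hitting times of large sets for both $Q$ and the additive symmetrisation of $Q_2$ by $1/\Phi_*$ via a greedy path construction that exploits $m=O(1)$ (Lemma~\ref{lemma:TmixPHit}), converts to mixing time via the hit--mix equivalence (Theorem~\ref{thrm:TmixHit}), and then uses that reversible mixing time dominates the inverse gap. In step (3), the lower bound on $\pi_W$ is less routine than you suggest: the paper gets it via an ergodic-theorem argument that relies on the stationarity of the re-rooted environment process (Theorem~\ref{claim:StationaryMGW}) and a first-visit counting estimate (Lemma~\ref{claim:TypeRegenVertex}), not merely from the regeneration structure of Lemma~\ref{lemma:RegIndep}.
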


Before we give an overview of the proof of this proposition, we recall the definition of the \Poincare constant~$\gamma$ of a Markov chain with transition matrix $P$, state space $S$ and invariant distribution $\pi$ as 
$$\gamma= \inf_{\phi \text{ non-constant}} \frac{\mathcal{E}_{P}(\phi)}{\text{Var}_{\pi}(\phi)},$$ where for any function $f$ on $S$, we define the Dirichlet form of $f$, $\mathcal{E}_P(f)$ as $$ \mathcal{E}_{P}(f)= \langle (I-P)f, f\rangle_\pi=\sum_{x} ((I-P)f(x)) f(x) \pi(x).$$

\begin{definition}
\label{def:WMC} We define a sequence of times $(\widetilde{\sigma}_k)_{k\in \bN}$ to be the times when the walk crosses the edge $\{X_{\widetilde{\sigma}_k-1},X_{\widetilde{\sigma}_k}\}$ for the first and last time at time~$\widetilde{\sigma}_k$ and if the type of~$X_{\widetilde{\sigma}_k-1}$ and its parent in the tree are the same. Lemma \ref{lemma:RegIndep} also holds for this sequence of times and therefore we can define the Markov chain~$W$ to be the chain on the state space $\{1,\ldots, m\}^2$ which has transitions $$W(\theta_1,\theta_2)=\prcond{\Theta\left(X_{\widetilde{\sigma}_2}\right)=\theta_2}{ \Theta\left(X_{\widetilde{\sigma}_1}\right)=\theta_1}{},$$ where~$\theta_1,\theta_2\in\{1,\ldots, m\}^2$. We denote the invariant distribution of $W$ (which clearly exists by irreducibility of $Q$) by $\pi_W$.
\end{definition}

\begin{definition}\label{def:chainQ2}
We let $Q_2$ be the transition matrix given by  $$Q_2((i,j),(j,k))=Q(j,k)=\frac{E_{j,k}}{\sum_{\ell}E_{j,\ell}}.$$ We write $\pi_{Q_2}$ for the invariant distribution of $Q_2$ given by $\pi_{Q_2}(i,j)=\pi_Q(i)Q(i,j)$ for all~$i,j\in \{1,\ldots,m\}$.
\end{definition}

In order to control the mixing time of~$\Sigma$, we need to bound the \Poincare constant of~$\Sigma$ by $\Phi_*^Q$ and also show that the invariant distribution of $\Sigma$ has entries bounded away from $0$ uniformly. The chain $\Sigma$ can be viewed as the induced chain of $W$ when it visits the set $\{(i,i): i \in \{1,\ldots,m\}$, and hence we can bound its \Poincare constant by the one of $W$.  We then bound the \Poincare constant of $W$ by the one of $Q_2$ by showing that the transition probabilities of $W$ are lower bounded by the corresponding ones for $Q_2$, and that $W$ and $Q_2$ have comparable invariant distributions. Finally, we need to bound the \Poincare constant of~$Q_2$ by $\Phi_*^Q$.

We start by stating the following result on the comparison between the \Poincare constants of a chain $Z$ and the induced chain of $Z$ observed when it visits a set $A$.

\begin{lemma}
\label{lemma:InducedPoincare} Let $Z$ be a Markov chain on the state space~$S$ and let~$A\subset S.$ Let $\widetilde{Z}$ be a Markov chain on the state space $A$ with transition probabilities $P_A$ given by $$P_A(x,y)=\prstart{Z_{\tau^+_A}=y}{x}\quad  \text{ for } x,y\in A,$$ where $\tau^+_A$ is the first return time to $A$. Let~$\gamma$ and~$\gamma_A$ be the \Poincare constants of~$Z$ and~$\widetilde{Z}$, respectively. We have that $$\gamma_A\ge \gamma. $$
\end{lemma}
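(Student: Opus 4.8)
The plan is to prove the classical comparison inequality relating the Poincaré constant (spectral gap of the additive symmetrisation) of a Markov chain $Z$ to that of its induced (trace) chain on a subset $A$, via the variational characterisation of $\gamma$ as an infimum of Rayleigh quotients. The key point is that $\widetilde{Z}$, the chain observed only on visits to $A$, has the same stationary distribution as $Z$ restricted and renormalised to $A$, namely $\pi_A(x)=\pi(x)/\pi(A)$ for $x\in A$; and, crucially, that for any function $g$ on $A$ one can relate the Dirichlet form $\mathcal{E}_{P_A}(g)$ back to a Dirichlet form of $Z$ by lifting $g$ to the harmonic extension on $S$.

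\medskip

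First I would recall that $\gamma$ is unchanged under additive symmetrisation, so without loss of generality I may work with the reversible chains $\tfrac12(Z+Z^*)$ and $\tfrac12(\widetilde Z+\widetilde Z^*)$; in particular I can use the clean identity $\mathcal{E}_P(f)=\tfrac12\sum_{x,y}\pi(x)P(x,y)(f(x)-f(y))^2$ for the symmetrised forms, and the fact that the induced chain of the symmetrisation has the same stationary measure $\pi_A$ on $A$. Next, given a non-constant $g:A\to\RR$, let $f:S\to\RR$ be the harmonic extension: $f=g$ on $A$ and $f$ is $Z$-harmonic on $S\setminus A$ (solve $(I-P)f=0$ off $A$). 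By a standard Dirichlet-principle computation, $f$ is the minimiser of $\mathcal{E}_P(\cdot)$ among all functions agreeing with $g$ on $A$, and one has the exact identity $\mathcal{E}_{P_A}(g)=\mathcal{E}_P(f)$ — this is the electrical-network fact that the effective Dirichlet energy of the trace chain equals the Dirichlet energy of the harmonic extension. Then I would observe $\mathrm{Var}_{\pi_A}(g)=\min_{c}\sum_{x\in A}\pi_A(x)(g(x)-c)^2 \le \tfrac{1}{\pi(A)}\min_c\sum_{x\in S}\pi(x)(f(x)-c)^2 = \tfrac{1}{\pi(A)}\mathrm{Var}_\pi(f)$, since the sum over $A$ is dominated by the sum over all of $S$ and $\pi_A=\pi/\pi(A)$ on $A$. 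Combining,
\[
\gamma_A=\inf_{g}\frac{\mathcal{E}_{P_A}(g)}{\mathrm{Var}_{\pi_A}(g)} \ge \inf_{g}\frac{\pi(A)\,\mathcal{E}_P(f)}{\mathrm{Var}_\pi(f)} \ge \pi(A)\cdot\gamma \ge \gamma' \cdot \min_A \pi(A),
\]
wait — I should be careful: the factor $\pi(A)\le 1$ goes the wrong way, so the clean statement $\gamma_A\ge\gamma$ actually follows by dropping the $1/\pi(A)$ improvement, i.e. using $\mathrm{Var}_{\pi_A}(g)\le \tfrac{1}{\pi(A)}\mathrm{Var}_\pi(f)$ together with $\mathcal{E}_{P_A}(g)=\mathcal{E}_P(f)\ge \tfrac{1}{\pi(A)}\cdot\pi(A)\mathcal{E}_P(f)$; more directly, since $\pi(A)\le1$ we get $\mathcal{E}_{P_A}(g)/\mathrm{Var}_{\pi_A}(g)\ge \mathcal{E}_P(f)/(\tfrac{1}{\pi(A)}\mathrm{Var}_\pi(f))\ge \pi(A)\,\mathcal{E}_P(f)/\mathrm{Var}_\pi(f)$, which is weaker than wanted. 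The correct route is to note $\mathrm{Var}_{\pi_A}(g)=\tfrac{1}{\pi(A)}\sum_{x\in A}\pi(x)(g(x)-\E{g})^2$ and compare $\sum_{x\in A}\pi(x)(g(x)-c)^2$ with $\mathrm{Var}_\pi(f)=\min_c\sum_{x\in S}\pi(x)(f(x)-c)^2$: choosing $c=\E{g}$ and using $f=g$ on $A$ gives $\pi(A)\mathrm{Var}_{\pi_A}(g)\le \sum_{x\in S}\pi(x)(f(x)-\E{g})^2$, but the right side need not equal $\mathrm{Var}_\pi(f)$ unless $\E{g}$ minimises over $c\in\RR$; since $\mathrm{Var}_\pi(f)=\min_c(\cdots)\le \sum_{x\in S}\pi(x)(f(x)-\E{g})^2$, this inequality goes the wrong way. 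The resolution, which I would present carefully, is the known fact that for the harmonic extension the minimising constant for $f$ over $S$ is NOT generally $\E{g}$, but one still has the clean operator-theoretic statement: the trace chain's Dirichlet form is a "compression" and by the Courant–Fischer / min-max principle for the (possibly non-symmetric, but we symmetrised) generator, $\gamma_A\ge\gamma$ holds directly.

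\medskip

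So the main obstacle — and the step I would spend real care on — is getting the variance comparison in the right direction; the cleanest rigorous path is to invoke the standard result (e.g. Aldous–Fill, or \cite{madrasrandall}-style decomposition lemmas) that for a reversible chain and any $A$, the spectral gap of the trace chain on $A$ is at least the spectral gap of the whole chain, proved via: $\gamma_A = \inf_g \mathcal{E}_{P_A}(g)/\mathrm{Var}_{\pi_A}(g)$ and $\mathcal{E}_{P_A}(g)=\mathcal{E}_P(\hat g)$ for $\hat g$ the harmonic extension, combined with the identity $\mathrm{Var}_{\pi_A}(g)\le \mathrm{Var}_\pi(\hat g)/\pi(A)\cdot$-type bound replaced by the sharper observation that among extensions of $g$, the harmonic one also minimises $\mathrm{Var}_\pi$ after the optimal centering — equivalently, one shows $\mathrm{Var}_\pi(\hat g)\le \mathrm{Var}_{\pi_A}(g)$ fails, but $\mathrm{Var}_\pi(\hat g) \ge \pi(A)\mathrm{Var}_{\pi_A}(g)$ is false too; the actually-true and standard chain of inequalities is $\mathcal{E}_P(\hat g) = \mathcal{E}_{P_A}(g) \ge \gamma\,\mathrm{Var}_\pi(\hat g) \ge \gamma\,\pi(A)\,\mathrm{Var}_{\pi_A}(g)$ giving $\gamma_A \ge \gamma\,\pi(A)$ — and then one upgrades $\pi(A)$ away by a separate argument, OR (the route I'd actually take) one uses that $\widetilde Z$ is the induced chain and directly applies the classical theorem that induced chains have larger spectral gap, citing it. Given the assumptions of the paper, $\pi_\Sigma$ and $\pi_W$ have entries bounded away from $0$, so even the weaker bound $\gamma_A\ge c\,\gamma$ with $c=\min_x\pi(x)\asymp 1$ would suffice for the application; but since the lemma as stated claims $\gamma_A\ge\gamma$ outright, I would present the harmonic-extension argument with the correct variance inequality $\mathrm{Var}_{\pi_A}(g)\le\mathrm{Var}_\pi(\hat g)$ (which does hold: the harmonic extension satisfies $\E[\hat g]=\E_{\pi_A}[g]$ because harmonicity off $A$ forces the $\pi$-mean of $\hat g$ to equal the $\pi_A$-mean of $g$ — this is the key computation, using reversibility and $\pi(x)=\sum_y\pi(y)P(y,x)$), yielding $\gamma_A=\mathcal{E}_{P_A}(g)/\mathrm{Var}_{\pi_A}(g)=\mathcal{E}_P(\hat g)/\mathrm{Var}_\pi(\hat g)\ge\gamma$ after taking infima. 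Verifying that $\E_\pi[\hat g]=\E_{\pi_A}[g]$ is the one genuinely non-obvious point and is where I'd focus the write-up.
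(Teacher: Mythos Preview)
Your proposal contains the right ingredients but gets tangled because of one missing factor, and the final ``resolution'' you propose is false.

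The correct short argument (which is the Aldous proof the paper cites from \cite{MixingBook}, Theorem~13.20) is exactly the harmonic-extension route you start on, but with the key identity stated correctly: if $\hat g(x)=\E_x[g(Z_{\tau_A})]$ is the harmonic extension of $g:A\to\RR$, then
\[
\mathcal{E}_P(\hat g)=\pi(A)\,\mathcal{E}_{P_A}(g),
\]
not $\mathcal{E}_P(\hat g)=\mathcal{E}_{P_A}(g)$. This follows in one line from $(I-P)\hat g=0$ off $A$ together with $P\hat g(x)=P_A g(x)$ for $x\in A$, so that $\mathcal{E}_P(\hat g)=\sum_{x\in A}\pi(x)\,g(x)\,[(I-P_A)g](x)=\pi(A)\,\langle (I-P_A)g,g\rangle_{\pi_A}$. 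The variance comparison you wrote is then exactly right and in the right direction: dropping the nonnegative terms over $A^c$ gives $\mathrm{Var}_\pi(\hat g)\ge\pi(A)\,\mathrm{Var}_{\pi_A}(g)$. The two $\pi(A)$ factors cancel, yielding
\[
\frac{\mathcal{E}_{P_A}(g)}{\mathrm{Var}_{\pi_A}(g)}=\frac{\mathcal{E}_P(\hat g)/\pi(A)}{\mathrm{Var}_{\pi_A}(g)}\ge\frac{\mathcal{E}_P(\hat g)}{\mathrm{Var}_\pi(\hat g)}\ge\gamma,
\]
and taking the infimum over $g$ gives $\gamma_A\ge\gamma$. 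No extraneous $\pi(A)$ survives, and no further trick is needed.

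Your proposed fix---that harmonicity forces $\E_\pi[\hat g]=\E_{\pi_A}[g]$---is not true in general (take a two-state reversible chain with unequal stationary weights, $A$ a singleton plus one step outside; the hitting distribution from $A^c$ need not be $\pi_A$), so the variance inequality $\mathrm{Var}_{\pi_A}(g)\le\mathrm{Var}_\pi(\hat g)$ you aim for at the end does not follow. The paper itself simply cites the reversible result and observes that the only place reversibility enters is in identifying $\pi_A$ as the stationary measure of the induced chain, which holds in the non-reversible case by an ergodic-average argument; the Dirichlet-form computation above is insensitive to reversibility since $\mathcal{E}_P=\mathcal{E}_{(P+P^*)/2}$.
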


\begin{proof}
        The proof of this lemma in the reversible case is given in~\cite[Theorem~13.20]{MixingBook} and is due to Aldous. The only place where reversibility is being used is to show that the invariant distribution of $\widetilde{Z}$ is $\pi\vert_A$ given by $\pi\vert_A(x) = \pi(x)/\pi(A)$ for $x\in A$. This is also true in the non-reversible case, by considering the limit of the time each chain spends in any state $x\in A,$ rather than using the detailed balance equations. The  rest of the proof goes through verbatim in the non-reversible case. 
\end{proof}

\begin{corollary}\label{cor:PoincareWSigma} Let $\gamma_\Sigma$ and $\gamma_W$ be the \Poincare constants of the chains~$\Sigma$ and~$W$ respectively. Then $$\gamma_\Sigma\ge \gamma_W. $$
\end{corollary}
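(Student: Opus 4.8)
The plan is to recognise $\Sigma$ as the chain induced by $W$ on the ``diagonal'' set $A:=\{(i,i):i\in\{1,\ldots,m\}\}\subseteq\{1,\ldots,m\}^2$ and then apply Lemma~\ref{lemma:InducedPoincare} with $Z=W$ and $\widetilde Z=\Sigma$. Since $W$ is irreducible on a finite state space it has an invariant distribution, so the lemma applies and yields $\gamma_\Sigma=\gamma_A\ge\gamma_W$, which is the claim. The only thing to verify is that the induced transition matrix $P_A$ of Lemma~\ref{lemma:InducedPoincare} coincides with $\Sigma$.

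The key observation is that the regeneration times $(\sigma_i)$ are exactly the subsequence of the times $(\widetilde\sigma_k)$ of Definition~\ref{def:WMC} at which the $W$-chain lies in $A$. Write $\widehat W_k:=\Theta(X_{\widetilde\sigma_k})=(\Theta(X_{\widetilde\sigma_k-1}),\Theta(X_{\widetilde\sigma_k}))$; by Lemma~\ref{lemma:RegIndep} applied to the sequence $(\widetilde\sigma_k)$ (as noted in Definition~\ref{def:WMC}), $(\widehat W_k)_{k\ge1}$ is a Markov chain with transition matrix $W$. By definition, at time $\widetilde\sigma_k$ the walk crosses $\{X_{\widetilde\sigma_k-1},X_{\widetilde\sigma_k}\}$ for the first and last time and $\Theta(p(X_{\widetilde\sigma_k-1}))=\Theta(X_{\widetilde\sigma_k-1})$. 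If moreover $\widehat W_k\in A$, i.e.\ $\Theta(X_{\widetilde\sigma_k-1})=\Theta(X_{\widetilde\sigma_k})$, then all three of $p(X_{\widetilde\sigma_k-1}),X_{\widetilde\sigma_k-1},X_{\widetilde\sigma_k}$ share the same type, so $\widetilde\sigma_k$ is a regeneration time in the sense of Definition~\ref{def:RegTime}; conversely every regeneration time $\sigma$ satisfies the two conditions defining the $\widetilde\sigma_k$'s and has $\Theta(X_\sigma)\in A$. Hence the regeneration times are precisely the times at which $(\widehat W_k)_k$ visits $A$, and consecutive regeneration times $\sigma_i,\sigma_{i+1}$ correspond to consecutive visits of $(\widehat W_k)_k$ to $A$. (In particular $\Sigma$, though formally defined on all of $\{1,\ldots,m\}^2$, is supported on $A$, and we regard it as a chain on $A$.)

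Given this, the strong Markov property of $(\widehat W_k)_k$ gives, for $\theta_1=(i,i),\theta_2=(j,j)\in A$,
\[
\Sigma(\theta_1,\theta_2)=\prcond{\Theta(X_{\sigma_2})=\theta_2}{\Theta(X_{\sigma_1})=\theta_1}{}=\prstart{\widehat W_{\tau_A^+}=\theta_2}{\theta_1}=P_A(\theta_1,\theta_2),
\]
where $\tau_A^+$ is the first return time of $(\widehat W_k)_k$ to $A$ and $P_A$ is as in Lemma~\ref{lemma:InducedPoincare}. Thus $\Sigma=P_A$, and Lemma~\ref{lemma:InducedPoincare} applied to $Z=W$ and this $A$ gives $\gamma_\Sigma\ge\gamma_W$. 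The only points requiring care are the identification of the two families of stopping times above and the fact that $(\widehat W_k)_k$ is genuinely Markov (so that ``induced chain'' literally means observing it at returns to $A$); both follow from Definition~\ref{def:WMC} together with Lemma~\ref{lemma:RegIndep}. No further obstacle arises, since Lemma~\ref{lemma:InducedPoincare} was already established in the non-reversible generality we need.
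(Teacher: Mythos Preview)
Your proof is correct and follows essentially the same approach as the paper: you identify $\Sigma$ as the induced chain of $W$ on the diagonal set $A=\{(i,i):i\in\{1,\ldots,m\}\}$ by checking that the regeneration times $(\sigma_k)$ are exactly the subsequence of $(\widetilde\sigma_k)$ at which $W$ lies in $A$, and then invoke Lemma~\ref{lemma:InducedPoincare}. The paper's proof is the same argument stated more tersely; your version spells out the identification of the two stopping-time sequences and the application of the strong Markov property that the paper leaves implicit.
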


\begin{proof}

From the definition, as the regeneration times $(\sigma_k)_{k\in \bN}$ given by Lemma~\ref{lemma:RegIndep} are a subsequence of  $(\widetilde{\sigma}_k)_{k\in \bN}$, it is clear that we can construct the chain~$\Sigma$ by running the chain~$W$ and only keeping the steps of it which are in the set $\{(i,i):i\in \{1,\ldots, m\}\}.$ Therefore, the chain~$\Sigma$ is the induced chain of $W$ on the set $\{(i,i):i\in \{1,\ldots, m\}\},$ and hence the statement of the corollary follows from Lemma~\ref{lemma:InducedPoincare}.
\end{proof}

\begin{lemma}\label{lemma:PoincareWQ2} 
There exists a positive constant $c$ depending on $m,\Delta,\min_{i,j}n_i/n_j$ so that 
$$\gamma_{Q_2}\leq c\cdot \gamma_{W}. $$
\end{lemma}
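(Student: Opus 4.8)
The goal of Lemma~\ref{lemma:PoincareWQ2} is to show that the \Poincare constant of $Q_2$ is at most a constant multiple of that of $W$, so that a lower bound on $\gamma_W$ (eventually coming from $\Phi_*^Q$) transfers to a lower bound on $\gamma_{Q_2}$, and hence up the chain to $\Sigma$. The natural tool here is the standard comparison-of-Dirichlet-forms technique (see e.g.\ Diaconis--Saloff-Coste, or \cite[Section~13.4]{MixingBook}): if the invariant distributions of two chains on the same state space are comparable up to constants, and if each transition of one chain can be ``simulated'' by a bounded-length path in the other chain with comparable edge weights, then their \Poincare constants are comparable up to constants depending only on these bounds and path lengths. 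My plan is to apply this with $W$ in the role of the ``richer'' chain and $Q_2$ in the role of the chain whose Dirichlet form we want to dominate.

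The key structural facts I would use, all of which are essentially asserted in the overview in Section~\ref{sec:mixingtypesoverview}, are: (i) $W$ and $Q_2$ live on the same state space $\Omega_2 = \{(i,j): E_{i,j}>0\}$ (since the regeneration-edge types are precisely the ordered pairs of community labels that can be adjacent); (ii) $\min_{x\in\Omega_2}\pi_W(x)/\pi_{Q_2}(x)\asymp 1\asymp\max_{x\in\Omega_2}\pi_W(x)/\pi_{Q_2}(x)$, where all implied constants depend only on $m,\Delta,\min_{i,j}n_i/n_j$; and (iii) $\min_{(x,y)\in\widehat\Omega_2} W(x,y)/Q_2(x,y)\asymp 1$, i.e.\ every transition that $Q_2$ can make is also made by $W$ with a transition probability of the same order. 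Given (iii), the ``path'' used to simulate a $Q_2$-edge $(x,y)$ is simply the single $W$-edge $(x,y)$ itself, so the path-length factor is $1$ and there is no congestion to control. Then for any non-constant $\phi$ on $\Omega_2$, the standard computation gives
\[
\mathcal{E}_{Q_2}(\phi)=\tfrac12\sum_{x,y}\pi_{Q_2}(x)Q_2(x,y)\,(\phi(x)-\phi(y))^2 \le C\,\tfrac12\sum_{x,y}\pi_{W}(x)W(x,y)\,(\phi(x)-\phi(y))^2 = C\,\mathcal{E}_{W}(\phi),
\]
using (ii) and (iii) edge-by-edge; note here I should use the symmetric (additive-symmetrisation) form of the Dirichlet form, which is legitimate since $\mathcal{E}_P(\phi)=\tfrac12\sum_{x,y}\pi(x)P(x,y)(\phi(x)-\phi(y))^2$ holds for any (not necessarily reversible) chain with stationary $\pi$, because $\langle(I-P)\phi,\phi\rangle_\pi$ depends on $P$ only through its additive symmetrisation. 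Meanwhile (ii) gives $\mathrm{Var}_{\pi_{Q_2}}(\phi)\asymp\mathrm{Var}_{\pi_W}(\phi)$ (comparing variances of the same function under comparable measures is routine: $\mathrm{Var}_\pi(\phi)=\tfrac12\sum_{x,y}\pi(x)\pi(y)(\phi(x)-\phi(y))^2$, so comparable measures give comparable variances up to the square of the density ratio bound). Dividing, $\gamma_{Q_2}=\inf_\phi \mathcal{E}_{Q_2}(\phi)/\mathrm{Var}_{\pi_{Q_2}}(\phi) \le C'\inf_\phi \mathcal{E}_{W}(\phi)/\mathrm{Var}_{\pi_W}(\phi)=C'\gamma_W$, which is the claim.

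The main obstacle — and the part that genuinely requires work rather than bookkeeping — is establishing facts (ii) and (iii), i.e.\ the explicit comparison of the stationary distribution and one-step transition probabilities of $W$ with those of $Q_2$. For $Q_2$ these are given in closed form ($\pi_{Q_2}(i,j)=\pi_Q(i)Q(i,j)$ and $Q_2((i,j),(j,k))=Q(j,k)$), but for $W$ there is no closed form: $W(\theta_1,\theta_2)$ is a conditional probability involving the regeneration structure of the random walk on the multitype tree, so one must argue via the regeneration decomposition of Lemma~\ref{lemma:RegIndep}. Concretely, I expect the lower bound $W(x,y)\gtrsim Q_2(x,y)$ to come from exhibiting an explicit positive-probability ``scenario'' for the walk: after a regeneration edge of type $(i,j)$, with probability $\asymp 1$ the walk escapes along internal edges within community $j$ until it crosses an outgoing $j\to k$ edge (which happens conditionally on that edge being present, i.e.\ with the right $Q(j,k)$-type weighting), and then performs the ``escape forever'' move of Lemma~\ref{lemma:ReturnProb} to make this the next regeneration edge of type $(j,k)$; the bounded degree assumption~\eqref{eq:mSparse} and the branching assumption~\eqref{eq:mBranchDeg} keep all the relevant probabilities bounded below by constants. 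The comparability of $\pi_W$ and $\pi_{Q_2}$ should then follow from this transition comparison together with the fact that both are stationary distributions of irreducible chains on a finite state space of bounded size, using that the entries of $\pi_Q$ — and hence of $\pi_{Q_2}$ — are bounded below by a constant (this uses $n_i\asymp n$ and $\Delta=O(1)$, which force $Q$ to have bounded-below stationary mass on each community). Since Section~\ref{sec:mixingtypesoverview} explicitly states (ii) and (iii) as things ``we are able to show'', I would cite the corresponding lemmas in the body of the paper for these two displays and keep the proof of Lemma~\ref{lemma:PoincareWQ2} itself to the short comparison argument above.
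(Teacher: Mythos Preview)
Your proposal is correct and follows essentially the same approach as the paper. The paper packages your facts (ii) and (iii) into Lemma~\ref{lemma:PiOfW} (with the transition lower bound proved via exactly the explicit escape scenario you describe), and then the proof of Lemma~\ref{lemma:PoincareWQ2} itself is the two-line comparison $\mathcal{E}_{Q_2}(f)\lesssim \mathcal{E}_W(f)$ and $\mathrm{Var}_{\pi_W}(f)\asymp \mathrm{Var}_{\pi_{Q_2}}(f)$ that you wrote out.
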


To prove this we first need to compare the invariant distributions and the non-zero transition probabilities of~$W$ and $Q_2$.

\begin{lemma} \label{lemma:PiOfW} 
There exist positive constants $c, c_1$ and $c_2$ depending on $\min_{i,j}n_i/n_j,$ $m, \Delta$ so that the following holds. For all $i_1,i_2,j_1,j_2\in \{1,\ldots,m\}$
\begin{align*}
&cQ_2((i_1,j_1),(i_2,j_2))\le W((i_1,j_1),(i_2,j_2)) \quad \text{ and } \\& c_1\pi_{W}((i_1,j_1))\le \pi_{Q_2}((i_1,j_1))\le c_2\pi_{W}((i_1,j_1)).
\end{align*}
\end{lemma}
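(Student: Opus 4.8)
The plan is to prove the two claimed inequalities by a direct analysis of the coupling between the walk on the tree and the construction of the regeneration structure, exploiting the fact that $W$ records the types of \emph{special} regeneration edges while $Q_2$ records the transitions of the type-of-parent-and-child pair of $\mathbf Z$ with no regeneration constraint.

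\textbf{Step 1: Lower bound $W\gtrsim Q_2$.} Both $W$ and $Q_2$ live on $\{1,\ldots,m\}^2$, but a single step of $W$ encodes the passage of the walk on $T$ from one special regeneration edge $(X_{\widetilde\sigma_k-1},X_{\widetilde\sigma_k})$ of type $\theta_1=(i_1,j_1)$ to the next one, of type $\theta_2=(i_2,j_2)$. By Lemma~\ref{lemma:RegIndep} (applied to the $\widetilde\sigma$-sequence), after the first special regeneration edge the walk behaves like a lazy SRW on a fresh copy of $T^a$ rooted at a vertex of type $j_1$ conditioned to escape. So $W(\theta_1,\theta_2)$ is, up to the $T^a$-conditioning, the probability that this escaping walk, started from a type-$j_1$ vertex whose parent has type $i_1$, has its next special regeneration edge of type $(i_2,j_2)$. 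To lower bound this by $Q_2((i_1,j_1),(i_2,j_2))$ note first that $Q_2((i_1,j_1),(i_2,j_2))$ is nonzero only when $j_1=i_2$, and then it equals $Q(i_2,j_2)=E_{i_2,j_2}/\sum_\ell E_{i_2,\ell}$. So we must produce, with probability $\gtrsim Q(i_2,j_2)$, a scenario in which the walk makes the very next special regeneration step across an edge of type $(i_2,j_2)=(j_1,j_2)$. The natural candidate: from the current position (type $j_1=i_2$, parent type $i_1$), have the walk step to an offspring of type $j_1$, then step to one of that offspring's offspring, choosing its type to be $j_2$ — which happens with probability $\frac{E_{j_1,j_2}}{\sum_{k\ne j_1}E_{j_1,k}}$ for an outgoing offspring, or probability $\asymp 1$ among internal offspring if $j_2=j_1$ — and then escape forever, which by Lemma~\ref{lemma:ReturnProb} has probability $\gtrsim 1$. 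One checks that along this path the relevant three nodes have the required types so this is indeed a special regeneration. The only subtlety is that the definition of $W$ uses the weaker ``parent of $X_{\widetilde\sigma_k-1}$ has the same type'' condition (not the full three-node condition of Definition~\ref{def:RegTime}), which only makes the event easier; and that $\frac{E_{j_1,j_2}}{\sum_{k\ne j_1}E_{j_1,k}}$ differs from $Q(j_1,j_2)=\frac{E_{j_1,j_2}}{\sum_\ell E_{j_1,\ell}}$ only by the bounded factor $\frac{\sum_\ell E_{j_1,\ell}}{\sum_{k\ne j_1}E_{j_1,k}}\asymp 1$ (using $\min_i\deg^{\rm int}\ge 3$, $\Delta=O(1)$, $n_i\asymp n$). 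Multiplying the per-step factors and the escape probability gives $W(\theta_1,\theta_2)\ge cQ_2(\theta_1,\theta_2)$ for a constant $c=c(m,\Delta,\min_{i,j}n_i/n_j)$.

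\textbf{Step 2: Compare the stationary distributions.} For the second inequality I would use the renewal-reward interpretation of $\pi_W$. Running the walk on $T$, the consecutive special regeneration times $(\widetilde\sigma_k)$ cut the trajectory into i.i.d.-up-to-type blocks with exponential tails (Lemma~\ref{lemma:RegIndep}); $\pi_W(\theta)$ is the asymptotic fraction of special regeneration edges that are of type $\theta$. Meanwhile $\pi_{Q_2}(i,j)=\pi_Q(i)Q(i,j)$ is the asymptotic fraction of steps $k$ of $\mathbf Z$ with $(Z_k,Z_{k+1})=(i,j)$. The link is that, between two special regeneration edges, the sequence of community labels visited by the walk on $T$ performs (a time-changed version of) the chain $Q$ — indeed by construction of the MGW tree (Definition~\ref{def:MTGWTree}) each outgoing offspring of a type-$i$ node is type $s$ with probability $\frac{E_{i,s}}{\sum_{k\ne i}E_{i,k}}\asymp Q(i,s)$, and crossing an internal edge keeps the label. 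So the expected number of type-$(i,j)$ special regeneration edges per unit ``label-time'' is comparable to $\pi_Q(i)Q(i,j)$: a lower bound comes from the explicit escape-path construction of Step~1 (each visit to a type-$i$ vertex has a $\gtrsim Q(i,j)$ chance of being immediately followed by a type-$(i,j)$ special regeneration), and an upper bound from the fact that special regeneration edges are a subset of all label-$(i,j)$ edge-crossings together with the exponential tail on inter-regeneration times (so special regenerations of a given type can't be more than a constant times denser than the corresponding edge-crossings). Dividing by the total density of special regenerations (which is $\asymp 1$ per step, again by Lemma~\ref{lemma:RegIndep}) and normalising yields $c_1\pi_W(i_1,j_1)\le \pi_{Q_2}(i_1,j_1)\le c_2\pi_W(i_1,j_1)$ with constants depending only on $m,\Delta,\min_{i,j}n_i/n_j$. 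An alternative, possibly cleaner route for the stationary comparison is to first prove it for $W$ versus the intermediate chain $\Sigma$ and the label-pair chain, but the renewal-reward viewpoint is the most robust.

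\textbf{Main obstacle.} The delicate point is the comparison of stationary distributions: unlike the transition-probability bound, which is a clean one-step event, $\pi_W$ has no closed form and we only know the block structure of Lemma~\ref{lemma:RegIndep} holds \emph{conditionally on the type} of the regeneration edge. So one has to be careful to set up the renewal-reward / law-of-large-numbers argument for the Markov-modulated block sequence (governed by $\Sigma$, resp.\ $W$) rather than a genuinely i.i.d.\ one, and to extract matching upper and lower bounds on the per-step density of each type of special regeneration edge. The lower bound is essentially Step~1's construction applied at a positive density of times; the upper bound requires the exponential tail on $\widetilde\sigma_{k+1}-\widetilde\sigma_k$ and on $\phi_{k+1}-\phi_k$ to rule out that the walk crosses many label-$(i,j)$ edges while producing disproportionately few (or many) special regenerations of that type. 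Once both densities are pinned between constant multiples of $\pi_Q(i)Q(i,j)$, normalising gives the claim.
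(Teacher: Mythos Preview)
There is a genuine gap in Step 1. Your constructed path fails when $i_1=j_1$. From $X_{\widetilde\sigma_k}$ (type $j_1$, parent type $i_1=j_1$) you step to a child $u$ of type $j_1$, then to a child $v$ of $u$ of type $j_2$, then escape. But the edge $(X_{\widetilde\sigma_k},u)$ is then crossed for the first and last time, and the type of $X_{\widetilde\sigma_k}$ equals the type of its parent (both $j_1$), so $(X_{\widetilde\sigma_k},u)$ is itself the \emph{next} $\widetilde\sigma$-regeneration edge --- of type $(j_1,j_1)$, not $(j_1,j_2)$. Your remark that the weaker defining condition for $\widetilde\sigma$ ``only makes the event easier'' is backwards here: it makes it easier for an \emph{unwanted} earlier edge to qualify. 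The paper fixes this with a backtrack: go $X_{\widetilde\sigma_k}\to u\to X_{\widetilde\sigma_k}\to u\to v$ and escape. Now $(X_{\widetilde\sigma_k},u)$ is crossed three times and cannot be a regeneration edge, while $(u,v)$ still is; this costs only a factor $\Delta^{-2}$ and handles all $(i_1,j_1)$ uniformly.

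Your Step 2 is in the right spirit but both vague and missing the key non-trivial input. The assertion that ``the sequence of community labels visited by the walk performs a time-changed version of $Q$'' is false: the walk backtracks, so consecutive labels do not follow $Q$ even after a time change. For the lower bound $\pi_W(i,j)\gtrsim\pi_{Q_2}(i,j)$ the paper instead invokes the stationarity of the environment under root-shift (Theorem~\ref{claim:StationaryMGW}), which gives $\E{|\{t\le\ell:\Theta(X_t)=i\}|}=\ell\,\pi_Q(i)$ when the root type is $\pi_Q$-distributed; a bounded-visit-count argument then shows that the expected number of \emph{first visits} to type-$i$ vertices by time $\ell$ is $\gtrsim\ell$ (Lemma~\ref{claim:TypeRegenVertex}). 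From each such first visit the explicit escape event produces a type-$(i,j)$ $\widetilde\sigma$-regeneration with probability $\gtrsim Q(i,j)$, and the ergodic theorem (with $N_\ell\le\ell$) finishes. For the upper bound the paper takes a much shorter route than your density sketch: it shows $W(\cdot,(i_2,j_2))\lesssim Q(i_2,j_2)$ uniformly (because, conditional on $X_{\widetilde\sigma_{k+1}-1}$ having type $i_2$, the next step lands at a type-$j_2$ child with probability $\lesssim Q(i_2,j_2)$), whence $\pi_W(i_2,j_2)=\sum_\theta\pi_W(\theta)W(\theta,(i_2,j_2))\lesssim Q(i_2,j_2)\asymp\pi_{Q_2}(i_2,j_2)$. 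No exponential-tail or density comparison is needed.
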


We prove this lemma by first establishing that the expected proportion of times the walk visits a vertex of type $i$, for $i\in \{1,\ldots,m\}$, for the first time is strictly positive. In order to do this, we will need the following result whose proof is given in Appendix~\ref{appendixA} and closely follows the proof from \cite{GWTSpeedHarmMeasure}.

\begin{theorem}\label{claim:StationaryMGW} Let $p_{\mathrm{SRW}}$ be the transition probability on $\mathcal{T}$ which moves the rooted tree $T$ to the rooted tree $T'$, obtained by picking a random neighbour of the root of $T$ and setting it to be the root of $T'.$ The Markov chain on $\mathcal{T}$ with transition probabilities $p_{\mathrm{SRW}}$ and initial distribution $\mathrm{MGW}_{\pi_Q}$ is stationary and reversible.
\end{theorem}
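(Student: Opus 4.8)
The plan is to verify reversibility directly, by checking the detailed balance equations for the measure $\mathrm{MGW}_{\pi_Q}$ with respect to the kernel $p_{\text{SRW}}$ on the space $\mathcal{T}$ of rooted trees, and then to note that the claimed stationarity is an immediate consequence of reversibility. Concretely, for two rooted trees $T$ and $T'$ that differ only by a re-rooting along an edge $\{\rho,\rho'\}$ (where $\rho$ is the root of $T$ and $\rho'$ a neighbour of $\rho$, which becomes the root of $T'$), one has $p_{\text{SRW}}(T,T')=\frac{1}{\deg_T(\rho)}$ and $p_{\text{SRW}}(T',T)=\frac{1}{\deg_{T'}(\rho')}=\frac{1}{\deg_T(\rho')}$, so detailed balance reads
\[
\mathrm{MGW}_{\pi_Q}(dT)\,\frac{1}{\deg_T(\rho)} \;=\; \mathrm{MGW}_{\pi_Q}(dT')\,\frac{1}{\deg_T(\rho')}.
\]
So it suffices to show $\deg_T(\rho')\cdot \mathrm{MGW}_{\pi_Q}(dT) = \deg_T(\rho)\cdot \mathrm{MGW}_{\pi_Q}(dT')$, i.e.\ that the density of $\mathrm{MGW}_{\pi_Q}$ against a suitable reference measure is proportional to the degree of the root divided by an expression that is symmetric under the re-rooting.

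The key computational step is to write down the likelihood of a rooted tree under $\mathrm{MGW}_{\pi_Q}$ as an explicit product over its vertices. Following the construction in Definition~\ref{def:MTGWTree}, the root of type $i$ contributes a factor $\pi_Q(i)\cdot \frac{\deg(v)}{\sum_{u\in V_i}\deg(u)}$ for the root's degree vector being that of $v\in V_i$, together with the independent type choices on its offspring edges with probabilities $\tfrac{E_{i,j}}{\sum_{k\ne i}E_{i,k}}$; every non-root vertex $w$ of type $i'$ whose parent has type $j'$ contributes a factor of the form $\tfrac{\deg^{\ast}(w)}{\sum (\cdots)}$ (the numerator being $\deg^{\mathrm{out}}$ or $\deg^{\mathrm{int}}$ according to whether $j'\ne i'$ or $j'=i'$) times the type choices on its further offspring edges. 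The plan is to massage this product so that the "internal" parts (the degree-vector choices, the offspring type choices) become manifestly invariant under re-rooting across an edge, leaving behind exactly the root-dependent factors: a factor $\pi_Q(i)$ from the root-type choice, a factor accounting for the parent–child type assignment on the re-rooted edge, and the factor $\frac{1}{\sum_{u\in V_i}\deg(u)}$ versus $\frac{1}{\sum(\cdots)}$ coming from whether the incident vertex is treated as root or as offspring. The identities $\pi_Q(i)\sum_{k\ne i}E_{i,k} \cdot \tfrac{E_{i,j}}{\sum_{k\ne i}E_{i,k}} = \pi_Q(i)E_{i,j}$ together with $\sum_i \pi_Q(i)E_{i,j}/(\cdots) = \pi_Q(j)$ (the stationarity of $\pi_Q$ for $Q$) are exactly what make the re-rooted and original weights match up to the degree factors, which is the reason $\pi_Q$ (as opposed to an arbitrary initial type distribution) must be used. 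One has to be careful bookkeeping the case $j=i$ (the re-rooting edge is internal) versus $j\ne i$ (it is outgoing) separately, but in both cases the cancellation is driven by the same balance identity for $\pi_Q$.

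I expect the main obstacle to be purely notational/combinatorial rather than conceptual: organising the infinite product defining $\mathrm{MGW}_{\pi_Q}(dT)$ so that the re-rooting symmetry is transparent, and in particular handling the asymmetry in Definition~\ref{def:MTGWTree} between how a vertex's degree vector is sampled when it is an internal child, an outgoing child, or the root. One clean way to sidestep a literal density computation on the non-locally-finite space $\mathcal{T}$ is to restrict, for each fixed $K$, to the finite-dimensional marginal given by the first $K$ levels together with the degree vectors of all vertices at distance $\le K$; detailed balance for the re-rooting kernel on this finite marginal (for a version of the re-rooting that only moves the root within the revealed ball) can be checked by a finite computation of exactly the form above, and then one passes to the limit $K\to\infty$ to conclude reversibility of $p_{\text{SRW}}$ on all of $\mathcal{T}$. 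This is the strategy of \cite{GWTSpeedHarmMeasure}, which the proof in Appendix~\ref{appendixA} follows, adapted to keep track of the vertex types and of the community matrix $E$.
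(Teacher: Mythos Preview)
Your proposal is correct and takes essentially the same approach as the paper: both follow \cite{GWTSpeedHarmMeasure}, verifying detailed balance for $\mathrm{MGW}_{\pi_Q}$ under the re-rooting kernel by explicitly computing the measure on cylinder events and showing that the asymmetry introduced by the choice of root cancels against the factor $\pi_Q(\theta)\cdot \deg(\rho)/\sum_{u\in V_\theta}\deg(u)$, with the symmetry $E_{\theta_1,\theta_2}=E_{\theta_2,\theta_1}$ (equivalently, reversibility of $Q$ with respect to $\pi_Q$) doing the work. The paper organises the computation via the $[C,D]$ and $\bigvee_s C_s$ formalism on cylinder sets rather than via $K$-level marginals; this is slightly cleaner than your suggested finite-level restriction, since after re-rooting the new $K$-ball is not contained in the old one, whereas the cylinder sets $[C,D]$ and $[D,C]$ are manifestly related by the re-rooting map.
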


\begin{lemma}\label{claim:TypeRegenVertex} 
There exists a constant $c>0$ depending on $m,\Delta, \min_{i,j}n_i/n_j$ so that the following holds. Suppose the type of the root of $T$ is sampled according to~$\pi_Q$. Then for all types $i\in \{1,\ldots,m\}$ and all times $\ell \in \bN$ we have
 $$ \mathbb{E}[|\left\{t\le \ell: \Theta(X_t)=i \ \text{ and } \  X_t\neq X_s, \forall \ s<t\right\}|]\ge c\ell.$$  
\end{lemma}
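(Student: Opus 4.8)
The plan is to show that the expected fraction of time spent at ``new'' type-$i$ vertices is bounded below by a positive constant, uniformly in $\ell$, by exploiting the stationarity and reversibility of the Markov chain on $\mathcal{T}$ given by Theorem~\ref{claim:StationaryMGW}. First I would set up the environmental Markov chain: consider the chain $(\vartheta_t)_{t \ge 0}$ on $\mathcal{T}$ where $\vartheta_t$ is the tree $T$ re-rooted at $X_t$, with $X$ a lazy simple random walk and initial law $\mathrm{MGW}_{\pi_Q}$. By Theorem~\ref{claim:StationaryMGW} this is a stationary (and reversible) chain; note that laziness does not affect stationarity. Under this stationary measure, for any fixed $t$ the probability that $\Theta(X_t) = i$ equals $\pi_Q(i)$, which by assumption~\eqref{eq:mComprableCom} (all $n_i \asymp n$) and the definition of $\pi_Q$ is $\asymp 1$; in particular it is at least some constant $c_0 > 0$ depending on $m, \Delta, \min_{i,j} n_i/n_j$.

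Next I would handle the ``new vertex'' constraint. The event $\{X_t \neq X_s \ \forall s < t\}$ is, in the re-rooted picture, the event that the walk from the current root $\vartheta_t$, run \emph{backwards} in time, never returns to $\vartheta_t$ — equivalently (using reversibility and stationarity of $(\vartheta_t)$) it has the same probability as the event that a lazy SRW started from the root of an $\mathrm{MGW}_{\pi_Q}$ tree, conditioned to have type $i$, never returns to its starting point. Since every vertex has degree $\ge 3$, Lemma~\ref{lemma:ReturnProb} gives that the non-return probability $\prstart{\tau_\rho^+ = \infty}{\rho}$ is bounded below by the absolute constant $c$ from that lemma. Combining, for each fixed $t$,
\[
\pr{\Theta(X_t) = i \ \text{ and } \ X_t \neq X_s \ \forall s < t} \ge c_0 \cdot c =: c' > 0,
\]
where the key point is that the type-$i$ event at time $t$ and the no-backwards-return event are, under the stationary reversible chain, related in a way that lets us lower bound their joint probability by the product of (a lower bound on) the marginal type probability and (a lower bound on) the escape probability — this needs a small argument that the escape probability conditioned on the type is still bounded below, which follows because conditioning on the root type only changes the law by a bounded factor and the escape event has probability $\ge c$ for \emph{every} realisation of a degree-$\ge 3$ tree. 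Summing over $t \le \ell$ and using linearity of expectation then yields $\mathbb{E}[|\{t \le \ell : \Theta(X_t) = i, \ X_t \neq X_s \ \forall s < t\}|] \ge c'(\ell+1) \ge c'\ell$, which is the claim (after renaming $c'$ to $c$, and noting the case $\ell = 0$ is trivial or we restrict to $\ell \ge 1$).

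The main obstacle I expect is making rigorous the identification ``$\{X_t$ is a new vertex$\}$ corresponds to a backwards-in-time non-return event under the stationary re-rooted chain,'' and in particular verifying that this correspondence respects the conditioning on $\Theta(X_t) = i$. One has to be careful that the stationary chain $(\vartheta_t)$ runs on a bi-infinite (or at least long enough) time interval so that ``backwards'' makes sense, or alternatively phrase everything via reversibility on the finite window $[0,\ell]$: for the time-reversed walk $\widehat X_s = X_{\ell - s}$, which by reversibility is again a lazy SRW on the (re-rooted) tree with the appropriate initial law, the event $\{X_t \neq X_s \ \forall s < t\}$ becomes $\{\widehat X_{\ell - t} \neq \widehat X_{\ell - s} \ \forall s < t\}$, i.e. the walk $\widehat X$ started at $\widehat X_{\ell-t}$ does not hit that vertex during the subsequent $\ell - t$ steps, whose probability is at least the full escape probability $\ge c$ uniformly. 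A secondary technicality is confirming that Theorem~\ref{claim:StationaryMGW} applies to the \emph{lazy} walk and that the offspring/degree-biasing in Definition~\ref{def:MTGWTree} is exactly the one making $\mathrm{MGW}_{\pi_Q}$ stationary; both should be routine given the cited result.
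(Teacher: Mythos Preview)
Your approach is correct and reaches the goal, but the route differs from the paper's. The paper does not use reversibility at all: it fixes the tree $T$ and uses the pointwise Green's-function bound $\prcond{X_t=v}{T}{}\le C\,\prcond{\tau_v=t}{T}{}$ (with $C$ the expected number of visits to the root of the $3$-regular tree), which after summing over all type-$i$ vertices $v$ and over $t\le\ell$ yields
\[
\econd{|\{t\le\ell:\Theta(X_t)=i,\ X_t\text{ new}\}|}{T}\ \ge\ \tfrac{1}{C}\,\econd{|\{t\le\ell:\Theta(X_t)=i\}|}{T}.
\]
Only then does the paper average over $T$ and invoke Theorem~\ref{claim:StationaryMGW}, but merely for \emph{stationarity} of $(\Theta(X_t))$, to get $\E{|\{t\le\ell:\Theta(X_t)=i\}|}=\ell\,\pi_Q(i)$.

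Your argument instead exploits the full \emph{reversibility} of the environment chain to obtain the per-time identity $\pr{\Theta(X_t)=i,\ X_t\ne X_s\ \forall s<t}=\pr{\Theta(\rho)=i,\ \tau_\rho^+>t}$, and then lower-bounds the right side by $\pi_Q(i)$ times a uniform escape probability. Both proofs rest on the same two pillars (stationarity of the environment from Theorem~\ref{claim:StationaryMGW} and uniform transience from Lemma~\ref{lemma:ReturnProb}); the paper's version is slightly more elementary in that it avoids having to articulate the trajectory-level time-reversal you correctly flag as the main thing to justify, whereas your version produces the stronger per-$t$ lower bound $\pr{\Theta(X_t)=i,\ X_t\text{ new}}\ge c'$ directly rather than only after summation.
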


{\proof Let $C$ be the expected number of visits to the root of a random walk starting from the root on the 3-regular infinite tree. Then, since every vertex of $T$ has degree at least $3$, it follows that the expected number of visits to any vertex $ v$ of $T$ by $X$ is at most $C.$  Writing $\tau_v=\inf\{t\geq 0:X_t=v\}$ and using that the expected number of visits to any vertex is at most~$C$, we have that 
 \begin{align*}
        \prcond{X_t=v}{T}{}\leq C\cdot \prcond{\tau_v=t}{T}{}.
 \end{align*}
 This now implies that 
 \begin{align*}
        &\econd{\left|\{t\le \ell: \Theta(X_t)=i, \forall s\le t, \,X_t\neq X_s\right\}|}{T} = \sum_{v\in T: \Theta(v)=i}\prcond{\tau_v\leq \ell}{T}{} \\&=
        \sum_{v\in T: \Theta(v)=i}\sum_{t=0}^{\ell}\prcond{\tau_v=t}{T}{}\geq \frac{1}{C}\cdot \sum_{v\in T: \Theta(v)=i} \sum_{t=0}^{\ell} \prcond{X_t=v}{T}{} \\&= \frac{1}{C}\cdot \econd{|\{t\le \ell: \Theta(X_t)=i\}|}{T},
 \end{align*}
 and hence averaging over the randomness of $T$ we get 
 \begin{align*}
        \E{\left|\{t\le \ell: \Theta(X_t)=i, \forall s\le t, \,X_t\neq X_s\right\}|} \geq \frac{1}{C}\E{|\{t\le \ell: \Theta(X_t)=i\}|}.
 \end{align*}
 Therefore, it suffices to prove that there exists a positive constant $c$ so that 
 \begin{align*}
        \E{|\{t\le \ell: \Theta(X_t)=i\}|}\geq c\ell.
 \end{align*}
From Theorem~\ref{claim:StationaryMGW} we have that $(\Theta(X_t))$ is a stationary process. Therefore $$\mathbb{E}[|\{t\le \ell: \Theta(X_t)=i\}|]=\ell\pi_{Q}(i).$$  
Since the number of communities $m$ is of order $1$, there exists a positive constant $c$ depending on $m, \Delta, \min_{i,j}n_i/n_j$ so that $\pi_Q(i)\geq  c.$ This now concludes the proof. \qed}

\begin{proof}[Proof of Lemma~\ref{lemma:PiOfW}]
 We first observe that $Q_2((i_1,j_1),(i_2,j_2))>0$ only if $j_1=i_2$. Let $p$ be the probability that a simple random walk on the binary tree never returns to the parent of the starting vertex. We now claim that
 $$W((i_1,j_1),(j_1,j_2))\ge \frac{p}{\Delta^5}Q(j_1,j_2)=\frac{p}{\Delta^5}Q_2((i_1,j_1)(j_1,j_2)).$$
 Indeed, a transition from $(i_1,j_1)$ to $(j_1,j_2)$ can happen for the chain $W$, if after a regeneration of the form $(i_1,j_1),$ the walk makes a step to a child $u$ of type $j_1$, then it backtracks to the parent of $u$, jumps immediately back to  $u$, then makes a jump to a child of type $j_2$ of $u$ and finally escapes to infinity without backtracking. We note that by the definition of the tree, the probability that a child of type~$j_2$ of $u$ is generated in the tree from an edge of type $(j_1,j_1)$ is lower bounded by $Q(j_1,j_2)/\Delta$. 

We now turn to prove the comparison of the invariant distributions. To do this, we first sample the root of $T$ according to $\pi_Q$.  Using Lemma~\ref{claim:TypeRegenVertex} and that the probability that after $X$ visits a vertex of type $i$ for the first time, then it jumps to a child of type $i,$ and then to a child of type $j$ and escapes is at least $pQ(i,j)/\Delta^3$, we obtain
\[
 \mathbb{E}\left[\sum_{t=1}^\ell\mathds{1}\left\{\exists \ k: \widetilde{\sigma}_k=t \text{ and } (\Theta(X_{t-1}), \Theta(X_t))=(i,j) \right\}\right]\geq  \frac{pQ(i,j)}{\Delta^3}\cdot c\ell.
 \]
By the ergodic theorem we have that almost surely
\[
\frac{\sum_{s=1}^L\mathds{1}\left\{\Theta(X_{\widetilde{\sigma}_{s}})=(i,j)\right\}}{L}\to \pi_{W}(i,j)\,\,\text{ as } L\to \infty.
\] 
Setting $N_\ell=\max\{k: \widetilde{\sigma}_k\leq \ell\}$, we have that 
\begin{align*}
        \E{\frac{\sum_{t=1}^\ell\mathds{1}\left\{\exists \ k: \widetilde{\sigma}_k=t \text{ and } (\Theta(X_{t-1}), \Theta(X_t))=(i,j) \right\}}{N_\ell}} \geq \frac{pQ(i,j)}{\Delta^3}\cdot c.\end{align*}
Since $N_\ell\to \infty$ as $\ell\to\infty$, we therefore obtain that 
\[
\pi_W(i,j)\geq \frac{pQ(i,j)}{\Delta^3}\cdot c.
\]
Using that $\pi_Q(i)\geq c_1$ for a constant $c_1$ depending on $m,\Delta,\min_{i,j}n_i/n_j$, we get that 
\[
\pi_W(i,j)\geq c_2 \pi_{Q_2}(i,j)
\]
for a positive constant $c_2$. 

To get the upper bound it is enough to show that $W((i_1,j_1),(i_2,j_2))\lesssim Q(i_2,j_2),$ for any~$i_1,i_2,j_1,j_2.$  
This clearly holds as if we condition on the first vertex of a regeneration edge being of type $i_2$, then
  the probability that the second one is of type $j_2$ is bounded by $\lesssim Q(i_2,j_2).$ 
\end{proof}

{\proof[Proof of Lemma \ref{lemma:PoincareWQ2}] 
Using Lemma~\ref{lemma:PiOfW} immediately gives that for all functions $f:\{1,\ldots,m\}^2\to \bR$
\[
\mathcal{E}_{Q_2}(f)\lesssim \mathcal{E}_{W}(f) \quad \text{ and } \quad \text{Var}_{\pi_W}(f)\asymp \text{Var}_{\pi_{Q_2}}(f).
\]
The statement of the lemma then follows from the definition of the \Poincare constant.  \qed}

Our next goal is to prove that $\Phi_*^{Q}\lesssim \gamma_{Q_2}.$ For the proof we will use the following theorem~\cite{MixHitPaper}. 

\begin{theorem}\label{thrm:TmixHit} For a reversible transition matrix $P$, let $t_{\mathrm{mix}}^{\mathrm{TV}, P_L}$ be a mixing time of the lazy version of $P$, i.e. of the Markov chain with transitions $\frac{P+I}{2}$ and $T_A$ be the first time this Markov chain hits a set~$A.$ It holds that $$t_{\mathrm{mix}}^{\mathrm{TV}, P_L}\asymp \max_{x,\pi_P(A)\ge \frac{1}{4}}\mathbb{E}_{x}[T_A].$$   Moreover, if the chain $P$ is such that $P(x,x)\ge \delta$ for some $\delta\in (0,1)$ then for the mixing time of the chain with transitions according to $P$ we have $$t_{\mathrm{mix}}^{\mathrm{TV}, P}\lesssim\max\left\{\frac{1}{\delta(1-\delta)},t_{\mathrm{mix}}^{\mathrm{TV}, P_L} \right\}.$$
\end{theorem}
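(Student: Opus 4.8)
The statement has two halves: the equivalence $t_{\mathrm{mix}}^{\mathrm{TV},P_L}\asymp\max_{x,\,\pi_P(A)\ge 1/4}\mathbb E_x[T_A]$, and the comparison of the merely $\delta$-lazy chain $P$ with its lazy version $P_L=\tfrac{P+I}{2}$. The first half is the theorem of \cite{MixHitPaper} identifying mixing times with hitting times of sets of non-negligible stationary mass, and I would recall its two directions as follows. For the inequality $\max_{x,\,\pi_P(A)\ge 1/4}\mathbb E_x[T_A]\lesssim t_{\mathrm{mix}}^{\mathrm{TV},P_L}$, use that $P_L$ is lazy, so the worst-case total-variation distance $d(t):=\max_x\|P_L^{t}(x,\cdot)-\pi\|_{\mathrm{TV}}$ is non-increasing and submultiplicative; hence within a bounded multiple of $t_{\mathrm{mix}}^{\mathrm{TV},P_L}$ steps the chain lands, from any starting point, in a prescribed set $A$ with $\pi_P(A)\ge 1/4$ with probability bounded below by a universal constant, so $T_A$ is stochastically dominated by a geometric number of such windows. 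The reverse bound $t_{\mathrm{mix}}^{\mathrm{TV},P_L}\lesssim\max_{x,\,\pi_P(A)\ge 1/4}\mathbb E_x[T_A]$ is the substantial content: following \cite{MixHitPaper} one passes through Aldous's characterisation $t_{\mathrm{mix}}\asymp t_{\mathrm{stop}}$, where $t_{\mathrm{stop}}(x)$ is the least mean length of a stopping time whose terminal distribution is $\pi$ started from $x$, and then builds such a stationary stopping rule of expected duration $\lesssim\max_{x,\,\pi_P(A)\ge 1/4}\mathbb E_x[T_A]$ by successively hitting a nested family of large sets and correcting the resulting deficits via the filling rule.

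For the second half I would first transport the hitting-time quantity back onto $P_L$. Running $P_L$ is the same as running $P$ while inserting an independent $\mathrm{Geometric}(1/2)$ number of holding steps before each genuine move; thus for every $A$ and every starting state the first visit of $A$ by the $P_L$-chain occurs at time $G_1+\cdots+G_{T_A}$, where $T_A$ is the corresponding hitting time for the underlying $P$-chain and the $G_i$ are i.i.d.\ $\mathrm{Geometric}(1/2)$ variables independent of the $P$-moves. Hence $\mathbb E_x^{P_L}[T_A]=2\,\mathbb E_x^{P}[T_A]$, and together with the first half this gives $\max_{x,\,\pi_P(A)\ge 1/4}\mathbb E_x^{P}[T_A]\asymp t_{\mathrm{mix}}^{\mathrm{TV},P_L}$. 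It then remains to bound $t_{\mathrm{mix}}^{\mathrm{TV},P}$ by $\max_{x,\,\pi_P(A)\ge 1/4}\mathbb E_x^{P}[T_A]$ up to the additive error $\tfrac{1}{\delta(1-\delta)}$; this is the $\delta$-lazy variant of the theorem of \cite{MixHitPaper}, which one obtains by tracking the laziness parameter through its proof. The source of the correction is visible on writing $P=\delta I+(1-\delta)R$ with $R$ a transition matrix reversible with respect to $\pi$: the eigenvalues of $P$ lie in $[2\delta-1,1]$, so the contribution of the non-negative eigenvalues behaves as for a lazy chain (indeed $\lambda^{2t}\le(\tfrac{1+\lambda}{2})^{2t}$ for $\lambda\in[0,1]$, so this part of the $\ell^2$-profile of $P^{t}$ is dominated by that of $P_L^{t}$ and decays on the scale of $t_{\mathrm{mix}}^{\mathrm{TV},P_L}$), whereas negative eigenvalues, which can only occur when $\delta<1/2$ and then have modulus at most $1-2\delta$, create a near-bipartite obstruction decaying like $(1-2\delta)^{t}$ and hence costing $O(1/\delta)$ extra steps; the regime $\delta\uparrow 1$ contributes the complementary factor $\tfrac{1}{1-\delta}$. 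Combining, $t_{\mathrm{mix}}^{\mathrm{TV},P}\lesssim\tfrac{1}{\delta(1-\delta)}+C\,t_{\mathrm{mix}}^{\mathrm{TV},P_L}\lesssim\max\{\tfrac{1}{\delta(1-\delta)},\,t_{\mathrm{mix}}^{\mathrm{TV},P_L}\}$.

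The main obstacle is the non-trivial direction of the first half, $t_{\mathrm{mix}}\lesssim\max_{x,\,\pi_P(A)\ge 1/4}\mathbb E_x[T_A]$: there is no soft argument deducing fast mixing from fast hitting of all large sets, and one must invoke the stationary-stopping-rule construction of \cite{MixHitPaper} essentially as a black box. A secondary subtlety — which is exactly why the correction in the second half has the sharp form $\tfrac{1}{\delta(1-\delta)}$ rather than a multiplicative loss — is that one cannot compare $P^{n}$ with $P_L^{n'}$ by a direct coupling, since the binomial laws $\mathrm{Bin}(n,1-\delta)$ and $\mathrm{Bin}(n',\tfrac{1-\delta}{2})$ governing the number of $R$-moves are never close in total variation; one is forced instead to isolate the only genuine obstruction to mixing of the non-lazy part $R$, namely its near-bipartiteness, which is precisely what the $\tfrac{1}{\delta(1-\delta)}$ term accounts for.
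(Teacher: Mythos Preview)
The paper does not prove this theorem; it is quoted as a known result from \cite{MixHitPaper} and used as a black box. Your proposal is therefore not competing with any argument in the paper, but rather sketching the contents of the reference.

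That said, your sketch of the first half is broadly faithful to the Peres--Sousi argument: the easy direction via geometric trials and the hard direction via stationary stopping rules and the filling scheme. For the second half your eigenvalue heuristic is reasonable for reversible chains and correctly identifies the two regimes producing the factor $\tfrac{1}{\delta(1-\delta)}$, though the actual proof in \cite{MixHitPaper} proceeds somewhat differently (again through the hitting-time characterisation rather than a direct spectral comparison). One point to be careful about: your claim that ``$\mathbb E_x^{P_L}[T_A]=2\,\mathbb E_x^{P}[T_A]$'' via inserting geometric holding times is not quite right as stated, since $P$ itself already has holding probability $\ge\delta$, so $P_L$ is not obtained from $P$ by inserting additional $\mathrm{Geometric}(1/2)$ delays before each $P$-step in the way you describe; rather both $P$ and $P_L$ share the same off-diagonal structure up to scaling, and the hitting-time comparison is cleanest done by writing both as time-changes of the non-lazy skeleton. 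This does not affect the conclusion but the bookkeeping should be adjusted.
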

 We will show that we can upper bound the hitting times of large sets of the chain $Q$ by the inverse of its Cheeger constant. This in turn will yield an upper bound on the mixing time. Since the mixing times of the chains $Q_2$ and $Q$ are essentially the same, and the total variation mixing time is always larger than the inverse of the Cheeger constant, this will give the comparison of $\Phi_*^Q$ and $\Phi^{Q_2}_*.$ Furthermore, using similar method as for~$Q$, we will bound the hitting times of large sets by the additive symmetrisation of $Q_2$ by $\frac{1}{\Phi^{Q_2}_*}$ which will again by the above theorem give the bound on the mixing time (of the lazy version). As the mixing time of a reversible Markov chain is always larger than the inverse of the \Poincare constant, and given that the \Poincare constant is the same for any chain and its additive symmetrisation, this will complete the proof.

If $P$ is a transition matrix on the state space $S$ with invariant distribution~$\pi$, we denote by $P^*$ the time-reversal of $P$ defined as
\[
P^*(x,y) = \frac{\pi(y)P(y,x)}{\pi(x)}.
\]
Also, for the subset $A\subseteq S$ we let 
\[
\Phi_P(A)=\frac{\sum_{x\in A, y\in A^c}\pi(x)P(x,y)}{\pi(A)}.
\]

\begin{lemma}\label{lemma:TmixPHit} There exists a positive constant $C$ depending on $m,\Delta,\min_{i,j}n_i/n_j$ so that if $T_A$ stands for the hitting time of the set $A$ by the chain with transition matrix $P\in \left\{Q,\frac{Q_2+Q_2^*}{2}\right\}$, then we have that for any $x$ in the state space of $P$ $$\max_{A:\pi_P(A)\ge \frac{1}{4}}\mathbb{E}_{x}[T_A]\leq \frac{C}{\Phi_*^P}. $$
\end{lemma}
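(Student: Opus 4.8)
The plan is to show that for a (possibly non-reversible) irreducible chain $P$ on a finite state space with stationary distribution $\pi$, uniformly bounded below, and whose size $m$ is $O(1)$, the expected hitting time of any set $A$ with $\pi_P(A) \ge 1/4$ is at most a constant times $1/\Phi_*^P$. I would treat the two cases $P = Q$ and $P = \tfrac{Q_2 + Q_2^*}{2}$ simultaneously via a generic argument; note that in both cases the hypotheses \eqref{eq:mComStruct}--\eqref{eq:mSparse} give $\min_x \pi_P(x) \ge c(m,\Delta,\min_{i,j} n_i/n_j) > 0$ and that $P$ ranges over a class with at most $m^2$ states (for $Q$ it is $m$ states). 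The key point is that since the state space has bounded size, $\Phi_*^P$ controls \emph{all} the relevant geometry: every set $B$ with $\pi_P(B) \le 1/2$ has $\Phi_P(B) \ge \Phi_*^P$, and by the bounded-size and bounded-below hypotheses the minimal nonzero stationary mass, hence the granularity of the set of possible values of $\pi_P(\cdot)$, is $\asymp 1$.

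The main step is a standard ``escape from a set'' estimate. Fix $A$ with $\pi_P(A) \ge 1/4$ and a starting point $x$. I would first reduce to bounding, for the chain started anywhere in $A^c$, the tail of the hitting time $T_A$. Let $B = A^c$, so $\pi_P(B) \le 3/4$; if $\pi_P(B) \le 1/2$ we use $\Phi_P(B) \ge \Phi_*^P$ directly, and if $1/2 < \pi_P(B) \le 3/4$ we can split $B$ into pieces of mass $\le 1/2$ using that the stationary masses are $\asymp 1$ (so $B$ is a union of $O(1)$ states, and removing any single state of mass $\asymp 1$ from $B$ drops it below $1/2$), and argue that from $B$ the chain reaches a subset of mass $\le 1/2$ (hence eventually $A$) quickly. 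Concretely: run the stationary-reversal trick — for the chain $P$ started from $\pi|_B$ (the restriction renormalised), the probability of staying inside $B$ for one step is $1 - \Phi_P(B) \le 1 - \Phi_*^P$ when $\pi_P(B)\le 1/2$; iterating and using a union bound / restart argument over the $O(1)$ possible ``current subset'' configurations, one gets $\prstart{T_A > t}{x} \le C_1 (1 - c\,\Phi_*^P)^{t/C_2}$ for constants depending only on $m, \Delta, \min_{i,j} n_i/n_j$, whence $\estart{T_A}{x} \le C/\Phi_*^P$ by summing the tail. An alternative, perhaps cleaner, route is to invoke a known quantitative bound: for any finite chain, $\max_x \estart{T_A}{x} \lesssim \frac{1}{\pi(A)\,\Phi_*^P}$ — this is essentially the content of the relationship between Cheeger constant, relaxation time, and hitting times, but in the non-reversible case one must be slightly careful and it is cleanest to just prove the exponential tail directly as above, using $\pi_P(A)\ge 1/4$ and $\min_x \pi_P(x) \asymp 1$.

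The main obstacle is the non-reversibility of $\tfrac{Q_2+Q_2^*}{2}$ — wait, that additive symmetrisation \emph{is} reversible, so the genuinely non-reversible case is only $Q$ itself. For $Q$, one cannot appeal to spectral arguments or to the eigenfunction characterisation of hitting times, so the tail bound must be obtained probabilistically. The cleanest way is: for any set $B$ with $\pi_P(B) \le 1/2$, the probability that the stationary chain crosses out of $B$ at a given step is exactly $\Phi_P(B)\,\pi_P(B) \cdot \frac{1}{\pi_P(B)} = \Phi_P(B) \ge \Phi_*^P$ when started from $\pi|_B$; to pass from ``started from $\pi|_B$'' to ``started from an arbitrary $x \in B$'' one uses that $\min_y \pi_P(y) \asymp 1$ and $|B| \le m$, so $\pi|_B(y) \ge c$ for all $y \in B$, giving a uniform comparability of hitting-time tails across starting points in $B$ at the cost of an $O(1)$ factor. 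Since $B$ itself changes as the chain moves (once it leaves a mass-$\le 1/2$ set it might enter another one), and since there are only finitely many subsets of a bounded set, one closes the argument by taking the worst such subset and the corresponding geometric bound. Assembling these pieces gives $\estart{T_A}{x} \le C/\Phi_*^P$ as claimed, with $C$ depending only on $m, \Delta$, and $\min_{i,j} n_i/n_j$ as required.
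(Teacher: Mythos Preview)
Your plan has a genuine gap at its foundational premise. You assert that $\min_x \pi_P(x) \ge c(m,\Delta,\min_{i,j} n_i/n_j) > 0$ for both choices of $P$, but this fails for $P = \tfrac{Q_2+Q_2^*}{2}$: one has $\pi_{Q_2}(i,j) = \pi_Q(i)Q(i,j)$, and while $\pi_Q(i) \asymp 1$ and $Q(i,i) \ge 3/\Delta$ (by the minimal internal degree assumption), the off-diagonal entries $Q(i,j) = E_{i,j}/\sum_\ell E_{i,\ell}$ can tend to zero --- nothing in \eqref{eq:mComStruct}--\eqref{eq:mSparse} prevents this. Your downstream argument (the comparison $\pi|_B(y) \ge c$ for all $y\in B$, the reduction of the $\pi_P(B)\in(1/2,3/4]$ case by peeling off a single state of mass $\asymp 1$) rests entirely on this premise, so it collapses in the $Q_2$ case. (Incidentally, $Q$ \emph{is} reversible, since $E$ is symmetric and $\pi_Q(i)Q(i,j)=E_{i,j}/\sum_{k,\ell}E_{k,\ell}$; your remark that ``the genuinely non-reversible case is only $Q$'' is a slip.) Even granting the premise, your tail iteration does not close: from $\pi|_B$ one escapes in one step with probability $\Phi_P(B)$, but conditioned on not escaping the law is no longer $\pi|_B$, and invoking ``comparability at cost $O(1)$'' at every step yields a recursion of the form $\prstart{T_A>t}{\pi|_B} \le (1-\Phi_P(B))\cdot c^{-1}\cdot\prstart{T_A>t-1}{\pi|_B}$, which is vacuous once $\Phi_P(B)$ is small. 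The ``alternative route'' you mention, $\max_x \estart{T_A}{x} \lesssim 1/(\pi(A)\Phi_*^P)$, is not a standard bound (Cheeger gives $t_{\mathrm{rel}}\lesssim 1/\Phi_*^2$, hence the wrong power).

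The paper's proof is structurally different. It builds a greedy sequence of sets $A_0=\{x_0\}\subset A_1\subset\cdots$ by repeatedly adjoining the state $z\notin A_\ell$ maximising $\max_{x\in A_\ell}\pi_P(x)P(x,z)$, stops at the first $k$ with $x_k\in A$, extracts from this a path $x_0=x_{\ell_0},x_{\ell_1},\ldots,x_{\ell_r}=x_k$ of length $r\le m^2=O(1)$, and bounds $\estart{T_A}{x_0}\le\sum_s \estart{T_{x_{\ell_s}}}{x_{\ell_{s-1}}}\le\sum_s 1/(\pi_P(x_{\ell_{s-1}})P(x_{\ell_{s-1}},x_{\ell_s}))$. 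By construction each such term equals (up to $O(1)$ factors, since $|A_{\ell_s-1}|\le m^2$) the inverse of $\pi_P(A_{\ell_s-1})\Phi_P(A_{\ell_s-1})$, and since $A_{\ell_s-1}\cap A=\emptyset$ one has $\pi_P(A_{\ell_s-1})\le 3/4$, so $\Phi_P(A_{\ell_s-1})\gtrsim\Phi_*^P$. The remaining factor $\pi_P(A_{\ell_s-1})^{-1}$ is where the argument splits: for $P=Q$ every singleton has mass $\asymp 1$; for $P=\tfrac{Q_2+Q_2^*}{2}$ one only has $\pi_P((i,i))\asymp 1$, so the argument works directly from diagonal starting points $x_0=(i,i)$, and for general $(i,j)$ one first bounds the expected time to hit the diagonal $\{(k,k):k\le m\}$ by $2\Delta/3$ (using that from any state, $P$ moves to a diagonal state with probability at least $\tfrac{1}{2}Q(j,j)\ge 3/(2\Delta)$) and then applies the diagonal case.
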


{\proof Fix an element of the state space $x_0$ and a set $A$ with $\pi_P(A)\ge \frac{1}{4}$ and let $A_0=\{x_0\}.$ Let $x_1$ be the maximiser of $P(x_0,z)$ over all $z\ne x_0$ and $A_{1}=\{x_0,x_1\}$. Define the sets $A_\ell$ recursively by letting $A_{\ell+1}=A_\ell\cup \{x_{\ell+1}\},$ where $x_{\ell+1}$ is not in $A_\ell$ and is the maximiser of $\max_{x\in A_\ell}\pi_P(x)P(x,z)$ over all~$z\notin A_\ell.$ Let $k$ be minimal such that~$x_k\in A.$ Let $x_{\ell_0}=x_0, x_{\ell_1},\ldots x_{\ell_r}=x_k$ be a path from $x_0$ to $x_k$ consisting of states in $A_k$, such that $\ell_t$ is increasing and satisfies that $$\pi_P(x_{\ell_t})P(x_{\ell_t},x_{\ell_{t+1}})=\max_{\substack{x\in A_{\ell_{t+1}-1},\\  y\in A_k\setminus A_{\ell_{t+1}-1}}} \pi_P(x)P(x,y).$$
We can construct such a sequence recursively backwards as follows: 
let $f:\{1,\ldots, k\}\to\{0,\ldots, k\}$ defined by 
\[
f(s)=\inf\{u<s:\pi_P(x_u)P(x_u,x_s)=\max_{z\in A_{s-1}}\pi_P(z)P(z,x_s)\},
\]
i.e.\ $f(s)$ is the first index of the element in $A_{s-1}$ with maximal transition to $x_s.$ We apply~$f$ repeatedly to $k$ until reaching $0,$ which will happen at some finite time $r$ as $f(s)<s.$ The sequence $x_0=x_{f^r(k)}, x_{f^{r-1}(k)},\ldots, x_{f^1(k)}, x_k$ then has the required  property. This is because $x_s$ was chosen to maximise the transition $\max_{z\in A_{s-1}}\pi_P(z)P(z,x_s)$ and therefore 
\[
\pi_P(x_{f(s)})P(x_{f(s)},x_s)=\max_{x\in A_{s-1},y\in A_k\setminus A_{s-1}}\pi_P(x)P(x,y).  
\]
Using that $\estart{T_y}{x}\leq 1/(\pi_P(x) P(x,y))$ for $x$ and $y$ neighbours and as $r$ is bounded by $m^2$ which  is of order $1$, we get that 
\begin{align}\label{eq:upperboundonexpectation}
\mathbb{E}_{x_0}[T_A]\le \sum_{s=1}^r\mathbb{E}_{x_{\ell_{s-1}}}[T_{x_{\ell_s}}] \asymp \max_{1\le s\le r}\mathbb{E}_{x_{\ell_{s-1}}}[T_{x_{\ell_s}}]\leq \max_{1\le s\le r}\frac{1}{\pi_P(x_{\ell_{s-1}})P(x_{\ell_{s-1}},x_{\ell_s})}. 
\end{align} 
As the size of the state space of $P$ is at most $m^2$ which is of order $1$, we have that for any non-empty set $B$
$$\Phi_P(B)\asymp \max_{x\in B}\frac{\pi_P(x)P(x,B^c)}{\pi_P(B)}\asymp \max_{x\in B,y\in B^c}\frac{\pi_P(x)P(x,y)}{\pi_P(B)}.$$
By the definition of the sequence $(\ell_s)$ and the sets $(A_{\ell_s})$ we have  
\[
\max_{x\in A_{\ell_s-1}}\max_{z\in A_{\ell_s-1}^c}\pi_P(x)P(x,z) = \pi_P(x_{\ell_{s-1}})P(x_{\ell_{s-1}},x_{\ell_s}),
\]
and hence
\[
\frac{1}{\pi_P(x_{\ell_{s-1}})P(x_{\ell_{s-1}},x_{\ell_s})}\asymp \frac{1}{\pi_P(A_{\ell_s-1})\Phi_P(A_{\ell_s-1})}.
\]
Plugging this into~\eqref{eq:upperboundonexpectation} yields
\begin{align*}
        \mathbb{E}_{x_0}[T_A]\lesssim \max_{1\leq s\leq r} \frac{1}{\pi_P(A_{\ell_s-1})\Phi_P(A_{\ell_s-1})}.
\end{align*}
As $A\cap A_{\ell_s-1}=\emptyset$ and $\pi_P(A)\ge \frac{1}{4}$, then $\pi_P(A_{\ell_s-1})\le \frac{3}{4}$. Using this together with the fact that  for any set $B$ we have $\Phi_P(B)=\frac{\pi_P(B^c)}{\pi_P(B)}\Phi_P(B^c)$, we get 
 $$\max_{1\le s\le r}\frac{1}{\Phi_P(A_{\ell_s-1})}\lesssim \frac{1}{\Phi_*^P}.$$ 
In the case of $P=Q$, the proof of the lemma follows  easily using that $\pi_Q(i)\asymp 1$ for any $i\in \{1,\ldots, m\}$ and  that  $\pi_Q(A_{\ell_s-1})\asymp 1$, since $A_{\ell_s-1}\neq \emptyset$ for any $s>0$. 
To conclude the proof when $P=\frac{Q_2+Q_2^*}{2}$, notice that if $x_0=(i,i)$ for some $i\in\{1,\ldots, m\}$, then $\pi_P(A_{\ell_s-1})\asymp 1$ and therefore $\mathbb{E}_{(i,i)}[T_A]\lesssim \frac{1}{\Phi_*^P}$. Further, if we let $\tau$ be the hitting time of the set $\{(x,x):x\in\{1,\ldots m\}\}$ by the Markov chain with transition matrix $P$ then for any $i,j\in\{1,\ldots m\}$ 
\[\mathbb{E}_{(i,j)}[T_A]\le \mathbb{E}_{(i,j)}[\tau]+ \sum_{x\in \{1,\ldots, m\l\}}\mathbb{E}_{(x,x)}[T_{A}]\lesssim  \mathbb{E}_{(i,j)}[\tau]+\frac{1}{\Phi_*^P}.\] As the probability of jumping along an edge of the form $(x,x)$ for $x\in \{1,\ldots, m\}$ is at least~$\frac{3}{2\Delta}$, we get that $\mathbb{E}[\tau]\le \frac{2\Delta}{3}$, which concludes the proof. 
\qed}

 \begin{corollary}\label{cor:cheegerQpoincareQ} There exists a constant $c$ depending on $m,\Delta, \min_{i,j}n_i/n_j$ such that 
 $$c\Phi_*^Q\le {\gamma_Q}. $$
 \end{corollary}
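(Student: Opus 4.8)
The plan is to chain together the results already established. First, by Lemma~\ref{lemma:TmixPHit} applied with $P = Q$, we have $\max_{A:\pi_Q(A)\ge 1/4}\mathbb{E}_x[T_A]\lesssim 1/\Phi_*^Q$ for every state $x$. Since $Q$ is irreducible on a state space of size $m=O(1)$ and has $\min_i\pi_Q(i)\asymp 1$ (as shown in the proof of Lemma~\ref{claim:TypeRegenVertex}), the lazy chain $Q_L = (Q+I)/2$ also satisfies $\min_i \pi_{Q_L}(i)\asymp 1$ and has comparable hitting times of large sets (laziness only doubles expected hitting times up to constants). Hence by the first part of Theorem~\ref{thrm:TmixHit}, $t_{\mathrm{mix}}^{\mathrm{TV},Q_L}\asymp \max_{A:\pi_Q(A)\ge 1/4}\mathbb{E}_x[T_A]\lesssim 1/\Phi_*^Q$. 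Because $Q_L(x,x)\ge 1/2$ for all $x$, the second part of Theorem~\ref{thrm:TmixHit} (applied with $\delta=1/2$) gives $t_{\mathrm{mix}}^{\mathrm{TV},Q_L}$ is itself comparable, up to the $O(1)$ constant $1/(\delta(1-\delta))=4$, so nothing is lost; in particular $t_{\mathrm{mix}}^{\mathrm{TV},Q_L}\lesssim 1/\Phi_*^Q$.

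Next I would invoke the standard fact that for a reversible chain the total variation mixing time is bounded below by the relaxation time, i.e. $t_{\mathrm{rel}}(Q_L)\lesssim t_{\mathrm{mix}}^{\mathrm{TV},Q_L}$ (this is, e.g., \cite[Theorem~12.5]{MixingBook}, using $\min_i \pi_{Q_L}(i)\asymp 1$ to control the logarithmic factor, which here is $O(1)$). Equivalently $\gamma(Q_L)\gtrsim \Phi_*^Q$. Since $Q$ itself need not be reversible, I apply this to the lazy \emph{additive symmetrisation}: note that the \Poincare constant $\gamma_Q$ of $Q$ is by definition the spectral gap of the additive symmetrisation $(Q+Q^*)/2$, and laziness changes the spectral gap only by a factor of $2$. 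The Cheeger constant of $Q_L$ is $\tfrac12\Phi_*^Q$, so combining, $\gamma_Q \gtrsim \gamma(Q_L) \gtrsim \Phi_*^Q$, which is precisely the claim $c\Phi_*^Q \le \gamma_Q$.

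The main subtlety — and the reason this corollary is stated separately rather than being folded into the ``easy direction of Cheeger'' — is that $Q$ is non-reversible, so one cannot directly apply the spectral characterisation of mixing to $Q$; instead one must route through the hitting-time characterisation of Theorem~\ref{thrm:TmixHit} (which is what Lemma~\ref{lemma:TmixPHit} is set up to feed) and then pass to the reversible lazy symmetrisation at the last step. Everything else is book-keeping with $O(1)$ constants, using repeatedly that $m$, $\Delta$, and $\min_{i,j} n_i/n_j$ are all of order one so that all state spaces are of bounded size and all stationary masses are bounded away from $0$ and $1$. I expect the only place requiring any care is making sure the comparison $t_{\mathrm{rel}} \lesssim t_{\mathrm{mix}}^{\mathrm{TV}}$ is applied to a genuinely reversible chain, which is why the argument is phrased via $(Q+Q^*)/2$ rather than $Q$.
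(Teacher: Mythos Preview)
Your chain of implications (Lemma~\ref{lemma:TmixPHit} $\to$ Theorem~\ref{thrm:TmixHit} $\to$ $t_{\mathrm{rel}}\lesssim t_{\mathrm{mix}}$) is exactly the paper's route, and the argument is correct. However, the ``main subtlety'' you flag is a misconception: $Q$ \emph{is} reversible. Recall $Q(i,j)=E_{i,j}/\sum_\ell E_{i,\ell}$ with $E$ symmetric (Definition~\ref{def:kCommounitiesRandomGraph}); the stationary measure is $\pi_Q(i)\propto\sum_\ell E_{i,\ell}$, and then $\pi_Q(i)Q(i,j)=E_{i,j}=E_{j,i}=\pi_Q(j)Q(j,i)$. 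So the detour through the additive symmetrisation is unnecessary (indeed $(Q+Q^*)/2=Q$), and your worry that one ``cannot directly apply the spectral characterisation of mixing to $Q$'' is unfounded.

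This matters for the internal consistency of your write-up: Theorem~\ref{thrm:TmixHit} as stated is for reversible $P$, so you are already using reversibility of $Q$ when you invoke it, which contradicts your later claim that $Q$ need not be reversible. The paper's version is accordingly shorter: it notes that $Q$ is reversible and that $Q(i,i)\ge 3/\Delta$ is bounded away from $0$ (from $\deg^{\mathrm{int}}\ge 3$), so the second part of Theorem~\ref{thrm:TmixHit} gives $t_{\mathrm{mix}}^{\mathrm{TV},Q}\lesssim 1/\Phi_*^Q$ directly, and then reversibility yields $t_{\mathrm{mix}}^{\mathrm{TV},Q}\gtrsim 1/\gamma_Q$. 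No lazification or symmetrisation is needed.
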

 {\proof Lemma~\ref{lemma:TmixPHit} and Theorem~\ref{thrm:TmixHit} give that $t_{\text{mix}}^{\text{TV}, Q}\lesssim \frac{1}{\Phi_*^Q},$ as the chain $Q$ is reversible and also has diagonal entries bounded away from $0$. As $Q$ is reversible we also know that $t_{\text{mix}}^{\text{TV}, Q}\gtrsim \frac{1}{\gamma_Q},$ which completes the proof.\qed}

\begin{lemma}\label{lemma:CheegerQCheegerQ2} There exists a positive constant $C$ depending on $m, \Delta, \min_{i,j}n_i/n_j$ so that
 $$\frac{1}{\Phi_*^{Q_2}}\leq\frac{C}{\Phi_*^Q}. $$
\end{lemma}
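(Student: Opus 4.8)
The plan is to relate $\Phi_*^{Q_2}$ to $\Phi_*^{Q}$ by sandwiching both between the relaxation time (equivalently, mixing time) of $Q_2$, using the results already established. First, by Corollary~\ref{cor:cheegerQpoincareQ} we have $\Phi_*^Q \lesssim \gamma_Q$, and since the easy direction of Cheeger's inequality always gives $\gamma_Q \lesssim \Phi_*^Q$ (for the reversible chain $Q$, up to the factor $2$), we conclude $\gamma_Q \asymp \Phi_*^Q$; in particular $t_{\mathrm{rel}}(Q) \asymp 1/\Phi_*^Q$. By Lemma~\ref{lemma:TmixPHit} applied to $P = Q$ together with Theorem~\ref{thrm:TmixHit}, we also get $t_{\mathrm{mix}}^{\mathrm{TV},Q} \lesssim 1/\Phi_*^Q$. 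The key observation is then that $Q_2$ and $Q$ have essentially the same mixing time: a trajectory of $Q_2$ is just the trajectory of $Q$ read off in overlapping consecutive pairs, so $t_{\mathrm{mix}}^{\mathrm{TV},Q_2} \asymp t_{\mathrm{mix}}^{\mathrm{TV},Q} \lesssim 1/\Phi_*^Q$ (one needs only that reading one extra coordinate costs one extra step, which is harmless up to additive $O(1)$; the diagonal-laziness of $Q$ ensures $Q_2$ is also lazy enough for Theorem~\ref{thrm:TmixHit}).

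Next I would pass from $t_{\mathrm{mix}}^{\mathrm{TV},Q_2}$ down to $\Phi_*^{Q_2}$. Since $Q_2$ need not be reversible, I work instead with its additive symmetrisation $P := \frac{Q_2 + Q_2^*}{2}$, which is reversible with the same stationary distribution $\pi_{Q_2}$ and the same Poincaré constant $\gamma_{Q_2}$, and whose Cheeger constant $\Phi_*^P$ satisfies $\Phi_*^P \asymp \Phi_*^{Q_2}$ (by definition $\Phi_P(A) = \frac{1}{2}(\Phi_{Q_2}(A) + \Phi_{Q_2^*}(A))$, and the bottleneck ratio of $Q$ and its time-reversal agree since $Q$ is reversible, giving a two-sided bound; alternatively one argues directly that $\Phi_*^{Q_2}$ and $\Phi_*^{Q_2^*}$ are comparable). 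Now Lemma~\ref{lemma:TmixPHit} applied to $P = \frac{Q_2+Q_2^*}{2}$ gives $\max_{A:\pi_P(A)\ge 1/4}\mathbb{E}_x[T_A] \lesssim 1/\Phi_*^P$; combined with Theorem~\ref{thrm:TmixHit} this yields $t_{\mathrm{mix}}^{\mathrm{TV},P} \lesssim 1/\Phi_*^P \asymp 1/\Phi_*^{Q_2}$.

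Finally I assemble the chain of inequalities. On one hand, $t_{\mathrm{mix}}^{\mathrm{TV},Q_2} \lesssim 1/\Phi_*^Q$ from the first paragraph. On the other hand, for the reversible chain $P$ one always has $t_{\mathrm{mix}}^{\mathrm{TV},P} \gtrsim 1/\gamma_P = 1/\gamma_{Q_2}$; and since $t_{\mathrm{mix}}^{\mathrm{TV},Q_2}$ and $t_{\mathrm{mix}}^{\mathrm{TV},P}$ are comparable (one can for instance bound the relaxation time of $Q_2$ by that of its symmetrisation, which controls the non-reversible mixing time up to constants given the laziness, or invoke that $\gamma_{Q_2}$ is by definition the spectral gap of $P$ and that $Q_2$-mixing is governed by $\gamma_{Q_2}$ in the lazy regime), we get $1/\gamma_{Q_2} \lesssim t_{\mathrm{mix}}^{\mathrm{TV},Q_2} \lesssim 1/\Phi_*^Q$, hence $\Phi_*^Q \lesssim \gamma_{Q_2}$. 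The easy direction of Cheeger applied to the reversible chain $P$ gives $\gamma_{Q_2} = \gamma_P \lesssim \Phi_*^P \asymp \Phi_*^{Q_2}$, and therefore $\Phi_*^Q \lesssim \Phi_*^{Q_2}$, which rearranges to the claimed $\frac{1}{\Phi_*^{Q_2}} \le \frac{C}{\Phi_*^Q}$.

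The main obstacle I anticipate is the non-reversibility of $Q_2$: each time one wants to convert between a Cheeger constant, a Poincaré constant, and a mixing time, the clean reversible-chain dictionary (Cheeger's inequality, $t_{\mathrm{mix}} \gtrsim t_{\mathrm{rel}}$, Theorem~\ref{thrm:TmixHit}) is only directly available after replacing $Q_2$ by $\frac{Q_2+Q_2^*}{2}$, and one must carefully check at each such replacement that (i) the Poincaré constant is genuinely unchanged, (ii) the Cheeger constant changes by at most a constant factor — which is where the reversibility of $Q$ itself (so that $Q$ and $Q^*$ coincide, forcing $Q_2$ and $Q_2^*$ to have comparable bottleneck structure) is used — and (iii) the comparison between the mixing time of $Q_2$ and that of its symmetrisation is legitimate in this finite, bounded-state-space, suitably-lazy setting. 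The bookkeeping of which comparisons lose only constants depending on $m, \Delta, \min_{i,j} n_i/n_j$ is the part requiring care; everything else is a short concatenation of already-proven lemmas.
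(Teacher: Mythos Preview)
Your first paragraph is correct and matches the paper: you get $t_{\mathrm{mix}}^{\mathrm{TV},Q_2}\lesssim 1/\Phi_*^Q$ via Lemma~\ref{lemma:TmixPHit}, Theorem~\ref{thrm:TmixHit}, and the observation that the mixing times of $Q$ and $Q_2$ agree up to an additive~$1$ (the paper in fact checks the identity $\|Q_2^t((x_1,x_2),\cdot)-\pi_{Q_2}\|_{\mathrm{TV}}=\|Q^{t-1}(x_2,\cdot)-\pi_Q\|_{\mathrm{TV}}$). From here the paper finishes in one line: the bound $1/\Phi_*\lesssim t_{\mathrm{mix}}^{\mathrm{TV}}$ holds for \emph{any} Markov chain, reversible or not (see e.g.\ \cite[Theorem~7.4]{MixingBook}), so $1/\Phi_*^{Q_2}\lesssim t_{\mathrm{mix}}^{\mathrm{TV},Q_2}\lesssim 1/\Phi_*^Q$ and you are done.

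Your second and third paragraphs take a detour through $\gamma_{Q_2}$ and the additive symmetrisation $P=\tfrac{Q_2+Q_2^*}{2}$, and this detour contains a genuine gap. The step where you need $1/\gamma_{Q_2}\lesssim t_{\mathrm{mix}}^{\mathrm{TV},Q_2}$ is obtained by claiming that $t_{\mathrm{mix}}^{\mathrm{TV},Q_2}$ and $t_{\mathrm{mix}}^{\mathrm{TV},P}$ are comparable. This is not a general fact: a non-reversible chain and its additive symmetrisation can have mixing times of very different orders (the biased walk on the cycle versus its symmetrisation is the standard example). Your alternative justification, that ``$Q_2$-mixing is governed by $\gamma_{Q_2}$ in the lazy regime'', amounts to $t_{\mathrm{mix}}\gtrsim 1/\gamma$ for non-reversible chains, which likewise fails in general. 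So the chain $\Phi_*^Q\lesssim\gamma_{Q_2}\lesssim\Phi_*^P\asymp\Phi_*^{Q_2}$ is not established as written. (Incidentally, $\Phi_*^P=\Phi_*^{Q_2}$ is actually an equality: stationarity forces the $Q_2$-flow out of any set to equal the $Q_2$-flow in, so symmetrising does nothing to the bottleneck ratio.)

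The fix is simply to delete the detour: once you have $t_{\mathrm{mix}}^{\mathrm{TV},Q_2}\lesssim 1/\Phi_*^Q$, apply the universal lower bound $1/\Phi_*^{Q_2}\lesssim t_{\mathrm{mix}}^{\mathrm{TV},Q_2}$ directly. The non-reversibility of $Q_2$ is a non-issue for this lemma.
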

{\proof Lemma~\ref{lemma:TmixPHit} and Theorem~\ref{thrm:TmixHit} give that $t_{\text{mix}}^{\text{TV}, Q}\lesssim \frac{1}{\Phi_*^Q},$ as the chain $Q$ is reversible and also has diagonal entries bounded away from $0$. In general, for any Markov chain it holds that the inverse of the Cheeger constant is bounded by the mixing time (see for instance~\cite[Theorem~7.4]{MixingBook}) and therefore $\frac{1}{\Phi_*^{Q_2}}\lesssim t_{\text{mix}}^{\text{TV},Q_2}.$
We will complete the proof by showing that $t_{\text{mix}}^{\text{TV},Q}=t_{\text{mix}}^{\text{TV},Q_2}-1.$ 
It is elementary to check that for all $x=(x_1,x_2)$ 
$$\|Q_2^t(x,\cdot)-\pi_{Q_2}(\cdot)\|_{\text{TV}}= \|Q^{t-1}(x_2,\cdot)-\pi_{Q}(\cdot)\|_{\text{TV}}.$$
Maximising over all $x$ completes the proof. \qed}

We will now use the following general fact about the \Poincare constant in the reversible case. 
\begin{lemma}[{\cite[Theorem 12.5]{MixingBook}}]\label{lemma:PoincareMix} There exists a universal constant $c$ so that if $P$ is an aperiodic reversible Markov chain with \Poincare constant $\gamma_P$, then
 $$ \frac{c}{\gamma_{P}}\leq t_{\mathrm{mix}}^{\mathrm{TV},P}.$$
\end{lemma}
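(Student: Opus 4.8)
The plan is to recognise this as the classical spectral lower bound on total variation mixing, i.e.\ essentially \cite[Theorem~12.5]{MixingBook}, and to include a short self-contained argument. Write $1=\lambda_1>\lambda_2\ge\cdots\ge\lambda_{|S|}\ge -1$ for the real eigenvalues of the reversible matrix $P$ (reversibility makes $P$ self-adjoint on $\ell^2(\pi)$), and set $\lambda_*=\max\{|\lambda_j|:j\ge 2\}$ and $\gamma_*:=1-\lambda_*$. Since $P$ is irreducible and aperiodic, $\lambda_*<1$. The first thing I would record is the elementary inequality $\gamma_*\le\gamma_P$, which holds because $\lambda_*\ge\lambda_2$ and $\gamma_P=1-\lambda_2$ by the variational definition of the \Poincare constant.

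The core step is the eigenfunction test. Let $f$ be a real eigenfunction with $Pf=\lambda f$ and $|\lambda|=\lambda_*$, normalised so that $\|f\|_\infty=1$, and choose $x_0$ with $|f(x_0)|=1$. Since $\lambda\neq 1$, self-adjointness forces $f$ to be orthogonal to the constants in $\ell^2(\pi)$, i.e.\ $\sum_y\pi(y)f(y)=0$, so for every $t$
\[
\lambda_*^{\,t}=|\lambda|^{t}\,|f(x_0)|=\Big|\sum_y\big(P^t(x_0,y)-\pi(y)\big)f(y)\Big|\le \sum_y\big|P^t(x_0,y)-\pi(y)\big|=2\,\|P^t(x_0,\cdot)-\pi\|_{\mathrm{TV}}.
\]
Evaluating at $t=t_{\mathrm{mix}}^{\mathrm{TV},P}=t_{\mathrm{mix}}^{\mathrm{TV},P}(1/4)$ gives $\lambda_*^{\,t_{\mathrm{mix}}^{\mathrm{TV},P}}\le 1/2$, whence, using $-\log\lambda_*\le(1-\lambda_*)/\lambda_*$,
\[
t_{\mathrm{mix}}^{\mathrm{TV},P}\ \ge\ \frac{\log 2}{-\log\lambda_*}\ \ge\ \frac{(\log 2)\,\lambda_*}{1-\lambda_*}\ =\ (\log 2)\Big(\tfrac1{\gamma_*}-1\Big).
\]

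It remains to turn $\tfrac1{\gamma_*}-1$ into a genuine positive multiple of $\tfrac1{\gamma_P}$ with a universal constant, and I expect this last bookkeeping to be the only mildly delicate point (precisely because the paper works with the ordinary spectral gap $\gamma_P$ rather than the absolute spectral gap $\gamma_*$, so one must check the bound does not degenerate when $\gamma_P$ is of order one). I would split on the size of $\gamma_*$: if $\gamma_*\le\tfrac12$ then $\tfrac1{\gamma_*}-1\ge\tfrac1{2\gamma_*}\ge\tfrac1{2\gamma_P}$ using $\gamma_*\le\gamma_P$, which gives the claim with $c=(\log 2)/2$; if instead $\gamma_*>\tfrac12$ then $\gamma_P\le 2$, while separately $t_{\mathrm{mix}}^{\mathrm{TV},P}\ge 1$ because $\max_x\|\delta_x-\pi\|_{\mathrm{TV}}=1-\min_x\pi(x)\ge\tfrac12>\tfrac14$ (for $|S|\ge 2$), so $t_{\mathrm{mix}}^{\mathrm{TV},P}\ge 1\ge\tfrac1{2\gamma_P}$. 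Taking $c$ to be the smaller of the two constants completes the proof; there is no substantive obstacle, the statement being classical.
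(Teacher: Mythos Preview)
Your argument is the standard one and is essentially correct. The paper itself does not prove this lemma: it is stated as a citation of \cite[Theorem~12.5]{MixingBook}, so there is nothing to compare against beyond observing that your proof is precisely the classical eigenfunction argument from that reference.

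One small slip in your bookkeeping: in the case $\gamma_*>\tfrac12$ you write ``then $\gamma_P\le 2$'' as justification for $1\ge\tfrac1{2\gamma_P}$, but what you actually need (and have) is $\gamma_P\ge\gamma_*>\tfrac12$. The inequality $\gamma_P\le 2$ is true but irrelevant; the conclusion $1\ge\tfrac1{2\gamma_P}$ follows from the lower bound on $\gamma_P$, not the upper bound. With that correction the case split is clean and the proof goes through with $c=\tfrac{\log 2}{2}$ (or $c=\tfrac12$, whichever is smaller).
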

We now bound the \Poincare constant of $Q$ by its Cheeger constant from below. 
 
\begin{lemma}\label{lemma:PoincareQ2CheegerQ} There exists a positive constant $c_1$ depending on $m,\Delta,\min_{i,j}n_i/n_j$ so that
$$\Phi_*^Q\leq c_1 \gamma_{Q_2}.$$
\end{lemma}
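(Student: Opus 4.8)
The plan is to chain together the comparisons already established in the excerpt rather than to do any new combinatorial work. Concretely, I want to show
\[
\Phi_*^Q \;\lesssim\; \gamma_{Q_2},
\]
and the idea is to route through the relaxation time / mixing time of $Q_2$ and its additive symmetrisation $\frac{Q_2+Q_2^*}{2}$. Recall that the \Poincare constant $\gamma_{Q_2}$ is by definition the spectral gap of the additive symmetrisation $S:=\frac{Q_2+Q_2^*}{2}$, since $\mathcal{E}_{Q_2}(f)=\mathcal{E}_{S}(f)$ and $S$ is reversible with respect to $\pi_{Q_2}$. So it suffices to bound $\Phi_*^Q$ by the spectral gap of $S$.

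First I would apply Lemma~\ref{lemma:TmixPHit} with $P=S=\frac{Q_2+Q_2^*}{2}$, which is reversible and, because $S((x,x),(x,x))\ge \frac{3}{2\Delta}>0$ (the walk has probability at least $\frac{3}{2\Delta}$ of traversing a $(x,x)$-type edge, as noted at the end of that proof), has diagonal entries bounded away from $0$. This gives $\max_{A:\pi_S(A)\ge 1/4}\mathbb{E}_x[T_A]\lesssim \frac{1}{\Phi_*^{S}}$. Next, by Lemma~\ref{lemma:CheegerQCheegerQ2} (comparing $\Phi_*^{Q_2}$ and $\Phi_*^{Q}$, noting $\Phi_*^{S}=\Phi_*^{Q_2}$ since the bottleneck ratios of a chain and its time-reversal coincide, and the additive symmetrisation has the same edge-boundary flows) we get $\frac{1}{\Phi_*^{S}} = \frac{1}{\Phi_*^{Q_2}}\lesssim \frac{1}{\Phi_*^Q}$. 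Combining with Theorem~\ref{thrm:TmixHit} applied to $S$ yields $t_{\mathrm{mix}}^{\mathrm{TV},S}\lesssim \frac{1}{\Phi_*^Q}$ (the laziness hypothesis of Theorem~\ref{thrm:TmixHit} is met since $S$ has diagonal entries bounded below). Finally, since $S$ is an aperiodic reversible chain, Lemma~\ref{lemma:PoincareMix} gives $\frac{c}{\gamma_{S}}\le t_{\mathrm{mix}}^{\mathrm{TV},S}$, i.e. $\frac{1}{\gamma_{Q_2}}\lesssim t_{\mathrm{mix}}^{\mathrm{TV},S}\lesssim \frac{1}{\Phi_*^Q}$, which rearranges to the claimed bound $\Phi_*^Q\le c_1\gamma_{Q_2}$.

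The only place that requires a small amount of care — and what I expect to be the main (albeit minor) obstacle — is the bookkeeping relating $\Phi_*^{Q_2}$, $\Phi_*^{S}$ where $S=\frac{Q_2+Q_2^*}{2}$, and the fact that Lemma~\ref{lemma:TmixPHit} is stated precisely for $P\in\{Q,\frac{Q_2+Q_2^*}{2}\}$, so the hitting-time bound is genuinely available for $S$ but one must check the invariant distribution of $S$ is $\pi_{Q_2}$ (so $\pi_S=\pi_{Q_2}$ and the set condition $\pi_S(A)\ge 1/4$ matches), and that the Cheeger constant appearing in Lemma~\ref{lemma:TmixPHit} for $P=S$ is exactly $\Phi_*^{Q_2}$ (this holds because for reversible $S$, $\pi_S(x)S(x,y)=\frac{1}{2}(\pi_{Q_2}(x)Q_2(x,y)+\pi_{Q_2}(y)Q_2(y,x))$, so edge boundary sizes, hence bottleneck ratios, are unchanged under additive symmetrisation). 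Everything else is a direct concatenation of inequalities: Lemma~\ref{lemma:TmixPHit} $\Rightarrow$ Theorem~\ref{thrm:TmixHit} $\Rightarrow$ Lemma~\ref{lemma:PoincareMix}, with Lemma~\ref{lemma:CheegerQCheegerQ2} feeding in the $Q_2$-to-$Q$ Cheeger comparison. No new probabilistic input is needed beyond what is already in the excerpt.
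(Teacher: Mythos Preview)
Your overall strategy is exactly the paper's: chain Lemma~\ref{lemma:TmixPHit}, Lemma~\ref{lemma:CheegerQCheegerQ2}, Theorem~\ref{thrm:TmixHit}, and Lemma~\ref{lemma:PoincareMix} through the additive symmetrisation $S=\tfrac{Q_2+Q_2^*}{2}$. However, there is a genuine gap in the step where you invoke Theorem~\ref{thrm:TmixHit} directly for $S$ by asserting that ``$S$ has diagonal entries bounded below''. This is false: for any state $(i,j)$ with $i\neq j$ one has $Q_2((i,j),(i,j))=0$ (since $Q_2((i,j),(k,\ell))>0$ forces $k=j\neq i$), and likewise $Q_2^*((i,j),(i,j))=0$, so $S((i,j),(i,j))=0$. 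The remark at the end of the proof of Lemma~\ref{lemma:TmixPHit} that you cite concerns the probability of \emph{jumping to} a state of the form $(x,x)$, not the probability of staying put; these are different things. Consequently the second part of Theorem~\ref{thrm:TmixHit} (which requires $P(x,x)\ge\delta$ for \emph{all} $x$) does not apply to $S$, and you have no bound on $t_{\mathrm{mix}}^{\mathrm{TV},S}$.

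The paper's fix is minimal: work instead with the lazy chain $P_L=\tfrac{I+S}{2}$. The first part of Theorem~\ref{thrm:TmixHit} together with Lemma~\ref{lemma:TmixPHit} and Lemma~\ref{lemma:CheegerQCheegerQ2} gives $t_{\mathrm{mix}}^{\mathrm{TV},P_L}\lesssim 1/\Phi_*^Q$; Lemma~\ref{lemma:PoincareMix} applied to the reversible aperiodic $P_L$ gives $1/\gamma_{P_L}\lesssim t_{\mathrm{mix}}^{\mathrm{TV},P_L}$; and finally one checks $\mathcal{E}_{Q_2}(f)=\mathcal{E}_{S}(f)=2\mathcal{E}_{P_L}(f)$, whence $\gamma_{Q_2}=2\gamma_{P_L}$. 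Inserting this lazification step repairs your argument completely.
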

{\proof Using Theorem~\ref{thrm:TmixHit}, Lemmas~\ref{lemma:TmixPHit} and~\ref{lemma:CheegerQCheegerQ2} we have for the mixing time of the lazy version~$P_L$ of $P=\frac{Q_2+Q_2^*}{2}$ that 
$$t_{\text{mix}}^{\text{TV},P_L}\lesssim \frac{1}{\Phi_*^Q}.$$ 
Further, since $P_L$ is a reversible chain, we have  from Lemma \ref{lemma:PoincareMix} that 
$$ \frac{1}{\gamma_{P_L}}\lesssim t_{\text{mix}}^{\text{TV},P_L}.$$ 
It is easy to check that $\mathcal{E}_{Q_2}(f)=\mathcal{E}_{P}(f)=2\mathcal{E}_{P_L}(f)$ and, since $P$, $Q_2$ and $P_L$ have the same invariant distributions, it follows that $\gamma_{Q_2}=\gamma_{P}=2\gamma_{P_L}$, which completes the proof. \qed}

We are now ready to bound the mixing time of~$\Sigma$ by the inverse of the Cheeger constant of~$Q$.

{\proof[Proof of Proposition~\ref{thrm:MixingOfW}] Combining the results from Lemmas \ref{lemma:PoincareQ2CheegerQ} and \ref{lemma:PoincareWQ2} and Corollary  \ref{cor:PoincareWSigma} gives that $$\Phi_*^Q\lesssim \gamma_{Q_2}\lesssim\gamma_W\leq\gamma_\Sigma.$$ 

As for all types $i\in \{1,\ldots,m\}$, we have that $\pi_{Q_2}((i,i))\asymp 1$, it follows from Lemma~\ref{lemma:PiOfW} that 
$\pi_W((i,i))\asymp 1$, and therefore also~$\pi_\Sigma((i,i))\asymp 1$, as $\Sigma$ is the induced chain. This now implies that for any~$\theta$ in the state space of $\Sigma$ $$\|\Sigma(\theta,\cdot)-\pi_{\Sigma}(\cdot)\|_{2}\asymp 1.$$ 
Since for all $i,$ we have $$1\asymp Q((i,i),(i,i))\lesssim W((i,i),(i,i))\leq \Sigma((i,i),(i,i)),$$ it follows that $\Sigma((i,i),(i,i))\ge c$ for all $i$, for some positive constant $c$. Using~\cite[eq.~(2.11) and Remark~2.16]{MontenegroTetali}, we get 
 $$ \|\Sigma^t(\theta,\cdot)-\pi_{\Sigma}(\cdot)\|_{\text{TV}}\leq \|\Sigma(\theta,\cdot)-\pi_{\Sigma}(\cdot)\|_{2}\cdot e^{-2c\cdot t\cdot \gamma_{\Sigma}}\lesssim e^{-c't\Phi_*^Q}, $$ 
 where $c'$ is another positive constant and this now completes the proof. \qed}

\subsection{Speed and entropy of (lazy) simple random walk on multi-type \random tree}\label{secEntropy}
From now on we will let $\alpha=\Phi_*^Q$ in the case of a multi-type \random tree from Definition \ref{def:MTGWTree}, while in the case of the two-type tree, $\alpha$ is as defined in Definition \ref{def:2TGWTree}. In both cases we have that  $t_{\text{mix}}^{\text{TV},\Sigma}\lesssim\frac{1}{\alpha}.$

\begin{definition}
\label{def:RegTimes} 
Define, as in Lemma \ref{lemma:RegIndep}, for a tree  $T$ with root $\rho$ as in Definition \ref{def:MTGWTree} or Definition \ref{def:2TGWTree} and for  $K\ge 0$, $\sigma_0$ to be the first time that a simple random walk $X$ on $T$ started from $\rho$, reaches $\partial \mathcal{B}_K\left(\rho\right)$ and $\left(\sigma_i\right)_{i=1}^\infty$ to be the almost surely infinite sequence of regeneration times occurring after~$\sigma_0$, where $\sigma_i$ is the $i$-th regeneration time such that ${\phi_i=d\left(\rho,X_{\sigma_i}\right)>K}$.
\end{definition}
\begin{definition}
\label{def:LoopErasedRW}
Let $T$ be as in Definition \ref{def:MTGWTree} or \ref{def:2TGWTree}. Let $\xi$ be a loop erased random walk on $T$ which is obtained by erasing loops from a simple random walk started from $\rho$ in the chronological order in which they were created. (This procedure is possible, since $T$ is uniformly transient by Lemma~\ref{lemma:ReturnProb}.) We call $\xi_i$ the $i$-th edge crossed by $\xi$. Unless otherwise specified, the loop erased random walk always starts from $\rho$. 
\end{definition}
In this section the goal is to prove results on the speed and entropy of the lazy simple random walk on $T$. We start by stating and proving a decorrelation result for a general Markov chain.

\begin{lemma}\label{lemma:Decorrelation} Let $\Sigma$ be a Markov chain on the state space $S$  with invariant distribution $\pi$ and let~$t_\mathrm{mix}$ be its total variation mixing time. Let $f, g:S\to \R$ be bounded functions satisfying $\estart{g}{\pi}=0$. Then there exists a positive constant $C$ (depending on the bound on $f$ and $g$) so that for all $i\le j$ and~$\theta\in S$ 
\[\estart{f(\Sigma_i)g(\Sigma_j)}{\theta}\leq C\cdot 2^{-\frac{j-i}{2t_{\mathrm{mix}}}}.
\]
\end{lemma}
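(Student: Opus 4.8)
The plan is to exploit the Markov property together with the contractivity of the chain in total variation. Write $\estart{f(\Sigma_i)g(\Sigma_j)}{\theta} = \estart{f(\Sigma_i)\,\estart{g(\Sigma_j)}{\Sigma_i}}{\theta}$ by conditioning on $\Sigma_i$. The inner conditional expectation is $\estart{g(\Sigma_{j-i})}{\Sigma_i} = \sum_{y} \Sigma^{j-i}(\Sigma_i,y) g(y)$, whereas $\estart{g}{\pi} = \sum_y \pi(y) g(y) = 0$. Hence $\left|\estart{g(\Sigma_{j-i})}{\Sigma_i}\right| = \left|\sum_y (\Sigma^{j-i}(\Sigma_i,y)-\pi(y)) g(y)\right| \le 2\|g\|_\infty \|\Sigma^{j-i}(\Sigma_i,\cdot)-\pi\|_{\mathrm{TV}}$.

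The next step is to convert the mixing-time bound at time $t_{\mathrm{mix}}$ (which by definition gives total variation distance at most $1/4$) into exponential decay. The standard submultiplicativity of the maximal TV distance $d(t) := \max_x \|\Sigma^t(x,\cdot)-\pi\|_{\mathrm{TV}}$ gives $d(k\,t_{\mathrm{mix}}) \le (2 d(t_{\mathrm{mix}}))^k \le 2^{-k}$ for integer $k \ge 0$, hence for general $s \ge 0$, $d(s) \le 2 \cdot 2^{-s/t_{\mathrm{mix}}}$ by taking $k = \lfloor s/t_{\mathrm{mix}}\rfloor$ and using monotonicity of $d$. Applying this with $s = j-i$ yields $\|\Sigma^{j-i}(\Sigma_i,\cdot)-\pi\|_{\mathrm{TV}} \le 2\cdot 2^{-(j-i)/t_{\mathrm{mix}}}$.

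Combining, $\left|\estart{f(\Sigma_i)g(\Sigma_j)}{\theta}\right| \le \|f\|_\infty \cdot \estart{\left|\estart{g(\Sigma_j)}{\Sigma_i}\right|}{\theta} \le \|f\|_\infty \cdot 4\|g\|_\infty \cdot 2^{-(j-i)/t_{\mathrm{mix}}}$. This is in fact slightly stronger than the claimed bound (which has $2^{-(j-i)/(2t_{\mathrm{mix}})}$ in the exponent), so setting $C = 4\|f\|_\infty\|g\|_\infty$ suffices; the weaker exponent in the statement presumably leaves room for a cruder but more robust submultiplicativity estimate, and I would simply note that the above also gives the stated inequality a fortiori.

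There is no real obstacle here; the only points requiring a little care are (i) that $\Sigma$ need not be reversible, but submultiplicativity of $d(t)$ holds for any Markov chain with a stationary distribution (it follows from the coupling characterisation of TV distance, or from the contraction property of $\|\mu P - \nu P\|_{\mathrm{TV}} \le \|\mu-\nu\|_{\mathrm{TV}}$ applied to the worst-case pair), and (ii) keeping track of the factor-of-two conventions so that the final constant is clean. If one wants the $2t_{\mathrm{mix}}$ in the exponent rather than $t_{\mathrm{mix}}$, one can alternatively invoke the bound $d(t_{\mathrm{mix}})\le 1/4$ and conservatively use $d(2 t_{\mathrm{mix}}) \le 2 d(t_{\mathrm{mix}})^2 \le 1/8$ and iterate, or just absorb the slack into $C$. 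I would present the argument with the cleaner $t_{\mathrm{mix}}$ exponent and remark that the displayed (weaker) form follows immediately.
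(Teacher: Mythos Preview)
Your proof is correct and follows the same overall strategy as the paper: condition on $\Sigma_i$ via the Markov property, then control $\estart{g(\Sigma_{j-i})}{\eta}$ using that $\estart{g}{\pi}=0$ together with the exponential total-variation decay $d(s)\lesssim 2^{-s/t_{\mathrm{mix}}}$. The one genuine difference is in how the inner term is bounded. You bound $\bigl|\estart{g(\Sigma_{j-i})}{\eta}\bigr|$ directly by $2\|g\|_\infty\,\|\Sigma^{j-i}(\eta,\cdot)-\pi\|_{\mathrm{TV}}$, which is the natural estimate and yields the sharper exponent $2^{-(j-i)/t_{\mathrm{mix}}}$. The paper instead introduces an optimal coupling $(Z_\eta,\Sigma_{j-i})$ of $\pi$ and $\Sigma^{j-i}(\eta,\cdot)$, writes the difference as $\estart{(g(\Sigma_{j-i})-g(Z_\eta))\mathds{1}(A)}{\eta}$ on the coupling-failure event $A$, and then applies Cauchy--Schwarz; the square root this introduces is precisely what produces the $2t_{\mathrm{mix}}$ in the stated exponent. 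That Cauchy--Schwarz step is not actually needed (a direct $L^\infty$ bound on $g$ times $\prstart{A}{\eta}$ would do), so your argument is a cleaner route to a slightly stronger conclusion, and you are right that the displayed form follows a fortiori.
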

\begin{proof}
Let $P$ be the transition matrix of $\Sigma$. Then we have 
\[
\estart{f(\Sigma_i)g(\Sigma_j)}{\theta} =\sum_{\eta\in S} P^i(\theta,\eta) f(\eta) \estart{g(\Sigma_{j-i})}{\eta}.
\]
Let $(Z_\eta,\Sigma_{j-i})$ be the optimal coupling of  ${\pi}$ and $P^{j-i}(\eta,\cdot)$ and define $A$ to be the event that the coupling fails, i.e.\ $A=\left \{Z_\eta\neq\Sigma_{j-i}\right\}.$ Given that by assumption $\mathbb{E}[g(Z_\eta)]=0$, we get 
\begin{align*}
&\sum_{\eta\in S} P^i(\theta,\eta) f(\eta) \estart{g(\Sigma_{j-i})}{\eta}=\sum_{\eta\in S}P^{i}(\theta,\eta)f(\eta)\estart{\left(g\left(\Sigma_{j-i}\right)-g\left(Z_\eta\right)\right)}{\eta}\\
&=\sum_{\eta\in S}P^{i}(\theta,\eta)f(\eta)\estart{\left(g\left(\Sigma_{j-i}\right)-g\left(Z_\eta\right)\right)\mathds{1}(A)}{\eta}
\\ &\leq  \sum_{\eta\in S}P^{i}(\theta,\eta)|f(\eta)|\sqrt{\estart{\left(g\left(\Sigma_{j-i}\right)-g\left(Z_\eta\right)\right)^2}{\eta}\prstart{A}{\eta}} \lesssim \sum_{\eta\in S}P^{i}(\theta,\eta) \sqrt{\prstart{A}{\eta}},
\end{align*}
where the inequality follows by Cauchy-Schwartz and the implied constants come from the bounds on~$f$ and $g$. Using~\cite[eq.~(4.34)]{MixingBook} we immediately deduce
\[
\prstart{A}{\eta}=\|\pi(\cdot)-P^{j-i}(\eta,\cdot)\|_{\text{TV}}
\leq 2^{-\frac{j-i}{t_{\text{mix}}}}
\]
and this concludes the proof.\end{proof}

Before giving the next result we define $\prstart{\cdot}{T_0}=\prcond{\cdot}{\mathcal{B}_{K}\left(\rho\right)=T_0}{}$ to be the probability measure conditional on the first $K$ levels of $T$ being equal to $T_0$ and $\mathbb{E}_{T_0}$ and $\mathrm{Var}_{T_0}$ be the corresponding expectation and variance. 

We next give a concentration result on the number of regeneration times which occur before some fixed level. This will allow us to show the convergence of the speed of a simple random walk on $T$. 

\begin{lemma}\label{lemma:ConcentrationPhi}
Let $T$ be a tree with root $\rho$ as in Definitions~\ref{def:MTGWTree} or~\ref{def:2TGWTree} and let $\sigma_i$ and $\phi_i$ be as in Definition \ref{def:RegTimes}. Fix $K>0$ and let $T_0$ be realisation of the first $K$ levels of $T$. For each $\ell\in \mathbb{N}$ let
$$N_\ell =\max\left\{i\ge 0: \phi_i\le \ell+K\right\} $$ be the number of regeneration times occurring before level $\ell+K.$
Then for all $\varepsilon>0$ there exists a sufficiently large constant $C$ such that for all $k\ge K^2$ $$\prcond{\left|N_k-\frac{k}{\estart{\phi_2-\phi_1}{\pi_\Sigma}}\right|> C\left(\sqrt{\frac{k}{\alpha}}+\frac{1}{\alpha}\right)}{\mathcal{B}_K\left(\rho\right)=T_0}{}\le \varepsilon,$$ where ${\pi_{\Sigma}}$ is the stationary distribution from Definition~\ref{def:MCRegTypes} and where $\estart{\cdot}{\pi_\Sigma}$ is the expectation when~$X_{\sigma_1}\sim\pi_\Sigma$.
\end{lemma}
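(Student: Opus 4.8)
\textbf{Proof plan for Lemma~\ref{lemma:ConcentrationPhi}.}

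The plan is to express $N_k$ (approximately) as a functional of the regeneration types Markov chain $\Sigma$ and then apply an $L^2$-concentration argument driven by the decorrelation estimate of Lemma~\ref{lemma:Decorrelation}, using the bound $t_{\mathrm{mix}}^{\mathrm{TV},\Sigma}\lesssim 1/\alpha$ from Proposition~\ref{thrm:MixingOfW}. First I would set up the machinery: conditionally on $\mathcal{B}_K(\rho)=T_0$, by Lemma~\ref{lemma:RegIndep} the increments $(\phi_{i}-\phi_{i-1})_{i\ge 1}$, together with the pieces of the tree and walk between consecutive regeneration times, are distributed according to a "Markov chain decomposition" governed by $\Sigma$: the distribution of $\phi_{i+1}-\phi_i$ depends on the past only through $\Theta(X_{\sigma_i})$, and $\Theta(X_{\sigma_1})$ has some initial law. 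So write $D_i := \phi_i-\phi_{i-1}$ and observe that, with $g(\theta) := \estart{D \mid \Theta(X_{\sigma_1})=\theta}{}-\estart{\phi_2-\phi_1}{\pi_\Sigma}$ having $\pi_\Sigma$-mean zero, the partial sums $S_r := \phi_r - \phi_0 = \sum_{i=1}^r D_i$ satisfy $S_r \approx r\cdot \estart{\phi_2-\phi_1}{\pi_\Sigma} + (\text{fluctuation})$. Since the $D_i$ have exponential tails uniformly (Lemma~\ref{lemma:RegIndep}), all moments are controlled.

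The core estimate is a variance bound: I would show that for $r \ge K^2$ (or rather, matching the hypothesis, for the relevant range of $r$),
\[
\mathrm{Var}_{T_0}(S_r) \lesssim \frac{r}{\alpha} + \frac{1}{\alpha^2}.
\]
This follows by writing $S_r - \E{S_r} = \sum_{i=1}^r (D_i - \E{D_i})$, decomposing each centered increment into the "predictable part" $g(\Theta(X_{\sigma_{i-1}}))$ plus a martingale-difference-type remainder, and expanding the square. The cross terms $\estart{g(\Sigma_i)g(\Sigma_j)}{}$ for $i<j$ are bounded by $C\cdot 2^{-(j-i)/(2t_{\mathrm{mix}})} = C\cdot 2^{-c(j-i)\alpha}$ by Lemma~\ref{lemma:Decorrelation}; summing the geometric series gives $\sum_{i<j} 2^{-c(j-i)\alpha} \lesssim r/\alpha$, which contributes the $r/\alpha$ term. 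The diagonal and remainder terms contribute $O(r)\lesssim r/\alpha$ as well (since $\alpha \le 1$). The initial-distribution discrepancy — the fact that $\Theta(X_{\sigma_1})$ is not distributed as $\pi_\Sigma$ but the target uses $\estart{\phi_2-\phi_1}{\pi_\Sigma}$ — introduces a bias of size $O(t_{\mathrm{mix}}) = O(1/\alpha)$ in the mean (the chain takes $O(t_{\mathrm{mix}})$ steps to equilibrate), which is why the $1/\alpha$ appears inside the deviation and the $1/\alpha^2$ inside the variance. Then Chebyshev converts the variance bound into: $|S_r - r\estart{\phi_2-\phi_1}{\pi_\Sigma}| \le C_\varepsilon(\sqrt{r/\alpha}+1/\alpha)$ with probability $\ge 1-\varepsilon$.

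Finally I would transfer the concentration of $S_r = \phi_r$ (a sum indexed by number of regenerations) to concentration of $N_k$ (number of regenerations before level $k+K$) by the standard inversion: $\{N_k \ge r\} = \{\phi_r \le k+K\}$, so $N_k$ concentrates around $k/\estart{\phi_2-\phi_1}{\pi_\Sigma}$ with a window of the same order $\sqrt{k/\alpha}+1/\alpha$ — here one uses that $\estart{\phi_2-\phi_1}{\pi_\Sigma}\asymp 1$ (the expected regeneration spacing is bounded above and below by constants, since increments have exponential tails and are at least $1$), so the deterministic scaling factor does not distort the window order, and one applies the bound at $r_\pm$ equal to $k/\estart{\phi_2-\phi_1}{\pi_\Sigma}$ shifted by a constant multiple of $\sqrt{k/\alpha}+1/\alpha$, checking the two one-sided inclusions. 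The condition $k \ge K^2$ is used to ensure the $\sqrt{k/\alpha}$ term dominates the $O(K)$-type errors coming from the conditioning on the first $K$ levels (the first regeneration beyond level $K$ occurs within $O(K)$ extra levels in expectation, with exponential tails).

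\textbf{Main obstacle.} I expect the delicate point to be the bookkeeping around the non-stationary start: carefully isolating the $1/\alpha$ bias from the initial distribution of $\Theta(X_{\sigma_1})$ relative to $\pi_\Sigma$, and ensuring that the telescoping/martingale decomposition of $D_i - \E{D_i}$ is clean enough that Lemma~\ref{lemma:Decorrelation} applies with the right functions $f,g$ (in particular that the remainder terms are genuinely mean-zero given the past and have the required boundedness after truncating the exponential tails). The geometric-sum-to-$r/\alpha$ step is routine once the decorrelation input is correctly invoked; the transfer from $\phi_r$ to $N_k$ is standard. Everything else is moment estimates using the exponential tails from Lemma~\ref{lemma:RegIndep}.
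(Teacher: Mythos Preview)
Your proposal is correct and follows essentially the same route as the paper: a second-moment bound on $\phi_r - r\,\estart{\phi_2-\phi_1}{\pi_\Sigma}$ via Lemma~\ref{lemma:Decorrelation} and the mixing bound $t_{\mathrm{mix}}^{\mathrm{TV},\Sigma}\lesssim 1/\alpha$, followed by Chebyshev and the standard inversion $\{N_k>\ell\}=\{\phi_\ell\le k+K\}$. The paper's argument is a touch more direct than your outlined decomposition: instead of splitting each increment into a predictable part $g(\Theta(X_{\sigma_{i-1}}))$ plus a martingale remainder and tracking a separate $O(1/\alpha)$ bias, it simply expands the square and, for the cross term with $i<j$, conditions on $(\Sigma_i,\Sigma_{j-1})$ so that (by Lemma~\ref{lemma:RegIndep}) the factor $\zeta_i-\mu$ and $\zeta_j-\mu$ become conditionally independent, yielding $\estart{f(\Sigma_i)g(\Sigma_{j-1})}{T_0}$ to which Lemma~\ref{lemma:Decorrelation} applies directly. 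This gives the clean bound $\lesssim k/\alpha$ for the full second moment in one stroke (the bias is absorbed automatically, since $\sum_{i\le k}2^{-c\alpha i}\le\min(k,1/\alpha)$ squares to at most $k/\alpha$), so there is no need for your $1/\alpha^2$ term or the martingale bookkeeping you flagged as the main obstacle.
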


{\proof Set $\zeta_i=\phi_i-\phi_{i-1}$ for $i\ge 2$ and $\zeta_1=\phi_1$. 
We will first show that
\[\estart{\left(\sum_{i=1}^k\l\zeta_i-k\estart{\zeta_2}{\pi_{\Sigma}}\right)^2}{T_0}\lesssim \frac{k}{\alpha}. 
\] 
In fact the left hand side above is equal to 
\[\sum_{i\le k}\estart{\left(\zeta_i-\estart{\zeta_2}{\pi_{\Sigma}}\right)^2}{T_0}+2\sum_{i<j\le k}\estart{\left(\zeta_j-\estart{\zeta_2}{\pi_{\Sigma}}\right)\left(\zeta_i-\estart{\zeta_2}{\pi_{\Sigma}}\right)}{T_0}, 
\]
and as the $\zeta_i$'s have exponential tails by Lemma~\ref{lemma:RegIndep}, we get that the first term above is $\lesssim k.$ To bound the second term notice that for $i<j$ 
\begin{align*}
&\estart{\left(\zeta_j-\estart{\zeta_2}{\pi_{\Sigma}}\right)\left(\zeta_i-\estart{\zeta_2}{\pi_{\Sigma}}\right)}{T_0}=\estart{\escond{\left(\zeta_j-\estart{\zeta_2}{\pi_{\Sigma}}\right)\left(\zeta_i-\estart{\zeta_2}{\pi_{\Sigma}}\right)}{\Sigma_i,\Sigma_{j-1}}{T_0}}{T_0}\\
&=\estart{\escond{\left(\zeta_j-\estart{\zeta_2}{\pi_{\Sigma}}\right)}{\Sigma_{j-1}}{T_0}\escond{\left(\zeta_i-\estart{\zeta_2}{\pi_{\Sigma}}\right)}{\Sigma_i}{T_0}}{T_0}, 
\end{align*}
 where the last equality holds by Lemma~\ref{lemma:RegIndep}.
We define the functions \[f(\theta)=\escond{\left(\zeta_i-\estart{\zeta_2}{\pi_{\Sigma}}\right)}{\Sigma_i=\theta}{T_0} \ \text{ and } \ 
  g(\theta)=\escond{\left(\zeta_j-\estart{\zeta_2}{\pi_{\Sigma}}\right)}{ \Sigma_{j-1}=\theta}{T_0}.\] Notice that $f$ and $g$ are bounded, since $\zeta_i$ have exponential tails, and hence we can now apply Lemma~\ref{lemma:Decorrelation} to deduce for a positive constant $c$
  \begin{align*}
        \estart{\escond{\left(\zeta_j-\estart{\zeta_2}{\pi_{\Sigma}}\right)}{\Sigma_{j-1}}{T_0}\escond{\left(\zeta_i-\estart{\zeta_2}{\pi_{\Sigma}}\right)}{\Sigma_i}{T_0}}{T_0} = \estart{f(\Sigma_i)g(\Sigma_{j-1})}{T_0}\lesssim 2^{-c\alpha(j-i)}. 
                \end{align*}
        Therefore,  
        \begin{align}\label{eq:boundformarkov}
        \estart{\left(\sum_{i=1}^k\l\zeta_i-k\estart{\zeta_2}{\pi_{\Sigma}}\right)^2}{T_0}\lesssim k+\sum_{i<j\le k}2^{-c\alpha(j-i)}\lesssim  k+k\sum_{s\le k} 2^{-c\alpha s}\lesssim \frac{k}{\alpha}.
        \end{align}
        We now let 
        \[\ell=  \left\lfloor\frac{k}{\estart{\phi_2-\phi_1}{\pi_{\Sigma}}} +C\left(\sqrt{\frac{k}{\alpha}}+\frac{1}{\alpha}\right)\right\rfloor.
        \]
By definition of $N_k$ and $\ell$, taking $C$ large enough and using the assumption on $K$ we obtain
\begin{align*}
\prstart{N_k>\ell}{T_0}&=\prstart{\sum_{i=1}^\ell  \zeta_i<k+K}{T_0}\\&\le \prstart{\sum_{i=1}^\ell  \zeta_i-\ell\estart{\zeta_2}{\pi_{\Sigma}}< -C\estart{\zeta_2}{\pi_{\Sigma}}\left(\sqrt{\frac{k}{\alpha}}+\frac{1}{\alpha}\right)+K}{T_0}\\
&\leq \prstart{\left|\sum_{i=1}^\ell  \zeta_i-\ell \estart{\zeta_2}{\pi_{\Sigma}}\right|\ge \frac{C\estart{\zeta_2}{\pi_\Sigma} }{2}\left(\sqrt{\frac{k}{\alpha}}+\frac{1}{\alpha}\right)}{T_0} \leq \varepsilon,
\end{align*}
where for the last inequality we used Markov's inequality together with~\eqref{eq:boundformarkov}.
 Similarly it follows that
 \[
 \prstart{N_k<\left\lfloor\frac{k}{\estart{\phi_2-\phi_1}{\pi_{\Sigma}}} -C\left(\sqrt{\frac{k}{\alpha}}+\frac{1}{\alpha}\right)\right\rfloor}{T_0}<\varepsilon
 \]
 and this concludes the proof.
  \qed

We prove the following lemma in Appendix~\ref{appendixA}
\begin{lemma}\label{lemma:Speed}
Let $X$ be a simple random walk on $T$. Then for $\nu=\frac{\estart{\phi_2-\phi_1}{{\pi_{\Sigma}}}}{\estart{\sigma_2-\sigma_1}{{\pi_{\Sigma}}}}$ almost surely 
\[\frac{d\left(\rho,X_t\right)}{t}\to \nu \,\,\,\,\mathrm{as}\,\, t\to \infty. 
\]
Moreover, for all $\varepsilon>0$ there is a positive constant $C$ so that for all $t$ sufficiently large
\begin{align*}&\pr{|d\left(\rho,X_t\right)-\nu t|\ge C\left(\sqrt{\frac{t}{\alpha}}+\frac{1}{\alpha}\right)}\leq \varepsilon\quad \text{and}\\& \pr{\sup_{s:s\le t}d\left(\rho,X_s\right)>\nu t+2C\left(\sqrt{\frac{t}{\alpha}}+\frac{1}{\alpha}\right)}\le \varepsilon. 
\end{align*}
\end{lemma}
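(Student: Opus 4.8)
The plan is to transfer the concentration statement for $N_\ell$ (Lemma~\ref{lemma:ConcentrationPhi}) into a concentration statement for $d(\rho,X_t)$ via the regeneration structure, and to handle the almost sure convergence by the renewal/ergodic argument applied to the Markov-modulated sums $\sigma_i-\sigma_{i-1}$ and $\phi_i-\phi_{i-1}$. First I would recall from Lemma~\ref{lemma:RegIndep} that the increments $(\sigma_i-\sigma_{i-1})_{i\ge 1}$ and $(\phi_i-\phi_{i-1})_{i\ge 1}$ are functions of an underlying ergodic Markov chain (the types chain $\Sigma$, together with the i.i.d.-given-types pieces $\mathcal{Z}_{\theta_1,\theta_2}$), so that by the Markov chain ergodic theorem $\sigma_k/k\to \estart{\sigma_2-\sigma_1}{\pi_\Sigma}$ and $\phi_k/k\to \estart{\phi_2-\phi_1}{\pi_\Sigma}$ almost surely. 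Since $\phi_{N_t}\le d(\rho,X_t)$ is not quite right — rather, between $\sigma_i$ and $\sigma_{i+1}$ the walk stays at distance $\ge \phi_i - (\text{bounded excursion depth})$ from $\rho$; more precisely $d(\rho,X_t)\ge \phi_{N'_t}$ where $N'_t=\max\{i:\sigma_i\le t\}$ because after a regeneration time the walk never backtracks past the regeneration edge. Combining $\sigma_{N'_t}\le t<\sigma_{N'_t+1}$ with the two ergodic limits and the fact that $\sigma_{i+1}-\sigma_i$ has exponential tails (so $\sigma_{N'_t+1}/\sigma_{N'_t}\to 1$ a.s.), one gets $N'_t/t\to 1/\estart{\sigma_2-\sigma_1}{\pi_\Sigma}$ and hence $\phi_{N'_t}/t\to \estart{\phi_2-\phi_1}{\pi_\Sigma}/\estart{\sigma_2-\sigma_1}{\pi_\Sigma}=\nu$ a.s.; the matching upper bound $d(\rho,X_t)\le \phi_{N'_t}+\max_{i\le N'_t+1}(\text{excursion depth in }[\sigma_i,\sigma_{i+1}])$ together with a Borel–Cantelli argument (the depths have exponential tails, so the max over $i\le t$ is $O(\log t)=o(t)$) pins down the a.s.\ limit.

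For the quantitative statement I would argue as follows. Fix $\varepsilon>0$. By Lemma~\ref{lemma:ConcentrationPhi} (applied with $K$ a fixed constant, e.g.\ $K=1$, so $k\ge K^2$ is automatic for large $t$), with probability $\ge 1-\varepsilon/3$ we have $|N_\ell - \ell/\estart{\phi_2-\phi_1}{\pi_\Sigma}|\le C'(\sqrt{\ell/\alpha}+1/\alpha)$ for a given level $\ell$; I want the dual statement controlling $\phi_i$ in terms of $i$, namely that with high probability $|\phi_i - i\,\estart{\phi_2-\phi_1}{\pi_\Sigma}|\lesssim \sqrt{i/\alpha}+1/\alpha$ — this is exactly inequality \eqref{eq:boundformarkov} combined with Markov's inequality, which is already in the proof of Lemma~\ref{lemma:ConcentrationPhi}. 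Running the identical Markov-chain-decorrelation computation for $(\sigma_i-\sigma_{i-1})$ in place of $(\phi_i-\phi_{i-1})$ — the increments also have exponential tails by Lemma~\ref{lemma:RegIndep} — gives $\estart{(\sigma_k - k\estart{\sigma_2-\sigma_1}{\pi_\Sigma})^2}{T_0}\lesssim k/\alpha$, hence $|\sigma_i - i\,\estart{\sigma_2-\sigma_1}{\pi_\Sigma}|\lesssim \sqrt{i/\alpha}+1/\alpha$ w.h.p. Now choose $i_\pm = t/\estart{\sigma_2-\sigma_1}{\pi_\Sigma}\pm C''(\sqrt{t/\alpha}+1/\alpha)$; on the good event, $\sigma_{i_-}\le t\le \sigma_{i_+}$, so $\phi_{i_-}\le d(\rho,X_t)$ up to the bounded-excursion correction and $d(\rho,X_t)\le \phi_{i_+}+O(\log t)$; plugging the concentration of $\phi_{i_\pm}$ around $i_\pm\estart{\phi_2-\phi_1}{\pi_\Sigma}$ and simplifying (using $\sqrt{i_\pm/\alpha}\asymp\sqrt{t/\alpha}$ since $\alpha\le 1$ and $i_\pm\asymp t$ for large $t$) yields $|d(\rho,X_t)-\nu t|\le C(\sqrt{t/\alpha}+1/\alpha)$ with probability $\ge 1-\varepsilon$, after absorbing the $O(\log t)$ into the $\sqrt{t/\alpha}$ term. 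Averaging over $T_0$ removes the conditioning.

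For the running-maximum bound, note that $\sup_{s\le t} d(\rho,X_s)\le \phi_{N'_t+1} + \max_{i\le N'_t+1}(\text{excursion depth})$ and $N'_t+1\le i_+$ on the good event, so the same estimate gives $\sup_{s\le t}d(\rho,X_s)\le \nu t + 2C(\sqrt{t/\alpha}+1/\alpha)$ with the factor $2$ absorbing the excursion correction and the slack from $i_+$ versus $t/\estart{\sigma_2-\sigma_1}{\pi_\Sigma}$. The main obstacle I expect is bookkeeping: carefully justifying the deterministic sandwich $\phi_{N'_t}\le d(\rho,X_t)\le \phi_{N'_t+1}+(\text{excursion depth})$ from the definition of regeneration times (the walk crosses a regeneration edge for the last time at $\sigma_i$, so it is confined to $T(X_{\sigma_i})$ afterwards, which sits at distance $\ge\phi_i$), and controlling the tails of the within-block excursion depths uniformly — both of which follow from Lemma~\ref{lemma:RegIndep} and Lemma~\ref{lemma:ReturnProb} but require a clean statement. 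Everything else is a repetition of the second-moment/Markov-chain-decorrelation argument already carried out for $\phi$.
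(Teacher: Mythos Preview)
Your proposal is correct. For the first two claims you spell out in detail what the paper summarises as ``follow easily from Lemma~\ref{lemma:ConcentrationPhi}''; in particular your observation that the second-moment/decorrelation bound \eqref{eq:boundformarkov} must be rerun verbatim with $(\sigma_i-\sigma_{i-1})$ in place of $(\phi_i-\phi_{i-1})$ is exactly the step the paper leaves implicit, and your sandwich $\phi_{N'_t}\le d(\rho,X_t)\le \phi_{N'_t+1}+(\text{excursion depth})$ is the right way to close it.

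For the running-maximum bound the paper takes a different and slightly shorter route. Instead of bounding $\sup_{s\le t}d(\rho,X_s)$ via $\phi_{N'_t+1}$ plus a max-over-blocks excursion correction, it first uses the already-proved concentration to discard the event $\{d(\rho,X_t)\ge \nu t+C(\sqrt{t/\alpha}+1/\alpha)\}$ at cost $\varepsilon$; on the complement, any time $s\le t$ with $d(\rho,X_s)>\nu t+2C(\sqrt{t/\alpha}+1/\alpha)$ forces the walk to backtrack at least $C(\sqrt{t/\alpha}+1/\alpha)$ levels between $s$ and $t$, which by the uniform transience of Lemma~\ref{lemma:ReturnProb} has probability at most $e^{-c(\sqrt{t/\alpha}+1/\alpha)}$. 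A union bound over $s\le t$ gives $t\cdot e^{-c(\sqrt{t/\alpha}+1/\alpha)}\le \varepsilon$ for large $t$. Your regeneration-block approach also works but requires the extra (routine) control on $\max_{i\lesssim t}(\sigma_{i+1}-\sigma_i)$ via exponential tails plus a union bound; the paper's backtracking argument sidesteps this bookkeeping entirely and uses only the already-established distance concentration together with Lemma~\ref{lemma:ReturnProb}.
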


 We now state a lemma which provides a crucial bound for studying the fluctuations of the entropy of loop erased random walk. The proof of the bound on the moments of $Y_k$ follows in the same way as in~\cite[Lemma~3.14]{PerlasPaper}, but the proof of the variance bound is substantially different, as in our case we need to condition on the types of the regeneration vertices. To do this, we use the results of Section~\ref{secMixingW}. We give the proof in Appendix~\ref{appendixA}. 

\begin{lemma} \label{lemma:EandVar}
There exist positive constants $\left(C_\ell \right)_{\ell  \ge 1}$ and $C'$ so that the following hold: let T be a multi-type \random tree with root $\rho$ as in Definition \ref{def:MTGWTree} or \ref{def:2TGWTree}. Fix $K\ge 0$ and let $T_0$ be a realisation of the first $K$ levels of $T$. Let $X$ be a simple random walk on $T$ started from $\rho$ and let $\widetilde{\xi}$ be an independent loop erased random walk on $T$. For $k\ge 1$ define 
\[Y_k=-\log \prcond{\left(X_{\sigma_k-1},X_{\sigma_k}\right)\in \widetilde{\xi}}{X,T}{}+\log\prcond{\left(X_{\sigma_{k-1}-1},X_{\sigma_{k-1}}\right)\in \widetilde{\xi}}{X,T}{}. \]
Then, for $\theta \in \left\{1,\ldots, m\right\}$, the distribution of $Y_i$ conditioned on $\Theta(X_{\sigma_{i-1}})=\theta$ is same as the distribution of $Y_j$ conditioned on $\Theta(X_{\sigma_{j-1}})=\theta$. The sequence $\left(Y_k\right)_{k\geq 2}$ is also independent of $\mathcal{B}_K\left(\rho\right)$ conditional on the type of the vertex $X_{\sigma_1}$. Moreover, for all $l\ge 1$ 
\begin{equation}\label{eq:MomentSigma0}
\econd{\left(-\log\prcond{\left(X_{\sigma_0-1},X_{\sigma_0}\right)\in \widetilde{\xi}}{ X,T \right)}{}^\ell }{ \mathcal{B}_K\left(\rho\right)=T_0}{}\leq C_\ell K^\ell   \end{equation} and for all $k\ge 1$ 
\begin{equation}\label{eq:MomentY}
\econd{\left(Y_k\right)^\ell}{\mathcal{B}_K\left(\rho\right)=T_0}{}\leq C_\ell .\end{equation} In addition, there exists a positive constant $C'$ so that for all $k\geq 1$ we have 
\begin{equation}\label{eq:VarY}
\econd{\left(\sum_{i=1}^k(Y_i-\estart{Y_2}{\pi_{\Sigma}})\right)^2}{ \mathcal{B}_K\left(\rho\right)=T_0}{}\leq \frac{C'k}{\alpha}, \end{equation} 
where $\estart{\cdot}{\pi_\Sigma}$ is the expectation when $X_{\sigma_1}\sim \pi_\Sigma$.\end{lemma}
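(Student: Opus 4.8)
The distributional identity for $Y_i$ conditionally on $\Theta(X_{\sigma_{i-1}})$, and the conditional independence of $(Y_k)_{k\ge 2}$ from $\mathcal{B}_K(\rho)$, follow from Lemma~\ref{lemma:RegIndep} in exactly the same way as in \cite[Lemma~3.14]{PerlasPaper}, the only change being that one now also conditions on the type of the regeneration vertex (equivalently the type of the regeneration edge). The moment bounds \eqref{eq:MomentSigma0} and \eqref{eq:MomentY} also follow as in \cite[Lemma~3.14]{PerlasPaper}: uniform transience (Lemma~\ref{lemma:ReturnProb}) gives that along the path from $\rho$ to $X_{\sigma_0}$ each edge contributes an amount bounded above and below to $-\log\pr{(X_{\sigma_0-1},X_{\sigma_0})\in\widetilde\xi\mid X,T}$, and since $X_{\sigma_0}$ is at depth $K$ this yields moments $\lesssim K^\ell$; likewise $Y_k$ is comparable, with exponential tails, to the number of edges a loop-erased walk must traverse between the two consecutive regeneration edges (i.e.\ to $\phi_k-\phi_{k-1}$ for $k\ge 2$ and to $\phi_1-K$ for $k=1$), which is uniformly exponentially tailed by Lemma~\ref{lemma:RegIndep}, giving \eqref{eq:MomentY} including the case $k=1$.

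\textbf{Setup for the variance bound.} The bulk of the work is \eqref{eq:VarY}, and this is where the argument departs from \cite{PerlasPaper}: $Y_i$ is \emph{not} a function of the $i$-th regeneration piece alone. Indeed $-\log\pr{(X_{\sigma_i-1},X_{\sigma_i})\in\widetilde\xi\mid X,T}+\log\pr{(X_{\sigma_{i-1}-1},X_{\sigma_{i-1}})\in\widetilde\xi\mid X,T}$ equals $-\log$ of the probability that a loop-erased walk from $X_{\sigma_{i-1}}$ conditioned never to hit its parent escapes to infinity through the subtree $T(X_{\sigma_i})$, so $Y_i$ depends on the piece $T(X_{\sigma_{i-1}})\setminus T(X_{\sigma_i})$ \emph{and} on the effective conductance to infinity of $T(X_{\sigma_i})$. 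The plan is to write $Y_i=h_i+R_i$, where $h_i=\econd{Y_i}{\mathcal{A}}$ with $\mathcal{A}$ the $\sigma$-algebra generated by $\mathcal{B}_K(\rho)$ together with the whole sequence of types of the regeneration edges, and $R_i=Y_i-h_i$. Writing $g_i=Y_i-\estart{Y_2}{\pi_\Sigma}$, since $h_i$ is $\mathcal{A}$-measurable and $\econd{R_j}{\mathcal{A}}=0$ the cross terms vanish, so $\estart{g_ig_j}{T_0}=\estart{(h_i-\estart{Y_2}{\pi_\Sigma})(h_j-\estart{Y_2}{\pi_\Sigma})}{T_0}+\estart{R_iR_j}{T_0}$, and it suffices to bound each summand.

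\textbf{Bounding the two parts.} For the $R$-part, conditionally on $\mathcal{A}$ the regeneration pieces are independent (Lemma~\ref{lemma:RegIndep}); and an electrical-network contraction estimate — each additional "layer" of the tree contracts the effective conductance seen from above by a factor bounded away from $1$, uniformly in the tree, by the degree lower bound \eqref{eq:mBranchDeg}/\eqref{eq:BranchDeg} and Lemma~\ref{lemma:ReturnProb} — shows that $Y_i$ depends on the piece crossed between the $(j-1)$-st and $j$-th regenerations only up to an error $\lesssim\rho^{\,j-i}$ for a fixed $\rho\in(0,1)$. A Cauchy--Schwarz argument then gives $\estart{R_iR_j}{T_0}\lesssim\rho^{\,j-i}$ and $\estart{R_i^2}{T_0}\lesssim 1$, so $\sum_{i\le k}\estart{R_i^2}{T_0}+2\sum_{i<j\le k}\estart{R_iR_j}{T_0}\lesssim k$. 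For the $h$-part, $h_i$ is a bounded function of the type sequence (by the distributional identity and Lemma~\ref{lemma:RegIndep}) which, by the same contraction, is within $\lesssim\rho^{\,m}$ of a function of the types in the window $\{i-1,\dots,i+m\}$, and $\estart{h_i}{\pi_\Sigma}=\estart{Y_2}{\pi_\Sigma}$. Proceeding now exactly as in the proof of Lemma~\ref{lemma:ConcentrationPhi} — expand the square, use the conditional-expectation factorisation from Lemma~\ref{lemma:RegIndep}, and apply the decorrelation Lemma~\ref{lemma:Decorrelation} to (a bounded-window extension of) the types chain $\Sigma$, whose total variation mixing time is $\lesssim 1/\alpha$ by Lemma~\ref{lemma:2comMixTypes} and Proposition~\ref{thrm:MixingOfW} — one obtains $\estart{(h_i-\estart{Y_2}{\pi_\Sigma})(h_j-\estart{Y_2}{\pi_\Sigma})}{T_0}\lesssim 2^{-c\alpha(j-i)}$, after choosing $m\asymp j-i$ and using $\alpha\le 1$ to absorb the $\rho^{\,m}$ errors. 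Summing, $\sum_{i<j\le k}2^{-c\alpha(j-i)}\lesssim k\sum_{s\ge1}2^{-c\alpha s}\lesssim k/\alpha$, and combining the two parts yields \eqref{eq:VarY}.

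\textbf{Main obstacle.} The delicate step is precisely the $R$-part: one must control the long-range dependence of $Y_i$ on the entire subtree below the $i$-th regeneration edge and show that, unlike the dependence carried by the type sequence, this dependence decorrelates at a rate that does \emph{not} involve $\alpha$, so that the $1/\alpha$ factor comes only from the slowly mixing chain $\Sigma$. Making the electrical-network contraction estimate uniform over the random tree, using only \eqref{eq:mBranchDeg}/\eqref{eq:BranchDeg} and Lemma~\ref{lemma:ReturnProb}, and then turning it into the quantitative covariance bounds $\estart{R_iR_j}{T_0}\lesssim\rho^{\,j-i}$ and the window approximation for $h_i$, is the part requiring the most care; the remaining combinatorics are a bookkeeping adaptation of the proofs of Lemma~\ref{lemma:ConcentrationPhi} and \cite[Lemma~3.14]{PerlasPaper}.
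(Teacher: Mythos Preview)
Your approach is correct and uses the same two key ingredients as the paper --- an exponential ``finite-horizon'' truncation coming from uniform transience, and the decorrelation Lemma~\ref{lemma:Decorrelation} applied to the types chain $\Sigma$ whose mixing time is $\lesssim 1/\alpha$ --- but you organise them differently.  You first split $Y_i=h_i+R_i$ with $h_i=\econd{Y_i}{\mathcal A}$, then (i) truncate $R_i$ to a function of finitely many regeneration pieces and use conditional independence of pieces given $\mathcal A$ to kill the covariance, and (ii) truncate $h_i$ to a function of a finite window of types and apply the decorrelation lemma to a block version of $\Sigma$.  The paper instead performs a \emph{single} truncation: it replaces $Y_i$ by $Y_{i,j}$, the same quantity computed from a loop-erased walk stopped when it first reaches the level of $X_{\sigma_{\lfloor (i+j)/2\rfloor}}$.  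Because $Y_{i,j}$ is measurable with respect to the tree and walk up to that midpoint regeneration, it is conditionally independent of $Y_j$ given the \emph{single} type $\Theta(X_{\sigma_{\lfloor (i+j)/2\rfloor}})$; one then factors $\estart{(Y_{i,j}-\mu)(Y_j-\mu)}{T_0}=\estart{f(\Sigma_{\lfloor(i+j)/2\rfloor})\,g(\Sigma_{j-1})}{T_0}$ and applies Lemma~\ref{lemma:Decorrelation} directly.  The truncation error $|Y_i-Y_{i,j}|$ is controlled exactly by your ``contraction'' (the probability the generating walk returns to $X_{\sigma_i}$ after reaching the midpoint level is $\le e^{-c(j-i)}$).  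So the paper collapses your two-step decomposition into one: the single truncation simultaneously localises in the tree \emph{and} reduces the conditioning to a single state of $\Sigma$, avoiding the need for a block chain and a separate treatment of $h$ and $R$.  Your route buys a cleaner conceptual separation (the $1/\alpha$ comes only from the $h$-part, the $R$-part is $O(k)$), at the cost of an extra layer of approximation; the paper's route is shorter but mixes the two sources of decorrelation in a single estimate $\lesssim 2^{-c\alpha(j-i)}+e^{-c(j-i)}$.  One small point to tighten in your write-up: to apply Lemma~\ref{lemma:Decorrelation} you need the window for $h_i$ to end strictly before that of $h_j$ begins, so $m$ should be taken $\asymp (j-i)/2$ rather than $\asymp j-i$; and the boundedness of $h_i$ (needed for the decorrelation lemma) requires that the exponential tail of $\phi_i-\phi_{i-1}$ in Lemma~\ref{lemma:RegIndep} is uniform over the conditioning types, which it is.
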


We conclude this section by obtaining the entropy result for the loop erased random walk. The proof follows similarly as in~\cite[Lemma~3.15]{PerlasPaper} and it is given in Appendix~\ref{appendixA} 

\begin{prop} 
\label{prop:LERWLimit}
Let T be a multi-type \random tree and let $\xi$ and $\widetilde{\xi}$ be two independent loop erased random walks on $T$ both started from the root. Then there exists a positive constant $\mathfrak{h}$ so that almost surely $$\frac{-\log\prcond{\xi_k\in \widetilde{\xi}}{ T,\xi}{}}{k}\to \mathfrak{h} \quad \text{as}\quad k\to \infty.$$ Fix $K>0$ and let $T_0$ be a realisation of the first $K$ levels of $T$. For all $\varepsilon>0$, there exists a positive constant $C$ so that for all $k\ge K^2$ 
\[\prcond{\left|-\log\prcond{\xi_k\in\widetilde{\xi}}{T,\xi}{}-\mathfrak{h}k\right| > C\left(\sqrt{\frac{k}{\alpha}}+\frac{1}{\alpha}\right)}{\mathcal{B}_K\left(\rho\right)=T_0}{} \leq \varepsilon.
\]
\end{prop}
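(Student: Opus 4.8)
The plan is to follow the scheme of \cite[Lemma~3.15]{PerlasPaper}, transporting the increment-level concentration from Lemma~\ref{lemma:EandVar} to a fixed depth via the renewal structure. Fix a simple random walk $X$ on $T$ started from $\rho$, let $\xi$ be its loop erasure, and let $\widetilde{\xi}$ be an independent loop erasure. Each regeneration edge $(X_{\sigma_i-1},X_{\sigma_i})$ is traversed exactly once and never backtracked, so it lies on the ray $\xi$, and being the only edge of $\xi$ at depth $\phi_i:=d(\rho,X_{\sigma_i})$ it equals $\xi_{\phi_i}$. Since $\widetilde{\xi}$ is independent of $X$ given $T$, the function $e\mapsto -\log\prcond{e\in\widetilde{\xi}}{T}{}$ evaluated at any $(T,\xi)$-measurable edge agrees with $-\log\prcond{e\in\widetilde{\xi}}{T,X}{}$; in particular, writing
\[
S_k:=-\log\prcond{(X_{\sigma_k-1},X_{\sigma_k})\in\widetilde{\xi}}{T,X}{},
\]
we have $S_k=-\log\prcond{\xi_{\phi_k}\in\widetilde{\xi}}{T,\xi}{}$ and $S_k=S_0+\sum_{i=1}^kY_i$ with $Y_i$ as in Lemma~\ref{lemma:EandVar}. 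Moreover, on a tree the event $\{\xi_j\in\widetilde{\xi}\}$ forces every ancestor edge of $\xi_j$ along $\xi$ to lie on $\widetilde{\xi}$ as well, so $j\mapsto-\log\prcond{\xi_j\in\widetilde{\xi}}{T,\xi}{}$ is non-decreasing and, with $N_j$ the number of regeneration times of $X$ at depth at most $j$,
\[
S_{N_j}\ \le\ -\log\prcond{\xi_j\in\widetilde{\xi}}{T,\xi}{}\ \le\ S_{N_j+1}.
\]

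For the almost sure statement, by Lemmas~\ref{lemma:RegIndep} and~\ref{lemma:EandVar} the sequence $(Y_i)_{i\ge1}$ is a bounded-moment functional of the irreducible finite-state Markov chain $\Sigma$ of regeneration-edge types, so the ergodic theorem gives $\frac1k\sum_{i=1}^kY_i\to\estart{Y_2}{\pi_\Sigma}$ a.s.\ and likewise $\frac{\phi_k}{k}\to\estart{\phi_2-\phi_1}{\pi_\Sigma}$ a.s. As $S_0<\infty$ a.s., this yields $S_k/\phi_k\to\mathfrak{h}:=\estart{Y_2}{\pi_\Sigma}/\estart{\phi_2-\phi_1}{\pi_\Sigma}$, which is positive since $\{\xi_{\phi_2}\in\widetilde\xi\}\subsetneq\{\xi_{\phi_1}\in\widetilde\xi\}$ with positive probability. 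Since $N_j\to\infty$, $\phi_{N_j}/\phi_{N_j+1}\to1$ a.s.\ and $\phi_{N_j}\le j<\phi_{N_j+1}$, the sandwich then gives $-\log\prcond{\xi_j\in\widetilde{\xi}}{T,\xi}{}/j\to\mathfrak{h}$ a.s.

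For the quantitative bound, condition on $\mathcal{B}_K(\rho)=T_0$ throughout and fix $j\ge K^2$; put $m_0:=\lceil j/\estart{\phi_2-\phi_1}{\pi_\Sigma}\rceil$ and $\mathfrak{h}_*:=\estart{Y_2}{\pi_\Sigma}$. Chebyshev's inequality with the variance bound \eqref{eq:VarY} controls $|S_k-S_0-k\mathfrak{h}_*|$, and \eqref{eq:MomentSigma0} together with $K\le\sqrt k\le\sqrt{k/\alpha}$ (using $\alpha\le1$) controls $S_0$; a maximal version of \eqref{eq:VarY}, obtained from the Doob decomposition $Y_i=(Y_i-\econd{Y_i}{\mathcal{F}_{i-1}})+\econd{Y_i}{\mathcal{F}_{i-1}}$ by Doob's $L^2$ inequality for the martingale part and a maximal bound for the additive functional $\sum_i\econd{Y_i}{\mathcal{F}_{i-1}}$ of $\Sigma$ (the latter via Lemma~\ref{lemma:Decorrelation}), upgrades this to $\max_{k\le2m_0}|S_k-m_0\mathfrak{h}_*|\lesssim\sqrt{m_0/\alpha}$ off an event of probability at most $\varepsilon$. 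Lemma~\ref{lemma:ConcentrationPhi} (after the obvious shift of index) gives $|N_j-m_0|\le C(\sqrt{j/\alpha}+1/\alpha)$, hence $N_j+1\le2m_0$, off an event of probability at most $\varepsilon$. On the intersection of these events the sandwich gives
\[
\big|-\log\prcond{\xi_j\in\widetilde{\xi}}{T,\xi}{}-\mathfrak{h}j\big|\ \le\ \max_{k\le 2m_0}|S_k-m_0\mathfrak{h}_*|+\mathfrak{h}_*\,|N_j+1-m_0|+\mathfrak{h}_*\ \lesssim\ \sqrt{\tfrac{j}{\alpha}}+\tfrac1\alpha,
\]
and relabelling constants and error probabilities yields the claim.

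The step I expect to be the main obstacle is the last one: coping with the \emph{random} indices $N_j$ and $N_j+1$ in the sandwich. A crude union bound over the polynomially large likely range of $N_j$ destroys the probability estimate, so one genuinely needs a maximal inequality for $\sum_{i\le k}(Y_i-\mathfrak{h}_*)$ whose second moment scales like $k/\alpha$, uniformly in $k$. Because the $Y_i$ are \emph{Markov}-dependent rather than i.i.d.\ — which is exactly why the analysis of $\Sigma$ in Section~\ref{secMixingW} is needed — this is less routine than the corresponding step in \cite{PerlasPaper}; it can be pushed through by splitting $Y_i$ into its martingale and compensator parts as above and invoking Doob's inequality together with the decorrelation estimate of Lemma~\ref{lemma:Decorrelation} used in the same way as in the proof of Lemma~\ref{lemma:ConcentrationPhi}.
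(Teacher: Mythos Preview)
Your overall architecture matches the paper's: write $S_k=-\log\prcond{(X_{\sigma_k-1},X_{\sigma_k})\in\widetilde{\xi}}{T,X}{}=S_0+\sum_{i=1}^kY_i$, identify $S_k$ with $-\log\prcond{\xi_{\phi_k}\in\widetilde\xi}{T,\xi}{}$, get the a.s.\ limit by the ergodic theorem, and transfer from regeneration indices to fixed depth via the sandwich $S_{N_j}\le -\log\prcond{\xi_j\in\widetilde\xi}{T,\xi}{}\le S_{N_j+1}$ together with the concentration of $N_j$ from Lemma~\ref{lemma:ConcentrationPhi}.

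The one place you diverge from the paper is precisely the step you flag as ``the main obstacle''. You propose to control $\max_{k\le 2m_0}|S_k-k\mathfrak h_*|$ by a Doob decomposition plus a maximal inequality for the compensator. This is heavier than needed, and the sketch you give is not quite self-contained: Lemma~\ref{lemma:Decorrelation} only yields covariance decay, and turning that into a maximal bound of order $\sqrt{k/\alpha}+1/\alpha$ for the additive functional really requires something like the Poisson equation for $\Sigma$ combined with Doob's inequality; it can be made to work, but it is extra machinery. The paper sidesteps all of this with a one-line observation you already half-made: since each regeneration edge is an ancestor of the next one along the ray, $Y_i\ge 0$ deterministically, so $k\mapsto S_k$ is itself non-decreasing. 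Hence on the event $\{m_-\le N_j\le m_+\}$ supplied by Lemma~\ref{lemma:ConcentrationPhi} one has $S_{m_-}\le S_{N_j}\le -\log\prcond{\xi_j\in\widetilde\xi}{T,\xi}{}\le S_{N_j+1}\le S_{m_++1}$, and it suffices to apply Chebyshev with \eqref{eq:VarY} at the two \emph{deterministic} indices $m_-$ and $m_++1$. No maximal inequality is required.
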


\subsection{Truncation}\label{sec3}
Following~\cite{PerlasPaper} we define in this section the truncation event which will be used to define the coupling needed for the proof of the main result in case $\alpha \gg \frac{1}{\log n}$.
\begin{definition} \label{def:W} Let $e$ be an edge of $T$ and $\xi$ a loop erased random walk started from the root of $T$ as in Definition \ref{def:LoopErasedRW}. Define
\[ W_T\left(e\right)=-\log\prcond{e\in \xi}{T}{}.
\]
 For an edge $e=\left(x,y\right)$ with $d\left(\rho,x\right)\le d\left(\rho, y\right)$ write $\ell\left(e\right)=d\left(\rho,y\right).$ Define 
 \[\widetilde{W}_T\left(e\right)=-\log\prcond{\left(X_{\tau^{\ell\left(e\right)}-1}X_{\tau^{\ell\left(e\right)}}\right)=e}{T}{}, \] 
 where $X$ is a simple random walk on $T$ started from the root and \[\tau^{\ell\left(e\right)}=\inf\left\{t\ge 0: d\left(\rho, X_t\right)=\ell\left(e\right)\right\}.\]
\end{definition}
The proof of the following lemma follows in exactly the same way as~\cite[Lemma~4.3]{PerlasPaper} and therefore we omit it. 
\begin{lemma}\label{lemma:IneqW} There exists a positive constant $c$, such that for all realisations of $T$ and all edges $e$ of~$T$ we have $$W_T\left(e\right)\ge \widetilde{W}_T\left(e\right)-c. $$
\end{lemma}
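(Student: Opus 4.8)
The plan is to relate $W_T(e) = -\log\prcond{e \in \xi}{T}{}$ to $\widetilde{W}_T(e) = -\log\prcond{(X_{\tau^{\ell(e)}-1}, X_{\tau^{\ell(e)}}) = e}{T}{}$ by bounding the loop-erasure probability below in terms of the first-passage probability. Write $e = (x,y)$ with $d(\rho,x) = \ell(e) - 1$ and $d(\rho,y) = \ell(e)$, and let $\tau = \tau^{\ell(e)}$ be the first time the simple random walk $X$ reaches level $\ell(e)$. The key observation is that on the event $\{X_{\tau-1} = x, X_\tau = y\}$, the edge $e$ is crossed by $X$ for the first time at a moment when $X$ is at the frontier of its range (it has never been to level $\ell(e)$ before), so whether $e$ ends up in the loop erasure $\xi$ depends only on the future behaviour of the walk after time $\tau$: specifically, $e \in \xi$ as soon as the walk never returns to $x$ after time $\tau$ (equivalently, after crossing $e$ it escapes through $T(y)$ without backtracking to $x$). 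By the uniform transience bound in Lemma~\ref{lemma:ReturnProb}, $\prstart{\tau_{p(y)} \wedge \tau_y^+ = \infty}{y} \ge c$ for a universal constant $c>0$, and $p(y) = x$ here since $e$ leads away from the root.

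Concretely, I would write
\[
\prcond{e \in \xi}{T}{} \;\ge\; \prcond{X_{\tau-1} = x,\; X_\tau = y,\; X_t \neq x \ \forall t > \tau}{T}{}
\;=\; \prcond{X_{\tau-1}=x,\, X_\tau = y}{T}{}\cdot \prstart{\tau_x = \infty}{y},
\]
where the last equality uses the strong Markov property at time $\tau$ together with the fact that, conditionally on $\{X_{\tau-1}=x, X_\tau=y\}$, the event $\{X_t \neq x\ \forall t>\tau\}$ is exactly $\{\tau_x = \infty\}$ for the walk restarted at $y$. Then $\prstart{\tau_x=\infty}{y} \ge \prstart{\tau_{p(y)}\wedge \tau_y^+ = \infty}{y} \ge c$ by Lemma~\ref{lemma:ReturnProb}. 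Taking $-\log$ of both sides yields
\[
W_T(e) = -\log \prcond{e\in\xi}{T}{} \;\le\; -\log\prcond{X_{\tau-1}=x,\,X_\tau=y}{T}{} - \log c \;=\; \widetilde{W}_T(e) - \log c,
\]
wait --- I need the inequality in the other direction, so in fact $W_T(e) \le \widetilde{W}_T(e) - \log c$ is what comes out, i.e. $\widetilde{W}_T(e) \ge W_T(e) + \log c$, which is the wrong sign. Let me re-examine: the claim is $W_T(e) \ge \widetilde{W}_T(e) - c$, i.e. $\prcond{e\in\xi}{T}{} \le e^c \prcond{(X_{\tau-1},X_\tau)=e}{T}{}$. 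So I actually need an \emph{upper} bound on $\prcond{e\in\xi}{T}{}$: one should argue that if $e\in\xi$ then in particular $e$ was crossed by $X$, and the \emph{first} crossing of $e$ happens at the first-passage time to level $\ell(e)$ \emph{or later}; more carefully, $e$ being in the loop erasure forces $X$ to eventually cross $e$ and never backtrack across it afterwards, and one relates this to the first-passage event. The cleanest route is: $\{e\in\xi\} \subseteq \{\exists t:\ (X_{t-1},X_t)=e \text{ and } X_s\neq x\ \forall s>t\}$, decompose over the first such $t$, and at that $t$ the walk is at a record level, so by the strong Markov property and Lemma~\ref{lemma:ReturnProb} (lower bound on escape probability giving an \emph{upper} bound on the number of returns / on the first-passage-to-$e$ structure) one extracts the factor. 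This is precisely the argument of~\cite[Lemma~4.3]{PerlasPaper}, so I would follow it verbatim.

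The main obstacle is getting the direction and the conditioning exactly right: one must verify that on the relevant event the first crossing of $e$ coincides with (or is controlled by) the first-passage time to level $\ell(e)$, so that the Markov decomposition at that time produces a bounded multiplicative factor between the two probabilities. Since the tree structure guarantees that reaching level $\ell(e)$ for the first time means reaching \emph{some} level-$\ell(e)$ vertex for the first time, and since from any such vertex the escape probability is uniformly bounded below by Lemma~\ref{lemma:ReturnProb}, the factor $e^c$ is absorbed into the additive constant after taking logarithms. As the excerpt itself notes, this is identical to~\cite[Lemma~4.3]{PerlasPaper}, so beyond checking that Lemma~\ref{lemma:ReturnProb} provides the needed uniform transience input (which it does), no new ideas are required and the proof can simply be cited.
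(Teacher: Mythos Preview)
Your final conclusion --- defer to \cite[Lemma~4.3]{PerlasPaper} --- is exactly what the paper does: it omits the proof and cites that lemma verbatim. In that sense your proposal matches the paper.

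However, the mathematical content of your attempted reconstruction has a real gap. As you yourself noticed, your first computation establishes
\[
\prcond{e\in\xi}{T}{}\ \ge\ c\cdot \prcond{(X_{\tau-1},X_\tau)=e}{T}{},
\]
which after taking logarithms gives $W_T(e)\le \widetilde W_T(e)-\log c$, the \emph{opposite} inequality to the one claimed. The lemma asks for $\prcond{e\in\xi}{T}{}\le C\cdot \prcond{(X_{\tau-1},X_\tau)=e}{T}{}$.

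Your corrected sketch then asserts that at the (unique) time $t$ with $(X_{t-1},X_t)=e$ and $X_s\neq x$ for all $s>t$, ``the walk is at a record level''. This is false: $t-1$ is the \emph{last} visit to $x$, not the first, and the walk may very well have reached level $\ell(e)$ (or higher) in a different branch before time $t$, then backtracked, and only later crossed $e$ and escaped. So $t$ need not equal $\tau^{\ell(e)}$, and a strong-Markov decomposition at $t$ does not produce the first-passage probability $\prcond{(X_{\tau-1},X_\tau)=e}{T}{}$ on the right-hand side.

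The actual argument in \cite{PerlasPaper} is a Green's-function/last-exit computation: one shows that both probabilities factor through the parent $x$ as
\[
\prcond{e\in\xi}{T}{}=\frac{q(y)}{2d(x)}\,G(\rho,x),\qquad
\prcond{(X_{\tau-1},X_\tau)=e}{T}{}=\frac{1}{2d(x)}\,\widetilde G(\rho,x),
\]
where $G(\rho,x)$ is the expected number of visits to $x$ and $\widetilde G(\rho,x)$ the expected number of visits to $x$ before time $\tau^{\ell(e)}$. The required bound then reduces to $G(\rho,x)\lesssim \widetilde G(\rho,x)$, which uses uniform transience together with the observation that from $x$ at least $(d(x)-1)/d(x)\ge 2/3$ of the non-lazy transitions go to level $\ell(e)$; this is where Lemma~\ref{lemma:ReturnProb} enters, but not in the way your sketch suggests.
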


\begin{definition}\label{def:Truncation} Let $A>0$ and $K=\lceil{C_2\log\log n}\rceil$ for a constant $C_2$ to be determined later. For~$e\in T$ we define the {\it truncation event} $\mathrm{Tr}\left(e,A\right)$ to be 
\[\mathrm{Tr}\left(e,A\right)=\left\{\widetilde{W}_T\left(e\right)>\log n-A \sqrt{\frac{\log n}{\alpha}}\right\}\cap \left\{\ell\left(e\right)\ge K\right\}, 
\] 
where $\ell\left(e\right)$ again stands for the level of $e$. 
\end{definition}

The following lemma will be useful in Section~\ref{sec5} as it shows that with high probability the walk on the tree will not visit truncated edges by the mixing time. 

\begin{lemma} \label{lemma:TruncatedEdgeNotVisited}
Let $K\le \left(\log\log n\right)^2$ and let $T_0$ be a realisation of the first $K$ levels of $T$. Let $X$ be a simple random walk on $T$ started from its root and set $t=\frac{\log n}{\nu \mathfrak{h}}-B\sqrt{\frac{\log n}{\alpha}},$ where $\nu$ and $\mathfrak{h}$ are as in Lemma~\ref{lemma:Speed} and Proposition~\ref{prop:LERWLimit}. If $\alpha \gg \frac{1}{\log n}$, for all $\varepsilon\in\left(0,1\right)$ there exist sufficiently large constants $B$ and $A$ (depending on both $\varepsilon$ and $B$) such that 
\[\prcond{\bigcup_{k\le t}\mathrm{Tr}\left(\left(X_{k-1},X_k\right),A\right)}{\mathcal{B}_{K}\left(\rho\right)=T_0}{}\leq \varepsilon. 
\]
\end{lemma}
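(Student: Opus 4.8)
The plan is to decompose the event according to the level $\ell(e)$ of the edge $e=(X_{k-1},X_k)$ being crossed and to use a union bound together with two inputs: (i) the speed concentration from Lemma~\ref{lemma:Speed}, which guarantees that by time $t$ the walk has only reached levels up to roughly $\nu t + O(\sqrt{t/\alpha})$; and (ii) the entropic concentration for the loop erased random walk from Proposition~\ref{prop:LERWLimit}, which controls $\widetilde W_T(e)\le W_T(e)+c$ (via Lemma~\ref{lemma:IneqW}) at each fixed level. First I would fix a constant $M$ and write
\[
\prcond{\bigcup_{k\le t}\mathrm{Tr}((X_{k-1},X_k),A)}{\mathcal B_K(\rho)=T_0}{}\le \prcond{\sup_{s\le t}d(\rho,X_s)> \nu t + M\sqrt{\tfrac{\log n}{\alpha}}}{\mathcal B_K(\rho)=T_0}{}+\prcond{\bigcup_{k\le t,\ \ell(X_{k-1},X_k)\le L}\mathrm{Tr}((X_{k-1},X_k),A)}{\mathcal B_K(\rho)=T_0}{},
\]
where $L=\lceil \nu t + M\sqrt{\log n/\alpha}\rceil$. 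By Lemma~\ref{lemma:Speed} the first term is at most $\varepsilon/2$ once $M$ is large (here $t\asymp \log n$, so $\sqrt{t/\alpha}\asymp\sqrt{\log n/\alpha}$ and $1/\alpha \ll \sqrt{\log n/\alpha}$ in the regime $\alpha\gg 1/\log n$, so the two error terms of Lemma~\ref{lemma:Speed} are both $\lesssim \sqrt{\log n/\alpha}$).

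For the second term, note that an edge $e$ at level $\ell(e)=j$ can be crossed by the walk only if the walk reaches level $j$, and when it does, $e$ is crossed iff $(X_{\tau^j-1},X_{\tau^j})=e$; so on the event that the walk visits $e$ before time $t$ we have $\widetilde W_T(e)\ge -\log\prcond{(X_{\tau^j-1},X_{\tau^j})=e}{T}{}$ trivially, but more usefully $W_T(e)\le \widetilde W_T(e)+c$ by Lemma~\ref{lemma:IneqW}. I would therefore bound
\[
\prcond{\bigcup_{k\le t,\ \ell(\cdot)\le L}\mathrm{Tr}(\cdot,A)}{T_0}{}\le \sum_{j=K}^{L}\prcond{\exists\, e\in T,\ \ell(e)=j,\ e\in\xi,\ W_T(e)> \log n - A\sqrt{\tfrac{\log n}{\alpha}}-c}{T_0}{},
\]
using that the loop erasure $\xi$ of $X$ passes through exactly one edge at each level, and that $\widetilde W_T(e)> \log n - A\sqrt{\log n/\alpha}$ forces $W_T(e)> \log n - A\sqrt{\log n/\alpha}-c$. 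Now $W_T(\xi_j)$ at level $j$ is exactly $-\log\prcond{\xi_j\in\widetilde\xi}{T,\xi}{}$ in the notation of Proposition~\ref{prop:LERWLimit} after re-indexing $\xi$ by level (recall $\xi_{\phi_k}=(X_{\sigma_{k-1}},X_{\sigma_k})$, and one must pass from indexing by level to indexing by edge-count; I would absorb this mismatch using the regeneration structure — at level $j$ the loop-erased edge is $\xi_j$ and $-\log\prcond{\xi_j\in\widetilde\xi}{T,\xi}{}$ is controlled by Proposition~\ref{prop:LERWLimit} applied with $k=j$, since $j\ge K\ge K^2$ is false in general — here is where I need $K=\lceil C_2\log\log n\rceil$ so that $K^2=(C_2\log\log n)^2 \ll \log n \asymp j$, hence $j\ge K^2$ holds for all $j$ in the range). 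Applying Proposition~\ref{prop:LERWLimit} with a suitable constant $C'$ in place of $C$, for each such $j$ we get
\[
\prcond{-\log\prcond{\xi_j\in\widetilde\xi}{T,\xi}{}> \mathfrak h j + C'\sqrt{\tfrac{\log n}{\alpha}}}{T_0}{}\le \frac{\varepsilon}{2L},
\]
provided $C'$ is large (using that the bound in Proposition~\ref{prop:LERWLimit} can be made $\le \varepsilon/(2L)$ by taking the constant there large, since $L\asymp \log n$ and the Chebyshev bound is $\lesssim (k/\alpha)/(\sqrt{k/\alpha})^2$-type — actually I would re-examine this: Proposition~\ref{prop:LERWLimit} as stated gives a fixed $\varepsilon$, not $\varepsilon/L$; so instead I would not take a union bound over all $L$ levels individually but rather over the at most $L$ values of $j$ using the stronger tail one gets from the variance bound \eqref{eq:VarY}, i.e. $\prcond{\cdot}{T_0}{}\lesssim (j/\alpha)/(C'^2\log n/\alpha) \lesssim 1/C'^2$ uniformly, and then sum — this still costs a factor $L\asymp\log n$, so I genuinely need the $\sqrt{\log n/\alpha}$ scale to beat $\log n$, which fails. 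The correct fix is to observe that on the walk trajectory only $O(t)$ edges are actually crossed, and to union bound over the $O(t)$ crossed edges using \eqref{eq:VarY}-type Chebyshev at scale $A\sqrt{\log n/\alpha}$ with $A$ large: $t\cdot (t/\alpha)/(A^2 \log n/\alpha)\asymp \log n /(A^2)\cdot$ — still not summable). I therefore would instead choose $A$ so that $A\sqrt{\log n/\alpha}$ dominates $\mathfrak h\cdot(\text{level})$ by a definite margin, and use that $\mathfrak h j \le \mathfrak h L \le \mathfrak h\nu t + O(\sqrt{\log n/\alpha}) = \log n - \mathfrak h\nu B\sqrt{\log n/\alpha}/\nu + O(\sqrt{\log n/\alpha})$, i.e. $\mathfrak h j$ is already below $\log n$ by order $\sqrt{\log n/\alpha}$, so with $B,A$ large the event $W_T(\xi_j) > \log n - A\sqrt{\log n/\alpha} - c$ becomes $W_T(\xi_j) - \mathfrak h j > (B\mathfrak h - A)\sqrt{\log n/\alpha} - O(\sqrt{\log n/\alpha})$, a large deviation of $\xi_j$'s entropy at scale $\sqrt{\log n/\alpha}$, whose probability by \eqref{eq:VarY}/Chebyshev is $\lesssim (j/\alpha)/((A-B\mathfrak h)^2(\log n/\alpha)) \lesssim (A-B\mathfrak h)^{-2}$, and summing over the $\le L$ levels gives $\lesssim L(A-B\mathfrak h)^{-2}\asymp \log n\,(A-B\mathfrak h)^{-2}$ — still off by $\log n$).

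Given that the naive union bound over levels loses a factor $\log n$, the real argument (following \cite{PerlasPaper}) must use the exponential tail of $Y_k$ and the fact that $W_T$ along the loop erasure increases by increments with exponential tails, so that $W_T(\xi_j)$ is genuinely concentrated with Gaussian-type tails at scale $\sqrt{j/\alpha}$ rather than merely having bounded variance; combining Proposition~\ref{prop:LERWLimit}'s Chebyshev bound with the moment bounds \eqref{eq:MomentY} one upgrades to a subexponential tail $\exp(-c(A\sqrt{\log n/\alpha})\wedge (A^2\log n/\alpha)/(\text{level}))$ which, for $A$ large and level $\asymp \log n$, is $\exp(-cA\sqrt{\log n/\alpha})$ up to the truncation at the scale where the variance proxy takes over — and since $\alpha\gg 1/\log n$ we have $\sqrt{\log n/\alpha}\to\infty$, so $L\cdot \exp(-cA\sqrt{\log n/\alpha})\le \log n\cdot\exp(-cA\sqrt{\log n/\alpha}) = o(1) \le \varepsilon/2$ for $A$ large. \textbf{The main obstacle} is precisely this: establishing a sharp enough (subexponential, at scale $\sqrt{\log n/\alpha}$) upper tail for $-\log\prcond{\xi_j\in\widetilde\xi}{T,\xi}{} - \mathfrak h j$ so that the union bound over all $\asymp\log n$ relevant levels closes; this requires combining the exponential moment control of the increments $Y_k$ (from \eqref{eq:MomentY} and the Markov-chain decomposition of Section~\ref{secMixingW}) with the decorrelation Lemma~\ref{lemma:Decorrelation}, essentially redoing the proof of Proposition~\ref{prop:LERWLimit} with exponential Markov inequality in place of Chebyshev. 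The roles of $K=\lceil C_2\log\log n\rceil$ (so that $K^2\ll\log n$, making Proposition~\ref{prop:LERWLimit} applicable at every level $j\ge K$) and of the hypothesis $\alpha\gg1/\log n$ (so that $\sqrt{\log n/\alpha}\to\infty$, killing the $\log n$ from the union bound) are both essential and I would flag them explicitly.
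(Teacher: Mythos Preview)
Your identification of the core difficulty --- that a naive union bound over the $\asymp \log n$ levels loses a factor of $\log n$ that Chebyshev (via Proposition~\ref{prop:LERWLimit}) cannot recover --- is correct. However, your proposed fix does not close the gap. Even granting Bernstein-type concentration for $W_T(\xi_j)$ with variance proxy $\asymp j/\alpha$, at deviation $x = A\sqrt{\log n/\alpha}$ with $j\asymp\log n$ the ratio $x^2/(j/\alpha)$ equals $A^2$, so you are in the Gaussian regime and the tail is $e^{-cA^2}$, \emph{not} $e^{-cA\sqrt{\log n/\alpha}}$ as you claim. Summing over $\asymp\log n$ levels gives $\log n \cdot e^{-cA^2}$, which cannot be made $\le \varepsilon$ with $A$ a constant (take $\alpha\asymp 1$). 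The min in your Bernstein formula lands on the wrong branch.

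The argument in \cite{PerlasPaper} (to which the paper defers) sidesteps the level-wise union bound altogether, and the mechanism is different from what you propose. The key points you are missing are:
\begin{itemize}
\item $W_T$ is \emph{monotone} along paths from the root: if $e'$ is a descendant of $e$ then $W_T(e')\ge W_T(e)$. Hence it suffices to apply Proposition~\ref{prop:LERWLimit} at a \emph{single} level (the maximal level $L$ reached, bounded via Lemma~\ref{lemma:Speed}) to control $W_T$ along the entire loop-erased path.
\item Edges crossed by the walk that are \emph{off} the loop-erased path are handled via the regeneration structure: any edge $e$ crossed during $[\sigma_{i-1},\sigma_i]$ is a descendant of the regeneration edge $(X_{\sigma_{i-1}-1},X_{\sigma_{i-1}})$ at depth at most $\sigma_i-\sigma_{i-1}$, so $W_T(e) \le W_T((X_{\sigma_{i-1}-1},X_{\sigma_{i-1}})) + O(\sigma_i-\sigma_{i-1})$. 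Since $\sigma_i-\sigma_{i-1}$ has exponential tails (Lemma~\ref{lemma:RegIndep}), the maximum over the $\lesssim t$ regeneration intervals is $\lesssim \log t \asymp \log\log n = o(\sqrt{\log n/\alpha})$ with high probability --- and \emph{this} union bound does close, because exponential tails beat the polynomial count $t$.
\item Finally $\widetilde W_T(e) \le W_T(e)+c$ by Lemma~\ref{lemma:IneqW}, and the constants $B,A$ are chosen so that $\mathfrak h L + C_\varepsilon\sqrt{L/\alpha} + O(\log\log n) + c \le \log n - A\sqrt{\log n/\alpha}$.
\end{itemize}
So the missing idea is not sharper tail bounds for the entropy at each level, but rather monotonicity of $W_T$ (reducing to one application of Proposition~\ref{prop:LERWLimit}) combined with a deterministic bound on $W_T(e)-W_T(\text{nearest regeneration edge})$ in terms of the regeneration interval length.
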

\begin{proof} The proof of this lemma follows in exactly the same way as the proof of~\cite[Lemma~4.5]{PerlasPaper}. Note that when using Lemma~\ref{lemma:Speed} and Proposition~\ref{prop:LERWLimit} we only keep the term $\sqrt{t/\alpha}$ by our assumption on~$t$ and $\alpha$. 
 \end{proof}
 
 \section{Controlling the spectral profile}\label{sec4}

In this section we prove Proposition~\ref{prop:TRel} for $t_{\mathrm{rel}}$ for the $m$-communities model and for the 2-communities one in the case when $\alpha\gtrsim 1/\log n$. The proof for $\alpha\ll 1/\log n$ is given at the end of Section~\ref{sec5}.
For this section we take $X$ to be a lazy simple random walk. We also obtain bounds on the spectral profile of $G_n$ that will be used in Section~\ref{sec5} in the proof of Theorems~\ref{thrm:mCutoff} and~\ref{thrm:Cutoff}. 

We start by stating a result from~\cite{madrasrandall}, which immediately implies the upper bound on the relaxation time for the $m$-communities model. 

\begin{theorem}[{\cite[Theorem~1.1]{madrasrandall}}]\label{thm:madrasrandall}
Let $X$ be a reversible Markov chain with transition matrix $P$, invariant distribution $\pi$ and spectral gap $\gamma$. Let $V_1,\ldots,V_M$ be a partition of $V$ and let $P_i$ be the transition matrix on $V_i$ with off-diagonal transitions $P_i(x,y)=P(x,y)$ for all $x\neq y \in V_i$ and $P_i(x,x)=1-\sum_{z \in V_i \setminus \{x \}}P(x,z)$. Denote its spectral gap  by $\gamma(P_i)$ and let $\gamma_{*}:=\min_{i \in [M]}\gamma(P_i)$. Let $\widehat P$ be a Markov chain on $[M]$ with transition probabilities given by 
\begin{align}\label{eq:defphat}
\widehat P(i,j)=\prcond{X_1 \in V_j}{ X_0 \in V_{i}}{\pi}=\sum_{x \in V_i}\frac{\pi(x)}{\pi(V_i)}P(x,V_j).
\end{align}
and spectral gap given by $\widehat \gamma$. Then 
\begin{equation}
\label{e:decom}
\gamma \ge   \widehat\gamma \gamma_{*}. 
\end{equation}
\end{theorem}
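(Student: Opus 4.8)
The plan is to argue via the variational (Dirichlet form) characterisation of the spectral gap recalled before Definition~\ref{def:WMC}, namely $\gamma=\inf\{\mathcal{E}_P(\phi)/\mathrm{Var}_\pi(\phi):\phi\text{ non-constant}\}$, so that it suffices to show $\mathcal{E}_P(f)\ge\widehat\gamma\,\gamma_*\,\mathrm{Var}_\pi(f)$ for every $f\colon V\to\R$. Fix such an $f$ and let $\bar f\colon\{1,\dots,M\}\to\R$ be its block-average, $\bar f(i)=\sum_{x\in V_i}\frac{\pi(x)}{\pi(V_i)}f(x)$; write $\pi_i$ for $\pi$ conditioned on $V_i$, and observe that $\widehat\pi(i):=\pi(V_i)$ is the invariant distribution of $\widehat P$, which is moreover reversible since $P$ is. The first step is the law of total variance, i.e.\ the exact identity
\[
\mathrm{Var}_\pi(f)=\sum_{i=1}^M\pi(V_i)\,\mathrm{Var}_{\pi_i}(f)\;+\;\mathrm{Var}_{\widehat\pi}(\bar f),
\]
obtained by conditioning on the block containing the initial state.

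Next I would bound the two terms separately. For the first, the defining property $\gamma(P_i)\ge\gamma_*$ gives $\mathrm{Var}_{\pi_i}(f)\le\gamma_*^{-1}\mathcal{E}_{P_i}(f)$, and since the diagonal entries of a transition matrix do not contribute to a Dirichlet form while $P_i$ agrees with $P$ off the diagonal, one has $\sum_i\pi(V_i)\,\mathcal{E}_{P_i}(f)=\tfrac12\sum_i\sum_{x,y\in V_i}\pi(x)P(x,y)(f(x)-f(y))^2$, which is exactly the ``within-block'' part of $\mathcal{E}_P(f)$ and in particular is $\le\mathcal{E}_P(f)$. For the second term, $\mathrm{Var}_{\widehat\pi}(\bar f)\le\widehat\gamma^{-1}\mathcal{E}_{\widehat P}(\bar f)$, and using reversibility of $P$ together with~\eqref{eq:defphat} one computes $\widehat\pi(i)\widehat P(i,j)=\sum_{x\in V_i,\,y\in V_j}\pi(x)P(x,y)$, hence $\mathcal{E}_{\widehat P}(\bar f)=\tfrac12\sum_{i\ne j}\big(\sum_{x\in V_i,\,y\in V_j}\pi(x)P(x,y)\big)\big(\bar f(i)-\bar f(j)\big)^2$.

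The crux is to bound $\mathcal{E}_{\widehat P}(\bar f)$ by a multiple of $\mathcal{E}_P(f)$; this is the delicate point, because the transition weights of $\widehat P$ average $f$ over whole blocks, so replacing $f$ by $\bar f$ can distort it and inflate its Dirichlet energy. The approach is, for each cross-block pair $x\in V_i,\ y\in V_j$, to telescope $\bar f(i)-\bar f(j)=\big(\bar f(i)-f(x)\big)+\big(f(x)-f(y)\big)+\big(f(y)-\bar f(j)\big)$, square via Cauchy--Schwarz, and sum against the weights $\pi(x)P(x,y)$: the middle term reproduces the ``cross-block'' part of $\mathcal{E}_P(f)$, while after summing over $j$ the terms $\big(\bar f(i)-f(x)\big)^2$ and $\big(f(y)-\bar f(j)\big)^2$ are controlled (using $\sum_{j\ne i}P(x,V_j)\le1$, reversibility, and $\bar f(i)=\mathbb{E}_{\pi_i}[f]$) by $\sum_i\pi(V_i)\,\mathrm{Var}_{\pi_i}(f)\le\gamma_*^{-1}\mathcal{E}_P(f)$. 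Feeding these estimates back into the total-variance identity yields $\mathrm{Var}_\pi(f)\lesssim(\widehat\gamma\,\gamma_*)^{-1}\mathcal{E}_P(f)$, i.e.\ $\gamma\gtrsim\widehat\gamma\,\gamma_*$. The main obstacle I anticipate is the sharper bookkeeping needed to extract the clean constant $1$ in $\gamma\ge\widehat\gamma\,\gamma_*$ (rather than merely $\gamma\gtrsim\widehat\gamma\,\gamma_*$); the law of total variance, the identification of the within- and between-block pieces of $\mathcal{E}_P$, and the verification that each $P_i$ is reversible with respect to $\pi_i$ so that $\gamma(P_i)$ is well defined are all routine.
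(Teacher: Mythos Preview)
The paper does not prove this theorem: it is simply quoted from \cite[Theorem~1.1]{madrasrandall} and then applied (in the proof of Proposition~\ref{prop:TRel} for the $m$-communities model) as a black box. So there is no ``paper's own proof'' to compare your proposal against.

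That said, your outline is the standard route to decomposition inequalities of this type: the law-of-total-variance splitting, the within-block bound via $\gamma_*$, and the between-block bound via $\widehat\gamma$ applied to the block-averaged function are exactly the ingredients of the Madras--Randall/Jerrum--Son--Tetali--Vigoda arguments. You have also correctly located the only nontrivial step, namely controlling $\mathcal{E}_{\widehat P}(\bar f)$ in terms of $\mathcal{E}_P(f)$, and your telescoping $\bar f(i)-\bar f(j)=(\bar f(i)-f(x))+(f(x)-f(y))+(f(y)-\bar f(j))$ is the right mechanism. As you yourself flag, the Cauchy--Schwarz step here produces a constant factor (typically $3$) and recycles a $\gamma_*^{-1}$, so the argument as written delivers $\gamma\gtrsim\widehat\gamma\gamma_*$ rather than the clean $\gamma\ge\widehat\gamma\gamma_*$. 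For the purposes of this paper that distinction is immaterial, since only the order of magnitude of $t_{\mathrm{rel}}$ is used; if you want the sharp constant you would need to follow the more careful bookkeeping in the original reference.
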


\begin{proof}[Proof of Proposition~\ref{prop:TRel}] ($t_{\rm{rel}}$ for $m$-communities model)
        We apply the previous theorem to the partition of $V$ corresponding to the communities. As the internal degree of each vertex is at least $3$ it follows that each community without considering outgoing edges is an expander with high probability \cite{expander}. Using the notation from the previous theorem and the bounded degree assumption implies that $\gamma(P_i)\gtrsim 1,$ for all $i$ and so $\gamma_*\gtrsim 1.$ Corollary~\ref{cor:cheegerQpoincareQ} gives that $\widehat{\gamma}\gtrsim \Phi_*^Q=\alpha,$ as the chain $\widehat{P}$ from the previous theorem is exactly the chain $Q$ from Corollary~\ref{cor:cheegerQpoincareQ}. The lower bound on $t_{\mathrm{rel}}$ follows by Cheeger's inequality. Indeed, from~\cite[Theorem 13.10]{MixingBook} it follows that $t_{\mathrm{rel}}\gtrsim \frac{1}{\Phi(A,A^c)}$, for any set $A$, where $\Phi(A,A^c)=\frac{\sum_{x\in A, y\in A^c} \pi(x)P(x,y)}{\pi(A)}.$ The proof of the lower bound then follows by choosing $A$ to be the union of communities achieving the bottleneck ratio of $Q$. 
           This completes the proof. 
\end{proof}

Theorem~\ref{thm:madrasrandall} cannot be used for the 2-communities model, as we cannot control the relaxation time of a random walk restricted to  one community. Instead we bound the relaxation time by controlling the spectral profile which we now recall. 

Let $P$ be a transition matrix on a finite state space $S$ which is reversible with respect to the invariant distribution $\pi$. For $A\subset S$, we let $P_A$ be the restriction of $P$ to $A$, i.e.\ $P_A(x,y)=P(x,y)$ for all $x,y\in A$. As the matrix $(\sqrt{\frac{\pi(x)}{\pi(y)}}P(x,y))_{x,y\in A}$ is symmetric, it is also diagonalisable, and therefore $P_A$ is diagonalisable so we can define the Dirichlet eigenvalue and the spectral profile as follows.

\begin{definition}\label{def:DirEV} The Dirichlet eigenvalue of $A$ denoted by $\lambda(A)=\lambda_{P}(A)$ is defined as $$\lambda\left(A\right)=1-\max\{\lambda: \lambda \mathrm{\, \, eigenvalue \, of \,} P_A\}.$$ 
Writing $\pi_*=\min_x \pi(x)$, we define the spectral profile $\Lambda_{P}:\left[\pi_*,\infty\right)\to \mathbb{R}$ of $P$ as $$\Lambda_{P}\left(r\right)=\inf_{\pi_*\le \pi(A)\le r}\lambda(A).$$ 
\end{definition}

The following lemma relating the Dirichlet eigenvalue of a set $A$ to the tail of the hitting probability of $A$ is a standard result and can be obtained using the spectral theorem. 
It will be very useful to us as in our setting it is easier to control hitting times rather than eigenvalues. 

The following lemma follows directly from the spectral theorem for $P_A$. 
\begin{lemma}\label{lem:dirichletdef}
Let $A\subseteq S$ and write $\pi_A$ for the measure $\pi$ conditional on $A$, i.e.\ $\pi_A(x)=\pi(x)/\pi(A)$ for $x\in A$. We have $$\frac{\log \prstart{T_{A^c}>t}{\pi_A}}{t} \to \log \left(1- \lambda\left(A\right)\right)\quad \text{as}\quad t \to \infty.$$ 
\end{lemma}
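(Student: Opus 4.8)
The plan is to prove Lemma~\ref{lem:dirichletdef} by a direct application of the spectral decomposition of the substochastic matrix $P_A$, which is what is alluded to in the statement. First I would set up the relevant linear algebra: since $P$ is reversible with respect to $\pi$, the matrix $S_A := \big(\sqrt{\pi(x)/\pi(y)}\,P(x,y)\big)_{x,y\in A}$ is symmetric, hence there is an orthonormal basis of eigenvectors of $S_A$ with real eigenvalues $1 > \lambda_1(A) \ge \lambda_2(A) \ge \cdots$ (strict inequality on the left because $A \subsetneq S$ and $P$ is irreducible, so $P_A$ is strictly substochastic along some row reachable from each state; in any case $\lambda_1(A) = 1-\lambda(A) < 1$ by Definition~\ref{def:DirEV}). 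Conjugating back, $P_A$ itself is diagonalisable with the same eigenvalues, and we can write $P_A^t(x,y)$ as a finite sum $\sum_k \lambda_k(A)^t \,\psi_k(x)\psi_k(y)\,\sqrt{\pi(y)/\pi(x)}$ for suitable eigenfunctions $\psi_k$ normalised in $\ell^2(\pi_A)$.

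Next I would translate the quantity of interest into this language. By definition $\prstart{T_{A^c} > t}{\pi_A} = \sum_{x \in A}\pi_A(x)\sum_{y\in A}P_A^t(x,y)$, which upon substituting the spectral expansion becomes $\sum_k \lambda_k(A)^t \,c_k$ where $c_k = \big(\sum_{x\in A}\pi_A(x)\psi_k(x)\sqrt{\pi(x)}\big)\big(\sum_{y\in A}\psi_k(y)/\sqrt{\pi(y)}\big)/\pi(A)$ or some such explicit constant — the exact form is a routine computation I would not grind through. The two facts I need about these constants are: (i) $c_1 > 0$, i.e.\ the top eigenvalue genuinely contributes, and (ii) the whole sum stays bounded and bounded below by a positive multiple of $\lambda_1(A)^t$ for large $t$. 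Point (i) follows because the top eigenfunction $\psi_1$ of the symmetric irreducible nonnegative matrix $S_A$ can be taken strictly positive (Perron--Frobenius applied to $S_A$ restricted to a communicating class, with a short argument that the relevant class carries positive $\pi_A$-mass), so $c_1$ is a product of two strictly positive quantities. Then $\lambda_1(A)^{-t}\prstart{T_{A^c}>t}{\pi_A} = c_1 + \sum_{k\ge 2}(\lambda_k(A)/\lambda_1(A))^t c_k \to c_1 \in (0,\infty)$ as $t\to\infty$, because $|\lambda_k(A)| \le \lambda_1(A)$ for all $k$ and the ratios with $|\lambda_k(A)| = \lambda_1(A)$ would force an extra positive eigenvector contribution which Perron--Frobenius rules out for the strictly positive top eigenvector — alternatively one notes $|\lambda_k(A)| < \lambda_1(A)$ strictly for $k \ge 2$ on each irreducible block, and handles a possible eigenvalue $-\lambda_1(A)$ by a parity/aperiodicity remark or simply by taking the limit along the subsequence and using monotonicity; I would phrase this cleanly. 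Taking logarithms, $\frac{1}{t}\log \prstart{T_{A^c}>t}{\pi_A} = \log\lambda_1(A) + \frac1t\log\big(c_1 + o(1)\big) \to \log\lambda_1(A) = \log(1-\lambda(A))$.

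The main obstacle I anticipate is the lower bound $c_1 > 0$ and, relatedly, controlling the possibility that the spectral radius $\lambda_1(A)$ is attained (in absolute value) by a \emph{negative} eigenvalue of $P_A$, which would make $\lambda_1(A)^{-t}\prstart{T_{A^c}>t}{\pi_A}$ oscillate rather than converge. The clean fix is to observe that in the setting of this section $P$ is a \emph{lazy} walk, so $P(x,x) \ge 1/2$ for all $x$, hence $P_A = \tfrac12 I + \tfrac12 R_A$ with $R_A$ substochastic; consequently every eigenvalue of $P_A$ lies in $[0,1)$, so $\lambda_1(A)$ is the genuine largest eigenvalue with no competing $-\lambda_1(A)$, and the sum $\lambda_1(A)^{-t}\prstart{T_{A^c}>t}{\pi_A}$ is decreasing-ish and converges to $c_1$. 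Laziness also makes $S_A$'s top eigenvector strictly positive on each closed communicating class, from which $c_1>0$ follows since the starting measure $\pi_A$ has full support on $A$. I would state the lemma's proof assuming $P$ lazy (consistent with the opening sentence of Section~\ref{sec4}), note in a parenthetical that for non-lazy reversible $P$ one restricts to $\limsup$/ works with $|\lambda_1|$, and otherwise let the spectral expansion do the work.
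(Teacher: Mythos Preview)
Your approach is correct and is exactly what the paper has in mind: the paper offers no proof beyond the sentence ``follows directly from the spectral theorem for $P_A$'', and your spectral expansion is that argument spelled out. One simplification you miss by not grinding through the computation: with the starting measure $\pi_A$ the two factors in your $c_k$ actually coincide, giving $c_k = \tfrac{1}{\pi(A)}\big(\sum_{x\in A}\sqrt{\pi(x)}\,\psi_k(x)\big)^2 \ge 0$, so every coefficient is a square; together with $\sum_k c_k = \prstart{T_{A^c}>0}{\pi_A}=1$ and the laziness (all $\lambda_k \ge 0$) this gives the two-sided bound $\lambda_1(A)^t\!\cdot\!\sum_{k:\lambda_k=\lambda_1}c_k \le \prstart{T_{A^c}>t}{\pi_A} \le \lambda_1(A)^t$ immediately, and your Perron--Frobenius observation shows the left-hand coefficient is strictly positive. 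This disposes of your worries about oscillation and sign in one stroke.
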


In the following lemma we obtain a lower bound on the spectral profile for small subsets of $G_n$. 

\begin{lemma}\label{lemma:SpectralProfileSmallSets}
There exist constants $\hat{c}, \tilde{c}>0$ such that the graph $G_n$ for both models (two and $m$-communities) satisfies with high probability that for every $r\le\hat{c}$ it holds that $\Lambda(r)\ge \tilde{c}$.  
\end{lemma}
\begin{remark}\rm{ In fact for all $\delta>0$, the lemma  above holds by taking $\hat{c}=(1-\delta)\min_{i\le 2}\pi(V_i)$ and some constant $\tilde{c}=\tilde{c}(\delta)>0$.}
\end{remark}

\begin{definition} \label{def:expander} For a graph $G=(V,E)$ and $A\subset V,$ the edge boundary of $A$, labeled by $\partial A$ is a set of edges between $A$ and $A^c.$ For constant $\varepsilon>0$ we say that $G$ is an $\varepsilon-$expander if  $$\min_{A\subset V,\pi(A)\le \frac{1}{2}}\frac{|\partial A|}{|A|}>\varepsilon,$$ where $\pi$ is the invariant distribution of simple random walk on $G.$ We say that the collection of graphs $G_n$ is an expander family if there is $\varepsilon>0$ such that each $G_n$ is an $\varepsilon-$expander.
\end{definition}
\begin{lemma}\label{lemma:SmallSetsBoundary} Let $G_n$ be a $2$-community model. There exist constants $\hat{c}, \delta>0$ such that the graph~$G_n$ satisfies with high probability that for every subset of vertices $D$ with $\pi_{G_n}(D)\le\hat{c}$ it holds that $|\partial D|\ge \delta |D|.$ 
\end{lemma}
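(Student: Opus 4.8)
The plan is to show that small sets in the $2$-community configuration-type model have large edge boundary with high probability, via a first-moment (union bound) argument over all connected small sets, exploiting the minimum degree condition $\min_v d(v)\geq 3$ and the sparsity $\Delta = O(1)$. The key reduction is that it suffices to prove the bound for \emph{connected} sets $D$: if $D$ is disconnected, its edge boundary and cardinality both decompose additively over connected components, so if every connected set of size at most some threshold has boundary at least $\delta$ times its size, the same holds for all small sets (and sets with $\pi_{G_n}(D)\leq \hat c$ are in particular of size $o(n)$, hence one only needs to control connected sets up to size $\lfloor \hat c' n\rfloor$ for an appropriate $\hat c'$).

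The main steps are as follows. First, fix a target size $s$ with $1 \le s \le \hat{c}' n$ and bound the number of connected vertex subsets of $G_n$ of size $s$: since $\Delta = O(1)$, a standard tree-counting argument (e.g.\ via spanning trees and the fact that the number of subtrees of size $s$ containing a fixed vertex in a graph of maximum degree $\Delta$ is at most $(e\Delta)^{s}$) gives at most $n\cdot C^{s}$ such sets for a constant $C = C(\Delta)$. Second, for a fixed vertex set $D$ of size $s$, I estimate the probability that $|\partial D| \leq \delta s$ in the random graph. Here the point is that $\sum_{v\in D} d(v) \geq 3s$ half-edges emanate from $D$, and $|\partial D|$ equals the number of these half-edges that get matched to half-edges outside $D$. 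Conditioning on the set of half-edges of $D$ (internal and outgoing) and revealing the matching one pairing at a time, each half-edge of $D$ that is still unmatched is matched to a half-edge outside $D$ with probability at least $1 - \tfrac{\Delta s}{N_i - \Delta s} \gtrsim 1 - O(s/n)$ as long as $s = o(n)$ — more precisely, at each step the number of available half-edges outside $D$ (within the appropriate community, or across communities for outgoing half-edges) dominates, so the number of ``boundary matches'' stochastically dominates a binomial-type quantity with mean $\gtrsim 3s - (\text{matches internal to }D)$. The quantity to control is really $\tfrac{1}{2}(\text{number of half-edges of } D\text{ matched within }D)$, which is at most $\tfrac12(3s + \text{excess})$; a careful accounting using $\deg^{\rm int}(v) \ge 3 - \deg^{\rm out}(v)$ and the fact that only $p$ half-edges per community are outgoing shows $|\partial D| \geq c s$ with failure probability at most $e^{-c' s}$ for suitable constants, \emph{provided} $s \le \hat c' n$ with $\hat c'$ small enough.

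Third, I combine the two estimates: the probability that \emph{some} connected set of size exactly $s$ violates $|\partial D| \geq \delta s$ is at most $n\, C^s e^{-c' s}$, and choosing $\delta$ small (so that $\delta s$ is well below the typical value $cs$, making the deviation estimate applicable with room to spare) and then $\hat c'$ small makes $C^s e^{-c's} \le e^{-c''s}$; summing over $1 \le s \le \hat c' n$ gives total failure probability at most $n \sum_{s\ge 1} e^{-c'' s} = O(n) \cdot e^{-c''}$ — which is \emph{not} $o(1)$ on its own. To fix this I would treat small $s$ separately: for $s$ below some constant $s_0$, connectivity plus minimum degree $3$ forces many boundary edges deterministically (a connected graph on $s$ vertices with all degrees at least $3$ has at least $3s - 2\binom{s}{2}$... ) — actually the clean statement is that for $s$ bounded, $|\partial D| \ge 3s - 2(s-1) - (\text{internal non-tree edges})$, which fails to be automatically positive, so instead for $s \le s_0$ I use the union bound more carefully: there are only $O(n)$ choices of $D$ of size $s \le s_0$ with $|\partial D|$ small would require $\Omega(s)$ specific half-edge pairings, each occurring with probability $O(1/n)$, giving probability $O(n)\cdot O(1/n)^{\Omega(s)} = O(n^{1-\Omega(s)}) = o(1)$ once $s \ge 2$; the case $s=1$ is immediate since $d(v) \ge 3$ means $|\partial\{v\}| = 3 \ge \delta$. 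For $s_0 \le s \le \hat c' n$, the bound $n e^{-c'' s} \le n e^{-c'' s_0}$ summed is $O(n e^{-c'' s_0})$, which I make $o(1)$ by... again not quite — the honest fix is that the per-set failure probability is actually $e^{-c' s \log(n/s)}$-type when $s$ is genuinely small (since matching $\Omega(s)$ half-edges to a set of size $O(s)$ out of $\sim n$ available costs $(s/n)^{\Omega(s)}$), so $n \cdot C^s \cdot (s/n)^{c's}$ is summable to $o(1)$ over all $s \geq 2$; for $s$ of order $n$ the crude $e^{-c's}$ bound with a sufficiently large exponent (taking $\hat c'$ small) beats $n C^s$. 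I would organise the union bound in these two regimes accordingly.

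The main obstacle I anticipate is the bookkeeping in the second step: getting a genuinely exponential (in fact super-exponential for small $s$) tail bound on $|\partial D|$ that is strong enough to survive the $n \cdot C^s$ union bound over all $\Theta(n^{s})$... wait, over all $n C^s$ connected sets. The subtlety is that one must carefully distinguish internal and outgoing half-edges — a set $D$ confined to community $1$ can only send boundary edges within community $1$ via internal half-edges and to community $2$ via the (scarce, only $p = \alpha N_1$) outgoing half-edges, so the relevant ``reservoir'' into which $D$'s half-edges are matched is the internal half-edges of community $i$, of which there are $\gtrsim N_i \gtrsim n$; since $|D|\Delta = o(n)$, the depletion is negligible and the stochastic-domination argument goes through. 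One should also double-check the model's matching is done within each community for internal half-edges and across for outgoing, so the conditioning structure is clean. I would present the half-edge-pairing martingale/exploration argument carefully and cite the standard subtree-counting bound, relegating the most routine estimates to the appendix as the paper does elsewhere.
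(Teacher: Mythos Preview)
Your plan has a genuine gap in the union-bound step. You want to combine the deterministic bound ``a bounded-degree graph has at most $nC^s$ connected sets of size $s$'' with ``a fixed $D$ has $|\partial D|<\delta s$ with probability at most $p_s$'' to conclude $\mathbb{P}(\exists\,D)\le nC^s\cdot p_s$. But the collection of connected sets depends on the same randomness as the boundary, so this product is not a valid first-moment bound. Writing $X=\#\{D:\ D\text{ connected},\ |\partial D|<\delta s\}$, Markov's inequality gives $\mathbb{P}(X\ge1)\le\mathbb{E}[X]=\sum_{|D|=s}\mathbb{P}(D\text{ connected},\ |\partial D|<\delta s)$, and the sum is over all $\binom{n}{s}$ vertex sets, not $nC^s$; the deterministic count $nC^s$ only tells you $X\le nC^s$ almost surely, which is useless here. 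Your subsequent attempts to repair the argument (splitting into small and large $s$, invoking ``$O(n)$ choices'' for small $s$) inherit the same confusion.

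The reduction to connected sets is in fact a red herring: drop it. Union-bound over all $\binom{n}{s}\le(en/s)^s$ sets directly. For a fixed $D$ with $\sum_{v\in D}d(v)\ge3s$ half-edges, the event $|\partial D|<\delta s$ forces at least $(3-\delta)s/2$ of the pairings to land inside $D$, which (handling internal and outgoing half-edges separately) has probability at most $(C's/n)^{(3-\delta)s/2}$. The product $(en/s)^s\cdot(C's/n)^{(3-\delta)s/2}\asymp[\,\mathrm{const}\cdot(s/n)^{(1-\delta)/2}\,]^s$ is then summable to $o(1)$ over $2\le s\le\hat c n$ for $\hat c$ small enough, with the $s=1$ case trivial. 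This is the estimate you eventually grope towards, but it has nothing to do with connectivity.

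For comparison, the paper's proof takes a rather different route. It first disposes of sets whose half-edge counts $d_1,d_2$ in the two communities are very unbalanced ($d_1<cd_2$ or $d_2<cd_1$) by a rewiring argument: rematch the outgoing half-edges within each community to obtain two independent configuration models, invoke their known expansion, and observe that rewiring changes $|\partial D|$ by at most $\min(d_1,d_2)$. For balanced sets it writes down the exact combinatorial probability of a prescribed boundary profile (tracking $d_i^o,d_i^i,k_i,\ell$), bounds it via Stirling and manipulations with the entropy function $H(x)=-x\log x-(1-x)\log(1-x)$, and counts sets with prescribed $(d_1,d_2)$ using the LYM antichain inequality rather than the crude $\binom{n}{s}$. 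This is heavier machinery than the direct first-moment computation above, but it is what the paper does.
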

{\proof The proof of this lemma is given in Appendix~\ref{appendixSmallSets}. 
\qed

\begin{proof}[Proof of Lemma~\ref{lemma:SpectralProfileSmallSets}]

 Recall that the conductance profile, $(\Phi(r))_r$, is defined 
to be 
\[
\Phi(r) = \min\left\{\frac{|\partial A|}{|A|}: \pi(A)\leq r \right\}.
\]
By~\cite[Lemma 2.4.]{SpectralProfilePaper}  we have 
\[
\frac{\Phi^2\left(r\right)}{2}\le \Lambda\left(r\right).
\]
Therefore, it is enough to bound the conductance profile. In the case of two communities this is the content of Lemma~\ref{lemma:SmallSetsBoundary}.
In the case of more communities this follows as restricted to one community the random graph is a configuration model with degree at each vertex at least three, which implies that it is an expander with high probability \cite{expander}. Therefore if $\hat{c}$ is small enough and $D$ is a set with $\pi_{G_n}(D)\le \hat{c}$, then $|D\cap V_i|\leq n_i/2$, and hence, the expander property gives us that the size of the boundary in community $i$ is bounded by a constant proportion of $|D\cap V_i|.$ This gives that $\Phi(r)\gtrsim 1$ for $r\le \hat{c}$ and completes the proof.
\end{proof}

\begin{prop}\label{prop:2comrelax}
There exists a positive constant $c$ so that for all $n$, if $G_n$ is a two-community graph and $\alpha \gtrsim 1/\log n$, then with high probability
\[
\Lambda(1/2)\geq c \cdot \alpha.
\]
\end{prop}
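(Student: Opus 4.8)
The plan is to prove the lower bound $\Lambda_{G_n}(1/2) \gtrsim \alpha$ for the lazy simple random walk on the two-community graph by combining Lemma~\ref{lemma:SpectralProfileSmallSets}, which already handles all sets of stationary measure at most $\hat c$, with a separate argument for the ``large'' sets. More precisely, by the definition of the spectral profile it suffices to bound $\lambda(D)$ from below by a constant multiple of $\alpha$ for every set $D$ with $\hat c \le \pi_{G_n}(D) \le 1/2$. By symmetry between the two communities and the comparable-size assumption~\eqref{eq:ComprableCom}, such a set $D$ must intersect both communities in a set of size $\Theta(n)$, but cannot contain both of them entirely (else $\pi(D)$ would be close to $1$); so without loss of generality $D$ contains at least a constant fraction of $V_1$ but misses at least a constant fraction of $V_2$. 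In fact, as explained in the overview (Section~\ref{sec:boundtreloverview}), it is enough to treat sets $D$ which contain all of one community and a small proportion of the other, since replacing $D$ by $D \cup V_1$ only decreases $\lambda$ up to a constant factor (the complement of $D$ inside $V_1$ has bounded stationary measure and, being a small subset, has good Dirichlet eigenvalue by the expander property inside the community / Lemma~\ref{lemma:SmallSetsBoundary}), allowing a standard restriction argument.

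The core estimate is then: for $D = V_1 \cup D_2$ with $D_2 \subseteq V_2$ and $\pi(D_2)$ at most some small constant $\delta$, show $\lambda(D) \gtrsim \alpha$. By Lemma~\ref{lem:dirichletdef} this is equivalent to an exponential tail bound $\prstart{T_{D^c} > t}{\pi_D} \le \exp(-c\alpha t)$ for all large $t$. To get this I would decompose the escape from $D$ according to the behaviour in community $2$: the walk leaves $D$ once it exits $D_2 \subseteq V_2$, which happens either by an outgoing edge (rate $\asymp \alpha$ from any vertex of $V_2$) or by crossing the internal boundary $\partial D_2$ inside $V_2$. The idea, following the overview, is to bound the tail of the time to leave $D$ using two ingredients: (i) the Dirichlet eigenvalue of $D_2$ as a subset of the induced graph on community $2$, which is $\gtrsim 1$ when $\pi_{V_2}(D_2)$ is small by the expander property of the configuration model on community $2$ (here minimal degree $\ge 3$ is used via~\cite{expander}, cf.\ Lemma~\ref{lemma:SpectralProfileSmallSets}) — this controls the time spent in $D_2$ per visit; and (ii) a bound showing that the walk started in $V_1$ spends at least a $\delta'$-fraction of its first $t$ steps in community $2$ with probability $1 - e^{-c\alpha t}$, so that the many independent-ish ``attempts'' to be in $D_2$ (where escape is likely by the above) accumulate. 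Ingredient (ii) is where the Cheeger-type estimate on $Q$ and the coupling with the random tree enter: one couples the walk on $G_n$ with the walk on the limiting two-type Benjamini--Schramm tree, revealing vertices as they are first visited, so that over a time window of length $t$ the number of community-switches is essentially governed by the types chain $\Sigma$, whose mixing time is $\lesssim 1/\alpha$ by Lemma~\ref{lemma:2comMixTypes}; a large-deviation / concentration estimate for the fraction of time in community $2$ then follows from the decorrelation Lemma~\ref{lemma:Decorrelation} applied to $\Sigma$.

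I expect the main obstacle to be making ingredient (ii) rigorous: controlling, uniformly over the (random) set $D$ and over the starting point, the probability that the walk fails to spend a constant fraction of its time in community $2$, and doing so on a single high-probability event for the graph $G_n$. The subtlety is that the coupling with the tree is only valid up to the time the walk has revealed $o(n)$ vertices, whereas here we need estimates on time scales that can be as large as we like relative to the coupling window; one must instead run the coupling in blocks of length $\Theta(1/\alpha)$, use it to get a one-block estimate (the walk started anywhere in $V_2$ spends $\gtrsim \delta'/\alpha$ steps in $V_2$ before returning to $V_1$, say, with probability bounded below), and then chain these blocks together via the Markov property to get the exponential tail. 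A secondary technical point is the restriction argument reducing general large sets to the special form $V_1 \cup D_2$: one must check that the constants degrade only by bounded factors, which uses $\pi(V_i) \asymp 1$ and the small-set spectral profile bound from Lemma~\ref{lemma:SpectralProfileSmallSets} in a black-box way.
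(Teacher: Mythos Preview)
Your overall strategy---reduce to sets $D'$ containing one full community $V_i$ plus a portion $D_{3-i}$ of the other, then bound $\lambda(D')$ by controlling the tail of $T_{(D')^c}$ through a decomposition into ``time spent in community $3-i$'' and ``escape from $D_{3-i}$ once there''---is exactly the paper's route (packaged there as Lemmas~\ref{lemma:DirEVCom1}, \ref{lemma:DirEVContainingCom1}, \ref{lemma:TimeInCom2fromKrootOfCom1} and Corollary~\ref{cor:notkroot}). The reduction is in fact simpler than you make it: since $\lambda(\cdot)$ is monotone decreasing under set inclusion, $\lambda(D)\ge\lambda(D\cup V_i)$ outright, so no ``up to a constant factor'' restriction argument or appeal to Lemma~\ref{lemma:SpectralProfileSmallSets} is needed at this step. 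Your ingredient (ii) is correct in spirit and matches the paper's treatment, including the block-chaining via the Markov property that you flag as the main obstacle.

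There is, however, a genuine gap in your ingredient (i). You claim the Dirichlet eigenvalue of $D_2$ in ``the induced graph on community $2$'' is $\gtrsim 1$ because the minimal degree is $\ge 3$ and hence community~$2$ is a configuration-model expander. In the two-community model this is false: only the \emph{total} degree is at least $3$ (assumption~\eqref{eq:BranchDeg}); the internal/outgoing split is random, so vertices can have internal degree $0$, $1$ or $2$, and the subgraph on $V_2$ is \emph{not} an expander---Section~\ref{sec:boundtreloverview} points out precisely this, noting it can contain degree-$2$ paths of length close to $\log n$. The paper therefore does not use any expander property inside a single community. Instead it works with the \emph{induced chain} $\widetilde X$ on $V_2$ (not the induced subgraph), proves separately in Lemma~\ref{lemma:DirEVCom1} that $\lambda(V_2)\gtrsim\alpha$ in $G_n$ (via a coupling with a rewired configuration model and Lezaud's concentration inequality), deduces $\widetilde t_{\mathrm{rel}}\lesssim 1/\alpha$ by a Dirichlet-form comparison, and then applies Lemma~\ref{lem:boundonrevlazymc} to get the exponential tail for $\widetilde T_{D_2^c}$. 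This additional lemma and the induced-chain viewpoint are the missing pieces in your outline.
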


\begin{proof}[Proof of Proposition~\ref{prop:TRel}]($t_{\rm rel}$ for $2$-communities model when $\alpha\gtrsim 1/\log n$)

        It is proved in~\cite[Lemma 2.2.]{SpectralProfilePaper} that $2\frac{1}{t_{\mathrm{rel}}}\ge \Lambda\left(\frac{1}{2}\right)$ so Proposition~\ref{prop:2comrelax} gives that $t_{\mathrm{rel}}\lesssim \frac{1}{\alpha}$ with high probability. From~\cite[Theorem 13.10]{MixingBook} it follows that $t_{\mathrm{rel}}\gtrsim \frac{1}{\Phi(A,A^c)}$, for any set $A$, where $\Phi(A,A^c)=\frac{\sum_{x\in A, y\in A^c} \pi(x)P(x,y)}{\pi(A)}.$ The proof of the lower bound then follows by choosing $A$ to be the first community, as the proportion of edges between the two communities is~$\alpha$.
\end{proof}

\begin{remark}
	\rm{
	Note that the statement of Proposition~\ref{prop:2comrelax} remains true also when $\alpha\ll 1/\log n$, as it is implied by the upper bound on $t_{\mathrm{mix}}$ proved in Section~\ref{sec5}. 
	}
\end{remark}

The rest of the section is devoted to the proof of Proposition~\ref{prop:2comrelax} for the graph with two communities. In the next section we bound the Dirichlet eigenvalue of each community and in Section~\ref{section:DEVcontains} we bound the Dirichlet eigenvalue of sets containing one community.

\subsection{Dirichlet eigenvalue of one community}\label{section:DEV1com}

\begin{lemma} \label{lemma:DirEVCom1}
There exists a positive constant $c$ so that for the $2$-community model for all $n$ with high probability 
$$\lambda\left(V_1\right)\ge c\alpha \text{ and } \lambda(V_2)\geq c\alpha.$$  
\end{lemma}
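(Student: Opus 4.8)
The plan is to show that $\lambda(V_1)\gtrsim\alpha$ (and symmetrically for $V_2$) by using Lemma~\ref{lem:dirichletdef}, which identifies $-\log(1-\lambda(V_1))$ with the exponential decay rate of $\prstart{T_{V_1^c}>t}{\pi_{V_1}}$, the tail of the time it takes the lazy walk started inside community $1$ to exit to community $2$. So it suffices to exhibit a constant $c>0$ and show that, with high probability over $G_n$, for every starting vertex $x\in V_1$ and all large $t$ one has $\prstart{T_{V_2}>t}{x}\le e^{-c\alpha t}$ (up to absorbing the restriction to $\pi_{V_1}$, which only averages this over starting points). Equivalently, one wants a uniform lower bound on the probability that the walk crosses an outgoing edge of community $1$ within a window of length $\Theta(1/\alpha)$.

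The first step is to set up the tree coupling described in the overview (Section~\ref{sec:boundtreloverview}): reveal the internal structure of community $1$ along the walk, so that until the walk first uses an outgoing half-edge, its trajectory is coupled with a lazy simple random walk on the (single-type, or rooted multi-type restricted to type~$1$) \random tree built from the degree sequence of $V_1$, where each step independently has probability $\asymp\alpha$ of landing on an outgoing edge (here we use assumption~\eqref{eq:BranchDeg}/\eqref{eq:mBranchDeg} that internal degrees are $\ge 3$, so the tree is well-defined and transient by Lemma~\ref{lemma:ReturnProb}, and~\eqref{eq:ComprableCom} to ensure $\alpha\asymp p/N$ is the right order). The coupling is valid up to time $T_{V_2}$ and up to the (w.h.p.\ large) time at which the revealed portion of the graph would first close a cycle or exhaust $V_1$; since we only care about times $t=O(1/\alpha)\ll$ that scale when $\alpha\gg 1/\log n$, and in fact for any $\alpha$ the relevant count of distinct vertices visited is $O(t)$ which is $O(n)$ for the $t$ we need, this is not an issue for the high-probability statement — I would make this precise by a union bound as in Section~\ref{sec5}.

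The second, and main, step is to bound on the tree the probability that the walk avoids all outgoing edges for a long time. Each vertex carries at least one outgoing half-edge only a $\Theta(\alpha)$-fraction of the time, so one cannot just say ``each step kills with probability $\alpha$''. Instead I would use the speed/regeneration structure of Section~\ref{secLimitTree}: by Lemma~\ref{lemma:Speed} the walk on the type-$1$ tree moves outward at linear speed, so in time $t$ it visits $\Theta(t)$ \emph{distinct} vertices w.h.p.; each newly visited vertex independently has probability $\asymp\alpha$ of being incident to an outgoing edge that the walk then crosses with probability bounded below before escaping (using the regeneration lower bound from Lemma~\ref{lemma:ReturnProb} and the Remark after it), so the number of ``missed chances'' is stochastically dominated, and $\prstart{T_{V_2}>t}{x}\le (1-c\alpha)^{ct}+\text{(speed large-deviation term)}\le e^{-c'\alpha t}$ for $t$ large. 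A cleaner packaging: decompose the walk at regeneration times $\sigma_i$; at each regeneration the walk is at a fresh vertex whose offspring-type structure is sampled, giving an independent $\Theta(\alpha)$ chance that within the next few steps the walk takes an outgoing edge and never returns; since regenerations occur at a linear rate (Lemma~\ref{lemma:RegIndep}, exponential tails on $\sigma_{i+1}-\sigma_i$), the number of regenerations by time $t$ is $\ge c t$ w.h.p., yielding the geometric tail with rate $\asymp\alpha$.

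The hard part will be making the coupling step quantitatively honest: one must ensure that the probability that the coupling breaks (a cycle closes, or the walk re-enters $V_1$ from $V_2$ and sees already-revealed vertices) is negligible on the relevant time scale \emph{uniformly over the starting vertex} and simultaneously over all sets $A\supseteq$ a partial community — but for this lemma we only need $A=V_1$, which simplifies matters. I expect the bookkeeping for the ``fresh vertex has an outgoing half-edge and the walk uses it before escaping'' event, and combining it with the speed estimate to get a genuine exponential (not just stretched-exponential) tail at rate $\asymp\alpha$, to be where the real work lies; everything else is a routine adaptation of the arguments already assembled in Sections~\ref{secLimitTree}--\ref{sec4}.
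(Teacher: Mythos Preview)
Your approach is genuinely different from the paper's, and while the tree-side estimate you sketch is essentially correct, there is a real gap in converting it to the quenched eigenvalue bound.

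The paper does \emph{not} use the annealed tree coupling here. Instead it introduces an auxiliary graph $\widehat{G}_n$ obtained from $G_n$ by \emph{rewiring} the $p$ outgoing half-edges in each community uniformly among themselves, so that $\widehat{G}_n$ restricted to $V_1$ is an ordinary configuration model with the same total-degree sequence (hence an expander w.h.p.). The lazy walk on $G_n$ and the one on $\widehat{G}_n$ can be coupled to agree \emph{exactly} until the first time an outgoing half-edge is selected, which on $G_n$ is precisely $T_{V_2}$; this is a quenched, not annealed, coupling. Writing $D\subset V_1$ for the set of vertices incident to an outgoing half-edge (so $\pi_{V_1}(D)\asymp\alpha$), the argument splits on whether the walk makes at least $\tfrac{t}{2}\pi_{V_1}(D)$ visits to $D$ by time $t$: if yes, each visit gives a $\ge 1/(2\Delta)$ chance to cross, yielding decay $e^{-c\alpha t}$ for every realisation of $G_n$; if no, this is a large-deviation event for $\widehat{X}$ on the fixed expander $\widehat{G}_n$ started from its stationary measure $\pi_{V_1}$, and Lezaud's Chernoff bound for Markov chains gives $\lesssim e^{-c\pi_{V_1}(D)t}$. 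Both pieces are quenched and hold for \emph{all} $t$, so Lemma~\ref{lem:dirichletdef} applies directly.

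The gap in your plan is the step you label ``union bound as in Section~\ref{sec5}''. Your coupling yields an annealed estimate $\E{\prcond{T_{V_2}>t}{G_n}{x}}\le e^{-c\alpha t}+(\text{coupling error})$, but $\lambda(V_1)\ge c\alpha$ is a quenched statement about the asymptotic decay rate as $t\to\infty$. The claim that ``we only care about $t=O(1/\alpha)$'' is where this breaks: at that scale the annealed tail is only a constant, so Markov's inequality followed by a union bound over the $n$ starting points fails badly. To salvage your route one must instead take $t\asymp (\log n)/\alpha$ so that the annealed bound is $n^{-c}$, apply Markov plus a union bound over $x$ to get the worst-case quenched estimate $\max_x\prcond{T_{V_2}>t}{G_n}{x}\le n^{-c'}$ w.h.p., and then use $\mu_1^t\le \max_x\prcond{T_{V_2}>t}{G_n}{x}$ (from Perron--Frobenius for the sub-stochastic $P_{V_1}$) or iterate the Markov property. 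This is feasible but is neither what Section~\ref{sec5} does nor ``routine''; it is much closer to the more elaborate machinery the paper later builds in Section~\ref{section:DEVcontains}, and the rewiring trick is precisely what lets the paper avoid all of it here.
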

\begin{proof}
By Lemma~\ref{lem:dirichletdef} it is enough to find a positive constant $c$ such that with high probability for all  $t$ larger than some constant, we have $\prcond{T_{V_2}>t}{G_n}{\pi_{V_1}} \leq e^{-{c}\alpha t}.$
\\Let $D$ be the subset of $V_1$ consisting of those vertices connected to community $2$. Then we have 
 \begin{align*}\prcond{T_{V_2}>t}{G_n}{\pi_{V_1}}=& \prcond{\sum_{i=1}^t\mathds{1}\left(X_i\in D\right)< \frac{t\pi_{V_1}\left(D\right)}{2},T_{V_2}>t}{G_n}{\pi_{V_1}}
 \\ &+\prcond{\sum_{i=1}^t\mathds{1}\left(X_i\in D\right)\ge \frac{t\pi_{V_1}\left(D\right)}{2},T_{V_2}>t}{G_n}{\pi_{V_1}}.
  \end{align*}
  Once the chain is in $D$ the probability to hit $V_2$ in the next move is at least $\frac{1}{2\Delta}$, as there is at least one edge going to $V_2$ from every vertex in $D$. Setting $k=\lfloor\frac{t\pi_{V_1}\left(D\right)}{2}\rfloor-1$ and writing $R_1,\ldots,R_k$ to be the times of the first $k$ visits to the set $D$, we get for any realisation $G$ of $G_n$ 
\begin{align*}
        &\prcond{\sum_{i=1}^t\mathds{1}\left(X_i\in D\right)\ge \frac{t\pi_{V_1}\left(D\right)}{2},T_{V_2}>t}{G_n=G}{\pi_{V_1}} \leq \prcond{T_{V_2}>R_k+1}{G_n=G}{\pi_{V_1}}
        \\&=\prcond{T_{V_2}>R_k+1}{T_{V_2}>R_{k-1}+1, G_n=G}{\pi_{V_1}}\prcond{T_{V_2}>R_{k-1}+1}{G_n=G}{\pi_{V_1}}\\
        &\leq \left(1-\frac{1}{2\Delta}\right)\prcond{T_{V_2}>R_{k-1}+1}{G_n=G}{\pi_{V_1}}\le \left(1-\frac{1}{2\Delta}\right)^k \leq \exp\left(- c_1 \alpha t\right),
\end{align*}
        for large $t$ and a positive constant $c_1$,
         where the penultimate inequality follows by induction and the last inequality follows since $\pi_{V_1}(D)\asymp \alpha$. 
\\It suffices to prove that for a positive constant $c_2$ with high probability for all large $t$
\begin{equation} \label{eq:ProbNotManyVisitsToD} \prcond{\sum_{i=1}^t\mathds{1}\left(X_i\in D\right)< \frac{t\pi_{V_1}\left(D\right)}{2},T_{V_2}>t}{G_n}{\pi_{V_1}}\leq  \exp\left(-{c_2}\pi_{V_1}\left(D\right)t\right),
\end{equation}
In order to bound this probability we are going to consider a new graph $\widehat{G}_n$ and a walk $\widehat{X}$ on $\widehat{G}_n$ which is coupled with $X$ in a natural way.  
 For $i\in\{1,2\}$ let $E_i=\left\{e^i_1,e^i_2, \ldots, e^i_{N_i}\right\}$ be all of the half-edges in community $i$ and let $\tau_i$ be permutations chosen uniformly at random among all permutations of~$\left\{1,\ldots, N_i\right\}.$ Then $G_n$ can be generating by choosing the outgoing half-edges in community $i\in \{1,2\}$ to be exactly the edges $E_i^{O}=\left\{e^i_{\tau_i(1)}, e^i_{\tau_i(2)}\ldots, e_{\tau_i(p)}^i\right\}$. The half-edges are further paired by matching the edge $e^1_{\tau_1(j)}$ with the edge $e^2_{\tau_2(j)}$
for $j\le p$ and for $j> \frac{p}{2}$ and $i\in \{1,2\}$ matching $e^i_{\tau_i(2j-1)}$ with $e^i_{\tau_i(2j)}$. We now define the graph $\widehat{G}_n$ by instead for $i\in\{1,2\}$ and $j\le \frac{N_i}{2}$ matching the half-edge $e^i_{\tau_i(2j-1)}$ with the half-edge $e^i_{\tau_i(2j)}.$  Note that $\widehat{G}_n$ has two disconnected components corresponding to each community. Moreover, all the internal half-edges of $G_n$ are matched in exactly the same way in~$\widehat{G}_n$ and the outgoing half-edges in $\widehat{G}_n$ in each community are matched uniformly at random inside the community.

Further we couple the walks $X$ on $G_n$ and $\widehat{X}$ on $\widehat{G}_n$, by first starting both walks from the same vertex chosen according to $\pi_{V_1}$, which is also the invariant distribution of the graph $\widehat{G}_n$ restricted to the first community.  At time $t-1$, if the walks are in the same vertex, with probability $\frac{1}{2}$ they both move at time $t$ along the same uniformly chosen half-edge which we label by $e(t)$, otherwise they both stay at the same vertex. The coupling fails the first time the walks are in different vertices. As half-edges in $E_1\setminus E_1^O$ are matched with the same half-edges in $G_n$ and in $\widetilde{G}_n$, the coupling fails at time $$T_{E_1^O}=\inf\left\{t\ge1: X_t\ne\widehat{X}_t\right\}=\inf\left\{t: e(t)\in {E_1^O} \right\}=\inf \left\{t:X_t\in V_2\right\}=T_{V_2}.$$ 
Therefore, since the randomness on $\widehat{G}_n$ only comes from the permutation $\tau_1$, we get 
\begin{align*}
        & \prcond{\sum_{i=1}^t\mathds{1}\left(X_i\in D\right)<\frac{t\pi_{V_1}\left(D\right)}{2},T_{V_2}>t}{G_n}{\pi_{V_1}} \\&\leq   \prcond{\sum_{i=1}^t\mathds{1}\left(\widehat{X}_i\in D\right)<\frac{t\pi_{V_1}\left(D\right)}{2}}{\tau_1}{\pi_{V_1}}. 
\end{align*}
As $\tau_1$ is chosen uniformly at random from all permutations of $\left\{1,\ldots N_1\right\}$ the graph $\widehat{G}_n$ restricted to the first community is exactly the configuration model. It is well known that the configuration model with minimum degree at least $3$ is with high probability an expander \cite{expander}. Therefore the relaxation time, $\widehat{t}_{\rm{rel}},$ of the lazy simple random walk $\widehat{X}$ on $\widehat{G}_n$,  is with high probability of order $1$.

 We now use Theorem 1.1 from Lezaud~\cite{Lezaud} for  function $f\left(x\right)=\pi_{V_1}\left(D\right)-\mathds{1}\left(x\in D\right)$, $\gamma=\pi_{V_1}\left(D\right)/2$ and $b^2=\pi_{V_1}\left(D\right)\left(1-\pi_{V_1}\left(D\right)\right)$. Notice that the conditions $\pi_{V_1}f=0$, $\|f\|_{\infty}\le 1$ and $\|f\|_2^2\le b^2$ are satisfied and that $N_q$ and $\varepsilon\left(P\right)=\frac{1}{\widehat{t}_{\text{rel}}}$ from the theorem, are both of order 1. Therefore, with high probability 
\begin{align*}
         &\prcond{t^{-1}\sum_{i=1}^t\left(\pi_{V_1}\left(D\right)-\mathds{1}\left(\widehat{X}_i\in D\right)\right)>\frac{\pi_{V_1}\left(D\right)}{2}}{\tau_1}{\pi_{V_1}} \\& \le \widetilde{C}\exp\left(-\frac{t \left(\pi_{V_1}\left(D\right)\right)^2}{16\pi_{V_1}\left(D\right)\left(1-\pi_{V_1}\left(D\right)\right)\left(1+h\left(\frac{5\pi_{V_1}\left(D\right)}{2\pi_{V_1}\left(D\right)\left(1-\pi_{V_1}\left(D\right)\right)}\right)\right)}\right)
         \end{align*}
 for $h\left(x\right)=\frac{1}{2} \left(\sqrt{1+x}-\left(1-\frac{x}{2}\right)\right)$ and a positive constant $\widetilde{C}$. We can rewrite this as
 \begin{align*}
  &\prcond{\sum_{i=1}^t\mathds{1}\left(\widehat{X}_i\in D\right)<\frac{t\pi_{V_1}\left(D\right)}{2}}{\tau_1}{\pi_{V_1}}
  \\ &\le\widetilde{C} \exp\left(-\frac{t\pi_{V_1}\left(D\right) }{16\left(1-\pi_{V_1}\left(D\right)\right)\left(1+h\left(\frac{5}{2(1-\pi_{V_1}\left(D\right))}\right)\right)}\right)\leq \widetilde{C}\exp\left(-\tilde{c}\pi_{V_1}\left(D\right)t\right),
  \end{align*} 
  with high probability for a suitable positive constant $\tilde{c}$, which completes the proof.  
  \end{proof}

\subsection{Dirichlet eigenvalue for sets containing one community}\label{section:DEVcontains}
The following lemma gives a bound on the Dirichlet eigenvalue of sets containing all vertices from the first community and some from the second. Because of the symmetry of the problem, the analogous result holds for sets containing all vertices in the second community and some from the first one. 

\begin{lemma}\label{lemma:DirEVContainingCom1} Let $G_n$ be the $2$-community model and let $\alpha \gtrsim 1/\log n$. Let $D$ be the set containing the first community and some vertices from the second community which we label by $D_{2}$. Let $\pi(D_2)\leq \hat{c} \frac{n_2}{n}$ for some constant $\hat{c}<1$. Then there is a positive constant $\tilde{c}$ such that with high probability  $$ \lambda\left(D\right)\ge \tilde{c}\alpha.$$ 
\end{lemma}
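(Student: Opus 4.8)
The plan is to reduce the bound on $\lambda(D)$ to a tail estimate for the exit time $T_{D^c}$ via Lemma~\ref{lem:dirichletdef}, exactly as in the proof of Lemma~\ref{lemma:DirEVCom1}. So it suffices to show that there is a constant $\tilde c>0$ so that, with high probability, $\prcond{T_{D^c}>t}{G_n}{\pi_D}\le e^{-\tilde c\alpha t}$ for all $t$ larger than some constant. The key observation is that escaping from $D$ means either escaping from the first community $V_1$ into $V_2\setminus D_2$, or escaping from $D_2$; and escaping from $V_1$ is already controlled by Lemma~\ref{lemma:DirEVCom1}. More precisely, I would decompose time according to whether the walk is in $V_1$ or in $D_2$. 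Since $\pi(D_2)\le\hat c\,\pi(V_2)$ with $\hat c<1$, and since $D_2$ together with the internal-degree-$\ge 3$ assumption behaves like a small subset of the (w.h.p.\ expander) second community, the walk restricted to $D_2$ should have Dirichlet eigenvalue bounded below by a constant: this is where I would invoke Lemma~\ref{lemma:SpectralProfileSmallSets} (or directly the conductance-profile bound from Lemma~\ref{lemma:SmallSetsBoundary}), applied inside $V_2$, to conclude that from any starting point in $D_2$ the walk leaves $D_2$ within a geometric-tailed number of steps with constant parameter, hence quickly returns to $V_1$ (or exits $D$ altogether, which is even better).

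The main step is then the following. Run the walk from $\pi_D$ and consider the successive excursions into $D_2$. On the event $\{T_{D^c}>t\}$, every such excursion ends by returning to $V_1$ rather than exiting into $V_2\setminus D_2$; and between consecutive excursions the walk sits in $V_1$, where by the coupling argument of Lemma~\ref{lemma:DirEVCom1} (coupling $X$ on $G_n$ with $\widehat X$ on the decoupled graph $\widehat G_n$ until the first outgoing edge is used, and applying Lezaud's concentration inequality \cite{Lezaud} for the additive functional counting visits to the boundary set $D$ of $V_1$) the walk spends at least a constant proportion $\asymp\alpha$ of its $V_1$-time adjacent to $V_2$, and each such visit gives probability $\ge\tfrac{1}{2\Delta}$ of taking an outgoing edge. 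So I would split according to the total time $S_t$ spent in $V_1$ up to time $t$: if $S_t\ge t/2$, then — on $\{T_{D^c}>t\}$ — every outgoing step from $V_1$ during that time must land in $D_2$ (never in $V_2\setminus D_2$) and moreover the number of boundary visits is $\gtrsim\alpha t$ with overwhelming probability, yet each such visit independently has constant probability of leading to $V_2\setminus D_2$ rather than to $D_2$ (here one uses that a positive fraction of the outgoing half-edges from $V_1$-boundary vertices are matched to $V_2\setminus D_2$, since $\pi(D_2)$ is bounded away from $\pi(V_2)$); combining gives $\prcond{T_{D^c}>t,\,S_t\ge t/2}{G_n}{\pi_D}\le e^{-c\alpha t}$. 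If instead $S_t<t/2$, then the walk spends time $\ge t/2$ inside $D_2$, and the constant Dirichlet eigenvalue of $D_2$ inside $V_2$ forces $\prcond{T_{V_2\setminus D_2}>t/2}{G_n}{}\le e^{-ct/2}$, which is even smaller. Summing the two cases and absorbing constants yields the exponential bound with rate $\asymp\alpha$, hence $\lambda(D)\ge\tilde c\alpha$.

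The hard part will be making the ``every outgoing step must land in $D_2$, but a constant fraction would naturally land outside'' dichotomy quantitatively rigorous while respecting the conditioning on $\{T_{D^c}>t\}$: one cannot simply treat the sequence of boundary visits as an independent sequence of Bernoulli trials, because conditioning on not having exited biases the path. I expect the cleanest route is again via a coupling/resampling argument in the spirit of Lemma~\ref{lemma:DirEVCom1}: reveal the matching of the $V_1$-outgoing half-edges only when the walk first uses them, so that at each new boundary-crossing the destination is a fresh uniform unmatched half-edge of $V_2$, of which a proportion bounded below by a constant belongs to $V_2\setminus D_2$ (using $\pi(D_2)\le\hat c\,\pi(V_2)$ and the bounded-degree assumption~\eqref{eq:Sparse}); then the probability of surviving $k\asymp\alpha t$ such crossings without ever hitting $V_2\setminus D_2$ is at most $(1-c)^{k}$, and this can be combined with the Lezaud-based lower bound on the number of boundary visits to finish. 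A secondary technicality is checking that the relevant ``small set'' $D_2$ really does fall under the scope of Lemma~\ref{lemma:SpectralProfileSmallSets}/Lemma~\ref{lemma:SmallSetsBoundary} — i.e.\ that $\hat c<1$ can be pushed below the threshold $\hat c$ appearing in those lemmas, possibly after first noting that if $\pi(D_2)$ is close to $\pi(V_2)$ one can instead bound $\lambda(D)$ by comparison with $\lambda(V)\asymp\gamma\gtrsim\alpha$ from Proposition~\ref{prop:2comrelax}; I would state the lemma for $\hat c$ small and remark that the general case follows by interpolation or by this comparison.
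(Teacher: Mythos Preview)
Your decomposition into ``much time in $V_1$'' versus ``much time in $D_2$'' matches the paper's, but both branches have problems as written.

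For the $S_t<t/2$ case you invoke an ``internal-degree-$\ge 3$ assumption'' to conclude that $V_2$ is w.h.p.\ an expander and hence that $D_2$ has constant Dirichlet eigenvalue inside $V_2$. This is the crucial error: the internal-degree assumption~\eqref{eq:mBranchDeg} belongs to the $m$-communities model, not the $2$-communities model treated here, where only the total degree is $\ge 3$ (assumption~\eqref{eq:BranchDeg}). As the paper explains in Section~\ref{sec:boundtreloverview}, the restriction of $G_n$ to one community can contain long degree-$2$ paths and is \emph{not} an expander; this is precisely why the decomposition of Theorem~\ref{thm:madrasrandall} is unavailable for this model. Invoking Lemma~\ref{lemma:SpectralProfileSmallSets} or Lemma~\ref{lemma:SmallSetsBoundary} ``inside $V_2$'' is not legitimate either --- those lemmas bound the edge boundary of $D_2$ in $G_n$, which may lie entirely in $V_1$. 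The paper instead shows that the relaxation time $\widetilde t_{\mathrm{rel}}$ of the \emph{induced chain} on $V_2$ satisfies $\widetilde t_{\mathrm{rel}}\lesssim 1/\alpha$ (not $O(1)$), by comparing its second eigenfunction to the Dirichlet eigenvalue $\lambda(V_2)$ from Lemma~\ref{lemma:DirEVCom1}; combined with Lemma~\ref{lem:boundonrevlazymc} this yields $\prcond{\widetilde T_{D_2^c}>\delta t}{G_n}{\eta}\le n\cdot e^{-c\alpha\delta t}$, which is the correct rate.

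For the $S_t\ge t/2$ case, your plan to reuse the Lezaud/coupling argument of Lemma~\ref{lemma:DirEVCom1} does not go through as stated: that coupling with $\widehat G_n$ breaks at the first crossing into $V_2$, so it says nothing about boundary visits accumulated after excursions to $D_2$. Your ``reveal the matching on the fly'' patch provides a fresh uniform destination only for \emph{new} outgoing half-edges, so you would still need to argue that the walk uses $\gtrsim\alpha t$ distinct ones, which is not immediate. The paper takes a different and more robust route: it bounds $\prcond{T_2(t)<\delta t}{G_n}{x}$ --- a quantity that does not involve $D$ at all --- by coupling the walk on $G_n$ with a walk on the limiting multi-type tree (Lemma~\ref{lemma:T2conditional}, Lemma~\ref{lemma:TimeInCom2fromKrootOfCom1}, Corollary~\ref{cor:notkroot}) and applying Lezaud's inequality to the \emph{types} Markov chain of regeneration edges, then iterating via the Markov property to obtain the exponential tail in $t$.
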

In order to prove the above lemma we first bound the probability that the proportion of time spent in the second community by some time $t$, when starting from a $K-$root (vertex around which the graph looks like a tree for $K$ levels--see Definition~\ref{def:KRoot}) in the first community, is smaller than some constant $\delta$.

 \begin{definition}\label{def:KRoot} We call a vertex $x$ a $K-$root of $G_{n}$ if $\mathcal{B}_{K}(x)=\mathcal{B}^{G_n}_{K}(x)$ from  Definition  \ref{def:KBall} is a possible realisation of the first $K$ levels of the $2$-type \random tree $T$ (corresponding to $G_{n}$). If $x$ is a $K-$root and $i \leq K,$ we denote by $\partial \mathcal{B}_{i}(x)$ the collection of vertices at distance $i$ from $x$.
\end{definition}

\begin{lemma}\label{lemma:TimeInCom2fromKrootOfCom1}
For $u>0$ define $T_2(u)$ to be the total time spent in the second community by time~$u$ and let $x$ be a vertex in $G_n$. 
There exist positive constants $C_1,$ $\varepsilon_0=\varepsilon(\Delta)$ and $\delta<1$ so that for all $\varepsilon<\varepsilon_0$  the following holds. 
If $\alpha=o(1)$, for every $c>0$ there exists $C>0$, so that taking $t=C\log n$ and  $K=\lfloor\varepsilon \log n\rfloor$, then with high probability we have 
\[\forall \, x, \quad \mathds{1}(x \text{ is a $K$-root} )\cdot \prcond{T_2(t)< \delta t}{G_n}{x} \le \frac{C_1}{n^{c \alpha}}. 
\]
If $\alpha\asymp 1$, then there exist positive constants $c'=c'(\varepsilon)$ and $C$ so that with $t$ and $K$ as above with high probability we have 
\[\forall \, x, \quad \mathds{1}(x \text{ is a $K$-root} )\cdot \prcond{T_2(t)< \delta t}{G_n}{x} \le \frac{C_1}{n^{c'\varepsilon \alpha}}. 
\]
\end{lemma}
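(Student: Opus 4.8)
\textbf{Proof proposal for Lemma~\ref{lemma:TimeInCom2fromKrootOfCom1}.}

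The plan is to couple the lazy simple random walk $X$ on $G_n$ started from the $K$-root $x$ with the lazy simple random walk on the $2$-type \random tree $T$, revealing new vertices of $G_n$ only when the walk crosses an edge leading to them for the first time. Since $x$ is a $K$-root, this coupling is exact (the walk on $G_n$ and the walk on $T$ agree) until the first time the walk either exits $\mathcal{B}_K(x)$ through a ``collision'' (i.e.\ closes a cycle) or the walk on the tree reaches distance $K$ from the root; by the uniform transience estimate in Lemma~\ref{lemma:ReturnProb} and a union bound over the $O(\Delta^K)$ vertices in $\mathcal{B}_K(x)$, for $t=C\log n$ and $K=\lfloor \varepsilon \log n\rfloor$ with $\varepsilon$ small, the walk stays in the ``tree-like'' region and agrees with the tree walk up to time $t$ with probability $1-n^{-\Omega(1)}$ (here one uses that the first $K$ levels of $G_n$ around a $K$-root are genuinely a realisation of the first $K$ levels of $T$, and that the walk is at distance $\asymp \nu t \gg K$ forever after a bounded time). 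So it suffices to bound $\prstart{T_2(t) < \delta t}{x}$ for the walk on the tree $T$ (with root type $1$), for a suitable $\delta$.

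On the tree, I would control the time spent in community $2$ through the regeneration structure of Section~\ref{secLimitTree} together with the mixing of the types Markov chain $\Sigma$. The key point is that, by Lemma~\ref{lemma:2comMixTypes}, $\Sigma$ has mixing time $\lesssim 1/\alpha$, so over a window of $\Theta(1/\alpha)$ regeneration edges the chain $\Sigma$ has visited type $(2,2)$ with probability bounded below by a constant — in fact $\pi_\Sigma((2,2)) \asymp 1$ since it is the induced chain of $W$ and $\pi_W((2,2)) \asymp 1$. More precisely, between two consecutive regeneration times $\sigma_i, \sigma_{i+1}$ the pair $(T(X_{\sigma_i})\setminus T(X_{\sigma_{i+1}}), (X_t)_{\sigma_i \le t \le \sigma_{i+1}})$ is sampled from $\mathcal{Z}_{\Theta(X_{\sigma_i}),\Theta(X_{\sigma_{i+1}})}$, and conditional on the regeneration edge being of type $(2,2)$ the walk spends a positive-mean, exponential-tailed amount of time in community $2$ during that excursion. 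Since by time $t$ there are, with overwhelming probability, at least $c_0 t$ regeneration times (this is the concentration in Lemma~\ref{lemma:ConcentrationPhi}, and the gaps $\sigma_{i+1}-\sigma_i$ have exponential tails), I can split the regeneration indices into $\asymp \alpha t$ blocks of length $\asymp 1/\alpha$. In each block, the probability that $\Sigma$ fails to produce a type-$(2,2)$ regeneration edge is, after mixing, bounded by a constant $q < 1$, and these near-independent block events let me conclude via a standard large-deviations / Chernoff argument for sums over a mixing Markov chain (or directly via the decorrelation Lemma~\ref{lemma:Decorrelation} applied to the indicator of type $(2,2)$) that the number of type-$(2,2)$ regeneration edges up to time $t$ is $\gtrsim \alpha t$ except with probability $\exp(-c\alpha t) = n^{-c C \alpha}$, which is exactly the claimed $C_1/n^{c\alpha}$ form. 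Choosing $\delta$ small enough that one type-$(2,2)$ regeneration excursion per block already forces total community-$2$ time $\ge \delta t$ (each such excursion contributes at least one unit of time, and there are $\gtrsim \alpha t$ of them, but we want $\ge \delta t$ so $\delta$ must be taken comparable to $\alpha$ — here I should be careful: the statement asks for a \emph{fixed} $\delta < 1$, so instead I use that once a type-$(2,2)$ regeneration occurs, the excursion into community $2$ has mean of order $1$ and positive probability of being long, and that consecutive $(2,2)$ regenerations accumulate; alternatively, I bound the proportion of time in community $2$ directly via the ergodic theorem for the stationary process $(\Theta(X_t))$, which has $\pi_Q(2) \asymp 1$, plus the quantitative fluctuation bound).

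For the case $\alpha = \Theta(1)$ the same scheme gives $\exp(-c t) = n^{-cC}$, but the lemma asks for the weaker bound $C_1/n^{c'\varepsilon\alpha}$; here the $\varepsilon$ enters because the tree-approximation only holds up to the collision time, and when $\alpha \asymp 1$ one cannot take $K$ as large without the error from $\Delta^K \cdot (\text{return probability})$ blowing up — so the exponent in the bound is dictated by $K = \lfloor \varepsilon \log n \rfloor$ rather than by $t$, and a coarser argument (the walk stays in the tree for $\asymp K$ steps, during which it sees $\asymp K$ regenerations and hence with exponentially high probability in $K$ spends a constant fraction of time in community $2$) yields $n^{-c'\varepsilon}$, which is $n^{-c'\varepsilon\alpha}$ up to renaming constants. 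The main obstacle I anticipate is making the ``blocks of regenerations are nearly independent'' step fully rigorous in the non-reversible setting: one cannot directly invoke i.i.d.\ Chernoff bounds, and must instead invoke a Markov-chain concentration inequality (e.g.\ via Lemma~\ref{lemma:Decorrelation} for the variance, boosted to exponential tails by a Bernstein-type argument, or a direct bound on $\prstart{\Sigma \text{ avoids } (2,2) \text{ for } m \text{ steps}}{\cdot} \le C\rho^m$ coming from mixing) and carefully track that the excursion lengths being summed have exponential tails so that $T_2(t) < \delta t$ really is an event of probability $\exp(-\Omega(\alpha t))$. The bookkeeping relating ``number of regenerations up to time $t$'' (random) to ``number of $\Sigma$-steps'' (the quantity the Markov-chain concentration controls) via Lemma~\ref{lemma:ConcentrationPhi} is the other place where care is needed.
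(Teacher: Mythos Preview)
Your tree-side argument is essentially the content of the paper's auxiliary Lemma~\ref{lemma:T2conditional}: couple with the tree, show there are $\gtrsim \delta_1 t$ regenerations by time $t/2$ via the exponential tails of $\sigma_i-\sigma_{i-1}$, and then use a Markov-chain concentration inequality (the paper invokes Lezaud's theorem directly rather than bootstrapping Lemma~\ref{lemma:Decorrelation}) to show that at least $\delta t$ of those regenerations have type $(2,2)$ except with probability $\exp(-c\alpha t)$. Your worry that $\delta$ must be comparable to $\alpha$ is a mistake: $\pi_\Sigma((2,2))\asymp 1$, so the \emph{mean} fraction of type-$(2,2)$ regenerations is a constant independent of $\alpha$; the factor $\alpha$ enters only through the concentration rate (the mixing time of $\Sigma$ is $\asymp 1/\alpha$), not the mean. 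Each type-$(2,2)$ regeneration contributes at least one unit of time in community~2, so a fixed $\delta$ works.

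The genuine gap is the passage from annealed to quenched. The lemma asks that \emph{with high probability over $G_n$} the conditional probability is $\le C_1/n^{c\alpha}$, and the paper in fact union-bounds over all $x\in G_n$ at the end, so it needs the complementary event to have probability $O(1/n^2)$. Your coupling only yields an annealed bound $\lesssim n^{-c\alpha}$; Markov's inequality on this would not even be $o(1)$ when $\alpha\asymp 1/\log n$, let alone summable over $x$. The paper's mechanism is to decompose over $z_i\in\partial\mathcal{B}_{K/2}(x)$, run separate exploration processes from each $V_{z_i}$, classify $z_i$ as good or bad according to whether earlier explorations touched its descendants, bound the number of bad $z_i$ combinatorially, and for the good $z_i$ apply the conditional (annealed) estimate of Lemma~\ref{lemma:T2conditional} inside an Azuma--Hoeffding argument for the Doob martingale $M_k=\sum_{i\le k}(R_i-\econd{R_i}{\mathcal{F}_{i-1}})$. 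The increments satisfy $|M_k-M_{k-1}|\le 2h(V_{z_i})\le 2n^{-c_2\varepsilon}$, which is what produces the $1/n^2$ bound. This martingale step is also exactly where the $\varepsilon$ in the $\alpha=\Theta(1)$ exponent comes from: Azuma gives $\exp(-n^{c_2\varepsilon-\beta\alpha}/8)$, forcing $\beta\asymp \varepsilon/\alpha$ and hence the final exponent $c'\varepsilon\alpha$ --- it is not, as you suggest, a limitation on how long the tree coupling lasts.
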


\begin{corollary}\label{cor:notkroot}
The same statement as Lemma~\ref{lemma:TimeInCom2fromKrootOfCom1} holds for every $x\in G_n$ which is not necessarily a $K$-root. 
\end{corollary}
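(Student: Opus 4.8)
## Proof proposal for Corollary~\ref{cor:notkroot}

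The plan is to reduce the general case to the $K$-root case of Lemma~\ref{lemma:TimeInCom2fromKrootOfCom1} by a ``detour argument'': from an arbitrary starting vertex $x$, the walk will, with overwhelming probability, hit a $K$-root in the first community within a number of steps that is negligible compared to $t = C\log n$, and from that point on we may apply the lemma. The key structural input is that $G_n$ is locally tree-like: with high probability all but a vanishing proportion of vertices are $K$-roots (this follows from the standard first/second moment estimates on short cycles in the configuration model with bounded degrees, and $K = \lfloor \varepsilon \log n\rfloor$ is below the threshold $\tfrac{1}{2}\log_{\Delta} n$ where cycles start to appear). In particular, from any vertex $x$, a random walk run for, say, $K$ steps is very likely to land on a $K$-root: the ball $\mathcal{B}_K(x)$ contains at most $\Delta^{K+1}$ vertices, of which at most $o(n)$ fail to be $K$-roots, but more carefully one shows that starting from $x$ the walk escapes any fixed non-tree-like neighbourhood in $O(\log\log n)$ steps with high probability since such neighbourhoods contain $O(1)$ cycles.

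Concretely, I would proceed as follows. First, fix the high-probability event $\mathcal{G}$ on which: (i) the conclusion of Lemma~\ref{lemma:TimeInCom2fromKrootOfCom1} holds simultaneously for every $K$-root in $G_n$; (ii) $G_n$ restricted to each community is an expander (from~\cite{expander}); and (iii) the local structure is as described above. Next, starting the walk from an arbitrary $x \in V_1$, let $\tau$ be the first time the walk is at a vertex $y$ that is a $K$-root lying in the first community with $d(y, V_2) > s$ for a suitable slowly growing $s = s(n) = \Theta(\log\log n)$ — in fact it is cleaner to take $\tau$ to be the hitting time of the set of $K$-roots in $V_1$ and separately control the number of steps spent near $V_2$ before $\tau$. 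I would show $\prstart{\tau > \log^2 n}{x} \le n^{-10}$, say: the walk on the expander $\widehat G_n$-component of community $1$ mixes in $O(\log n)$ steps, and the stationary probability of the $K$-roots is $1 - o(1)$, so the walk hits a $K$-root long before $\log^2 n$; the coupling with $\widehat G_n$ used in the proof of Lemma~\ref{lemma:DirEVCom1} shows this reasoning is not disturbed by the outgoing edges until the walk crosses to $V_2$, and before $\tau$ the walk has had few opportunities to do so.

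Then, by the strong Markov property at time $\tau$ (conditioning on the location $X_\tau = y$, a $K$-root in $V_1$),
\[
\prcond{T_2(t) < \delta t}{G_n}{x} \le \prstart{\tau > t/2}{x} + \max_{y}\prcond{T_2^{(y)}(t/2) < \delta t}{G_n}{y},
\]
where the max is over $K$-roots $y$ in the first community and $T_2^{(y)}$ is the analogous quantity for the walk started at $y$. The first term is at most $n^{-10}$ as above, and — adjusting the constant $\delta$ and the constant $C$ in $t = C\log n$ by a factor of $2$, which is harmless — the second term is bounded by $C_1 n^{-c\alpha}$ (resp.\ $C_1 n^{-c'\varepsilon\alpha}$) directly from Lemma~\ref{lemma:TimeInCom2fromKrootOfCom1} applied to $y$. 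Since $n^{-c\alpha}$ dominates $n^{-10}$ for the relevant range $\alpha = o(1)$ (and in the $\alpha = \Theta(1)$ case one can absorb the $n^{-10}$ term into $C_1 n^{-c'\varepsilon\alpha}$ at the cost of enlarging $C_1$ and shrinking $c'$), this gives the claim with possibly adjusted constants.

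The main obstacle I anticipate is making precise and quantitative the statement that ``from an arbitrary vertex the walk quickly reaches a $K$-root'': one must rule out the possibility that $x$ sits inside a ``bad'' region — e.g.\ a long induced path of degree-$2$... wait, here internal degrees are $\ge 3$, so that particular pathology is absent, but one could still have $x$ in a small dense cluster of cycles, or a bottleneck between $x$ and the bulk. The cleanest route is probably to argue at the level of the graph rather than the walk: show that with high probability \emph{every} vertex of $G_n$ has a $K$-root within graph-distance $O(\log\log n)$ (again using the bounded-degree, locally-tree-like structure and the fact that there are only $o(n)$ non-$K$-roots, which moreover cannot cluster too much), and then a short-time escape estimate for the walk — of exactly the type proved for trees in Lemma~\ref{lemma:Speed} via Lemma~\ref{lemma:ReturnProb}, valid on the tree-like ball — brings the walk to such a $K$-root in $O((\log\log n)^2)$ steps with the required probability. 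This is routine given the tools already developed, but the bookkeeping (choosing $K$, $s$, and the various constants consistently, and verifying the escape estimate is not spoiled by the $O(1)$ cycles possibly present in the bad region) is where the real work lies.
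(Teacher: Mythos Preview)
Your reduction is correct and is exactly the paper's approach: reach a $K$-root quickly, then apply Lemma~\ref{lemma:TimeInCom2fromKrootOfCom1} via the strong Markov property. The paper's execution of the ``reach a $K$-root quickly'' step is much more direct than your sketch, though. Rather than going through mixing on the rewired expander or arguing about stationary mass of $K$-roots, the paper simply shows by a union bound that with high probability \emph{every} ball $\mathcal{B}_{5K}(x)$ contains at most one cycle (the probability of two cycles in a fixed $5K$-ball is $O(\Delta^{20K}/N^2)=O(N^{20\varepsilon\log\Delta-2})$, and a union over the $n$ vertices still goes to zero for $\varepsilon$ small). On that event, Lemma~2.3 of \cite{ComparingMixT} gives directly that from any vertex the walk hits a $K$-root within time $O(\log n)$ except with probability $n^{-c}$. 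This is precisely the structural fact you were groping for in your last paragraph; it replaces all of your expander/mixing machinery and sidesteps the bookkeeping you anticipated.

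One small correction: your aside ``here internal degrees are $\ge 3$'' is true only in the $m$-communities model; in the $2$-communities model (which is the one relevant here) the \emph{total} degree is $\ge 3$ but a vertex may have fewer internal half-edges. This does not matter for the argument, since the ``at most one cycle in $\mathcal{B}_{5K}$'' property uses only the bounded-degree sparsity, not any lower bound on internal degree.
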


\begin{proof}

         We already established above for a $K$-root $x$ in Lemma \ref{lemma:TimeInCom2fromKrootOfCom1}. 
The result now follows from the fact that the probability that a $K-$root is not reached by time of order $\log n$ is bounded by~$\frac{1}{n^c}$ with high probability. Indeed, the ball $\B_{5K}(x)$ can be generated by revealing the type of the half-edge and its pair one at a time until we reach level $5K$. As at most $k=\frac{\Delta\left(\left(\Delta-1\right)^{5K}-1\right)}{\Delta-2}$ pairs are formed and at each step we have probability at most $\max_{i\in \{1,2\}}\left(\frac{\Delta(\Delta-1)^{5K}-1}{N_i-2k}\right)$ to create a cycle, we get that $\B_{5K}(x)$ contains at least two cycles with probability $O\left(\frac{\Delta^{4\cdot 5K}}{N^2}\right)=O \left(N^{20\varepsilon-2}\right).$ A union bound then gives that with probability $O(\frac{1}{N^{20\varepsilon-1}})$ the graph has no vertices $x$ whose $K$ ball contains two cycles. Taking $\varepsilon<\frac{1}{20}$, the proof now directly follows from Lemma 2.3  in~\cite{NBRWvsSRW}. 
\end{proof}

We now give a brief overview of the proof of Lemma~\ref{lemma:TimeInCom2fromKrootOfCom1} and of the importance of working with $K$-roots. Using a coupling of the random graph with a tree we prove the desired estimate for a typical point (see Lemma~\ref{lemma:T2conditional} for a precise statement). To upgrade the result from a typical point to all points, we need to first prove it for all $K$-roots, and then as explained in Corollary~\ref{cor:notkroot} above
 it can be easily extended to all points. The advantage of working with a $K$-root is that starting from there, we show that with high probability at the first hitting time of the boundary of the tree rooted at the $K$-root the walk will be at a \emph{good} point, i.e.\ a vertex with the property that starting a walk from there we have a good control on the time it spends in community $2$. This is achieved using that the hitting distribution of the boundary of the tree is sufficiently spread out and that starting from a typical point the walk spends a constant proportion of time in community $2$ with high probability. With these two ingredients, we then conclude the proof using a concentration result.

Before proving Lemma~\ref{lemma:TimeInCom2fromKrootOfCom1}, we define the following coupling and exploration process. 

\begin{definition}\label{def:CouplingFirst} For a $K-$root $x$ in $G_n$ and a tree $T_0$ with $\pr{\mathcal{B}_K(x)=T_0}>0$, we define the exploration process of the graph $G_n$ conditional on $\mathcal{B}_K(x)=T_0$ (this includes conditioning on the types and degree of vertices in $\partial \mathcal{B}_K(x)$ but not on the value of their internal and outgoing degree, which will be chosen later at random),  by coupling $G_n$ with the multi-type \random tree $T$ with root~${x}$ and  $\mathcal{B}_K\left({x}\right)=T_0$ in the following way. We first label all vertices of $\partial\mathcal{B}_{\lfloor\frac{K}{2}\rfloor}(x)$ by $z_1,\ldots, z_L$ and define $V_{z_i}$ to be all descendants of $z_i$ in $\partial\mathcal{B}_{K}(x).$ For $i=1,\ldots, L$ and for all $z\in V_{z_i}$ we define the exploration process of $G_n$ corresponding to $z$ by coupling a simple random walk $X$ on $G_n$ started from $z$,  and $\widetilde{X}$ on $T$ started from ${z}.$ For any $t\ge 1$ the coupling is performed as follows: If the coupling has succeeded so far and the walk has moved to a vertex which was not seen before but whose total degree is revealed, we proceed by performing the optimal couplings for the distributions for each of the following steps: 

 (1) for each half-edge going from vertex $\widetilde{X}_t$ of type $i$ in $T$, we choose it to have probability $\frac{p}{N_i}$ to be outgoing, while for the corresponding vertex $X_t$ in $G_n$ we choose this with probability $\frac{p-p(t)}{N_i-N_i(t)}$ where $p(t)$ is the number of outgoing half-edges revealed in community $i$ by time $t$ and $N_i(t)$ is the number of half-edges in community $i$ revealed by time $t$.
 
(2) In $T$ we match each of these half-edges from $\widetilde{X}_t$ to a uniformly chosen half-edge of the $n_i$ vertices in the appropriate community, while in $G_n$ we match each half-edge from $X_t$ to the half-edges (corresponding to the vertices of the appropriate type) which had not already been matched previously.

If the two optimal couplings above succeed, then we move both walks from $\widetilde{X}_t$ and $X_t$ to the same uniformly chosen neighbour.  

After we have revealed the matchings of all the half-edges from  we perform the optimal coupling of the next step of these two random walks. 
 
The coupling fails if one of the following happens: the optimal couplings of steps 1) or 2) fail (i.e. if we either do not choose the same types for half-edges or do not connect them to the same vertex), if we created a cycle in $G_n$, or if the walk $\widetilde{X}$ reaches level $\frac{K}{2}$ in $T_0$. The exploration process from~$z$ ends when the coupling fails or after $C\log n$ steps for a constant $C$ to be determined later.

 We define $\mathcal{F}_i$ to be the $\sigma$-algebra generated by the exploration processes corresponding to vertices $z\in \cup_{j=1}^iV_{z_j}.$ We  call $z_i\in \partial\mathcal{B}_{\frac{K}{2}}\left(x\right)$ \emph{good} if none of its descendants in $\partial T_0$ have been  explored in the exploration processes of $V_{z_1},\ldots V_{z_{i-1}}$. Otherwise $z$ is called \emph{bad}. 
\end{definition}

We first prove the following lemma, where we recall $T_2(u)$ is the total time spent in the second community by time~$u$. 

\begin{lemma}\label{lemma:T2conditional}
There exist a positive constant $C_1,$ $\varepsilon_0=\varepsilon(\Delta)$ and $\delta<1$ so that the following holds for all $\varepsilon<\varepsilon_0$. For every $c>0$ there exists $C>0$, so that if $t=C\log n$ and  $K=\lfloor\varepsilon \log n\rfloor$, then on the event $\{x \text{ is a $K$-root in } G_n\} \cap \{z_i\text{ is good}\},$  for $z\in V_{z_i}$, we have 
\[\prcond{T_2\left(\frac{t}{2}\right)< \delta t}{\mathcal{F}_{i-1}}{z} \le \frac{C_1}{n^{c\alpha}}
\]
for all $n$ sufficiently large.
\end{lemma}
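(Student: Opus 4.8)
The plan is to transfer the estimate from $G_n$ to the \random tree via the exploration-process coupling of Definition~\ref{def:CouplingFirst}, and then to bound the time the tree walk spends in community~$2$ using the regeneration structure together with the fact (Lemma~\ref{lemma:2comMixTypes}) that the type chain $\Sigma$ has all its transition probabilities of order $\alpha$. First: on the event $\{x\text{ is a }K\text{-root}\}\cap\{z_i\text{ is good}\}$ and conditionally on $\mathcal{F}_{i-1}$ the exploration from $V_{z_i}$ starts fresh, so Definition~\ref{def:CouplingFirst} couples the walk $X$ on $G_n$ from $z$ with a lazy simple random walk $\widetilde X$ on a fresh copy of the \random tree started from $z$ (which sits at level $K$ of that tree); the two walks coincide, and in particular spend equally long in community~$2$, until the first time $F$ at which the coupling fails. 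Writing $\widetilde T_2(u)$ for the time $\widetilde X$ spends at type-$2$ vertices by time $u$, it therefore suffices to bound $\prstart{\widetilde T_2(t/2)<\delta t}{z}$ and $\prstart{F\le t/2}{z}$ separately.

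The coupling fails only if $\widetilde X$ backtracks to level $\lfloor K/2\rfloor$ of $T_0$, or a cycle is created in the explored part of $G_n$, or one of the optimal couplings of Definition~\ref{def:CouplingFirst}(1)--(2) fails. Since $\widetilde X$ starts at level $K$, the first requires backtracking $\lceil K/2\rceil$ levels, which by the uniform transience bound of Lemma~\ref{lemma:ReturnProb} has probability at most $(1-c)^{\lceil K/2\rceil}=O(n^{-c'\varepsilon})$; and since only $O(\log n)$ half-edges are revealed in the $t/2=O(\log n)$ steps, each fresh match closing a cycle, resp.\ producing a coupling discrepancy, with probability $O(\log n/n)$, the remaining two contribute $O((\log n)^2/n)$. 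Hence $\prstart{F\le t/2}{z}=O(n^{-c'\varepsilon})$; taking $\varepsilon_0$ small, this is $\le C_1 n^{-c\alpha}$ once $n$ is large (in the regime $\alpha=o(1)$, because $\varepsilon$ is a fixed constant, and in the regime $\alpha\asymp1$ it already has the $n^{-c''\varepsilon\alpha}$ form of Lemma~\ref{lemma:TimeInCom2fromKrootOfCom1}).

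For the tree estimate, let $(\sigma_k)$ be the regeneration times of $\widetilde X$; by Lemma~\ref{lemma:RegIndep} (and since, being conditioned not to backtrack to level $\lfloor K/2\rfloor$, $\widetilde X$ has the regeneration structure of the walk on a fresh \random tree, with regeneration-edge types forming the chain $\Sigma$ of Definition~\ref{def:MCRegTypes}) the gaps $\sigma_k-\sigma_{k-1}$ have uniformly exponential tails. A standard Chernoff bound for $\sigma_{\lceil c_1 t\rceil}=\sum_{k\le\lceil c_1 t\rceil}(\sigma_k-\sigma_{k-1})$ then gives, for $c_1$ small, $\prstart{N<c_1 t}{z}\le e^{-c_2 t}$ where $N:=\#\{k\ge1:\sigma_k\le t/2\}$. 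At each type-$(2,2)$ regeneration time the walk sits at a type-$2$ vertex, and the $\sigma_k$ are distinct, so $\widetilde T_2(t/2)\ge\#\{1\le k\le N:\Sigma_k=(2,2)\}$, which on $\{N\ge c_1 t\}$ is at least $\#\{1\le k\le\lfloor c_1 t\rfloor:\Sigma_k=(2,2)\}$. Thus it remains to prove that, for constants $\delta',c_3>0$ and uniformly over the law of $\Sigma_0$,
\[\pr{\#\{1\le k\le m:\Sigma_k=(2,2)\}<\delta' m}\le e^{-c_3\alpha m}\qquad\text{for all }m;\]
taking $m=\lfloor c_1 t\rfloor$ and $\delta:=\delta'c_1$ then yields $\prstart{\widetilde T_2(t/2)<\delta t}{z}\le e^{-c_2 t}+e^{-c_3\alpha c_1 t}\le C_1 n^{-c\alpha}$ once $C$ is chosen large (using $\alpha\le1$), which together with the previous paragraph completes the proof.

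The displayed large-deviation bound for $\Sigma$ is the main obstacle, and the whole point is to obtain the rate $\alpha$ rather than a rate of order~$1$: an $L^2$/Chebyshev estimate as in Lemma~\ref{lemma:ConcentrationPhi} only yields a bound of order $1/(\alpha m)$, far too weak. The plan is a block decomposition: split $\{1,\dots,m\}$ into $\lfloor m/\ell\rfloor$ consecutive blocks of length $\ell:=\lceil c_4/\alpha\rceil$ for a large constant $c_4$ (when $\alpha\gtrsim1/\log n$ there is at least one block; when $\alpha=o(1/\log n)$ one has $n^{c\alpha}\to1$ and the asserted bound is asymptotically trivial). Since the mixing time of $\Sigma$ is at most a constant times $1/\alpha$, for suitable $c_4$ the chain visits $(2,2)$ during the first half of any block with probability $\ge c_5>0$ irrespective of its state at the block's start; and since $\Sigma((2,2),(2,2))\ge1-c_6\alpha$ (one checks the complementary probability is $\lesssim\alpha$), once at $(2,2)$ it stays there for $\ell/4$ further steps with probability $\ge(1-c_6\alpha)^{\ell/4}\ge c_7>0$. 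So, conditionally on the past, each block contributes at least $\ell/4$ to $\#\{k:\Sigma_k=(2,2)\}$ with probability $\ge c_5c_7$, and a Chernoff bound for this sequence of conditionally lower-bounded indicators shows at least $\tfrac12 c_5c_7\lfloor m/\ell\rfloor$ blocks are good except with probability $e^{-c_8 m/\ell}=e^{-c_3\alpha m}$, whence $\#\{k\le m:\Sigma_k=(2,2)\}\ge\delta' m$. The care needed here — and the reason the per-block bound must be uniform in the starting state — is that $z$ may be of the ``wrong'' type, so $\Sigma$ is generally started far from $\pi_\Sigma$; the uniformity makes the block decomposition oblivious to this.
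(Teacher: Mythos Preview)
Your proof is correct and follows the same architecture as the paper's: couple to the tree via the exploration process, bound the failure probability of the coupling, then reduce to a large-deviation estimate for the number of type-$(2,2)$ visits of the regeneration-type chain $\Sigma$. Two points are worth flagging.

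\emph{A minor undercount.} When you say ``only $O(\log n)$ half-edges are revealed'', you are counting only the half-edges revealed during the current exploration from $z$. But the conditioning on $\mathcal{F}_{i-1}$ means that up to $O(\Delta^{K}\log n)=O(n^{\varepsilon\log\Delta}\log n)$ half-edges have already been revealed by the explorations from $V_{z_1},\dots,V_{z_{i-1}}$ (there are $O(\Delta^K)$ leaves of $T_0$, each exploration revealing $O(\log n)$ half-edges). The per-step TV error in the optimal couplings of Definition~\ref{def:CouplingFirst} is therefore $O(n^{\varepsilon\log\Delta}\log n/n)$ rather than $O(\log n/n)$. This is still $n^{-c_1}$ for small $\varepsilon$, so your conclusion survives, but the intermediate bound as written is too optimistic.

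\emph{The large-deviation step: a genuine alternative.} The paper proves the bound $\pr{\#\{k\le m:\Sigma_k=(2,2)\}<\delta' m}\le e^{-c_3\alpha m}$ by invoking Lezaud's Chernoff-type theorem for additive functionals of Markov chains, using the bound $t_{\mathrm{rel}}(\Sigma)\lesssim 1/\alpha$ from Lemma~\ref{lemma:2comMixTypes}. Your block decomposition is a self-contained substitute: you exploit directly that (i) within one mixing time $\asymp 1/\alpha$ the chain is at $(2,2)$ with uniformly positive probability, and (ii) $\Sigma((2,2),(1,1))\lesssim\alpha$, so once at $(2,2)$ the chain stays there for a further $\Theta(1/\alpha)$ steps with uniformly positive probability. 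Claim (ii) does require a short justification (the paper's Lemma~\ref{lemma:2comMixTypes} only gives the lower bound $\Sigma(\theta_1,\theta_2)\gtrsim\alpha$); one clean way is to note that the increments between \emph{weak} regeneration times (where only the ``first and last crossing'' condition is imposed, not the type condition) have uniformly exponential tails independent of types, and that on the trajectory up to the next weak regeneration the edge-types are i.i.d.\ with outgoing probability $\alpha_2$, so the chance that any explored edge is outgoing is $\lesssim\alpha_2\,\mathbb{E}[\text{edges explored}]\lesssim\alpha$. With this in hand, your block argument is correct and arguably more transparent than the black-box appeal to Lezaud; the paper's route has the advantage of immediately generalising to the $m$-community chain $\Sigma$, where the analogue of (ii) is less clean.
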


\begin{proof} We use the coupling from Definition~\ref{def:CouplingFirst} for $C\log n$ steps. If it has succeeded, then we explore all the unexplored parts of the tree $T$ and we continue running the walk $\widetilde{X}$ forever as this will not affect the events of interest for $X$ up to time $t$. 

On the event $\{z_i \text{ is good}\}$, we can bound 
\begin{align}\label{eq:timeincom2}
\begin{split}   
&\prcond{T_2\left(\frac{t}{2}\right)< \delta t}{\mathcal{F}_{i-1}}{z}\leq \prcond{\text{coupling failed}}{\mathcal{F}_{i-1}}{z} 
        \\ &+ \prcond{\widetilde{X}\text{ has less than } \delta t\text{ regeneration times in community 2 by } \frac{t}{2}}{\mathcal{F}_{i-1}}{z}.      
\end{split}
\end{align}
 We first bound the probability that the coupling with the tree failed as follows. 
Firstly, the probability that the walk backtracked to level $\frac{K}{2}$ is by Lemma \ref{lemma:ReturnProb} bounded by~$n^{-c_1\varepsilon}$ for a suitable constant~$c_1$.
The probability of failure in step (1) of the coupling is the probability that at some step an edge in the tree was assigned a different type to the one in the graph. The probability of this happening at time $s$ when the two walks are in community $i$ is equal to the total variation distance between two Bernoulli random variables with parameters $\frac{p}{N_i}$ and $\frac{p-p(s)}{N_i-N_i(s)}$ which is equal to 
\[
\left|\frac{p}{N_i} - \frac{p-p(s)}{N_i-N_i(s)} \right|.
\] 
As the number of half-edges in $T_0$ is of order $n^{\varepsilon\log\Delta}$, and for each vertex in $\partial T_0$ we reveal at most $O(\log n)$ half edges during the exploration process, we have that the total number of explored half-edges is $O(n^{\varepsilon\log\Delta} \log n).$ Therefore,
\[
\left|\frac{p}{N_i} - \frac{p-p(s)}{N_i-N_i(s)} \right|\leq O\left(\frac{n^{\varepsilon\log\Delta} \log n}{n}\right).
\] 
Taking a union bound over all $C\log n$ steps of the exploration process we get that the probability of the coupling failing in step (1) is bounded by $O\left(n^{\varepsilon\log\Delta}(\log n)^2/n\right)\leq n^{-c_1},$ for a suitable positive constant $c_1$ and $n$ large enough.  

The probability that step (2) of the coupling fails can be upper bounded similarly. The probability that step (2) fails at time $s$ when both walks are in community $i$ is equal to the total variation distance between two uniform random variables on $\{1,\ldots,N_i\}$ and $\{1,\ldots,N_i-N_i(s)\}$. This total variation distance is equal to $N_i(s)/N_i$. Using the bound from above and taking a union bound over all time steps gives that the probability that step (2) of the coupling fails is upper bounded by $O\left(n^{\varepsilon\log\Delta}(\log n)^2/n\right)\leq n^{-c_1}.$

The last possibility for the coupling to fail is if we created a cycle. As above this is bounded by the probability that we chose to connect to an already revealed vertex or one of its neighbours. Using the bounded degree assumption we thus obtain the same bound as in step (2), i.e.\ $O\left(n^{\varepsilon \log\Delta}(\log n)^2/n\right)\leq n^{-c_1}.$

Therefore, we conclude that for a suitable constant $\tilde{c}>0$ we have that 
\[\prcond{\text{coupling\,failed}}{\mathcal{F}_{i-1}}{z}\le n^{-\tilde{c}}.
\]
 It is left to bound the second probability appearing on the right hand side of~\eqref{eq:timeincom2}, i.e.\ the probability that the random walk on the \random tree had less than $\delta t$ regeneration times in community $2$ by time $t/2$. 
     As in Lemma~\ref{lemma:RegIndep} we take $\sigma_1, \sigma_2,\ldots$ to be the regeneration times of $\widetilde{X}$ that occur after level $K$. We first note that if $\widetilde{X}_{\sigma_1}$ is among the descendants of $z_i$ (which happens with probability at least $1-e^{-cK}$ for a positive $c$), then the conditioning on $\mathcal{F}_{i-1}$ is irrelevant, since it concerns the part of the tree the walk will never visit. So using Chernoff's bound we get that for any $\delta_1>0$
          \begin{align}\label{eq:regspentincom2}
          \begin{split}
                 \prcond{\sigma_{\delta_1 t}>\frac{t}{2}}{\mathcal{F}_{i-1}}{z}\le e^{-cK}&+
    e^{-\theta\frac{t}{2}}\max_{a\in\{1,2\}^2}\escond{e^{\theta\sigma_1}}{\Theta\left(\widetilde{X}_{\sigma_1}\right)=a}{z}\\ &\cdot\left(\max_{a,b\in\{1,2\}^2}\escond{e^{\theta(\sigma_{2}-\sigma_1)}}{\Theta\left(\widetilde{X}_{\sigma_1}\right)=a,\Theta\left(\widetilde{X}_{\sigma_2}\right)=b}{z}\right)^{\delta_1t}.
          \end{split}   
          \end{align}
  Here we have also used that after conditioning on the types of all of the first $\delta_1 t$ regeneration times we have by Lemma~\ref{lemma:RegIndep} that $\sigma_s-\sigma_{s-1}$ for $s\in \{1,\ldots,\delta_1t\}$ are independent. Using that $(\sigma_2-\sigma_1)$ has exponential tails it is standard to find $\theta$ and $\delta_1$ sufficiently small so that the second term on the right hand side of~\eqref{eq:regspentincom2} becomes smaller than $e^{-c_1t}$ for a positive constant $c_1$, and hence for a positive constant $c_2$ we get
  \begin{align}\label{eq:lessreg}
        \prcond{\sigma_{\delta_1 t}>\frac{t}{2}}{\mathcal{F}_{i-1}}{z}\le n^{-c_2}.
  \end{align}
   Let $\mu$ be the invariant distribution of the types Markov chain as in Definition~\ref{def:MCRegTypes}. Then
  \begin{align*}
\prcond{\widetilde{X}\text{ has\,}\leq \delta t\text{\,regeneration\,times\,in\,community\,2\,by\,} \frac{t}{2}}{\mathcal{F}_{i-1}}{z} \leq \prcond{\sigma_{\delta_1 t}>\frac{t}{2}}{\F_{i-1}}{z} \\+ 
        \prstart{\sum_{i=1}^{\delta_1t}\left(\mu(2,2)-\mathds{1}\left(\Theta(\widetilde{X}_{\sigma_i})=(2,2)\right)\right)>\left(\mu(2,2)- \frac{\delta}{\delta_1}\right)\delta_1t}{z}.
 \end{align*}
 Using that $\mu(2,2)\asymp 1/2$, Lemma~\ref{lemma:2comMixTypes}, which implies an upper bound on the relaxation time of the types Markov chain, and~\cite[Theorem~1.1]{Lezaud} we obtain that there is a constant $c'>0$ such that for $\delta>0$ sufficiently small we have 
 \[
 \prstart{\sum_{i=1}^{\delta_1t}\left(\mu(2,2)-\mathds{1}\left(\Theta(\widetilde{X}_{\sigma_i})=(2,2)\right)\right)>\left(\mu(2,2)- \frac{\delta}{\delta_1}\right)\delta_1t}{z}\lesssim \exp\left(-c'\delta_1t\alpha\right).
 \]
 Taking $C$ in the definition of $t$ sufficiently large we can make the exponential above less than $n^{-c\alpha}$. This together with~\eqref{eq:lessreg} concludes the proof.
 \end{proof}

 \begin{proof}[Proof of Lemma \ref{lemma:TimeInCom2fromKrootOfCom1}] 
We first set $h\left(z\right)=\prcond{X_{\tau_{\partial \mathcal{B}_K\left(x\right)}}=z}{G_n}{x}$ and we work on the event that~$x$ is a $K$-root. Then we have  
\begin{align*}
        \prcond{T_2(t)< \delta t}{G_n}{x}&\le \prcond{T_2(t)-T_2(\tau_{\partial \mathcal{B}_K\left(x\right)})< \delta t}{G_n}{x} \\ 
        &\leq \sum_{z\in \partial \mathcal{B}_K\left(x\right)}h\left(z\right) \prcond{T_2\left(\frac{t}{2}\right)< \delta t}{G_n}{z}+ \prcond{\tau_{\partial\mathcal{B}_K\left(x\right)}\ge\frac{t}{2}}{G_n}{x}.
\end{align*}
By Lemma \ref{lemma:ReturnProb} we get that for $\varepsilon$ (in the definition of $K$) sufficiently small   
\begin{equation}\label{eq:ProbT2Bound} \prcond{T_2(t)< \delta t}{G_n}{x}\leq\sum_{z\in \partial \mathcal{B}_K\left(x\right)}h\left(z\right) \prcond{T_2\left(\frac{t}{2}\right)< \delta t}{G_n}{z}+ n^{-c_1},
\end{equation}
where $c_1$ is a positive constant. We turn to bounding the first term on the right hand side above. 
For $\beta$ to be determined later we define the set 
\[
V=\left\{z\in\partial \mathcal{B}_K\left(x\right):\prcond{T_2\left(\frac{t}{2}\right)< \delta t}{G_n}{z}>n^{-\beta\alpha} \right\}.
\]
We then have 
\begin{equation}\label{eq:BoundSumh}\sum_{z\in \partial \mathcal{B}_K\left(x\right)}h\left(z\right) \prcond{T_2\left(\frac{t}{2}\right)< \delta t}{G_n}{z}\le\sum_{z\in V}h\left(z\right)+n^{-\beta\alpha}.
\end{equation}

Let $T_0$ be a tree with $\pr{\B_K(x)=T_0}>0$. We now work on the event $\{\B_K(x)=T_0\}$. Using the definition of good and bad vertices from Definition \ref{def:CouplingFirst} and writing $h\left(U\right)=\sum_{z\in U}h\left(z\right)$ for any set~$U$, we obtain 
\[
\sum_{z\in V}h\left(z\right)= \sum_{i=1}^L\left(h\left(V\cap V_{z_i}\right)\mathds{1}\left(z_i\text{ is\,good}\right)+h\left(V\cap V_{z_i}\right)\mathds{1}\left(z_i\text{ is\,bad}\right)\right).
\]

 We first bound the number of bad vertices as follows. Notice that the exploration process continues for time of order $\log\left(n\right)$ and the number of leaves in the tree $T_0$ is of order $n^{\varepsilon\log(\Delta)}$ giving that we explore at most order $\log(n)n^{\log(\Delta)\varepsilon}\le n^{c' \varepsilon}$ vertices, for suitable $c'$ and large $n$. The probability of connecting to a vertex in $\partial T_0$ at any step is also bounded by $\frac{O(n^{\log(\Delta)\varepsilon})}{n}\le \frac{n^{c''\varepsilon}}{n}$, for suitable $c''$ and large $n$. This gives  that for $R>4$, taking $\varepsilon$ sufficiently small we obtain
\begin{align}\label{eq:badvertices}
\prcond{\left| \left\{z_i\in \partial\mathcal{B}_{\frac{K}{2}}\left(x\right): z_i\text{ is\,bad} \right\} \right| \ge R}{ \mathcal{B}_K\left(x\right)=T_0}{}\le {n^{c'\varepsilon} \choose R}\left(\frac{n^{c''\varepsilon}}{n}\right)^R\le \frac{1}{n^2}.
\end{align} 
We now claim that for a positive constant $c_2$ 
\begin{equation}\label{eq:h(Vzi)}  h\left(V_{z_i}\right)\le n^{-\varepsilon c_2}. \end{equation}  Indeed, this follows from the observation that if $\xi$ is the loop erasure of $(X_t)_{t\le \tau_K},$ then  $h\left(V_{z_i}\right)=\prcond{\xi_{\lfloor\frac{K}{2}\rfloor}=z_i}{G_n}{}\le e^{-c'\frac{K}{2}}$, for some constant $c'$, where the last inequality follows from the assumption that all vertices on the tree have degree at least $3$ and the definition of the loop erasure (see also \cite[Lemma~3.13]{PerlasPaper}).

 Combining~\eqref{eq:badvertices} and~\eqref{eq:h(Vzi)} we deduce for $R>4$ that 
\begin{equation}\label{eq:sumBad}\prcond{ \sum_{i=1}^Lh\left(V\cap V_{z_i}\right)\mathds{1}\left(z_i\text{ is\,bad}\right)\le R n^{-\varepsilon c_2}}{\mathcal{B}_K\left(x\right)=T_0}{} \ge 1-\frac{1}{n^2}.\end{equation}
We next bound the sum over the good vertices and we get
\begin{align*}&\econd{h\left(V\cap V_{z_i}\right)\mathds{1}\left(z_i \text{ is\,good}\right)}{\mathcal{F}_{i-1}}
\\&=\sum_{z\in V_{z_i}}h\left(z\right)\prcond{\prcond{T_2\left(\frac{t}{2}\right)< \delta t}{G_n}{z}>n^{-\beta\alpha}}{\mathcal{F}_{i-1}}{}\mathds{1}\left(z_i \text{ is\,good}\right)
\\&\leq \sum_{z\in V_{z_i}}h\left(z\right)\frac{\econd{\prcond{T_2\left(\frac{t}{2}\right)< \delta t}{G_n}{z}}{\mathcal{F}_{i-1}}}{n^{-\beta\alpha}}\mathds{1}\left(z_i \text{ is\,good}\right) 
\\&= \sum_{z\in V_{z_i}}h\left(z\right)\frac{\prcond{T_2\left(\frac{t}{2}\right)< \delta t}{\mathcal{F}_{i-1}}{z}}{n^{-\beta\alpha}}\mathds{1}\left(z_i \text{ is\,good}\right).
\end{align*}
Taking $c=2\beta$ in Lemma~\ref{lemma:T2conditional} and taking $C$ (in the definition of $t$) sufficiently large we get for $n$ sufficiently large
\[\econd{h\left(V\cap V_{z_i}\right)\mathds{1}\left(z_i \text{ is\, good}\right)}{\mathcal{F}_{i-1}}\le  C_1 h\left(V_{z_i}\right) \mathds{1}\left(z_i \text{ is \,good} \right) n^{-\beta\alpha}.
\]
Writing $R_i=h\left(V\cap V_{z_i}\right)\mathds{1}\left(z_i \text{\,is\, good}\right)$, we consider the Doob martingale defined conditional on $\mathcal{B}_K\left(x\right)=T_0$ as $M_0=0$ and for $1\le k\le L$ (recall $L=|\partial \B_{K/2}(x)|$ from Definition~\ref{def:CouplingFirst}) 
\[M_k=\sum_{i=1}^k\left(R_i-\econd{R_i}{\mathcal{F}_{i-1}}\right).
\]
Using that $\left|M_k-M_{k-1}\right|\le 2h\left(V_{z_i}\right)$, we can apply Azuma-Hoeffding's inequality  to this martingale to get
\begin{align*}&\prcond{\sum_{i=1}^L h\left(V\cap V_{z_i}\right)\mathds{1}\left(z_i \text{ is\, good}\right)\ge (C_1+1)n^{-\beta\alpha}}{\mathcal{B}_K\left(x\right)=T_0}{}
\\&\le\prcond{M_L\ge n^{-\beta\alpha}}{\mathcal{B}_K\left(x\right)=T_0}{} \le \exp\left(-\frac{n^{-2\beta\alpha}}{2\sum_{i=1}^L\left(2h\left(V_{z_i}\right)\right)^2}\right)\le \exp\left(-\frac{n^{c_2\varepsilon-2\beta\alpha}}{8}\right).\end{align*}
 The last inequality above follows from the fact that using \eqref{eq:h(Vzi)} $$\sum_{i=1}^L \left(h\left(V_{z_i}\right)\right)^2\le n^{-c_2\varepsilon}\sum_{i=1}^L h\left(V_{z_i}\right)=n^{-c_2\varepsilon}. $$ 
 If $\alpha=o(1)$, then for all $n$ sufficiently large the exponential above can be made smaller than $1/n^2$. Otherwise, we take $\beta=c_2\varepsilon/4$ so that the exponential is smaller than $1/n^2$.
 
 Using the above together with \eqref{eq:ProbT2Bound}, \eqref{eq:BoundSumh} and  \eqref{eq:sumBad} gives that for $n$ sufficiently large
\begin{align*} \prcond{\prcond{T_2\left(t\right)\le \delta t}{G_n}{x}\le (C_1+2)n^{-\beta\alpha}+Rn^{-\varepsilon c_2}+n^{-c_1}}{\mathcal{B}_K\left(x\right)=T_0}{}\ge1- \frac{2}{n^2}.
\end{align*}
If $\alpha=o(1)$, then we can take any $\beta$ by changing $C$ in the definition of $t$. Otherwise, we take $\beta=c_2\varepsilon/4$.
Taking a union bound over all $x\in G_n$ completes the proof. 
\end{proof}

\begin{lemma}\label{lem:boundonrevlazymc}
        Let $P$ be the transition matrix of a finite reversible lazy Markov chain with invariant distribution~$\pi$, relaxation time $t_{\rm{rel}}$ and let $A$ be a non-empty subset of the state space.  Then for every distribution $\eta$ and all times $t$ we have 
        \begin{align*}
                \prstart{T_A>t}{\eta}\leq \left\|\frac{\eta}{\pi}\right\|_{2,\pi} \cdot \exp\left(-\frac{t \pi(A)}{t_{\rm{rel}}} \right),
        \end{align*}
        where $\left\|\frac{\eta}{\pi}\right\|^2_{2,\pi}  = \sum_x (\eta(x))^2/\pi(x)$.
\end{lemma}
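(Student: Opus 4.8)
The plan is to reduce the hitting-time tail bound to a spectral estimate on the restricted (killed) chain, just as in the standard proof of $\prstart{T_A>t}{\eta}\le \|\eta/\pi\|_{2,\pi}\,e^{-t\lambda(A^c)}$, and then upgrade $\lambda(A^c)$ to $\pi(A)/t_{\rm rel}$ using laziness. Concretely, let $B=A^c$ and let $P_B$ be the restriction of $P$ to $B$; since $P$ is reversible, $P_B$ is self-adjoint on $\ell^2(\pi_B)$, and $\prstart{T_A>t}{\eta}=\sum_{x\in B}\eta(x)(P_B^t\mathbf 1)(x)$. Writing $f=\eta/\pi$ restricted to $B$ (extended by $0$ on $A$), one gets $\prstart{T_A>t}{\eta}=\langle f, P_B^t \mathbf 1\rangle_{\pi}$, and by Cauchy–Schwarz and self-adjointness this is at most $\|f\|_{2,\pi}\,\|P_B^t\mathbf 1\|_{2,\pi}\le \|\eta/\pi\|_{2,\pi}\,(1-\lambda(B))^t$, where $\lambda(B)$ is the Dirichlet eigenvalue of $B$ from Definition~\ref{def:DirEV}. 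Hence $\prstart{T_A>t}{\eta}\le \|\eta/\pi\|_{2,\pi}\,\exp(-t\lambda(A^c))$.

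Then it remains to show $\lambda(A^c)\ge \pi(A)/t_{\rm rel}$. For this I would use the variational characterisation $\lambda(A^c)=\inf\{\mathcal E_P(f)/\|f\|_{2,\pi}^2 : \operatorname{supp}(f)\subseteq A^c\}$. For a function $f$ supported on $A^c$, reversibility gives $\mathcal E_P(f)=\tfrac12\sum_{x,y}\pi(x)P(x,y)(f(x)-f(y))^2$, and comparing with the variance one writes $\mathrm{Var}_\pi(f)=\tfrac12\sum_{x,y}\pi(x)\pi(y)(f(x)-f(y))^2$; the Poincaré inequality $\gamma\,\mathrm{Var}_\pi(f)\le \mathcal E_P(f)$ together with $\operatorname{Var}_\pi(f)\ge \|f\|_{2,\pi}^2-(\mathbb E_\pi f)^2$ and $(\mathbb E_\pi f)^2=(\sum_{x\in A^c}\pi(x)f(x))^2\le \pi(A^c)\|f\|_{2,\pi}^2$ (Cauchy–Schwarz, using $f\equiv 0$ on $A$) yields $\mathcal E_P(f)\ge \gamma(1-\pi(A^c))\|f\|_{2,\pi}^2=\gamma\,\pi(A)\,\|f\|_{2,\pi}^2$. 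Taking the infimum over such $f$ gives $\lambda(A^c)\ge \gamma\,\pi(A)=\pi(A)/t_{\rm rel}$. Here laziness is what guarantees $\gamma=\gamma_*$ (no issue with a $-1$ eigenvalue), so that $t_{\rm rel}$ as defined via the spectral gap $\gamma$ is the relevant quantity; I would invoke this where needed.

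Combining the two displays gives $\prstart{T_A>t}{\eta}\le \|\eta/\pi\|_{2,\pi}\,\exp(-t\pi(A)/t_{\rm rel})$, which is exactly the claim.

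The main obstacle, such as it is, is bookkeeping: making sure the Cauchy–Schwarz step controlling $(\mathbb E_\pi f)^2$ is applied with the right normalisation (it is crucial that $f$ vanishes on $A$, so the mean is an average over a set of mass $\pi(A^c)$), and being careful that the submultiplicativity $\|P_B^t\mathbf 1\|_{2,\pi}\le (1-\lambda(B))^t\|\mathbf 1\|_{2,\pi}$ uses only the top eigenvalue of the self-adjoint operator $P_B$ (legitimate since $\mathbf 1$ is, a priori, a nonnegative function with all its spectral mass below $1-\lambda(B)$ in absolute value — again laziness ensures $P_B$ has spectrum in $[0,1-\lambda(B)]$, avoiding any need to take absolute values). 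None of these steps is deep; the lemma is essentially the textbook $\ell^2$ hitting-time bound combined with the easy direction relating Dirichlet eigenvalues to the spectral gap.
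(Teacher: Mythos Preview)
Your proof is correct and follows essentially the same strategy as the paper's: both apply Cauchy--Schwarz to extract the factor $\|\eta/\pi\|_{2,\pi}$ and then reduce to a spectral estimate on the killed chain $P_{A^c}$. The paper's version computes $\sum_x \pi(x)\,\prstart{T_A>t}{x}^2=\prstart{T_A>2t}{\pi}$ via reversibility and then invokes \cite[Lemma~3.5]{CharacterizationCutoff} for the bound $\prstart{T_A>2t}{\pi}\le e^{-2t\pi(A)/t_{\rm rel}}$, whereas you bound $\|P_{A^c}^t\mathbf 1\|_{2,\pi}\le(1-\lambda(A^c))^t$ directly and then prove $\lambda(A^c)\ge\pi(A)/t_{\rm rel}$ from the Poincar\'e inequality and Cauchy--Schwarz on $(\mathbb E_\pi f)^2$. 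Your route is more self-contained (it unpacks what the cited lemma is doing) and you correctly identify where laziness enters, namely to keep the spectrum of $P_{A^c}$ nonnegative; the paper's version is terser by outsourcing that step.
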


\begin{proof}

Applying the Cauchy-Schwartz inequality we obtain
\begin{align*}
        \left(\prstart{T_A>t}{\eta}\right)^2 = \left(\sum_{x} \frac{\eta(x)}{\pi(x)} \prstart{T_A>t}{x} \pi(x)\right)^2 \leq \left\|\frac{\eta}{\pi}\right\|^2_{2,\pi} \cdot \sum_{x\in A^c} \left(\prstart{T_A>t}{x}\right)^2\pi(x). 
\end{align*}
Writing $P_{A^c}$ for the restriction of the transition matrix $P$ to the set $A^c$, i.e.\ for all $x,y\in A^c$ we have $P_{A^c}(x,y) = P(x,y)$, we have
\begin{align*}
         \sum_{x\in A^c} \left(\prstart{T_A>t}{x}\right)^2\pi(x) &=
 \sum_{x\in A^c} \sum_{y,z\in A^c}P_{A^c}^t(x,y) P_{A^c}^t(x,z) \pi(x) \\&=  \sum_{x\in A^c} \sum_{y,z\in A^c}P_{A^c}^t(x,y) P_{A^c}^t(z,x) \pi(z) \\ &= 
          \sum_{y,z\in A^c}P_{A^c}^{2t}(z,y) \pi(z) = \prstart{T_A>2t}{\pi} \leq  \exp\left(-\frac{2t \pi(A)}{t_{\rm{rel}}} \right),
\end{align*}
        where for the second equality we used reversibility and for the last inequality we used~\cite[Lemma~3.5]{CharacterizationCutoff}. This now concludes the proof.
\end{proof}

\begin{proof}[Proof of Lemma \ref{lemma:DirEVContainingCom1}] By Lemma~\ref{lem:dirichletdef} it is enough to show  that with high probability, for all large enough t, 
\[\prcond{T_{D^c}>t}{G_n}{\pi_D}\lesssim n \exp\left(-\tilde{c} t \alpha\right),
\]
for a positive constant $\tilde{c}$.
Let $\widetilde{X}$ be the induced chain in community $2$, i.e.\ it is the Markov chain~$X$ viewed at the times when it visits the second community. We write $\widetilde{T}_A$ for the first hitting time of the set $A$ by $\widetilde{X}$. Recalling that $T_2(t)$ is the total time spent in community $2$ by time $t$, we have that for any starting point $y\in D$
\begin{equation}\label{eq:BoundTimeToLeaveD}\prcond{T_{D^c}>t}{G_n}{y}\leq \prcond{T_2(t)\le \delta t}{G_n}{y}
+\prcond{\widetilde{T}_{D_{2}^{c}}>\delta t}{G_n}{y}.
\end{equation}
We start by bounding the second probability on the right hand side above. Let $\widetilde{t}_{\mathrm{rel}}$ and $\widetilde{\pi}$ be the relaxation time and invariant distribution respectively of the chain $\widetilde{X}$. Then for any distribution $\eta$ on $D_{2}$ using Lemma~\ref{lem:boundonrevlazymc} we get  
\[
\prcond{\widetilde{T}_{D_{2}^{c}}>\delta t}{G_n}{\eta} \leq\left\|\frac{\eta}{\widetilde{\pi}}\right\|_{2,\widetilde{\pi}} \cdot\exp\left({-\frac{\delta t \widetilde{\pi}\left(D_{2}^{c}\right)}{\widetilde{t}_{\mathrm{rel}}}}\right).
\]
Let $f$ be an eigenvector corresponding to the second largest eigenvalue $\widetilde{\lambda}_2$ of $\widetilde{X}$.
Then
\[\frac{1}{\widetilde{t}_{\mathrm{rel}}}=1- \widetilde{\lambda}_2=\frac{\widetilde{\mathcal{E}}(f,f)}{\|f \|_{2,\widetilde\pi}},\]
where $\widetilde{\mathcal{E}}$ is the Dirichlet form corresponding to $\widetilde{X}$.

It is easy to check that for $g=f \mathds{1}\left\{f \ge 0 \right\},$  \[ \widetilde{\mathcal{E}}(f,f) \ge \widetilde{\mathcal{E}}(g,g). \]
By replacing $f$ with $-f$ if necessary, we can assume that \[ \|f\|_{2,\widetilde\pi} \le 2 \|g\|_{2,\widetilde{\pi}}.\]
Putting the last two inequalities together, we get that
 \[ 1- \widetilde\lambda_2=\frac{\widetilde{\mathcal{E}}(f,f)}{\|f \|_{2,\widetilde\pi}} \gtrsim   \frac{\widetilde{\mathcal{E}}(g,g)}{\|g \|_{2,\widetilde\pi}}\gtrsim \frac{{\mathcal{E}}(g,g)}{\|g \|_{2,\pi}},\]
where for the last inequality we used that $\widetilde\pi\asymp \pi$.
As $g$ is non-negative (supported inside the second community), using the variational characterisation of the Dirichlet eigenvalue of the second community (see for instance~\cite{aldous-fill-2014}[Section 3.6.2]) and Lemma~\ref{lemma:DirEVCom1} we get that with high probability $$\alpha\lesssim \frac{1}{\widetilde{t}_{\mathrm{rel}}}.$$ This together with the fact that $\widetilde{\pi}(D_2^c)$ is at least a positive constant and that $\|\eta\|_2\le n$ gives that with high probability over $G_n$ for a positive constant $c_1$
\[\prcond{\widetilde{T}_{D_{2}^{c}}>\delta t}{G_n}{\eta}\leq n\cdot \exp(-c_1\delta \alpha t).
 \] 
 We proceed to bound the first term on the right hand side of \eqref{eq:BoundTimeToLeaveD}. By Corollary~\ref{cor:notkroot} we have that with high probability over the graph $G_n$ for all $x\in G_n$
 \[\prcond{T_2(C\log n)< 2\delta C\log n}{G_n}{x}  \le C_1n^{-c \alpha}.
 \] 
In case $1\gg\alpha \gtrsim 1/\log n$, we can take $c$ as large as we like by increasing $C$. In particular, we take $c$ so that $n^{\alpha c}/(2C_1)>1$. If $\alpha\asymp 1$, then for any constant $c$ we have $n^{\alpha c}/(2C_1)>1$ for large enough $n$. Since the bound above holds uniformly over all $x\in G_n$, using the Markov property we can perform independent experiments and obtain for any $k\in \N$
\begin{align*}
        &\prcond{T_2(Ck\log n)<\delta k C\log n}{G_n}{x} \\ &\leq \prcond{\sum_{i=1}^{k} \mathds{1}(T_2(iC\log n)-T_2((i-1)C\log n) <2\delta C\log n)>\frac{k}{2}}{G_n}{x}\\& \leq 
         \pr{{\rm{Bin}}(k,C_1n^{-c\alpha})>\frac{k}{2}}.
\end{align*}
 Using large deviations for a binomial distribution and that $n^{\alpha c}/(2C_1)>1$ we deduce for a positive constant $c_2$
 \begin{align*}
         \pr{{\rm{Bin}}(k,C_1n^{-c\alpha})>\frac{k}{2}}\lesssim \exp(-c_2\alpha k\log n)
 \end{align*}
 and this completes the proof.
\end{proof}

 \begin{proof}[Proof of Proposition \ref{prop:2comrelax}]
For all sets $D\subset V$, with $\pi(D)\le\frac{1}{2}$ there exists a set $D'$ with $D\subset D'$ and such that $D'$ contains one full community $i$ and less than half of the other community, i.e. $\pi(D'\cap V_{3-i})\le\frac{1}{2}\pi(V_{3-i})$.  Then Lemma~\ref{lemma:DirEVContainingCom1} gives that $\lambda(D')\geq c\alpha$ for a positive constant $c$. By monotonicity we have that $\lambda(D)\gtrsim \alpha, $ which completes the proof.  
\end{proof}

\section{Coupling} \label{sec5}

In this section we prove Theorems~\ref{thrm:Cutoff} and~\ref{thrm:mCutoff} for the lazy simple random walk. So in this section~$X$ is again taken to be a lazy simple random walk on $G_n$.

We denote by $d(x,y)$ the graph distance from $x$ to $y$ in $G_n$ and write $\mathcal{B}_{K}(x)$ for the ball of radius~$K$ and centre $x$ in this metric. As in Lemma \ref{lemma:TruncatedEdgeNotVisited} let
$$ t=\frac{\log n}{\nu \mathfrak{h}}-B \sqrt{\frac{\log n}{\alpha}} $$
for a constant $B$ to be determined and also let $A$ be as in Lemma \ref{lemma:TruncatedEdgeNotVisited}.
\begin{definition}\label{def:KRoot} We call a vertex $x$ a $K-$root of $G_{n}$ if $\mathcal{B}_{K}(x)$ is a possible realisation of the first $K$ levels of the \random tree $T$ (corresponding to $G_{n}$). If $x$ is a $K-$root and $i \leq K,$ we denote by $\partial \mathcal{B}_{i}(x)$ the collection of vertices at distance $i$ from $x$.
\end{definition}
We next define an exploration process of $G_{n}$ and a coupling between the walk $X$ on $G_{n}$ and a walk~$\widetilde{X}$ on the \random tree $T$ corresponding to $G_{n}$. The coupling below for the 2 community case is similar to the one used in Section~\ref{sec4}, however here we are not exploring only the parts of the graph that the walk visits and we truncate more edges than in Section~\ref{sec4}. Both the coupling and the exploration process are very similar to the ones used in~\cite[Definition~5.3]{PerlasPaper}.

\begin{definition}\label{def:Coupling} Let $K=\left\lceil C_{2} \log \log n\right\rceil$ for a constant $C_{2}$ to be determined and suppose we work conditional on the event that $x_{0}$ is a $K-$root and $\mathcal{B}_{K}\left(x_{0}\right)=T_{0},$ where $T_{0}$ is a realisation of the first $K$ levels of a \random tree. By this we mean that we know which vertices in $V_n$ are part of $B_K(x_0)$ but we have not yet revealed the full degree vector of vertices at distance $K$ from $x_0,$ meaning that in the case of the two community model we do not know which of the half edges coming out of $\partial\mathcal{B}_K(x_0)$ are internal and which are outgoing, and in the case of multiple communities we do not know which communities outgoing edges lead to. Let $L=\left|\partial \mathcal{B}_{K / 2}\left(x_{0}\right)\right|$ and $\left\{z_{1}, \ldots, z_{L}\right\}$ be some ordering of the set $\partial \mathcal{B}_{K / 2}\left(x_{0}\right).$ For each $z \in \partial \mathcal{B}_{K / 2}\left(x_{0}\right)$ we denote by $V_{z}$ the set of offspring of $z$ which are also in $\partial \mathcal{B}_{K}\left(x_{0}\right) .$ Fix $z \in \partial \mathcal{B}_{K / 2}\left(x_{0}\right) .$ We now describe the exploration process of $G_{n}$ corresponding to the set $V_{z}$ by constructing a coupling of a subset of $G_{n}$ with a subset of a  \random tree $T$, conditioned on its first  levels $K$ being exactly $T_{0}$. We first reveal all edges of $T$ with one endpoint in $\partial T_{0}$, including the ones which are not descendants of $z$. For the edges originating in $V_{z}$ we couple them with the edges of $G_{n}$ by using the optimal coupling between the two distributions at every step.

Choosing half-edge types differs in the cases of the two models we consider. For the two communities model we write $p(s)$ for the number of outgoing half-edges in community $i$ which were revealed so far  and $N_i(s)$ for number of half-edges in community $i$ whose types were revealed by time $s$. At step $s$ of the exploration process for every half-edge corresponding to the vertex in $V_z$ we assign it an outgoing type  with probability $\frac{p-p(s)}{N_i-N_i(s)}$ and otherwise we assign it an internal type. For the~$m$ communities model, if the current vertex is in community $i$ we choose the community to which an outgoing edge from it leads to be $j\ne i$ with probability $\frac{E_{i,j}-E_{i,j}(s)}{\sum_{k\ne i}\left(E_{i,k}-E_{i,k}(s)\right)}$, where $E_{i,j}(s)$ is the number of outgoing edges between $i$ and $j$ which have already been revealed.

In both cases, for each half-edge we choose a half-edge to connect it to uniformly at random from all of the half-edges  in the appropriate community, which were not chosen yet (and in $m$ community case we take care that the type of the edge is appropriate). 

If at some point one of these couplings fails, which meant that either the corresponding types of the edges or just the corresponding vertices in $T$ and $G_n$ differ, then we truncate the edge where this happened and stop the exploration for this edge in $G_{n}$ but we continue it in $T$. We also truncate an edge and stop the exploration in $G_{n}$ if we create a cycle. Once all edges joining levels $K$ and $K+1$ of $T$ have been revealed, we examine which of those satisfy the truncation criterion $\operatorname{Tr}(e, A)$ from Definition~\ref{def:Truncation} (which is defined w.r.t. $T,$ not $G_{n}$). We then stop the exploration at these edges for the graph $G_{n},$ but we continue the exploration of their offspring for the  tree $T$. Suppose we have explored all $k$ level edges of the  tree $T$ and also the corresponding ones in $G_{n}$ that were coming from $V_z$ and have not been truncated. Then for the edges of level $k+1$ we explore all of them in $T$, (including the ones which are not descendants of $z$) and we use the optimal coupling to reveal the corresponding ones that come from non-truncated descendants of $z$ in $G_{n}$ in the same way as when revealing level $K+1$. We truncate an edge and stop the exploration process at this edge if the optimal coupling between the two distributions fails for the type or at the endpoint of the edge or if we created a cycle. We again truncate edges in $G_n$ corresponding to the ones in $T$ which satisfy the truncation criterion. We continue the exploration process for $t$ levels.
\\We now describe a coupling of the walk $X$ on $G_{n}$ starting from $x \in V_{z}$ with a walk $\widetilde{X}$ on $T$ starting from $x$ as follows: we move $X$ and $\widetilde{X}$ together for time $t$ and we say the coupling is successful as long as none of the following happen:
\\(i) $\widetilde{X}$ crosses a truncated edge;
\\(ii) $\widetilde{X}$ visits a vertex $w \in \partial \mathcal{B}_{K / 2}\left(x_{0}\right)$ for some $w \neq z.$
\\We write $\mathcal{F}_{i}$ for the $\sigma$ -algebra generated by $T_{0}$ and the exploration processes starting from all the vertices of $V_{z_{1}}, \ldots, V_{z_{i}} .$ We call $z_{i} \in \partial \mathcal{B}_{K / 2}\left(x_{0}\right)$ good if none of its descendants in $\partial T_{0}$ (i.e. those vertices $y \in \partial T_{0}$ such that $\left.d\left(x_{0}, y\right)=d\left(x_{0}, z_{i}\right)+d\left(z_{i}, y\right)\right)$ has been explored during the exploration processes corresponding to the sets $V_{z_1}, \ldots, V_{z_{i-1}} .$ Otherwise, $z_{i}$ is called bad. Note that the event $\left\{z_{i}\mathrm{\, is\, bad} \right\}$ is $\mathcal{F}_{i-1}$ measurable. Finally, we denote by $\mathcal{D}_{i}$ the collection of vertices of $G_{n}$ explored in the exploration process of the set $V_{z_{i}}$.
\end{definition}
\begin{remark}\label{remark:StayInD}We note that if the coupling between $X$ and $\widetilde{X}$ starting from $x \in V_{z_{i}},$ for ${z_{i} \in \partial \mathcal{B}_{K / 2}\left(x_{0}\right)},$ succeeds for $t$ steps, then $\widetilde{X}_{s} \in \mathcal{D}_{i}$ for all $s \leq t$.
\end{remark}

\begin{lemma} \label{lemma:NumberOfBadVertices} Let $\alpha\gg 1/\log n$, in the setup of Definition \ref{def:Coupling}, we have that $\left|\mathcal{D}_{i}\right| \leq n \exp \left(-A \sqrt{\frac{\log n}{\alpha}}/3\right)$ for all $i \in L $ deterministically. Moreover, there exists a positive constant $C$ (independent of $T_{0}$ ) so that the number $\mathrm{Bad}$ of bad vertices $z$ satisfies
\[\prcond{\mathrm{Bad} \geq C \sqrt{\log n}}{\mathcal{B}_{K}\left(x_{0}\right)=T_{0}}{} \leq \frac{1}{n^{2}}.\]
\end{lemma}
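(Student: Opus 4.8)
The plan is to prove the two claims separately. For the deterministic bound on $|\mathcal{D}_i|$, I would argue as follows. By construction, every vertex $w$ that is explored in the exploration process of $V_{z_i}$ lies on some non-truncated path of the tree $T$ originating at a vertex of $V_{z_i}\subseteq\partial\mathcal{B}_K(x_0)$. If $w$ is reached at level $\ell(w)=d(x_0,w)\le t+K$ in $T$ without the truncation criterion $\mathrm{Tr}(\cdot,A)$ having been met on the edge into $w$, then $\widetilde W_T(e_w)\le \log n - A\sqrt{\log n/\alpha}$ for the edge $e_w$ into $w$ (using that $\ell(e_w)\ge K$ automatically holds here). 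By the definition of $\widetilde W_T$, this gives a lower bound on the hitting probability $\prcond{(X_{\tau^{\ell(w)}-1}X_{\tau^{\ell(w)}})=e_w}{T}{}\ge \exp(-\log n + A\sqrt{\log n/\alpha}) = n^{-1}\exp(A\sqrt{\log n/\alpha})$. Since for each fixed level $\ell$ these hitting events (over the distinct edges $e$ at level $\ell$ crossed first upon reaching level $\ell$) are disjoint, the number of non-truncated vertices at level $\ell$ is at most $n\exp(-A\sqrt{\log n/\alpha})$. Summing over the at most $t+K = O(\log n)$ relevant levels, and absorbing the polylogarithmic factor into the exponential (here we use $\alpha\gg 1/\log n$, so $\sqrt{\log n/\alpha}=o(\log n)$ but still $\to\infty$, making $\exp(-A\sqrt{\log n/\alpha}/3)$ beat any $\log n$ factor times $\exp(-A\sqrt{\log n/\alpha})$ for large $n$), we obtain $|\mathcal{D}_i|\le n\exp(-A\sqrt{\log n/\alpha}/3)$. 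The factor $1/3$ (rather than $1/2$) gives the slack needed to absorb the $O(\log n)$ number of levels and also whatever branching factor $\Delta$ contributes; this should be checked but is routine.

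For the bound on the number of bad vertices, recall $z_i\in\partial\mathcal{B}_{K/2}(x_0)$ is bad if some descendant of $z_i$ in $\partial T_0$ (i.e.\ at level $K$) was explored during the exploration processes of $V_{z_1},\dots,V_{z_{i-1}}$. The key observation is that $z_i$ is bad only if at some step of one of the earlier exploration processes, when a half-edge of $G_n$ was being matched, it got matched to a half-edge belonging to a vertex in $\partial T_0$ that is a descendant of $z_i$ — equivalently, a ``collision'' with the already-revealed tree $T_0$ occurred. (If no such collision ever happens then all descendants of all $z_i$ in $\partial T_0$ remain untouched.) The total number of matching steps performed across all exploration processes is at most (number of explored vertices) $\times$ (max degree) $= O(L \cdot n\exp(-A\sqrt{\log n/\alpha}/3)\cdot\Delta)$ by the first part, but more crudely it is at most $O(n)$ deterministically since we never explore more than $n$ vertices total (more carefully: the total is at most $\sum_i|\mathcal{D}_i|\cdot\Delta$). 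At each such step, the probability of colliding with one of the $|\partial T_0| = O((\Delta-1)^K) = O(\mathrm{poly}(\log n))$ half-edges of $T_0$ is at most $O(|\partial T_0|/(N - N(s))) = O(\mathrm{poly}(\log n)/n)$, since $N(s)=O(n)$ stays bounded away from $N$ throughout (again using the first part of the lemma and $N_i\asymp n$). Each collision can turn at most one $z_i$ (the ancestor of the hit vertex) bad. Therefore $\mathrm{Bad}$ is stochastically dominated by a sum of $O(n)$ Bernoulli variables, each with success probability $O(\mathrm{poly}(\log n)/n)$, so $\E{\mathrm{Bad}} = O(\mathrm{poly}(\log n))$.

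To upgrade the expectation bound to the high-probability tail $\prcond{\mathrm{Bad}\ge C\sqrt{\log n}}{\mathcal{B}_K(x_0)=T_0}{}\le n^{-2}$, I would set up a supermartingale / domination argument: the indicator that $z_i$ becomes bad during the $i$-th exploration step, conditioned on $\mathcal{F}_{i-1}$, is dominated by a Bernoulli of parameter $O(\mathrm{poly}(\log n)/n)$ (uniformly, by the collision bound above), so $\mathrm{Bad}$ is dominated by $\mathrm{Bin}(m, p)$ with $m = O(n)$ and $mp = O(\mathrm{poly}(\log n))$; then a standard Chernoff bound for the binomial gives that $\mathrm{Bad}$ exceeds, say, $C(\log n)^{3/2}$ (comfortably more than $\mathrm{poly}(\log n)$ for the right power) with probability at most $\exp(-c(\log n)^{3/2})\le n^{-2}$ for $n$ large. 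Wait — the statement asks for threshold $C\sqrt{\log n}$, which is smaller than a generic $\mathrm{poly}(\log n)$; so in fact one must be more careful and show $\E{\mathrm{Bad}} = O(\sqrt{\log n})$ or less. This is where the refined bound $|\mathcal{D}_i|\le n\exp(-A\sqrt{\log n/\alpha}/3)$ pays off: the total number of matching steps is $\sum_i |\mathcal{D}_i|\cdot\Delta \le L\cdot\Delta\cdot n\exp(-A\sqrt{\log n/\alpha}/3)$, and $L = O((\Delta-1)^{K/2}) = O(\mathrm{poly}(\log n))$, so the expected number of collisions is at most $O(\mathrm{poly}(\log n)\cdot n\exp(-A\sqrt{\log n/\alpha}/3))\cdot O(\mathrm{poly}(\log n)/n) = O(\mathrm{poly}(\log n)\exp(-A\sqrt{\log n/\alpha}/3))$, which for $A$ large is $o(1)$ — in particular far below $\sqrt{\log n}$. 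Then $\mathrm{Bad}$ is dominated by a binomial with mean $o(1)$, and a direct union-bound / first-moment argument gives $\prcond{\mathrm{Bad}\ge C\sqrt{\log n}}{\mathcal{B}_K(x_0)=T_0}{}\le n^{-2}$ trivially (indeed the probability that $\mathrm{Bad}\ge 1$ is already tiny, though stating it with a growing threshold $C\sqrt{\log n}$ is a safe weakening that also covers the regime where $A$ is only moderately large).

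The main obstacle I anticipate is getting the bookkeeping right on the interplay between the three scales $K = \Theta(\log\log n)$, $|\partial T_0| = \exp(\Theta(K)) = \mathrm{poly}(\log n)$, and the truncation scale $\exp(A\sqrt{\log n/\alpha})$, and in particular making sure the ``number of matching steps'' is controlled by $\sum_i|\mathcal{D}_i|$ with the $1/3$ in the exponent genuinely leaving room for the $O(\log n)$ levels and the $\mathrm{poly}(\log n)$ factors from $L$ and $|\partial T_0|$. One must also verify that $N(s)$ (the number of revealed half-edges) indeed stays $\le (1-c)N$ throughout — this follows from the first part of the lemma since at most $\sum_i|\mathcal{D}_i|\cdot\Delta = o(n)$ half-edges are ever touched — so the collision probability $|\partial T_0|/(N-N(s))$ really is $O(\mathrm{poly}(\log n)/n)$ at every step, not just initially.
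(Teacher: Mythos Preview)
Your approach matches the paper's. The bound on $|\mathcal{D}_i|$ via the disjointness of the hitting events $\{(X_{\tau^\ell-1},X_{\tau^\ell})=e\}$ at each level, summed over $O(\log n)$ levels with the $1/3$ in the exponent absorbing the logarithmic factor, is exactly the argument the paper invokes (citing \cite{PerlasPaper}). For the tail on $\mathrm{Bad}$, the paper also bounds the total number of matching steps by $n\exp(-\tfrac{A}{3}\sqrt{\log n/\alpha})$ and the per-step probability of hitting $\partial T_0$ by $\Delta^{K}/(c_1\alpha n-\Delta n\exp(-\tfrac{A}{3}\sqrt{\log n/\alpha}))\lesssim \Delta^{C'\log\log n}/n$, and then writes down the direct combinatorial tail
\[
\binom{n\exp(-\tfrac{A}{3}\sqrt{\log n/\alpha})}{C\sqrt{\log n}}\left(\frac{\Delta^{C'\log\log n}}{n}\right)^{C\sqrt{\log n}}\le \left(\exp\Bigl(-\tfrac{A}{3}\sqrt{\tfrac{\log n}{\alpha}}\Bigr)\Delta^{C'\log\log n}\right)^{C\sqrt{\log n}}\le n^{-AC/6},
\]
which is precisely your ``union bound'' once you drop the unnecessary extra factor of $L$ (the paper's first part actually bounds the \emph{total} number of explored vertices across all $i$, not just per $i$, since the level-$\ell$ disjointness holds globally in $T$).

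Two points to clean up. First, your detour through a Chernoff bound with threshold $C(\log n)^{3/2}$ is a red herring and should be deleted. Second, and more substantively, your final remark that ``a first-moment argument gives $\le n^{-2}$ trivially'' is incorrect as stated: Markov's inequality on $\E{\mathrm{Bad}}=o(1)$ only yields $o(1/\sqrt{\log n})$, not $n^{-2}$. You must actually carry out the $\binom{m}{k}p^{k}$ computation with $k=C\sqrt{\log n}$, using that the base $(mp/k)$ or $(\exp(-\tfrac{A}{3}\sqrt{\log n/\alpha})\cdot\mathrm{poly}(\log n))$ raised to the power $C\sqrt{\log n}$ gives $\exp(-cAC\log n/\sqrt{\alpha})\le n^{-AC/6}$ since $\alpha\le 1$. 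Also note (as the paper does) that in the $m$-communities model the denominator in the per-step collision probability is $\asymp\alpha n$, not $N$, because once the edge type is fixed there may be only $\asymp\alpha n$ candidate half-edges; this is still absorbed since $1/\alpha=o(\log n)$, but your write-up glosses over it.
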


\begin{proof} 
This proof follows along the lines of the proof of~\cite[Lemma~5.5]{PerlasPaper}. We include the details only where they differ. 
Exactly as in the proof of~\cite[Lemma~5.5]{PerlasPaper} using the definition of the truncation criterion and that the exploration process continues for $t\asymp \log n$ steps, the total number of explored vertices is at most  
\[
n\exp\left(- \frac{A}{3}\cdot \sqrt{\frac{\log n}{\alpha}} \right),
\]
where $A$ is as in Lemma~\ref{lemma:TruncatedEdgeNotVisited}.  
At every step of the exploration process the probability of intersecting a vertex of $\partial T_{0}$ is upper bounded by
$$\frac{\Delta^ {K}}{ c_{1}\alpha n-\Delta n \exp \left(-A \sqrt{\frac{\log n}{\alpha}}/3\right)} \lesssim \frac{\Delta^{C'\log \log n}}{n}$$
where $c_{1}$ and $C'$ are positive constants. This is clear in the case of a two communities model, as there are order $n$ edges from vertices of each type and therefore the above bounds the probability to match the current half-edge with one of at most $\Delta^K$ from the boundary of $T_0.$ In the case of multiple communities, as there are at least order $\alpha n$ outgoing edges from vertices of each type (as the Cheeger constant would otherwise be smaller), and at least order $n$ internal ones, once the community and type of an edge we are currently matching to is known, the probability to match it with one of at most $\Delta^K$ half edges in the boundary of $T_0$ is again bounded by the above. We therefore obtain 
\begin{align*}
\prcond{\mathrm{Bad}>C\sqrt{\log n}}{\mathcal{B}_{K}\left(x_{0}\right)=T_{0}}{}&\le \binom{ n \exp \left(-A \sqrt{\frac{\log n}{\alpha}}/3\right)} {C\sqrt{\log n}} \left(\frac{\Delta^{C'\log \log n}}{n}\right)^{C\sqrt{\log n}}
\\&\le \left(\frac{1}{n}\right)^{AC/6}  
\end{align*}
for large enough $n$. Taking $C$ large enough completes the proof.\end{proof}

\begin{lemma}\label{lemma:CouplingSuccedes}  In the same setup as in Definition \ref{def:Coupling}, for all $\varepsilon>0,$ there exist $B$ (in the definition of $t$) and $A$ (in the definition of the truncation criterion, depending on $\varepsilon$ and $B$) sufficiently large so that for all $n$ large enough on the event $\left\{\mathcal{B}_{K}\left(x_{0}\right)=T_{0}\right\}$, for all $i$ and all descendants $x \in \partial \mathcal{B}_{K}\left(x_{0}\right)$ of $z_{i}$, the coupling of Definition \ref{def:Coupling} satisfies
$$\prcond{\text{the coupling of } X \text{ and } \tilde{X} \text{ succeeds}}{\mathcal{F}_{i-1}}{x} \geq \mathds{1}\left(z_{i} \text { is good}\right) \cdot(1-\varepsilon)$$

\end{lemma}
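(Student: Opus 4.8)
The plan is to show that, conditional on $\mathcal{F}_{i-1}$ and on $\{\mathcal{B}_K(x_0)=T_0\}$, the coupling fails with probability at most $\varepsilon$ whenever $z_i$ is good (if $z_i$ is bad the asserted bound is vacuous, since its right-hand side is $0$). By Definition~\ref{def:Coupling} the coupling of $X$ and $\widetilde X$ fails only if, within the first $t$ steps, either (i) $\widetilde X$ crosses a truncated edge, or (ii) $\widetilde X$ visits a vertex $w\in\partial\mathcal{B}_{K/2}(x_0)$ with $w\ne z_i$. I would bound the probabilities of these events separately, further splitting the truncated edges in (i) according to the reason for truncation: those satisfying the tree-only criterion $\mathrm{Tr}(e,A)$, and those truncated because the optimal coupling of the half-edge types, or of the endpoint of an edge, failed, or because a cycle was created in $G_n$. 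The whole argument follows the proof of the corresponding statement in~\cite{PerlasPaper} closely; I describe only the steps and the one genuine difference.

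First I would dispose of (ii) together with the ``wrong part of the tree'' issue. Since $x$ lies at level $K$ and every $w\in\partial\mathcal{B}_{K/2}(x_0)\setminus\{z_i\}$ can be reached from $x$ only by first passing through the parent of $z_i$ at level $K/2-1$, event (ii) forces $\widetilde X$ to backtrack below level $K/2$, i.e. to descend a net of at least $K/2$ levels toward the root. Iterating the uniform transience estimate of Lemma~\ref{lemma:ReturnProb} over the ancestors (via the strong Markov property), the probability that $\widetilde X$ ever does this is at most $(1-c)^{K/2}$; as $K=\lceil C_2\log\log n\rceil\to\infty$ this is $o(1)$, hence $\le\varepsilon/3$ for $n$ large. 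On the complementary event, $\widetilde X$ remains inside the subtree $T(z_i)$, which — given $z_i$ is good and $\mathcal{B}_K(x_0)=T_0$ — still carries the fresh multi-type \random tree distribution, so the estimates of Section~\ref{secEntropy} are available there with the same constants $\nu,\mathfrak{h},\alpha$.

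Next I would bound the probability that $\widetilde X$ crosses a $\mathrm{Tr}(\cdot,A)$-truncated edge. As this criterion is defined purely in terms of $T$, and it only concerns edges of level $\ge K$, on the good event of the previous paragraph the walk from $x$ behaves in $T(z_i)$ exactly like a walk on a multi-type \random tree; the proof of Lemma~\ref{lemma:TruncatedEdgeNotVisited} then applies with only routine changes (an $O(1)$ shift from Lemma~\ref{lemma:IneqW}, and the harmless $(1-c)^{K/2}$ contribution just handled), using Lemma~\ref{lemma:Speed} and Proposition~\ref{prop:LERWLimit} as there. Choosing $B$ large depending on $\varepsilon$ and then $A$ large depending on $\varepsilon$ and $B$, exactly as in the proof of Lemma~\ref{lemma:TruncatedEdgeNotVisited}, gives probability $\le\varepsilon/3$. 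Finally, for the edges truncated by a coupling failure or a cycle: by Lemma~\ref{lemma:NumberOfBadVertices} each exploration reveals at most $n\exp(-A\sqrt{\log n/\alpha}/3)$ vertices of $G_n$, so the total revealed up to and including the exploration from $V_{z_i}$ is at most $L\cdot n\exp(-A\sqrt{\log n/\alpha}/3)\le(\log n)^{O(1)}n\exp(-A\sqrt{\log n/\alpha}/3)$, using $L=|\partial\mathcal{B}_{K/2}(x_0)|\le\Delta^{K/2}=(\log n)^{O(1)}$. When $\widetilde X$ first crosses a new edge and the matching endpoint in $G_n$ must be revealed, the probability that the optimal coupling of the half-edge type fails, or that the optimal matching to a half-edge fails, or that a cycle is created, is each at most a total variation distance of order $\Delta\cdot(\text{number of matched half-edges})/(\alpha n)\lesssim\frac{\Delta(\log n)^{O(1)}}{\alpha}\exp(-A\sqrt{\log n/\alpha}/3)$; here the relevant matching ranges over $\gtrsim\alpha n$ half-edges of the appropriate type, which in the $m$-community model is where the Cheeger bound enters via $\sum_{j\ne i}E_{i,j}\asymp n\,\Phi_Q(\{i\})\ge\alpha n$ (and in the two-community model via $p\asymp\alpha n$). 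A union bound over the $t\asymp\log n$ steps, together with $1/\alpha\le\log n$ and $\sqrt{\log n/\alpha}\ge\sqrt{\log n}$, makes this $\lesssim\Delta(\log n)^{O(1)}\exp(-\tfrac A3\sqrt{\log n})\to0$, hence $\le\varepsilon/3$ for $n$ large. Summing the three contributions bounds the failure probability by $\varepsilon$, which is the claim.

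I would expect the main obstacle to be the third step: making precise that Lemma~\ref{lemma:TruncatedEdgeNotVisited}, proved for a walk from the root, transfers to the walk started from the level-$K$ vertex $x$ with the correct $\sqrt{\log n/\alpha}$-scale error terms — one must check that conditioning on $\{\mathcal{B}_K(x_0)=T_0,\ z_i\ \text{good}\}$ and on $\mathcal{F}_{i-1}$ genuinely leaves the descendant subtree with the unconditioned law, that the fluctuation estimates survive the restriction, and that the quantifier order on $A$ and $B$ is consistent with what is needed there (this is exactly the interplay $\mathfrak{h}\nu B\gtrsim A$ built into the statements). A secondary, purely bookkeeping point is keeping the union bound over $V_{z_1},\dots,V_{z_i}$ down to a $(\log n)^{O(1)}$ loss, which is negligible against the $\exp(-\tfrac A3\sqrt{\log n/\alpha})$ gain; in particular, unlike in the last step of several earlier lemmas, here it suffices to let $n\to\infty$ with $A,B$ already fixed.
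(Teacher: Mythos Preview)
Your proposal is correct and follows essentially the same approach as the paper. The paper's proof simply refers to \cite[Lemma~5.6]{PerlasPaper} and notes that the only change is the per-step bound $c\,\frac{n\exp(-A\sqrt{\log n/\alpha}/3)}{n\alpha}$ for the failure of the optimal couplings or cycle creation---precisely the $\alpha n$ denominator you identify in your third step via the Cheeger lower bound on $\sum_{j\ne i}E_{i,j}$ (resp.\ $p$)---and your decomposition into backtracking, $\mathrm{Tr}(\cdot,A)$-truncation, and coupling-failure/cycle edges is exactly the structure of that referenced proof.
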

{\proof  
This proof is identical to the proof of~\cite[Lemma~5.6]{PerlasPaper} with the only difference being that the probability that the optimal couplings fail at any given step or that a cycle is created is upper bounded by 
\[
c\frac{ n \exp \left(-A \sqrt{\frac{\log n}{\alpha}}/3\right)}{n\alpha}
\]
for a positive constant $c$, using arguments similar to the proof of Lemma~\ref{lemma:NumberOfBadVertices}.
\qed}

We recall that the $\mathcal{L}^2$ distance of a probability measure $\mu$ from $\pi$ is given by
\[
\|\mu - \pi\|_2^2 = \sum_x\left(\frac{\mu(x)}{\pi(x)}-1 \right)^2 \pi(x).
\]

\begin{prop}\label{prop:t+s+r} In the same setup as in Definition \ref{def:Coupling}, for all $\varepsilon>0,$ there exist $B$ (in the definition of $t$), $A$ (in the definition of the truncation criterion) depending on $\varepsilon$ and $B$ and a positive constant $\Gamma$ sufficiently large such that for all $n$ sufficiently large the following holds. For all $M>0$, on the event $\left\{\mathcal{B}_{K}\left(x_{0}\right)=T_{0}\right\}$, for all $i$ and all $x \in \partial \mathcal{B}_{K}\left(x_{0}\right)$  which are descendants of $z_{i} \in \partial \mathcal{B}_{K / 2}\left(x_{0}\right),$ we have for $r>0$ and $$s\left(G_n\right)=\int_{\frac{4}{\frac{\Delta}{\left(1-\varepsilon\right)^2} \exp \left(2\Gamma \sqrt{\frac{\log n}{\alpha}}\right)}}^{\frac{4}{M^2}}\frac{d\delta}{\delta \Lambda_{G_n}\left(\delta\right) },$$ with $\Lambda_{G_n}$ being the spectral profile for random walk on the graph $G_n,$ that
\[
\prcond{d_{x}(t+s(G_n)+r)<e^{-\frac{r}{t_{\mathrm{rel}}\left(G_{n}\right)}} \cdot M+\varepsilon}{\mathcal{F}_{i-1}}{} \geq \left(1-2 \varepsilon\right)\mathds{1}\left(z_{i}\,\text{ is good}\right) 
\]
where $d_{x}(s)=\left\|\prcond{X_{s} \in \cdot}{G_{n}}{x}-\pi\right\|_{\mathrm{TV }}$ for every $s \in \mathbb{N}.$
\end{prop}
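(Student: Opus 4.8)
The plan is to combine the coupling of Definition~\ref{def:Coupling} with the spectral profile bound from Lemma~\ref{lemma:SpectralProfileSmallSets} and the relaxation-time bound from Proposition~\ref{prop:TRel}, following the structure of~\cite[Proposition~5.7]{PerlasPaper}. We start from a descendant $x\in\partial\mathcal{B}_K(x_0)$ of a good vertex $z_i$. By Lemma~\ref{lemma:CouplingSuccedes}, on the event $\{z_i\text{ is good}\}$ the coupling of $X$ (on $G_n$) and $\widetilde X$ (on $T$) succeeds for $t$ steps with probability at least $1-\varepsilon$ (for $A,B$ large). When the coupling succeeds, Remark~\ref{remark:StayInD} tells us that $\widetilde X_s\in\mathcal D_i$ for all $s\le t$, and since $X_s=\widetilde X_s$ on this event, the walk $X$ is supported inside $\mathcal D_i$ at time $t$. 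By Lemma~\ref{lemma:NumberOfBadVertices}, $|\mathcal D_i|\le n\exp(-A\sqrt{\log n/\alpha}/3)$, so the distribution of $X_t$ (started from $x$), restricted to the coupling-success event, is concentrated on a set of stationary measure at most $\Delta\cdot\exp(-A\sqrt{\log n/\alpha}/3)$ (using $\pi(v)\asymp 1/n$ from the bounded-degree assumption). Consequently the $\ell^2$-density of $\mathbb{P}_x(X_t\in\cdot\mid\text{success})$ relative to $\pi$ is at most roughly $\frac{\Delta}{(1-\varepsilon)^2}\exp(2\Gamma\sqrt{\log n/\alpha})$ for a suitable constant $\Gamma$ absorbing the constant~$3$ and the $1-\varepsilon$ factor.

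Next I would run the spectral profile technique of~\cite{SpectralProfilePaper} from time $t$ onward. The key input is the Spectral Profile Theorem, which bounds the decay of the $\ell^2$-distance of a reversible (lazy) chain in terms of $\int \frac{d\delta}{\delta\,\Lambda_{G_n}(\delta)}$; starting from an initial $\ell^2$-density bounded by $\frac{\Delta}{(1-\varepsilon)^2}\exp(2\Gamma\sqrt{\log n/\alpha})$ and running for an additional $s(G_n)$ steps (precisely the integral appearing in the statement) brings the $\ell^2$-distance down to at most $M$. This requires laziness of $X$, which is assumed throughout this section, and it requires that along the way the relevant sets all have $\pi$-measure at most $\hat c$ so that Lemma~\ref{lemma:SpectralProfileSmallSets} (resp.\ Proposition~\ref{prop:2comrelax}) applies to give a genuine lower bound on $\Lambda_{G_n}(\delta)$ — this is why the lower limit of integration is chosen as $4/(\text{initial density})$ rather than $\pi_*$. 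Finally, from time $t+s(G_n)$ one runs the standard Poincaré / $\ell^2$-contraction for an extra $r$ steps: since $t_{\rm rel}(G_n)\asymp1/\alpha$ by Proposition~\ref{prop:TRel}, the $\ell^2$-distance (hence, up to a factor, the TV distance $d_x$) contracts from $M$ to $e^{-r/t_{\rm rel}(G_n)}M$. Combining the coupling-failure probability $\varepsilon$ at the start with an additional $\varepsilon$ accounting for the contribution of the failure event to the law of $X_{t+s+r}$ gives the factor $(1-2\varepsilon)$ and the additive $+\varepsilon$ in the conclusion.

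The main obstacle is handling the coupling-failure event cleanly when passing from the tree bound to a genuine TV bound for $X$ on $G_n$. On the success event we get exactly the support statement we want, but on the failure event (probability $\le\varepsilon$) we have no control, so we must decompose $\mathbb{P}_x(X_t\in\cdot)$ as a mixture of the success-conditioned law (which has the small $\ell^2$-density) and the failure-conditioned law (which has total mass $\le\varepsilon$), and then argue that applying the lazy chain's transition operator for $s(G_n)+r$ further steps and measuring TV distance to $\pi$ only inflates the failure contribution by at most its mass. This is where one uses that $\|P^u(\mu,\cdot)-\pi\|_{\rm TV}\le\|\mu-\pi\|_{\rm TV}$ together with convexity, so the failure piece contributes at most $\varepsilon$ to $d_x(t+s(G_n)+r)$ regardless of $u$. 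A secondary technical point is verifying that the spectral-profile integral is applied only over the range of $\delta$ where $\Lambda_{G_n}(\delta)$ is controlled — i.e.\ checking that the $\ell^2$-mass never spreads onto sets of measure exceeding $\hat c$ before the integral is exhausted — which is exactly guaranteed by choosing the integration limits as written and by the monotonicity of the spectral profile. Everything else (the $\pi(v)\asymp1/n$ estimate, the bookkeeping of constants into $\Gamma$, the reversibility needed for the $\ell^2$ arguments) is routine given the results already established.
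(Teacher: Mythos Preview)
Your overall architecture matches the paper's: condition on the event $S$ that the coupling of Definition~\ref{def:Coupling} succeeds, obtain an $\ell^2$ bound on the law of $X_t$ given $S$, then apply the spectral-profile bound for $s(G_n)$ steps and the Poincar\'e inequality for the final $r$ steps, while the failure piece contributes the additive $\varepsilon$. The paper additionally passes through a class $\mathcal G$ of graphs with $\prcond{G_n\in\mathcal G}{\mathcal F_{i-1}}{}\ge 1-\varepsilon$ (this is where the second $\varepsilon$ in $1-2\varepsilon$ comes from), but that is bookkeeping and your mixture decomposition on success/failure is an acceptable substitute.

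There is, however, a genuine gap in your derivation of the $\ell^2$ bound. You argue that on the success event $X_t\in\mathcal D_i$ with $\pi(\mathcal D_i)\le\Delta\exp\!\bigl(-\tfrac{A}{3}\sqrt{\log n/\alpha}\bigr)$ (Lemma~\ref{lemma:NumberOfBadVertices}), and then conclude that the $\ell^2$-density is at most $\exp\!\bigl(2\Gamma\sqrt{\log n/\alpha}\bigr)$. This implication runs the wrong way: a probability measure supported on a set of stationary mass $p$ satisfies $\|\mu/\pi\|_{2,\pi}\ge p^{-1/2}$, with \emph{no} upper bound coming from the support alone --- a point mass on a single vertex already has $\|\delta_y/\pi\|_{2,\pi}\asymp\sqrt n$. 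The size bound on $\mathcal D_i$ is used in the paper to control the number of bad $z_i$ and for the \emph{lower} bound on the mixing time, not for the $\ell^2$ upper bound.

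What the bound~\eqref{eq:boundonl2timet} actually requires is a \emph{pointwise} estimate $\prcond{X_t=y}{S,G_n}{x}\lesssim n^{-1}\exp\!\bigl(\Gamma\sqrt{\log n/\alpha}\bigr)$ uniformly in $y$, from which
\[
\bigl\|\prcond{X_t\in\cdot}{S,G_n}{x}-\pi\bigr\|_{2}^{2}\le\max_y\frac{\prcond{X_t=y}{S,G_n}{x}}{\pi(y)}\lesssim\Delta\exp\!\Bigl(\Gamma\sqrt{\tfrac{\log n}{\alpha}}\Bigr).
\]
This pointwise bound does \emph{not} follow from the truncation criterion: on untruncated edges one has $\widetilde W_T(e)\le\log n-A\sqrt{\log n/\alpha}$, which is only a \emph{lower} bound on first-passage probabilities. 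In the paper (following~\cite[Proposition~5.7]{PerlasPaper}) the event $S$ additionally restricts to trajectories for which the entropy of the loop-erasure lies in the window $\mathfrak h\ell\pm C\sqrt{\ell/\alpha}$ --- this is precisely Proposition~\ref{prop:LERWLimit} --- and it is the \emph{lower} bound on the entropy that forces the pointwise density to be small. This is exactly what the parenthetical remark in the paper's proof is flagging when it says that the sets $A_1$ and $\widehat B$ from~\cite{PerlasPaper} must be taken with $\sqrt{\log n/\alpha}$ in place of $\sqrt{\log n}$.
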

{\proof 
This proof follows in exactly the same way as the proof of \cite[Proposition~5.7]{PerlasPaper} with the only difference being that here we bound the $\mathcal{L}^2$ distance using not only the Poincar\'e inequality, but also the spectral profile technique. Using the same notation as in~\cite[Proposition~5.7]{PerlasPaper} we only point out the places where the two proofs differ. 

We set $\ell=\frac{\log n}{\mathfrak{h}}-2 \nu B \sqrt{\frac{\log n}{\alpha}}$ and recall that $t=\frac{\log n }{\nu \mathfrak{h}}-B \sqrt{\frac{\log n}{\alpha}}$. 
In~\cite[Proposition~5.7]{PerlasPaper} a class of graphs $\mathcal{G}$ is defined so that 
\[
\prcond{G_n\in \mathcal{G}}{\mathcal{F}_{i-1}}{} \geq 1-\varepsilon,
\]
which implies for all $u\in \mathbb{N}$
\begin{align}\label{eq:tvwithouts}
\prcond{\left\|\prcond{X_{u} \in \cdot}{G_{n}}{x}-\pi\right\|_{\mathrm{TV}}=\mathds{1}\left(G_{n} \in \mathcal{G}\right)\left\|\prcond{X_{u} \in \cdot}{G_{n}}{x}-\pi\right\|_{\mathrm{TV}}}{\mathcal{F}_{i-1}}{} \geq 1-\varepsilon.
\end{align}
(Note that in the definition of the sets $A_1$ and $\widehat{B}$ here we need to take $\sqrt{\log n/\alpha}$ instead of $\sqrt{\log n}$.) 
For the event $S$ as defined in~\cite[Proposition~5.7]{PerlasPaper} we get for any $u\in \mathbb{N}$
\begin{align}\label{eq:tvwiththeevents}
\left\|\prcond{X_{u} \in \cdot}{G_{n}=G}{x}-\pi\right\|_{\mathrm{TV}} \leq\left\|\prcond{X_{u} \in \cdot}{S, G_{n}=G}{x}-\pi\right\|_{\mathrm{TV}}+\varepsilon.
\end{align}
In exactly the same way as in~\cite{PerlasPaper} we get for $G\in\mathcal{G}$
\begin{align}\label{eq:boundonl2timet}
\left\|\prcond{X_{t} \in \cdot}{G_{n}=G, S}{x}-\pi\right\|_{2} \leq \frac{\sqrt{\Delta}}{1-\varepsilon} \exp  \left(\Gamma \sqrt{\frac{\log n}{\alpha}}\right) 
\end{align}
By the \Poincare inequality and the fact that conditional on $X_{t}$, the event $S$ is independent of $\left(X_{u}\right)_{u \geq t}$ we have 
\begin{align*} \left\|\prcond{X_{t+s(G)+r}\in \cdot}{S,G_n=G}{x}-\pi \right\|_{\mathrm{TV}}&\le \left\|\prcond{X_{t+s(G)+r}\in \cdot}{S,G_n=G}{x}-\pi\right\|_{2}\\
     &\le e^{-\frac{r}{\mathrm{trel}\left(G\right)}}\left\|\prcond{X_{t+s(G)}\in \cdot}{S,G_n=G}{x}-\pi\right\|_2,
     \end{align*}
where $s(G)$ is defined as in the statement of the proposition.
Using~\eqref{eq:boundonl2timet} and~\cite[Theorem 1.1]{SpectralProfilePaper} (see also~\cite[Proposition 5.2]{DynamicalPercolations} for the form we use here) we get 
\begin{align*}
        \left\|\prcond{X_{t+s(G)}\in \cdot}{S,G_n=G}{x}-\pi\right\|_2 \leq M.
\end{align*} 
This together with~\eqref{eq:tvwiththeevents} and~\eqref{eq:tvwithouts} finishes the proof.
\qed}

\begin{lemma} \label{lemma:HittingKRoot}There exists a positive constant $\beta,$ so that starting from any vertex the random walk will hit a $K-$root by time $\beta K$ with probability $1-o(1)$ as $n \rightarrow \infty.$
\end{lemma}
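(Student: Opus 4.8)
The plan is to show that from any starting vertex, a $K$-root is visited within $\beta K$ steps with probability $1-o(1)$, where $K = \lceil C_2 \log\log n\rceil$. The key point is that $K$-roots are abundant: all but a vanishing fraction of vertices of $G_n$ are $K$-roots. Indeed, as in the proof of Corollary~\ref{cor:notkroot}, the ball $\mathcal{B}_K(x)$ fails to be a tree (and thus $x$ fails to be a $K$-root) only if $\mathcal{B}_{CK}(x)$ contains a cycle for a suitable constant $C$; since at most $\Delta^{O(K)} = (\log n)^{O(1)}$ pairs are formed when revealing this ball one half-edge at a time, and each carries probability $O(\Delta^{O(K)}/N)$ of closing a cycle, the expected number of non-$K$-roots is $N \cdot O(\Delta^{O(K)}/N \cdot \Delta^{O(K)}) = (\log n)^{O(1)} = o(N)$. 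By Markov's inequality, with high probability the number of non-$K$-roots is $o(N)$, in fact at most $N/(\log n)^{10}$ say.

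Next I would use the fact that the walk spreads out quickly. First I would argue that, regardless of the starting point, with probability $1-o(1)$ the walk is at distance $\asymp K$ from its start after $\Theta(K)$ steps and in particular reaches the region of $G_n$ that looks locally tree-like; more concretely, from any $x$ the walk reaches $\partial\mathcal{B}_{K}(x')$ for some $K$-root $x'$ it has visited, or reaches a fresh vertex. The cleanest route is the following: run the walk and expose $G_n$ as the walk moves; at each new vertex visited, the conditional probability that it fails to be a $K$-root is $O((\log n)^{O(1)}/N)$ by the same cycle-counting estimate as above (done dynamically, conditioning on the explored portion). Over $\beta K = \Theta(\log\log n)$ steps the walk visits at most $\beta K + 1$ vertices, so by a union bound the probability that none of the visited vertices is a $K$-root is at most $(\beta K + 1)\cdot O((\log n)^{O(1)}/N) = o(1)$. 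This already gives the claim, since visiting \emph{any} $K$-root within time $\beta K$ suffices, and the walk trivially visits at least one vertex by time $\beta K$.

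The main technical point to handle carefully is the dynamic exposure argument: one must condition on the history of the walk and the revealed half-edges, and bound the probability that the \emph{next} visited vertex is not a $K$-root given everything seen so far. Since at the time the walk first visits a vertex $v$, only $O(\Delta^{\beta K}) = (\log n)^{O(1)}$ half-edges have been revealed, the ball $\mathcal{B}_{CK}(v)$ is still ``fresh'' up to these few constraints, and the cycle-counting bound $O(\Delta^{O(K)} \cdot \Delta^{O(K)}/(cN)) = (\log n)^{O(1)}/N$ goes through with $c$ accounting for the already-revealed half-edges (which number $o(N)$). Summing this over the at most $\beta K$ visited vertices and taking $\beta$ to be any fixed positive constant yields that a $K$-root is hit by time $\beta K$ with probability $1 - O((\log n)^{O(1)}/N) = 1 - o(1)$, uniformly over the starting vertex, which completes the proof.
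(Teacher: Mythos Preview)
Your dynamic-exposure route yields an \emph{annealed} bound (probability taken jointly over $G_n$ and the walk) for a fixed starting vertex, not the \emph{quenched} statement uniform over all starting vertices that the lemma asserts and that is needed for the worst-case mixing time. Concretely: once $G_n$ is realised there will typically be some (polylogarithmically many, as you correctly compute) non-$K$-roots, and for the lemma you must show that \emph{starting from such an $x$} the walk still reaches a $K$-root in $\beta K$ steps. At that point your ``freshness'' estimate no longer applies --- a short cycle is already present near $x$, so the conditional probability that the next visited vertex is a $K$-root is not small at all --- and your per-vertex annealed bound $O((\log n)^{O(1)}/N)$ is far too weak to survive a Markov-plus-union-bound over all $n$ starting points. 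Your claim of uniformity in the last sentence conflates ``the annealed bound is the same for each fixed $x$'' with ``the quenched bound holds for every $x$ in a typical $G_n$''.

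The paper proceeds along the line you sketch and then abandon at the start of your second paragraph: first establish the \emph{structural} fact that with high probability every ball $\mathcal{B}_{CK}(x)$ contains at most one cycle --- here $K=\Theta(\log\log n)$, so the probability of two cycles in a given $CK$-ball is $O(\Delta^{O(K)}/N^2)=O((\log n)^{O(1)}/N^2)$, which \emph{does} survive the union bound over all $x$ --- and then invoke the deterministic walk statement of Lemma~2.3 in~\cite{ComparingMixT}: on a graph of minimum degree $3$, if $\mathcal{B}_{CK}(x)$ has at most one cycle, the lazy walk drifts away from that cycle and reaches a vertex at distance greater than $K$ from it within $\beta K$ steps with probability $1-e^{-\Omega(K)}=1-o(1)$; such a vertex is necessarily a $K$-root. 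It is this escape-from-one-cycle step, not the counting of non-$K$-roots, that carries the lemma.
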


{\proof The proof of this follows directly from Lemma 2.3 from \cite{NBRWvsSRW} (in the same way as discussed in the proof of Corollary~\ref{cor:notkroot}) and could also be derived analogously to the proof of Lemma~5.9 in~\cite{PerlasPaper}. \qed}

\proof[Proof of Theorem~\ref{thrm:Cutoff} for $\alpha\gtrsim \frac{1}{\log n}$ and Theorem~\ref{thrm:mCutoff}](lazy walk case)
We first prove cutoff for both models when $\alpha\gg 1/\log n$.
 Recall that $t=\frac{\log n }{\nu \mathfrak{h}}-B \sqrt{\frac{\log n}{\alpha}},$ where $B$ is a positive constant to be chosen later. We first prove the upper bound on the mixing time. From Lemma~\ref{lemma:SpectralProfileSmallSets} we know that for $M$ larger than a  certain constant we have that with high probability $\Lambda\left(\delta\right)\ge \tilde{c}$ for $\delta\le \frac{4}{M^2}$ and so with high probability
$$s\left(G_n\right)=\int_{\frac{4}{\frac{\Delta}{\left(1-\varepsilon\right)^2} \exp \left(2\Gamma \sqrt{\frac{\log n}{\alpha}}\right)}}^{\frac{4}{M^2}}\frac{d\delta}{\delta \Lambda_{G_n}\left(\delta\right)}\le  \int_{\frac{4}{\frac{\Delta}{\left(1-\varepsilon\right)^2} \exp \left(2\Gamma \sqrt{\frac{\log n}{\alpha}}\right)}}^{\frac{4}{M^2}}\frac{d\delta}{ \tilde{c}\delta}\lesssim \Gamma\sqrt{\frac{\log n}{\alpha}}.$$
Let $r=t_{\text {rel }}\left(G_{n}\right)\log\left(\frac{M}{\varepsilon}\right),$ where $\Gamma$ is as in Proposition \ref{prop:t+s+r}  so that
$$
e^{-\frac{r}{t_{\mathrm{rel}}\left(G_{n}\right)}} \cdot M=\varepsilon.
$$
We now claim that it suffices to prove that with high probability
\begin{equation}\label{eq:TMix}
t_{\operatorname{mix}}\left(G_{n}, 5 \varepsilon\right) \leq t+s(G_n)+r+(\beta+c) K,
\end{equation}
where $\beta$ is as in Lemma \ref{lemma:HittingKRoot} and $c$ is a positive constant to be determined later. Indeed, once this is established, the proof then follows from the bound on $s(G_n)$ above together with the fact that with high probability $t_{\text{rel}}\left(G_{n}\right)\asymp \frac{1}{\alpha}$ by Proposition \ref{prop:TRel}.

It remains to prove~\eqref{eq:TMix}. This now follows in exactly the same way as the proof of (5.9) in~\cite{PerlasPaper} with the only difference being that the set $V$ we need to consider here is 
$$V=\left\{x \in \partial \mathcal{B}_{K}\left(x_{0}\right): d_{x}(t+s(G_n)+r) \geq 2 \varepsilon\right\}$$
for $x_0$ a $K$-root and use Proposition~\ref{prop:t+s+r} in place of Proposition~5.7 in~\cite{PerlasPaper}. This completes the proof of the upper bound on the mixing time. The lower bound follows identically to~\cite{PerlasPaper}.
This completes the proof in the case of $\alpha\gg \frac{1}{\log n}.$
 
For $\alpha\lesssim \frac{1}{\log n}$ in the case of the $m$-communities model and for $\alpha \asymp \frac{1}{\log n}$ in the case of the $2$-communities model, we will show using the spectral profile technique that with high probability $t_{\mathrm{mix}}\lesssim \frac{1}{\alpha}$ and then Proposition \ref{prop:TRel} completes the proof in this case, as by \cite[Theorem 12.5] {MixingBook} $t_{\text{mix}}(\varepsilon)\ge (t_{\text{rel}}-1)\log\left(\frac{1}{2\varepsilon}\right)$ and as there can be no cutoff when  $t_{\text{mix}}\asymp t_{\text{rel}}$  \cite[Proposition 18.4] {MixingBook}. We will condition on the graph $G_n$ and similarly as before define \begin{align}\label{eq:defs(G)}s\left(G_n\right)=\int_{4\pi_*}^{\frac{4}{M}}\frac{2d\delta}{\delta \Lambda_{G_n}\left(\delta\right)},\end{align} where $M>0$ will be determined later and $\pi_*=\min_{x\in G_n}\pi(x).$  Notice that for any starting state $x$ using first the \Poincare inequality and the spectral profile bounds \cite[Theorem 1.1]{SpectralProfilePaper}  we have 
\begin{align*}
&\|\prcond{X_{s(G_n)+ \log(\frac{M}{\varepsilon})t_{\text{rel}}(G_n)}\in \cdot}{G_n}{x}-\pi(\cdot)\|_{\mathrm{TV}}\\&\le \|\prcond{X_{s(G_n)+\log\left(\frac{M}{\varepsilon}\right)t_{\text{rel}}(G_n)}\in \cdot}{G_n}{x}-\pi(\cdot)\|_2 \\&\le  \frac{\varepsilon}{M}\|\prcond{X_{s(G_n)}\in \cdot}{G_n}{x}-\pi(\cdot)\|_{2} \le \varepsilon.
 \end{align*}  
 By Proposition \ref{prop:TRel} with high probability $t_{\text{rel}}(G_n)\lesssim \frac{1}{\alpha}$. Also using Lemma~\ref{lemma:SpectralProfileSmallSets} there exists a constant~$\hat{c}$ so that with high probability  $\Lambda_{G_n}(\delta)\gtrsim 1$ for $\delta\le \hat{c},$ and hence $s(G_n)\lesssim \log n$ holds for $M\ge \frac{4}{\hat{c}}$. Choosing any constant $M\ge \frac{4}{\hat{c}}$ gives that with high probability $t_{\text{mix}}(\varepsilon)\lesssim \frac{1}{\alpha}+\log n$  which completes the proof of the upper bound in both cases.
 \qed

Before giving the proof of Theorem~\ref{thrm:Cutoff} in the case when 
$\alpha\ll \frac{1}{\log n}$, we give a short argument that shows that in this regime there is no cutoff with high probability. Indeed, as in the proof of Proposition~\ref{prop:TRel} for $\alpha\gtrsim 1/\log n$, we see that the relaxation time is lower bounded by $1/\alpha$ with high probability, as this follows directly from Cheeger's inequality. Moreover, in exactly the same way as in the proof above in the case when $\alpha\asymp 1/\log n$ we get that with high probability
\[
t_{\rm mix}(\varepsilon)\lesssim t_{\rm rel}+{\log n}.
\]
Thus using also that $t_{\rm mix}(\varepsilon) \gtrsim t_{\rm rel},$
the assumption that $\alpha \ll 1/\log n$ and that $t_{\rm rel} \gtrsim 1/\alpha$ ,we conclude that $t_{\rm mix}(\varepsilon)\asymp t_{\rm rel}$. This implies that there is no cutoff with high probability, but since it does not determine the order of $t_{\rm mix}(\varepsilon)$  we do this in the proof below.

\proof[Proof of Theorem~\ref{thrm:Cutoff}] ($\alpha\ll \frac{1}{\log n}$ and lazy walk case)

 We now prove an upper bound on the mixing time for the two community model and $\alpha \ll 1/\log n$.

 We will prove that there exist constants $\delta, C>0$ such that with high probability $G_n$ satisfies that for any subset $A$ with $\pi(A)\geq 1/2$ and any $x_0\in G_n$ we have for the lazy simple random walk~$X$ started from $x_0$ that \begin{align}\label{eq:hitA}\prcond{T_A\le\frac{C}{\alpha}}{G_n}{x_0}\ge \delta,\end{align} where $T_A= \min\left\{t:X_t\in A\right\}$ is the hitting time of set $A$. By performing repeated experiments of length $C/\alpha$ this will imply that with high probability for all sets $A\subset G_n$ with $\pi(A)\geq 1/2$ and~$x\in G_n$ \[\escond{T_A}{G_n}{x}\le \frac{C}{\alpha \delta},\] which gives by~\cite[Theorem 1.1]{MixHitPaper} and~\cite[Theorem~1.3]{IneqHit} that $t_{\text{mix}}(1/4)\lesssim 1/\alpha.$ This also implies an upper bound for $t_{\text{rel}}$ and completes the proof of Proposition~\ref{prop:TRel} and also gives us the desired bounds on $t_{\text{mix}}(\varepsilon)$ for any~$\varepsilon$. 
 
We are left with proving~\eqref{eq:hitA}. For $x\in G_n$ we write $\theta(x)\in \{1,2\}$ for the community $x$ belongs to. We also write $\tau_i$ for $i\in \{1,2\}$ for the first hitting time of community $i$ by $X$. We now define the random set $D=D(G_n)$ which depends on the sampling of the random graph $G$ by 
\[
D=\left\{x\in G_n:\prcond{\tau_{3-\theta(x)}<u}{G_n}{x}\le \sqrt{u\alpha}\right\},
\] 
where $u=C' \log n,$ for a constant $C'$ to be determined. Notice that for $\theta\in \{1,2\},$ \[\E{\prcond{\tau_{3-\theta}< u}{G_n}{x}}\lesssim u\alpha\ll 1\] for any $x\in V_\theta$, as when we generate the graph and the walk together, the probability of revealing an outgoing edge at any step of the simple random walk is $\alpha.$  Therefore we get
\begin{align*}u\alpha &\gtrsim \E{\sum_{x\in G_n}\pi(x)\prcond{\tau_{3-\theta(X_0)}< u}{G_n}{x}}\\&\ge \E{\sum_{x\in D^c}\pi(x)\prcond{\tau_{3-\theta(x)}< u}{G_n}{x}} \ge \sqrt{u\alpha}\E{\pi(D^c)}.\end{align*} This gives that by Markov's inequality \[\pr{\pi(D^c)>(u\alpha)^{1/4}}\lesssim (u\alpha)^{1/4}.\]
Recall the definition of $s(G_n)$ from~\eqref{eq:defs(G)} and that with high probability $s(G_n)\lesssim \log n$. We now set $s=C\log n$ where $C$ is a sufficiently large positive constant. Then with high probability we have that 
\[
\|\prcond{X_s\in \cdot}{G_n}{x_0} - \pi\|_\infty \leq M.
\]
This means that for large enough~$n$ 
\begin{align}\label{eq:boundonDc}
\prcond{X_s\in D^c}{G_n}{x_0}\le (M+1)\pi(D^c)\lesssim (u\alpha)^{1/4}
\end{align}
with high probability over the graph $G_n.$ 
 
 For $\theta\in \{1,2\}$ we now let $G^\theta_n$ be the graph obtained from $G_n$ by a uniform rewiring of the outgoing edges in community $\theta$ (this is the same as the definition of the graph $\widehat{G}_n$ in the proof of Lemma~\ref{lemma:DirEVCom1}). We write $\pi_\theta$ for the corresponding invariant distribution. Then with high probability (over the randomness of $G_n^\theta$) for every $\delta>0$ there exists $n$ sufficiently large so that for $x\in D$ and $C'$ (in the definition of $u$) sufficiently large we have
  \begin{align}\label{eq:boundontvwithrewired}
 \left\|\prcond{X_u\in \cdot}{G_n}{x}-\pi_\theta(\cdot)\right\|_{\rm{TV}} \leq \left\|\prcond{X_u\in \cdot}{G^\theta_n}{x}-\pi_\theta(\cdot)\right\|_{\rm{TV}}+\prcond{\tau_{3-\theta}<u}{G_n}{x} \leq \delta,
 \end{align}
 where we used that a lazy simple random walk on the configuration model exhibits cutoff at time of order $\log n$ with high probability, see for instance~\cite{RWonRG}.
 
 Since $\pi(A) = \pi_1(A)\pi(V_1)+ \pi_2(A)\pi(V_2)$ and $\pi(A)\geq 1/2$, we can assume without loss of generality that $\pi_1(A)\geq 1/2$. Let $C$ be a positive constant to be determined later. Writing $\nu_{x_0}(\cdot) = \prcond{X_s\in \cdot}{G_n}{x_0}$ we now get 
 \begin{align*}
        \prcond{T_A>\frac{C}{\alpha}+s}{G_n}{x_0} \leq \nu_{x_0}(D^c) + \sum_{x\in D\cap V_1} \nu_{x_0}(x) \prcond{X_{\frac{C}{\alpha}}\in A^c}{G_n}{x} \\ + \sum_{x\in D\cap V_2} \nu_{x_0}(x) \prcond{T_A>\frac{C}{\alpha}}{G_n}{x}.
 \end{align*}
 From~\eqref{eq:boundontvwithrewired} we see that for $x\in D\cap V_1$
 \[
 \prcond{X_{\frac{C}{\alpha}}\in A^c}{G_n}{x}\leq \left\|\prcond{X_{\frac{C}{\alpha}}\in \cdot}{G_n}{x} - \pi_{1}\right\|_{\rm TV} + \pi_1(A^c) \leq \delta + \frac{1}{2},
 \]
 and hence plugging this above we deduce
 \begin{align*}
        \prcond{T_A>\frac{C}{\alpha}+s}{G_n}{x_0}& \leq \nu_{x_0}(D^c) + \left(\frac{1}{2}+\delta \right)\nu_{x_0}(D\cap V_1) \\&+ \sum_{x\in D\cap V_2} \nu_{x_0}(x) \prcond{T_A>\frac{C}{\alpha}}{G_n}{x}.
 \end{align*}

Now for $x\in D\cap V_2$ using the Markov property we have that with high probability 
\begin{align*}&\prcond{T_A>\frac{C}{\alpha}}{G_n}{x}\le \sum_{y\in G_n}\prcond{X_u=y}{G_n}{x}\prcond{T_A>\frac{C}{\alpha}-u}{G_n}{y}\\&\le 2\left\|\prcond{X_u\in \cdot}{G_n}{x}-\pi_2(\cdot)\right\|_{\text{TV}}+ \sum_{y\in G_n}\pi_2(y)\prcond{T_A>\frac{C}{\alpha}-u}{G_n}{y}\\& \le 2\delta + \prcond{T_A>\frac{C}{2\alpha}}{G_n}{ \pi_2},\end{align*} 
where for the last inequality we used~\eqref{eq:boundontvwithrewired} and that for all large enough $n$ we have that $C/2\alpha>u$ using the assumption on $\alpha$. Therefore, it is enough to show that \begin{align}\label{eq:boundonTa}\prcond{T_A>\frac{C}{2\alpha}}{G_n}{ \pi_2}\le 1-4\delta,\end{align}
as plugging this above would imply that with high probability for all large enough $n$
 \begin{align*}
        \prcond{T_A>\frac{C}{\alpha}+s}{G_n}{x_0}\le \nu_{x_0}(D^c)+\left(\frac{1}{2}+\delta \right)\nu_{x_0}(D\cap V_1)+\left(1-2\delta \right)\nu_{x_0}(D\cap V_2)\le 1-\delta,
 \end{align*}
since we have from \eqref{eq:boundonDc} that $\nu_{x_0}(D^c)\le \delta$ for large enough $n$.

So now we focus on proving~\eqref{eq:boundonTa}.
Recall that $s=C\log n$. For $(x,y)\in V_1\times V_2$ and time $t$ we define the event 
\begin{align*}\mathcal{A}(t,x,y)=&\left\{\exists i\in\{t,\ldots, t+s\}: \,\theta(X_i)=2,\right.\\&\left.(X_{i-1},X_{i})\neq (x,y),\forall j\in\{t,\ldots, i-1\} \,\theta(X_j)=1\right\},
\end{align*} which is the event that the walk which is in community $1$ at time $t$ exits this community by time~$t+s$ through an edge different to $(x,y)$. We now consider a random set $U_1=U_1(G_n)$ of edges in $V_1\times V_2$ defined by 
\[
U_1=\left\{(x,y)\in V_1\times V_2: \prcond{\mathcal{A}(0,x,y)}{G_n}{x}\le \sqrt{\alpha s} \right\}.
\] 
 We also define a random set $U_2=U_2(G_n)$ of edges in $V_1\times V_2$ as
  \[
  U_2=\left\{(x,y)\in V_1\times V_2: \widehat{\mathcal{B}}_K(x) \text{ is a tree} \right\},
  \] 
  where $\widehat{\mathcal{B}}_K(x)$ is the connected component of $\mathcal{B}_K(x)\cap V_1$ containing $x$ with $K=C_2\log\log n$ for a large constant $C_2$ to be determined.

\begin{lemma}\label{lem:lemma1}
        Let $U=U_1\cap U_2.$ For an edge $(x,y)\in U$ we will show that \[
        \prcond{X_{s}\in V_2}{G_n}{x}\le 1-\frac{c}{2}
        \]
        where $c$ is the constant from Lemma~\ref{lemma:ReturnProb}. 
\end{lemma}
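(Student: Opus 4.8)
\textbf{Proof proposal for Lemma~\ref{lem:lemma1}.}

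The plan is to exploit the two defining properties of an edge $(x,y)\in U=U_1\cap U_2$ simultaneously: the tree-like structure near $x$ inside $V_1$ (from $U_2$) gives a uniform lower bound on the probability that the walk started at $x$ stays inside $V_1$ for a long time without returning to a neighbourhood of $x$ or escaping through the edge $(x,y)$; the small-exit-probability property (from $U_1$) says that escaping through \emph{any other} edge is unlikely before time $s$. Combining these, with probability bounded away from $0$ the walk started at $x$ is still in $V_1$ at time $s$, which is exactly the complement of the event in the statement.

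First I would use $U_2$: since $\widehat{\mathcal B}_K(x)$ is a tree with all degrees at least $3$ (by the branching-degree assumption~\eqref{eq:BranchDeg} and the definition of $K$-root-type neighbourhoods), Lemma~\ref{lemma:ReturnProb} applies to it. Concretely, starting from $x$, with probability at least $c$ (the constant of Lemma~\ref{lemma:ReturnProb}) the walk first steps to an offspring of $x$ inside $\widehat{\mathcal B}_K(x)$ and then never returns to $x$; on a further event of probability bounded below it moves away from $x$ to graph distance growing linearly in time, so that up to time $s = C\log n$ it has not exited the tree $\widehat{\mathcal B}_K(x)$ provided $K = C_2\log\log n$ is chosen large relative to $C$ — here one uses the second (exponential escape) bound of Lemma~\ref{lemma:ReturnProb}, exactly as in the proof of Lemma~\ref{lemma:TruncatedEdgeNotVisited}. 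Let $\mathcal E$ denote this event; then $\prcond{\mathcal E}{G_n}{x}\ge c - o(1) \ge c/2$ for $n$ large. Crucially, on $\mathcal E$ the walk never leaves $V_1$ through \emph{the edge $(x,y)$} after time $1$, because to do so it would have to return to $x$.

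Next I would bring in $U_1$. On the event $\mathcal E$, the only way the walk can be in $V_2$ at time $s$ is if it exits $V_1$ before time $s$ through some edge $(x',y')\neq (x,y)$; but that is precisely (a subset of) the event $\mathcal A(x,y,0)$ with the walk started at $x$. By the definition of $U_1$, $\prcond{\mathcal A(x,y,0)}{G_n}{x}\le \sqrt{\alpha s}$, and since $\alpha \ll 1/\log n$ and $s = C\log n$ we have $\sqrt{\alpha s}\to 0$. Hence
\[
\prcond{X_s\in V_2}{G_n}{x} \le \prcond{\mathcal E^c}{G_n}{x} + \prcond{\mathcal A(x,y,0)}{G_n}{x} \le \Big(1-\tfrac{c}{2}\Big) + \sqrt{\alpha s} \le 1-\tfrac{c}{4}
\]
for $n$ large, and after harmlessly adjusting the claimed constant (or observing that the statement allows the constant $c/2$ in place of $c/4$ once $\sqrt{\alpha s}<c/4$) the lemma follows.

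The main obstacle I anticipate is making the first step fully rigorous: one must verify that on the tree $\widehat{\mathcal B}_K(x)$ the walk does not merely avoid returning to $x$ but genuinely stays \emph{inside} $\widehat{\mathcal B}_K(x)$ — equivalently does not reach its boundary $\partial\widehat{\mathcal B}_K(x)$ — for all of the $s\asymp\log n$ steps, which requires the escape bound of Lemma~\ref{lemma:ReturnProb} together with a union bound over times $\le s$ and a suitably large choice of $C_2$ so that $c_1 K \gg s$ fails — wait, rather so that the walk on the infinite tree extending $\widehat{\mathcal B}_K(x)$ has distance from $x$ at most $K$ with exponentially small probability only after $\Omega(K)$ steps, hence one needs $K$ large enough that reaching distance $K$ by time $s$ is itself the typical behaviour while \emph{returning} is exponentially unlikely; the clean way is to note that conditioned on never returning to $x$ the walk is transient with positive speed on the tree, so it exits $\widehat{\mathcal B}_K(x)$ through its boundary (not back through $x$) and this does not matter, because exiting through the boundary into the rest of $V_1$ still keeps the walk in $V_1$. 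So in fact the event $\mathcal E$ I should use is simply ``the walk started at $x$ does not traverse the edge $(x,y)$ up to time $s$ and does not exit $V_1$ through any $V_1$–$V_2$ edge before $s$'', for which the relevant lower bound is just $\prcond{\text{never return to }x}{G_n}{x}\ge c$ combined with the $U_1$ estimate — no control of $K$ relative to $s$ is needed at all, and $U_2$ is used only to guarantee degree-$\ge 3$ tree structure so that Lemma~\ref{lemma:ReturnProb} is applicable. That streamlined version is what I would write up.
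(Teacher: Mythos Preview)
Your decomposition into ``exit via $(x,y)$'' and ``exit via some other edge'' is the same as the paper's, and you correctly bound the second piece by $\sqrt{\alpha s}$ using the $U_1$ property. The gap is in your treatment of the first piece.

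In the streamlined version you assert that $\prcond{\text{never return to }x}{G_n}{x}\ge c$ follows from Lemma~\ref{lemma:ReturnProb} together with the tree structure of $\widehat{\mathcal B}_K(x)$. But Lemma~\ref{lemma:ReturnProb} is a statement about \emph{infinite} trees, and $U_2$ only guarantees that $\widehat{\mathcal B}_K(x)$ is a tree of depth $K=C_2\log\log n$. What you can conclude is that with probability $\ge c$ the walk exits $\widehat{\mathcal B}_K(x)$ (through $\partial\widehat{\mathcal B}_K(x)$ or through some $V_1$--$V_2$ edge other than $(x,y)$) without first crossing $(x,y)$. Once the walk is at $\partial\widehat{\mathcal B}_K(x)$, however, it is in a region of $G_n$ whose structure you know nothing about; it can wander in $V_1\setminus\widehat{\mathcal B}_K(x)$ and return to $x$ later, after which it may cross $(x,y)$. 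Since $s=C\log n\gg K$, the walk typically exits $\widehat{\mathcal B}_K(x)$ in $O(K)$ steps and then has almost all of its $s$ steps left in which to return. Nothing you have written rules this out, so the bound $\prcond{\tau_2\le s,\ (X_{\tau_2-1},X_{\tau_2})=(x,y)}{G_n}{x}\le 1-c$ is unjustified.

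The paper closes exactly this gap: from any $z\in\partial\widehat{\mathcal B}_K(x)$, to reach $x$ while remaining in $V_1$ and without first returning to $\partial\widehat{\mathcal B}_K(x)$, the walk must backtrack $K$ levels inside the tree $\widehat{\mathcal B}_K(x)$, which has probability at most $(\log n)^{-c'C_2}$. A simple recursion then gives $\prcond{\tau_x<t\wedge\tau_2}{G_n}{z}\le t\,(\log n)^{-c'C_2}$, so the total contribution from returning after hitting the boundary is at most $(s+1)(\log n)^{-c'C_2}\le c/4$ once $C_2$ is chosen with $c'C_2>1$. Thus your claim that ``no control of $K$ relative to $s$ is needed at all'' is precisely backwards: the choice of $C_2$ large enough is the missing ingredient, and your earlier, abandoned attempt (``provided $K$ is chosen large relative to $C$'') was closer to the truth than the streamlined version.
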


\begin{lemma}\label{lem:lemma2}
        Let $\tau$ be the hitting time of community $1$. There is a small constant $\beta<1$ such that with high probability the random graph $G_n$ is such that 
        \[
        \prcond{(X_{\tau},X_{\tau-1})\in U^c}{G_n}{\pi_2}\le \beta.
        \] 
\end{lemma}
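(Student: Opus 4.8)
The plan is to bound the \emph{annealed} quantity $\E_{G_n}\!\big[\prcond{(X_{\tau-1},X_\tau)\in U^c}{G_n}{\pi_2}\big]$ over the joint randomness of $G_n$ and the walk $X\sim\prstart{\cdot}{\pi_2}$, and then conclude by Markov's inequality that with high probability the conditional probability is $o(1)$; this certainly gives the statement with any fixed $\beta\in(0,1)$. Since $U=U_1\cap U_2$ it suffices to bound $\pr{(X_{\tau-1},X_\tau)\in U_1^c}$ and $\pr{(X_{\tau-1},X_\tau)\in U_2^c}$ separately. The structural observation driving both estimates is the following: up to time $\tau$ the walk stays inside community $2$, so its trajectory — and in particular the half-edge $h$ through which it first leaves community $2$, so that $X_{\tau-1}$ is the vertex of $h$ — is a measurable function of the internal matching of community $2$ and of the walk's coin flips only; it is independent of the internal structure of community $1$ and of the matching of the outgoing half-edges. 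Moreover, before time $\tau$ no part of that matching has been exposed, so conditionally on everything generated up to time $\tau$ the partner $h'$ of $h$ is a uniformly random outgoing half-edge of community $1$ and $X_\tau=v_{h'}$ is its vertex. Combined with the fact that the $p$ outgoing half-edges of community $1$ are themselves a uniform subset of the $N_1$ half-edges of that community, this lets us treat $X_\tau$ as essentially a uniformly chosen outgoing endpoint of community $1$ that is "fresh" relative to the rest of $G_n$.

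\textbf{The $U_1$ part.} Here $\prcond{\mathcal{A}(x,y,0)}{G_n}{x}\le\prcond{\exists\, i\le s:\ \theta(X_i)=2}{G_n}{x}$, and running the walk from $x$ while exposing $G_n$ on the fly shows that at each step the conditional probability of traversing an outgoing half-edge is $O(\alpha)$ (each half-edge of the current vertex is outgoing with probability $\asymp\alpha$, unless we have already revealed anomalously many outgoing half-edges, which is itself unlikely); hence $\E_{G_n}\!\big[\prcond{\exists\, i\le s:\ \theta(X_i)=2}{G_n}{x}\big]\lesssim\alpha s$, uniformly in $x$ and even after conditioning on one designated outgoing edge at $x$. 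By Markov's inequality, for a fixed outgoing half-edge $h'$ of community $1$ we get $\pr{\prcond{\exists\, i\le s:\ \theta(X_i)=2}{G_n}{v_{h'}}>\sqrt{\alpha s}}\lesssim\sqrt{\alpha s}$, so $\pr{(v_{h'},v_h)\in U_1^c}\lesssim\sqrt{\alpha s}$ for each fixed $h'$; averaging over the uniform choice of $h'$ (using exchangeability of community $1$'s outgoing half-edges) gives $\pr{(X_{\tau-1},X_\tau)\in U_1^c}\lesssim\sqrt{\alpha s}$, which tends to $0$ because $s=C\log n$ and $\alpha\ll 1/\log n$.

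\textbf{The $U_2$ part.} With $K=C_2\log\log n$ the $K$-neighbourhood of a fixed vertex has size at most $\Delta^K=\mathrm{polylog}(n)$, so by the standard cycle-counting bound for the configuration model each vertex of $V_1$ fails the tree condition defining $U_2$ with probability $O(\mathrm{polylog}(n)/n)$; thus the expected number of such "bad" vertices is $\mathrm{polylog}(n)$. A bad vertex carries at most $\Delta$ half-edges, each of which is one of the $p=\alpha_1 N_1$ outgoing half-edges with probability $O(\alpha_1)$, independently of badness (which involves only the internal matching), so the expected number of outgoing half-edges incident to a bad vertex is $O(\alpha_1\,\mathrm{polylog}(n))$. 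Since $X_\tau=v_{h'}$ with $h'$ uniform over the $p$ outgoing half-edges, $\pr{(X_{\tau-1},X_\tau)\in U_2^c}=\pr{v_{h'}\text{ bad}}=\E\!\big[\#\{\text{bad outgoing half-edges}\}/p\big]=O(\mathrm{polylog}(n)/N_1)=o(1)$, using $N_1\asymp n$ (note the factor $p$ cancels, so the regime of $\alpha$ is irrelevant here).

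\textbf{Conclusion and main obstacle.} Combining the two parts, $\E_{G_n}\!\big[\prcond{(X_{\tau-1},X_\tau)\in U^c}{G_n}{\pi_2}\big]\lesssim\sqrt{\alpha s}+\mathrm{polylog}(n)/n=o(1)$, and a final Markov's inequality over the randomness of $G_n$ shows that with high probability $\prcond{(X_{\tau-1},X_\tau)\in U^c}{G_n}{\pi_2}=o(1)$, which is $<\beta$ for all $n$ large. The main point requiring care is the bookkeeping of the exposure order that justifies the independence/exchangeability statements in the first paragraph — precisely, that at time $\tau$ the community-$1$ endpoint $X_\tau$ can be treated as a uniform outgoing endpoint whose neighbourhood has not yet been probed by the walk — together with the elementary annealed estimate that a step of the walk inside community $1$ crosses to community $2$ with probability $O(\alpha)$; everything else is a routine first-moment computation.
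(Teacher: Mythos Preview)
Your overall architecture matches the paper's: bound the annealed probabilities of landing in $U_1^c$ and in $U_2^c$ separately via first-moment arguments exploiting the independence between the internal matching of community $2$ (which governs $(\tau,X_{\tau-1})$), the cross-matching (which selects the partner half-edge $h'$), and the internal matching of community $1$ (which governs the relevant properties of $X_\tau$); then conclude by Markov's inequality. The $U_2$ part is essentially correct, though your ``independently of badness'' is a bit loose (the tree condition on $\widehat{\mathcal{B}}_K(x)$ depends on which half-edges are internal, not only on the internal matching); the cleanest route, as in the paper, is simply to note that conditional on $X_\tau=x$ and on $h'$ being outgoing, the internal matching of community $1$ is still uniform, so $\pr{\widehat{\mathcal{B}}_K(x)\text{ has a cycle}}\lesssim\Delta^{2K}/n$ directly.

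There is, however, a genuine gap in your $U_1$ argument. You pass to the inclusion
\[
\mathcal{A}(x,y,0)\subseteq\{\exists\, i\le s:\ \theta(X_i)=2\},
\]
and then claim the annealed probability of the right-hand side is $O(\alpha s)$ ``even after conditioning on one designated outgoing edge at $x$''. This is false: once you condition on the half-edge $h'$ at $x=v_{h'}$ being outgoing, the walk started at $x$ has probability at least $1/(2\Delta)$ of traversing $h'$ on its first non-lazy step, hence $\prcond{\exists\, i\le s:\ \theta(X_i)=2}{G_n}{x}\ge 1/(2\Delta)$ deterministically on that conditioning, which is $\Omega(1)$, not $O(\alpha s)$. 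The point you have discarded is precisely the clause ``$(X_{i-1},X_i)\neq(x,y)$'' in the definition of $\mathcal{A}$: it is this exclusion that makes the annealed bound survive the conditioning on $x\sim y$. The paper implements this by considering the walk from $x$ \emph{killed upon traversing} $(x,y)$, for which every half-edge encountered is still fresh and outgoing with probability $\asymp\alpha$, yielding $\prcond{\mathcal{A}(0,x,y)}{x\sim y}{x}\lesssim s\alpha$; combined with the independence structure and the threshold $\sqrt{\alpha s}$ defining $U_1$, this gives $\E\!\big[\prcond{(X_{\tau-1},X_\tau)\in U_1^c}{G_n}{\pi_2}\big]\lesssim\sqrt{\alpha s}$, and Markov then finishes. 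Your argument is repaired by keeping the event $\mathcal{A}(x,y,0)$ rather than relaxing it to $\{\exists\, i\le s:\theta(X_i)=2\}$.
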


We defer the proof of the two lemmas above until the end of the proof of the theorem.

We now set $\eta(z)=\prcond{X_{\tau+s}={z}}{G_n}{\pi_2}$ and see that from Lemmas~\ref{lem:lemma1} and~\ref{lem:lemma2} and~\eqref{eq:boundonDc} we have for $n$ sufficiently large 
\begin{align}\label{eq:boundonv1d}
\eta(V_1\cap D)&\ge \sum_{(w_1,w_2)\in U}\prcond{(X_{\tau-1},X_{\tau})=(w_2,w_1)}{G_n}{\pi_2}\prcond{X_{s}\in V_1\cap D}{G_n}{w_1}
\ge \frac{c}{4}(1-\beta).
\end{align} 
The proof of Lemma~\ref{lemma:DirEVCom1} shows that for positive constants $c_1$ and $c_2$ with high probability by choosing~$C$ sufficiently large
\begin{align*}
\prcond{\tau_{V_{3-\theta}}>\frac{C}{4\alpha}}{G_n}{\pi_\theta}\le c_1e^{-c_2\alpha C/(4\alpha)}\le {\delta}.
\end{align*} 
Using this we now get for $n$ sufficiently large
\begin{align*}
&\prcond{T_A>\frac{C}{2\alpha}}{G_n}{\pi_2} \\&\le\prcond{\tau>\frac{C}{4\alpha}}{G_n}{\pi_2}+ \sum_{z\in G_n}\prcond{X_{\tau+s}=z, \frac{C}{4\alpha}>\tau}{G_n}{\pi_2}\prcond{X_u\in A^c}{G_n}{z}
\\& \le {\delta}+\sum_{z\in V_2\cup D^c}\eta(z)+\sum_{z\in V_1\cap D}\eta(z)\left(\left\|\prcond{X_u\in \cdot}{G_n}{z}-\pi_1(\cdot)\right\|_{\text{TV}}+\pi_1(A^c)\right)
 \\&\le {\delta}+\sum_{z\in G_n}\eta(z)-\sum_{z\in V_1\cap D}\eta(z)\left(\frac{1}{2}-\delta\right)\le {\delta} +1-\frac{c}{4}(1-\beta)\left(\frac{1}{2}-\delta\right), 
 \end{align*}
 where the third inequality holds using~\eqref{eq:boundontvwithrewired} and the assumption that $\pi_1(A)\geq 1/2$ and the last one uses~\eqref{eq:boundonv1d}.  This gives \eqref{eq:boundonTa} and completes the proof as we can take $\delta$ as small as we wish. 
 \qed

\begin{proof}[Proof of Lemma~\ref{lem:lemma1}]
Recall $\tau_2$ is the hitting time of community $2$.  Then we have
\begin{align*}
\prcond{X_{s}\in V_2}{G_n}{x}&\le \prcond{\tau_2\leq s, (X_{\tau_2-1},X_{\tau_2})=(x,y)}{G_n}{x} \\&+    \prcond{\tau_2\leq s, (X_{\tau_2-1},X_{\tau_2})\neq(x,y)}{G_n}{x}.
\end{align*}
As $(x,y)\in U\subset U_1$ we have that the second probability is at most $\sqrt{\alpha s}$. It thus remains to control the first probability appearing above.

As $(x,y)\in U_2$ we know that the $K=C_2\log \log n$ neighbourhood of $x$ which can be reached without crossing to community $2$ is a tree. From Lemma~\ref{lemma:ReturnProb}, we have that the probability to cross $(x,y)$ before otherwise leaving $V_1$ and before reaching the boundary~$\partial\widehat{\mathcal{B}}_{K}(x)$ is $\le 1-c<1$. 

From Lemma~\ref{lemma:ReturnProb} we have that the probability that a simple random walk on a tree which has degrees at least $3$ backtracks for $C_2\log\log n$ levels is at most $e^{-c'C_2\log\log n}\le \left(\frac{1}{\log n}\right)^{c'C_2},$ where $c'>0.$ Writing $\tau^+_{\partial\widehat{\mathcal{B}}_{K}(x)}$ for the first return time to the set $\partial\widehat{\mathcal{B}}_{K}(x)$ and $\tau_x$ for the first hitting time of $x$, we therefore get that for~$z\in \partial\widehat{\mathcal{B}}_{K}(x)$   
\[
\prcond{\tau_{x}< \tau^+_{\partial\widehat{\mathcal{B}}_{K}(x)},\tau_{x}< \tau_{2}}{G_n}{z}\le \left(\frac{1}{\log n}\right)^{c'C_2},\] 
and therefore for any time $t$ 
\begin{align*}
&\prcond{\tau_{x}< (t+1)\wedge\tau_{2}}{G_n}{z}\le\prcond{\tau_{x}< \tau^+_{\partial\widehat{\mathcal{B}}_{K}(x)}\wedge\tau_{2}}{G_n}{z}\\ &\quad \quad \quad \quad \quad \quad \quad \quad \quad\quad \quad \quad \quad +\prcond{\tau_{x}> \tau^+_{\partial\widehat{\mathcal{B}}_{K}(x)}, \tau_{x}<( \tau^+_{\partial\widehat{\mathcal{B}}_{K}(x)}+t)\wedge\tau_{2}}{G_n}{z}\\&=\prcond{\tau_{x }< \tau^+_{\partial\widehat{\mathcal{B}}_{K}(x)}\wedge\tau_{2}}{G_n}{z}+\sum_{\widetilde{z}}\prcond{X_{\tau^+_{\partial{\widehat{\mathcal{B}}_{K}(x)}}}=\widetilde{z}}{G_n}{z}\prcond{ \tau_{x}<t\wedge\tau_{2}}{G_n}{\widetilde{z}}
\\&\le \left(t+1\right) \left(\frac{1}{\log n}\right)^{c'C_2},
\end{align*} 
where the last line follows by induction. Taking $t=s$ and $C_2$ in the definition of $K$ sufficiently large such that $c'C_2>1$ this gives that 
\begin{align*}
\prcond{\tau_2\leq s, (X_{\tau_2-1},X_{\tau_2})=(x,y)}{G_n}{x}&\le 1-c+ (s+1) \left(\frac{1}{\log n}\right)^{c'C_2}\le 1-c+c/4
\end{align*} 
with high probability over $G_n$ as $s=C \log n$.
\end{proof}

\begin{proof}[Proof of Lemma~\ref{lem:lemma2}]

We will bound \[\prcond{(X_{\tau},X_{\tau-1})\in U_1^c}{G_n}{\pi_2}\quad \text{and} \quad \prcond{(X_{\tau},X_{\tau-1})\in U_2^c}{G_n}{\pi_2}\] by a constant $\beta_1<1/2$ which then implies the statement of the Lemma by union bound. We let $\mathcal{H}_1, \mathcal{H}_2$ and $\mathcal{H}$ be the matchings of internal half edges of community $1$ and $2$ and the matching of the outgoing half edges, respectively.

Let $(x,y)\in V_1\times V_2$. First of all  by considering a walk starting from $x$ which is killed if it visits the undirected edge $(x,y)$ we have that
\[
\prcond{\mathcal{A}(0,x,y)}{x\sim y}{x} \lesssim s\alpha. 
\]
Indeed, this holds as the probability to generate a new outgoing edge at each step is $\lesssim \alpha$ and we can use the union bound up to time $s$. 
It is not hard to see by conditioning on the matchings on each community and since the event $\mathcal{A}(0,x,y)$ only depends on $\mathcal{H}_1$ and $(\tau,X_{\tau-1})$ only depends on~$\mathcal{H}_2$, that 
\begin{align*}
        \prstart{\mathcal{A}(\tau,X_{\tau},X_{\tau-1})}{\pi_2} = \sum_{x,y} \prcond{\mathcal{A}(0,x,y)}{x\sim y}{x} \prstart{X_{\tau}=x,X_{\tau-1}=y, x\sim y}{\pi_2}.
\end{align*}

This implies that 
\begin{align*}
&\alpha s\gtrsim \prstart{\mathcal{A}(\tau,X_{\tau},X_{\tau-1})}{\pi_2} = \E{\prcond{\mathcal{A}(\tau,X_{\tau},X_{\tau-1})}{G_n}{\pi_2}}\\&\ge\E{\sum_{(x,y)\in U_1^c}\prcond{\left(X_{\tau-1},X_{\tau}\right)=(y,x)}{G_n}{\pi_2}\prcond{\mathcal{A}(x,y,0)}{G_n}{x}}\\&\ge \sqrt{\alpha s}\ \E{\prcond{\left(X_{\tau},X_{\tau-1}\right)\in U_1^c}{G_n}{\pi_2}}.
\end{align*} 
Therefore by Markov's inequality \[\pr{\prcond{\left(X_{\tau},X_{\tau-1}\right)\in U_1^c}{G_n}{\pi_2}>(\alpha s)^{1/4}}\le (\alpha s)^{1/4}\to 0 \text{ as }n\to \infty,\] which gives that for any constant $\beta_1<1/2$ for large enough $n$ we have that with high probability $\prcond{(X_{\tau},X_{\tau-1})\in U_1^c}{G_n}{\pi_2}\le \beta_1.$

To control the probability of $\prcond{(X_{\tau},X_{\tau-1})\in U_2^c}{G_n}{\pi_2}$ we use that for any vertex in community $1$ if we start generating its $C_2\log\log n$ descendants in community $1$ the probability of creating a cycle with each new edge we reveal is at most $\Delta^{C_2\log\log n}/n$ (as $\Delta^{C_2\log\log n}$ is the maximal total number of vertices we reveal in community $1$). As we reveal at most $\Delta^{C_2\log\log n}$ edges by the union bound the probability that a cycle is created is $\lesssim \Delta^{2C_2\log\log n}/n.$  As we again have that $X_{\tau-1}$ does not depend on the matching in community $1$ and on the matching across the communities, this gives that \[\Delta^{2C_2\log\log n}/n\ge \E{\prcond{(X_{\tau},X_{\tau-1})\in U_2^c}{G_n}{\pi_2}}.\] The proof now follows by Markov's inequality. 
\end{proof}
\begin{proof}[Proof of Proposition~\ref{prop:TRel}] ($t_{\rm{rel}}$ for 2 communities model when $\alpha \ll \frac{1}{\log n}$) The lower bound follows in exactly the same way as for the larger values of $\alpha$. As we have shown in Theorem~\ref{thrm:Cutoff} that $t_{\text{mix}}\lesssim \frac{1}{\alpha}$ with high probability and as the mixing time is always an upper bound on the relaxation time, this completes the proof. 
\end{proof}
 \section{Back to simple random walk}\label{section:simplefromlazy}
 We now turn to proving cutoff for simple random walk. First recall that absolute relaxation time is defined to be the inverse of the absolute spectral gap. We say that a Markov chain has cutoff window $W$ if

  We will use the following lemma. 
 
 \begin{lemma}\label{lem:cutofflazygivessimple} Let $P$ be a transition matrix with $\varepsilon$-mixing time $t(\varepsilon)$. Let $t_{L}(\varepsilon)$ be the $\varepsilon$-mixing time of $\frac{I+P}{2}$, i.e.\ of the lazy version of $P$. If the lazy version exhibits cutoff and the absolute relaxation time of $P,$ denoted by $t_{\mathrm{rel}}^{*}$, satisfies $t_{\mathrm{rel}}^{*} \ll t_{L}(\frac{1}{4})=:t_L$, then there is cutoff for the matrix $P$. Moreover, if there exist positive constants $C(\varepsilon)$ and a function $\mathcal{W}$  satisfying
$|t_L(\varepsilon) - t_L(1-\varepsilon)|\leq C(\varepsilon) \mathcal{W}$ and $\mathcal{W}\gg t_{\rm rel}^*$, then we have $|t(\varepsilon)-\frac{1}{2}t_{{L}}(\varepsilon)|\le C'(\varepsilon)\mathcal{W}$ for a positive constant $C'(\varepsilon)$. 
 On the other hand, if the lazy chain does not exhibit cutoff and $t_L\asymp  t^*_{\rm rel},$ this implies that for all~$\varepsilon$ we have $t(\varepsilon)\asymp t^*_{\rm rel}$, and hence there is no cutoff for $P$.  
 \end{lemma}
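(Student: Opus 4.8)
The plan is to relate the mixing behaviour of $P$ to that of its lazy version $P_L=\frac{I+P}{2}$ through the standard spectral decomposition in $\mathcal{L}^2$, splitting the contribution of the large eigenvalues (which governs cutoff) from that of the negative eigenvalues (which is controlled by $t_{\mathrm{rel}}^*$). I would work throughout with a reversible $P$ (in our applications $P$ is the simple random walk on $G_n$, which is reversible), so the spectral theorem applies.

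\textbf{Step 1: the lazy-to-simple comparison via $\mathcal{L}^2$.} First I would record that for reversible $P$ with eigenvalues $1=\lambda_1>\lambda_2\ge \cdots\ge \lambda_{|S|}\ge -1$, the lazy chain has eigenvalues $\frac{1+\lambda_i}{2}\in[0,1]$, so $\|P_L^{s}(x,\cdot)-\pi\|_2^2=\sum_{i\ge 2} f_i(x)^2\left(\frac{1+\lambda_i}{2}\right)^{2s}$ while $\|P^{s}(x,\cdot)-\pi\|_2^2=\sum_{i\ge 2} f_i(x)^2\lambda_i^{2s}$, where $(f_i)$ is an orthonormal eigenbasis in $\mathcal{L}^2(\pi)$. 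The point is that for the lazy chain at time $s$, the dominant contribution comes from eigenvalues close to $1$, and $\left(\frac{1+\lambda_i}{2}\right)^{2s}\approx \lambda_i^{s}$ when $\lambda_i$ is close to $1$ and $s$ is large, so running $P$ for $\frac12$ the number of steps of $P_L$ gives comparable $\mathcal{L}^2$ distance up to the negative-eigenvalue correction. Concretely I would show
\[
\|P^{s}(x,\cdot)-\pi\|_2^2 \le \|P_L^{s'}(x,\cdot)-\pi\|_2^2 + \Big(\max_{i:\lambda_i<0}|\lambda_i|\Big)^{2s}\cdot\|P^{0}(x,\cdot)-\pi\|_2^2
\]
for an appropriate $s'$ close to $2s$, using that for $\lambda_i\in[0,1]$ one has $\lambda_i^{2s}\le \left(\frac{1+\lambda_i}{2}\right)^{2s-?}$-type inequalities, and handling the negative part by $|\lambda_i|^{2s}\le (1-\gamma_*)^{2s}$ where $\gamma_*=1/t_{\mathrm{rel}}^*$. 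Since $\|P^0(x,\cdot)-\pi\|_2^2\le 1/\pi_*\le e^{O(t_L(1/4))}$ (the state space has size $|S|\le e^{O(t_L(1/4))}$ in our setting, as $t_L(1/4)\asymp \log n$), the correction term is $o(1)$ once $s\gg t_{\mathrm{rel}}^*\log(1/\pi_*)$, which holds for $s\asymp t_L(1/4)/2$ precisely because $t_{\mathrm{rel}}^*\ll t_L(1/4)$. This, together with the reverse comparison, pins $t(\varepsilon)$ to $\tfrac12 t_L(\varepsilon)$ up to an additive error of order $\max\{t_{\mathrm{rel}}^*\log(1/\pi_*),\,?\}$; optimising the bookkeeping gives the window statement $|t(\varepsilon)-\tfrac12 t_L(\varepsilon)|\le\mathcal{W}$ under $\mathcal{W}\gg t_{\mathrm{rel}}^*$ and $\mathcal{W}\gtrsim\sqrt{t_L(1/4)}$.

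\textbf{Step 2: deduce cutoff for $P$.} Given that the lazy chain has cutoff with window $\mathcal{W}=o(t_L(1/4))$, Step 1 gives $t(\varepsilon)=\tfrac12 t_L(\varepsilon)+O(\mathcal{W})=\tfrac12 t_L(1/4)+o(t_L(1/4))$, uniformly in $\varepsilon\in(0,1)$, which is exactly cutoff for $P$ at time $\tfrac12 t_L(1/4)$ with window $O(\mathcal{W})$. For the no-cutoff direction, I would instead use the universal inequalities $t(\varepsilon)\ge (t_{\mathrm{rel}}^*-1)\log\frac{1}{2\varepsilon}$ (spectral lower bound, valid with $t_{\mathrm{rel}}^*$ since we must account for possible near-$(-1)$ eigenvalues) and $t(\varepsilon)\lesssim t^*_{\mathrm{rel}}\log(1/(\varepsilon\pi_*))$; combined with $t_L(1/4)\asymp t_{\mathrm{rel}}^*$ and $t(1/4)\asymp t_L(1/4)$ (which itself follows from Step 1, or more crudely from $t_L(1/4)\le 2\,t(1/4)+O(1)$ and $t(1/4)\lesssim t_L(1/4)$), this forces $t(\varepsilon)\asymp t_{\mathrm{rel}}^*$ for all fixed $\varepsilon$, so the ratio $t(\varepsilon)/t(1-\varepsilon)$ stays bounded away from $1$ and there is no cutoff.

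\textbf{Main obstacle.} The delicate point is controlling the negative part of the spectrum of $P$ and making the constant $\tfrac12$ in $t(\varepsilon)\approx\tfrac12 t_L(\varepsilon)$ precise with an additive error of the claimed size, rather than a multiplicative one. The comparison $\left(\frac{1+\lambda}{2}\right)^{s}$ versus $\lambda^{s/2}$ is not an exact identity for $\lambda$ bounded away from $1$, so I would need the cutoff hypothesis on $P_L$ to ensure the $\mathcal{L}^2$ mass at time near $t_L(\varepsilon)$ is essentially carried by eigenvalues $\lambda_i=1-o(1)$ — this is where the sharp form of cutoff (a genuine window bound, not just $t_{\mathrm{mix}}(\varepsilon)/t_{\mathrm{mix}}(1-\varepsilon)\to1$) enters, and getting the window transfer $\mathcal{W}\rightsquigarrow O(\mathcal{W})$ cleanly will require care with the $\sqrt{t_L(1/4)}$ term, which is the usual diffusive fluctuation scale one loses when passing between a chain and its lazy version. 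The remaining steps are routine once this spectral comparison is set up.
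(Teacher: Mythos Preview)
Your spectral approach has a genuine gap at the point where you bound the negative-eigenvalue contribution. You write that the correction term
\[
\Big(\max_{i:\lambda_i<0}|\lambda_i|\Big)^{2s}\cdot \|P^{0}(x,\cdot)-\pi\|_2^2
\]
is $o(1)$ once $s\gg t_{\mathrm{rel}}^*\log(1/\pi_*)$, and that this holds for $s\asymp \tfrac12 t_L(1/4)$ ``precisely because $t_{\mathrm{rel}}^*\ll t_L(1/4)$''. But this does not follow: you need $t_L(1/4)\gg t_{\mathrm{rel}}^*\log(1/\pi_*)$, not merely $t_L(1/4)\gg t_{\mathrm{rel}}^*$. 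In the application $\log(1/\pi_*)\asymp\log n\asymp t_L(1/4)$ and $t_{\mathrm{rel}}^*\asymp 1/\alpha$, so the correction term at $s\asymp\tfrac12\log n$ is of order $e^{-s/t_{\mathrm{rel}}^*}/\pi_*\asymp n^{1-c\alpha}$, which does \emph{not} tend to zero unless $\alpha$ is bounded away from $0$. The hypothesis $t_{\mathrm{rel}}^*\ll t_L(1/4)$ is equivalent to $\alpha\gg 1/\log n$, which is much weaker. So your $\mathcal{L}^2$ comparison collapses exactly in the regime where the lemma is needed.

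The paper bypasses this entirely by working with \emph{hitting times of large sets} rather than $\mathcal{L}^2$ distance. It invokes the characterisation from \cite{CharacterizationCutoff} (their Proposition~1.8) that for reversible chains, $t(\varepsilon)$ and $\mathrm{hit}_{1/2}(\varepsilon):=\min\{t:\max_x\max_{\pi(A)\ge 1/2}\mathbb{P}_x(\tau_A>t)\le\varepsilon\}$ agree up to an additive $O(t_{\mathrm{rel}}^*|\log\varepsilon|)$; the same holds for the lazy chain. The lazy-to-simple transfer is then \emph{exact} at the level of hitting times via binomial thinning: $\mathbb{P}_x(\tau_A^L>t)=\sum_u\binom{t}{u}2^{-t}\,\mathbb{P}_x(\tau_A^S>u)$, because the lazy walk is the simple walk watched at a $\mathrm{Bin}(t,\tfrac12)$ subsequence. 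Concentration of the binomial then gives $\mathrm{hit}_{1/2}(\varepsilon)=\tfrac12\,\mathrm{hit}^L_{1/2}(\varepsilon)+O(\sqrt{t_L(1/4)})$, and the $t_{\mathrm{rel}}^*$ error from the hitting-time/mixing-time comparison is absorbed into $\mathcal{W}$ by the assumption $\mathcal{W}\gg t_{\mathrm{rel}}^*$. No $1/\pi_*$ factor ever appears, because hitting probabilities are bounded by $1$ and do not see the sign of the spectrum. Your ``main obstacle'' paragraph correctly senses that the negative spectrum is the issue, but the hitting-time route is what actually resolves it, not a more careful $\mathcal{L}^2$ bookkeeping.
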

 \begin{proof} Let $X$ be a Markov chain with transition matrix $P$.
Define $ p_x(\alpha,t )=\max_{A\subset V,\pi(A)>\alpha}\prstart{\tau_A>t}{x}$, where $\tau_A=\inf\{t:X_t \in A\}$ is the first hitting time of the set $A$.  We define
\[ 
{\rm{hit}}_{\alpha}(\varepsilon) = \min \{t: \max_{x}p_x(\alpha,t) \le\varepsilon\}.
\]
Let ${\rm hit}^L_\alpha(\varepsilon)$ and $p^L$ be defined as above with respect to the lazy chain. From~\cite[Proposition 1.8]{CharacterizationCutoff} and~\cite[Remark 1.9]{CharacterizationCutoff} we have that for $\varepsilon \in (0,1/4)$
\begin{align}\label{eq:cutoffhitmix} 
\begin{split}&{\rm hit}_{\frac{1}{2}}(3\varepsilon/2)-\lceil2t^*_{\rm rel}|\log \varepsilon|\rceil\le t(\varepsilon )\le {\rm hit}_{\frac{1}{2}}(\varepsilon/2)+\lceil t^*_{\rm rel}|\log(\varepsilon/4)|\rceil,\text{  and   }
\\&{\rm hit}_{\frac{1}{2}}(1-\varepsilon/2)-\lceil2t^*_{\rm rel}|\log \varepsilon|\rceil\le t(1-\varepsilon )\le {\rm hit}_{\frac{1}{2}}(1-2\varepsilon)+\lceil\frac{1}{2} t^*_{\rm rel}\log(8)\rceil.\end{split}
\end{align}
The same statements also hold for $t_L(\varepsilon)$ and ${\rm hit}_{1/2}^L(\varepsilon)$. First in the case when there is cutoff, as $t_{\rm rel}^*\ll\mathcal{W}\ll t_L$, the inequalities above for the lazy chain give us that for all $\varepsilon \in (0,1/8)$  
\begin{align}\label{eq:hitmixtl14}
{\rm hit}^L_{\frac{1}{2}}(\varepsilon)=t_L+O(\mathcal{W}) \quad \text{ and } \quad {\rm hit}^L_{\frac{1}{2}}(1-\varepsilon)=t_L+O(\mathcal{W}) .
\end{align}
 If $\tau_A^S$ and $\tau_A^L$ are the hitting times of $A$ by $X$ and the lazy version of $X$ respectively, we see that
\[
\prstart{\tau_A^L>t}{x}=\sum_{u\le t}\left(\frac{1}{2}\right)^t{t \choose u}\prstart{\tau_A^S>u}{x}.
\]
Using large deviations for the binomial distribution by taking $C$ a sufficiently large constant, we get
\[
\pr{{\rm Bin}\left(t,\frac{1}{2}\right)\notin \left(\frac{t}{2}-C\sqrt{t}, \frac{t}{2}+C\sqrt{t}\right)}\le  \varepsilon^2.
\]
Hence, we also have 
\begin{align}\label{eq:pforsimplelazy}
p_x\left(\frac{1}{2},\frac{t}{2}+C\sqrt{t}\right)\le \frac{p_x^L\left(\frac{1}{2},t\right)}{1-\varepsilon^2} \quad  \text{ and } \quad p_x\left(\frac{1}{2},\frac{t}{2}-C\sqrt{t}\right)\ge p_x^L\left(\frac{1}{2},t\right)-\varepsilon^2.
\end{align}
Using this for $t={\rm hit}_{\frac{1}{2}}^L(\varepsilon)$ and $t={\rm hit}_{\frac{1}{2}}^L(1-\varepsilon)$,~\eqref{eq:hitmixtl14} and the fact that $\mathcal{W}^2\gtrsim t_L$ (see for instance~\cite[Theorem~C]{HermonLacoinPeres} or \cite[Theorem~3.4]{ChenSaloffCoste}) we get for $\varepsilon<1/16$
\[
{\rm hit}_{\frac{1}{2}}(\varepsilon)=\frac{1}{2}t_L+O(\mathcal{W}) \quad \text{ and } \quad {\rm hit}_{\frac{1}{2}}(1-\varepsilon)=\frac{1}{2}t_L+O(\mathcal{W}) .
\]
The proof is now complete in the case when there is cutoff, using~\eqref{eq:cutoffhitmix}.
Notice that when there is no cutoff for the lazy walk we have by~\eqref{eq:cutoffhitmix} that for all  $\varepsilon\in \left(0,\frac{1}{4}\right)$
\[{\rm hit}^L_{\frac{1}{2}}(3\varepsilon/2)\lesssim  t_L\quad \text{ and }\quad  {\rm hit}^L_{\frac{1}{2}}(1-\varepsilon/2)\lesssim  t_L,
\]
since $t_{\rm rel}\lesssim t_L$.
Now this together with~\eqref{eq:pforsimplelazy} gives that ${\rm hit}_{\frac{1}{2}}(\varepsilon)\lesssim t_L$ and ${\rm hit}_{\frac{1}{2}}(1-\varepsilon)\lesssim t_L$ and the proof of an upper bound on $t(\varepsilon)$ by $t_{\rm rel}^*$ follows from~\eqref{eq:cutoffhitmix} and the assumption $t_L\asymp t_{\rm rel}^*$. Since  $t_{\rm rel}^*\lesssim t(\varepsilon)$ (see for instance~\cite[Theorem~12.4]{MixingBook}), we get $t_{\rm rel}^*\asymp t(\varepsilon)$, and hence there is no cutoff for the chain with matrix $P$ either (see~\cite[Proposition~18.4]{MixingBook}).  
  \end{proof}

 The lemma above together with Theorem~\ref{thrm:mCutoff} and~\ref{thrm:Cutoff} in the case of a lazy walk gives that in order to prove Theorem~\ref{thrm:cutoffSimplem} and~\ref{thrm:cutoffSimple2} for the simple random walk it is enough to bound the absolute spectral gap by $\frac{1}{\alpha}$. 
 To do this we use the following result from~\cite{lazysimple}.
\begin{theorem}[\cite{lazysimple}]\label{thrm:JonathansPaperWithStudents}Let $P$ be a reversible Markov chain on a state space $\Omega$ of size $n$ with stationary distribution $\pi$.
Let $1=\lambda_1\geq\lambda_2\geq\ldots\geq \lambda_n$ be eigenvalues of $P$ and $f_i$ the corresponding unit eigenfunctions, such that $P f_i=\lambda_i f_i$ and $\estart{f_i f_j}{\pi}=\mathds{1}(i=j)$.
Then if $1+\lambda_n \leq c\left(1-\lambda_2\right)$ for some absolute constant $c \in (0,1) $ then
 $\text{Var}_{\pi}\left|f_n\right| \leq \frac{1+\lambda_n}{1-\lambda_2}$. Moreover, if we let
\[F_{+}:=\left\{x: f_n(x) \geq 0\right\} \text { and } F_{-}:\left\{x: f_n(x)<0\right\}\]
then $\left|\pi\left(F_{+}\right)-1 / 2\right|=\left|\pi\left(F_{-}\right)-1 / 2 \right|\lesssim\frac{1+\lambda_n}{1-\lambda_2}$ and the parity breaking time defined by
 \[S:=\inf\left\{i:\left(X_{i-1}, X_i\right) \in F_{+}^2 \cup F_{-}^2\right\}\] satisfies for some $\beta>\left|\lambda_n\right|$ that for all $k$
\[\left(1-\frac{2\left(1+\lambda_n\right)}{1-\lambda_2}\right)(\beta^{2k}+\beta^{2k+1}) \leq \prstart{S>2k}{\pi}+\prstart{S>2k+1}{\pi}\leq (\beta^{2k}+\beta^{2k+1}).\]
\end{theorem}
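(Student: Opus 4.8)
The plan is to cite this result essentially verbatim from~\cite{lazysimple} and explain why it applies in our setting, but since the statement is self-contained, I will also indicate the structure of a direct argument. The starting point is that $f_n$ is the (signed) eigenfunction for the smallest eigenvalue $\lambda_n$, and the key analytic fact is that $|f_n|$ cannot be too far from a constant when the spectral gap $1-\lambda_2$ is much larger than $1+\lambda_n$. Concretely, one writes $P|f_n| \ge |P f_n| = |\lambda_n| \, |f_n|$ pointwise (triangle inequality applied to $P f_n(x) = \sum_y P(x,y) f_n(y)$ and using $|f_n| \ge 0$, with equality failing exactly when $f_n$ changes sign across an edge from $x$). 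Taking inner products against $|f_n|$ and using $\|f_n\|_{2,\pi}=1$ gives $\langle |f_n|, (I-P)|f_n|\rangle_\pi \le 1 - |\lambda_n| = 1+\lambda_n$ (since $\lambda_n<0$). Now decompose $|f_n| = \bar c \cdot \mathbf 1 + g$ where $\bar c = \estart{|f_n|}{\pi}$ and $g \perp \mathbf 1$; then $\langle |f_n|, (I-P)|f_n|\rangle_\pi = \langle g, (I-P) g\rangle_\pi \ge (1-\lambda_2)\,\mathrm{Var}_\pi(|f_n|)$ by the variational characterisation of the spectral gap restricted to the mean-zero subspace. Combining the two inequalities yields $\mathrm{Var}_\pi(|f_n|) \le \dfrac{1+\lambda_n}{1-\lambda_2}$, which is the first claim; the hypothesis $1+\lambda_n \le c(1-\lambda_2)$ guarantees the right-hand side is a small absolute constant, so the subsequent estimates are meaningful.

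For the estimate on $\pi(F_+)$ and $\pi(F_-)$: since $\estart{f_n}{\pi}=0$, we have $\bar c = \estart{|f_n|}{\pi} = 2\,\estart{f_n \mathbf 1(f_n\ge0)}{\pi}$, and a short computation relating $\bar c^2$, $\mathrm{Var}_\pi(|f_n|)$, and the masses $\pi(F_\pm)$ (together with $\|f_n\|_{2,\pi}^2 = \bar c^2 + \mathrm{Var}_\pi(|f_n|) = 1$, hence $\bar c^2 = 1 - \mathrm{Var}_\pi(|f_n|)$ is close to $1$) shows that $|f_n|$ is concentrated near the constant $\bar c$; then writing $f_n$ itself as $\approx \pm \bar c$ on $F_\pm$ forces $|\pi(F_+) - \pi(F_-)| = |\estart{\mathrm{sgn}(f_n)}{\pi} \cdot (\text{something close to }1)|$ to be controlled, and since $\estart{f_n}{\pi}=0$ one extracts $|\pi(F_+) - 1/2| \lesssim \dfrac{1+\lambda_n}{1-\lambda_2}$, with $|\pi(F_-)-1/2|$ equal by $\pi(F_+)+\pi(F_-)=1$.

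For the parity-breaking time $S$: the event $\{S > m\}$ is the event that the pair $(X_{i-1},X_i)$ never lies in $F_+^2\cup F_-^2$ for $i \le m$, i.e.\ the walk alternates sides of the partition $\{F_+,F_-\}$ at every step. The idea is to compare this with a two-state lazy-type chain driven by the quantity $\langle f_n, P^m f_n \rangle_\pi = \lambda_n^m$; more precisely one uses that $f_n/\bar c$ is, up to the small $L^2$-error $\mathrm{Var}_\pi(|f_n|)$, equal to the signed indicator $\mathbf 1_{F_+} - \mathbf 1_{F_-}$, so the alternation probability under $\pi$ is captured by iterating the near-eigenvalue relation, giving both the upper bound $\beta^{2k}+\beta^{2k+1}$ (for $\beta = |\lambda_n|$, or any $\beta > |\lambda_n|$ absorbing lower-order terms) and the matching lower bound with the $\left(1 - \frac{2(1+\lambda_n)}{1-\lambda_2}\right)$ correction factor accounting for the $L^2$-discrepancy between $f_n/\bar c$ and the signed indicator. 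I expect the main obstacle to be this last step: making the comparison between ``the walk alternates sides'' and the spectral quantity $\lambda_n^m$ fully rigorous requires carefully tracking how the non-sharpness of $f_n \approx \bar c(\mathbf 1_{F_+}-\mathbf 1_{F_-})$ propagates through $m$ steps, and controlling the cross terms; since this is proved in~\cite{lazysimple}, in the present paper I would simply invoke it, noting that in our application $P$ is the (reversible) simple random walk on $G_n$, whose lazy version satisfies $1+\lambda_n \ge 0$ trivially and whose spectral gap we have lower-bounded by $\asymp\alpha$ via Proposition~\ref{prop:TRel}, so the hypothesis $1+\lambda_n \le c(1-\lambda_2)$ will need to be checked separately (and is exactly what the subsequent application establishes, by bounding the bottom of the spectrum).
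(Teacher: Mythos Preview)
Your instinct is correct: the paper does not prove this theorem at all --- it is stated with attribution to~\cite{lazysimple} and used as a black box in the proof of Proposition~\ref{prop:TRel} for $t_{\rm rel}^*$. So the ``paper's approach'' is simply to cite it, which you already identified as the plan. Your additional sketch of the variance bound (via $\langle |f_n|, (I-P)|f_n|\rangle_\pi \le 1+\lambda_n$ combined with the spectral-gap lower bound on the Dirichlet form of the mean-zero part of $|f_n|$) is a standard and correct argument and matches what one finds in~\cite{lazysimple}, but none of it appears in the present paper; your remarks on the parity-breaking time are heuristic as you acknowledge, and again the paper makes no attempt to supply them.
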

  
 We also use following two lemmas which we prove in Appendix~\ref{appendixSmallSets} as they contain similar calculations as the proof that small sets in the two communities model have good expansion.

 \begin{lemma}\label{lemma:ConfigurationModelNotBipartite}
Let $G=(V,E)$ be a configuration model on $n$ vertices, with minimal degree $3$ and maximal degree $\Delta$. Then there exists a constant $\delta>0$ such that the graph $G$ with high probability satisfies the following: for all sets $A\subset V$ and $B=A^c$ we have that \[\frac{|\{x\sim y, (x,y)\in A^2\cup B^2\}|}{\sum_{x\in V}\deg(x)}\ge \delta.\]
\end{lemma}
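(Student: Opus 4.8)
The statement to prove is Lemma~\ref{lemma:ConfigurationModelNotBipartite}: a configuration model with minimum degree $3$ and maximum degree $\Delta$ is, with high probability, quantitatively far from bipartite, in the sense that every bipartition $(A,B)$ leaves at least a constant fraction $\delta$ of the half-edges inside $A$ or inside $B$. Equivalently, writing $m = \tfrac12\sum_x \deg(x)$ for the number of edges, every cut of the graph misses at least $\delta m$ edges.

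\medskip

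The plan is to use a first-moment (union bound) argument over all bipartitions, exactly in the spirit of the usual proof that the configuration model is an expander w.h.p.\ (as invoked via \cite{expander}), but now bounding the probability that a given partition is an \emph{almost-perfect cut} rather than an almost-isolated set. First I would fix a partition $V = A \sqcup B$ with $|A| = a$; I may assume $a \le n/2$ by symmetry, and since a vertex in $A$ of degree $\ge 3$ forces at least one internal edge unless all its neighbours lie in $B$, the only dangerous partitions have $a$ not too small --- I will actually only need to union bound over $a$ ranging over all of $\{1,\dots,n/2\}$. Let $k_A = \sum_{x \in A} \deg(x)$ be the number of half-edges emanating from $A$; by the minimum-degree-$3$ and bounded-degree assumptions, $3a \le k_A \le \Delta a$, and similarly for $B$. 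The event that the bipartition has \emph{fewer than} $2\delta m$ half-edges landing inside $A \cup B$ means that at least $k_A - 2\delta m$ of the $k_A$ half-edges from $A$ (say $k_A \le k_B$, so $k_A \le m$) are matched to half-edges of $B$. The number of perfect matchings of $2m$ half-edges is $(2m-1)!! = (2m)!/(2^m m!)$, and I would bound the number of matchings realising ``at most $\ell := 2\delta m$ internal half-edges in $A$'' by choosing which half-edges are internal and matching everything: this gives a factor at most $\binom{k_A}{\ell}\binom{k_B}{\ell}\cdot (\ell-1)!!\cdot(\text{ways to match the rest across})$, and comparing with $(2m-1)!!$ one obtains, after Stirling, a bound of the form $\big(C \delta^{c}\big)^{k_A}$ for the conditional probability, for constants $C,c>0$ depending only on $\Delta$ --- the key point being that the exponent is $\Theta(k_A) = \Theta(a)$ and the base can be made $<1/(2^{\Delta} e^{\Delta})$, say, by taking $\delta$ small.

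\medskip

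Then I would union bound over the $\binom{n}{a}\le (en/a)^a \le (2e)^{a}\cdot$(something controlled when $a$ is linear) choices of $A$ of size $a$, and sum over $a$. Because the per-partition probability decays like $\beta^{a}$ with $\beta$ small compared to the entropy factor coming from $\binom{n}{a}$ --- which is at most $2^n$ in total, but more carefully $e^{a\log(n/a)+a}$ --- I need $\beta \cdot (n/a) \cdot e < 1$, i.e.\ I need the decay base raised to $k_A \ge 3a$ to beat $n/a$. For $a \ge \varepsilon n$ this is immediate since $\binom{n}{a}\le 2^n$ while $\beta^{3a}\le \beta^{3\varepsilon n}$ with $\beta$ chosen small; for small $a$ (say $a = o(n)$), the factor $(n/a)^a$ is not bounded by $2^{O(a)}$, so here I must exploit that the per-partition exponent is $k_A\log\beta \le 3a\log\beta$ and choose $\delta$ small enough that $3|\log\beta|$ dominates $\log(n/a) + 1$ uniformly --- this works because $\log(n/a)$ can be absorbed: the standard trick is that $\sum_{a\ge 1}\binom{n}{a}\beta^{3a} \le \sum_a (n e \beta^3 / a)^a$... but this does NOT converge for small $a$ unless we are more careful. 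The resolution, as in the expander proofs, is to note that a partition with $A$ very small simply cannot be an almost-cut at all: a degree-$\ge 3$ vertex all of whose $\ge 3$ edges go to $B$ is fine for the cut, so small $A$ is actually the \emph{easy} case for the adversary --- hence I should instead lower-bound the number of \emph{internal} edges differently. I expect the cleanest route is: condition on the internal structure only through the count, and observe that if all of $A$'s half-edges and all but $2\delta m$ of $B$'s go across, then in particular $k_A \ge m - 2\delta m$, forcing $a \ge (1-2\delta)m/\Delta \ge c' n$; so only linear-sized $A$ need to be considered, and there $\binom{n}{a}\le 2^n$ suffices against $\beta^{\Theta(n)}$.

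\medskip

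The main obstacle, and the step requiring the most care, is exactly this last point: making the union bound converge by correctly identifying that \emph{only balanced-ish cuts are dangerous}, so that the crude $\binom{n}{a}\le 2^n$ bound is affordable, and then checking that the combinatorial matching count indeed produces a base $\beta$ with $2 \cdot \beta^{c'} < 1$ for $\delta$ small enough depending only on $\Delta$. This is a routine but slightly delicate Stirling computation; since the lemma is stated with the proof deferred to Appendix~\ref{appendixSmallSets} and is described there as ``similar calculations as the proof that small sets in the two communities model have good expansion'', I would present it in that appendix, reusing the same counting lemmas (number of matchings, Stirling bounds) already set up for Lemma~\ref{lemma:SmallSetsBoundary}, and simply remark that the only change is that we count half-edges crossing the cut rather than half-edges escaping a small set, which if anything makes the estimate easier since we are forced into the linear regime.
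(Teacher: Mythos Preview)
Your approach is essentially the one the paper takes: a first-moment union bound over bipartitions, an exact count of matchings with a prescribed number of crossing edges, Stirling/entropy estimates, and the crucial observation that only partitions with $k_A$ close to $N/2$ can possibly be near-bipartite, so one is automatically in the linear regime. The paper parametrises sets by their half-edge count $d$ and uses the LYM inequality to bound the number of vertex sets with a given $d$ by $\binom{N/3}{d/3}$, whereas you parametrise by $|A|$ and use the cruder $\sum_a \binom{n}{a}\le 2^n$; both work because $n\le N/3$ (minimum degree $3$), which is exactly the factor that makes $2^{N/3}\cdot 2^{-N/2}$ summable.

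One correction to your intermediate sketch: the claim that the per-partition probability is of the form $(C\delta^c)^{k_A}$ is not right, and you should drop it. The probability that a balanced partition is \emph{exactly} bipartite is already $\asymp 2^{-N/2}$, so it certainly does not tend to $0$ as $\delta\to 0$; the base stays near $1/2$ per half-edge for all small $\delta$, and the $\delta$-dependence only enters through a sub-leading $e^{O(H(\delta))N}$ factor. What makes the union bound close is not that $\delta$ is small, but that $N\ge 3n$ turns $2^n\cdot 2^{-N/2}$ into $2^{-n/2}$; the smallness of $\delta$ is only needed to keep the $e^{O(H(\delta))N}$ correction from destroying this margin. Your final paragraph has this right, so just excise the earlier $(C\delta^c)^{k_A}$ heuristic and go directly to the balanced-cut reduction.
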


 \begin{lemma}\label{rmk:2ComNotBipartite} Let $G=(V,E)$ be the two communities model on $n$ vertices with $\alpha=\alpha_1+\alpha_2\leq {1}$. Then there exists a constant $\delta>0$ depending on $\alpha_1/\alpha_2$, such that the graph $G$ with high probability satisfies the following: for all sets $A\subset V$ and $B=A^c$ we have that \[\frac{|\{x\sim y, (x,y)\in A^2\cup B^2\}|}{\sum_{x\in V}\deg(x)}\ge \delta.\]
\end{lemma}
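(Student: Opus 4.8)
The statement to prove is Lemma~\ref{rmk:2ComNotBipartite}: for the two communities model with $\alpha\le 1$, with high probability every bipartition $(A,B=A^c)$ of the vertex set has a positive fraction of ``monochromatic'' edges, i.e.\ $|\{x\sim y:(x,y)\in A^2\cup B^2\}|\gtrsim N$. Equivalently, the graph is with high probability far from bipartite in a robust (edge) sense.

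\medskip

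\textbf{Approach.} The plan is to run a first-moment (union bound) argument over all $2^n$ bipartitions, showing that for a fixed bipartition the probability of having fewer than $\delta N$ monochromatic edges decays fast enough to beat $2^n$. First I would reduce to controlling, for each community $i\in\{1,2\}$ separately, the number of monochromatic \emph{internal} edges: the internal half-edges of community $i$ are matched by a uniform perfect matching on $N_i-p$ half-edges (those not chosen to be outgoing), and since $\alpha_i=p/N_i$ is bounded away from $1$ (this is where the dependence on $\alpha_1/\alpha_2$ and $\alpha\le 1$ enters, guaranteeing a constant fraction of internal half-edges), it suffices to prove the statement for a uniform perfect matching of $\asymp N_i$ half-edges attached to the degree sequence on $V_i$ with degrees in $[3,\Delta]$. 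So the core of the proof is the same computation that underlies Lemma~\ref{lemma:SmallSetsBoundary}/the small-sets expansion in the appendix, and indeed Lemma~\ref{lemma:ConfigurationModelNotBipartite} is exactly this statement for a single configuration model; I would first prove (or cite the appendix proof of) Lemma~\ref{lemma:ConfigurationModelNotBipartite} and then deduce Lemma~\ref{rmk:2ComNotBipartite} from it by applying it within each community to the restricted matching and noting that the (at most $\alpha N$, hence negligible if we choose $\delta$ small, or simply ignorable since they only help) outgoing edges do not spoil the bound.

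\medskip

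\textbf{Key steps, in order.} (1) Fix a bipartition $(A,B)$ and a community $i$. Condition on which $p$ of the $N_i$ half-edges are outgoing; this leaves $N_i-p = (1-\alpha_i)N_i \asymp N$ internal half-edges forming a uniform perfect matching. (2) Reveal the matching one edge at a time (the standard sequential pairing of the configuration model). At each step, given the current partial matching, the next half-edge $h$ has a uniformly chosen partner among the remaining unmatched half-edges; the conditional probability that this partner lies on the same side of $(A,B)$ as $h$'s endpoint is $\ge \frac{(\text{remaining half-edges on that side})-1}{(\text{total remaining})}$. Provided both $A$ and $B$ contain a constant fraction of the remaining half-edges — which is the only case we need to worry about, as otherwise one colour class has $o(N)$ half-edges and then nearly \emph{all} internal edges of that community's majority colour are monochromatic, giving the bound trivially — this conditional probability is bounded below by a constant $p_0=p_0(\Delta,\alpha_1/\alpha_2)$. (3) Hence the number of monochromatic internal edges in community $i$ stochastically dominates a sum of independent Bernoulli$(p_0)$ variables (after the standard martingale/stochastic-domination coupling for sequential matchings), so by a Chernoff bound the probability it is less than $\frac{p_0}{2}\cdot\frac{N_i-p}{2}$ is at most $e^{-cN}$ for a constant $c=c(\Delta,\alpha_1/\alpha_2)>0$. (4) Union bound: the probability that \emph{some} bipartition $(A,B)$ has fewer than $\delta N$ monochromatic edges (with $\delta$ chosen as a small constant multiple of $p_0$) is at most $2\cdot 2^n\cdot e^{-cN}$; since $N\ge 3n$ (minimal degree $3$), choosing $\delta$ small makes $c$ large enough that $2^{n}e^{-cN}=o(1)$. (5) Finally, translate ``monochromatic half-edge fraction'' into the stated inequality with $\sum_x\deg(x)=N$.

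\medskip

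\textbf{Main obstacle.} The one genuinely delicate point is step (2)–(3): making rigorous the claim that the count of monochromatic edges in a uniform perfect matching stochastically dominates a binomial, uniformly over the bipartition, \emph{and} handling the degenerate regime where one colour class is very small within a community. The clean way is to split into two cases for each community and each side: if $\min(|A\cap V_i|_{\deg},|B\cap V_i|_{\deg})\ge \eta N$ for a small constant $\eta$, then the lower bound $p_0$ on the step-wise conditional probability holds throughout (one must be slightly careful that it holds for \emph{most} of the $\asymp N/2$ steps, e.g.\ for the first $N/4$ of them, which is enough); if instead some colour class is smaller than $\eta N$ in half-edge volume, then since that community has $\asymp N$ internal half-edges and minimal degree $3$, a direct counting argument (a vertex all of whose $\ge 3$ half-edges are matched within the large class contributes monochromatic edges, and there are $\Omega(N)$ such vertices whp, or even deterministically on the event of step (3) applied to the large class alone) gives $\Omega(N)$ monochromatic edges. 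Since Lemma~\ref{lemma:ConfigurationModelNotBipartite} packages exactly this for one configuration model and is proved in Appendix~\ref{appendixSmallSets} by these calculations, I would state Lemma~\ref{rmk:2ComNotBipartite} as a corollary: apply Lemma~\ref{lemma:ConfigurationModelNotBipartite} to the configuration model on $V_i$ formed by the internal half-edges (valid since $(1-\alpha_i)$ is bounded below and degrees lie in $[3,\Delta]$), for $i=1,2$, and add up; the cross-community edges only add monochromatic edges or are discarded, so $\delta$ can be taken to be, say, half of the minimum of the two constants from Lemma~\ref{lemma:ConfigurationModelNotBipartite} times $\min_i(1-\alpha_i)$.
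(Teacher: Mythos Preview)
Your reduction to Lemma~\ref{lemma:ConfigurationModelNotBipartite} applied to the internal half-edges of each community does not go through, and this is a genuine gap rather than a technicality. You write that this is ``valid since $(1-\alpha_i)$ is bounded below and degrees lie in $[3,\Delta]$'', but the \emph{internal} degrees do not lie in $[3,\Delta]$: in the two-communities model the $p$ outgoing half-edges are chosen uniformly, so a vertex can have internal degree $0$, $1$ or $2$. More to the point, the first-moment argument behind Lemma~\ref{lemma:ConfigurationModelNotBipartite} needs the ratio of half-edges to vertices to exceed $2$ (this is what makes $2^{n}\cdot 2^{-M/2}\to 0$, or in the paper's entropy bookkeeping, what makes $\tfrac13 H(\tfrac12)<\tfrac12 H(\tfrac12)$). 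For the internal matching on community $i$ one has $M_i=(1-\alpha_i)N_i$ half-edges on $n_i\ge N_i/\Delta$ vertices; when degrees are $3$ and $\alpha_i\approx 1/2$ (which is allowed: with $N_1=N_2$ and $\alpha=1$ one has $\alpha_1=\alpha_2=1/2$), the ratio is only $3/2<2$ and the union bound blows up. Even combining the two internal matchings by independence you would need $(M_1+M_2)/2>n$, i.e.\ $N-2p>2n$, which again fails in this regime since $N-2p=N/2<2N/3\le 2n$ is possible. So ``discarding the cross-community edges'' is exactly what breaks the argument when $\alpha$ is not small.

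The paper's proof does not separate the communities. For the balanced case it carries out the first-moment computation over all \emph{three} matchings (internal in $V_1$, internal in $V_2$, and the outgoing matching) simultaneously, reusing the combinatorics of Lemma~\ref{lemma:SmallSetsBoundary}; the outgoing matching supplies precisely the extra $p$ edges' worth of entropy needed so that the exponent $(M_1+M_2+p)/2=N/2-p/2$ beats $n\le N/3$. If you want to salvage your Chernoff-style sketch, you must include the outgoing matching in step~(3): for a bipartition balanced in both communities the outgoing matching also has step-wise monochromatic probability $\approx 1/2$, and then the total monochromatic count dominates (essentially) $\mathrm{Bin}(N/2,1/2)$, which does beat $2^{n}$. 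The unbalanced case you handle is fine and matches the paper's Case~1.
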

 
\begin{proof}[Proof of Proposition~\ref{prop:TRel}]($t_{\rm rel}^*$ for both models) The lower bound on $t_{\rm rel}^*$ follows as we always have that $t_{\rm rel}^*\ge t_{\rm rel}$ and we established Proposition~\ref{prop:TRel} for $t_{\rm rel}$. We now upper bound $t_{\rm rel}^*$ for the $m$-communities model. We prove the bound by contradiction. We assume that $1+\lambda_n\le \varepsilon\alpha$ for a small~$\varepsilon$ to be determined.  From the bound on $t_{\rm rel}$ from Proposition~\ref{prop:TRel} we get that $1-\lambda_2\ge c \alpha$. This gives that the time $S$ from Theorem~\ref{thrm:JonathansPaperWithStudents} satisfies that 
\begin{equation}\label{eq:P(S>2)}\prcond{S>2}{G_n}{\pi}\geq \left(1-\frac{2\varepsilon}{c}\right)\beta^3\ge \left(1-\frac{2\varepsilon}{c}\right)\left(1-\varepsilon\right)^3. 
\end{equation}
Recalling that $N_i$ is the number of half edges emanating from community $i$, we claim that using Lemma~\ref{lemma:ConfigurationModelNotBipartite} we get that with high probability
\begin{equation}\label{eq:Shappenedat1}\prcond{S=1}{G_n}{\pi}\geq \frac{3}{\Delta}\cdot \delta\cdot \min_{i\le m}\frac{N_i}{N}.\end{equation}
Indeed,  we know that the graph $G_n$ restricted to community $i$ satisfies with high probability that any partition of its vertices into two sets, has at least $\delta$ proportion of edges fully inside one of these sets. Recalling that the internal degree is at least $3$, we get that there are at least $\frac{3}{\Delta}N_i$ half edges which are fully inside community $i$. Therefore for the partition $F_-$ and $F_+$ as in the statement of Theorem~\ref{thrm:JonathansPaperWithStudents}  we get that with high probability there at least $\delta\frac{3}{\Delta}N_i$ half-edges fully inside $F_-$ or~$F_+$. As the walk started from $\pi$ at the first step goes through an edge chosen uniformly at random, we get that with probability at least $\frac{3}{\Delta}\delta\min_{i\leq m}\frac{N_i}{N}$ this walk crossed an edge fully in~$F_+$ or~$F_-$, so \eqref{eq:Shappenedat1} holds. This gives that $\prcond{S>2}{G_n}{\pi}$ is bounded away from $1$ and therefore we obtain a contradiction with \eqref{eq:P(S>2)} by choosing $\varepsilon$ small enough and this concludes the proof.

The proof in the case of the two communities model follows in exactly the same way in the case of the $m$ communities model using Lemma~\ref{rmk:2ComNotBipartite} instead of Lemma~\ref{lemma:ConfigurationModelNotBipartite}.
\end{proof} 
 
\begin{proof}[Proof of Theorems~\ref{thrm:cutoffSimplem} and~~\ref{thrm:cutoffSimple2}] (simple random walk case)
The proof follows directly from Lemma~\ref{lem:cutofflazygivessimple}, as we have proved Proposition~\ref{prop:TRel} and  Theorems~\ref{thrm:cutoffSimplem} and~\ref{thrm:cutoffSimple2} for lazy simple random walk.
\end{proof}

We notice that  Lemma~\ref{lem:cutofflazygivessimple} also gives the following.

\begin{remark}\label{rmk:lazytwotimessimple} In the setups of Theorems~\ref{thrm:cutoffSimple2} and~\ref{thrm:cutoffSimplem}, let $t$ and $t_L$ be the $\frac{1}{4}$ mixing times of the simple and lazy simple random walk, respectively. Then there exists a constant $C$ depending on $\Delta$, such that  with high probability we have $|t_L-2t|\leq C\sqrt{\frac{ \log|V_k|}{\alpha_k}}.$
\end{remark}

\subsection{Comparison of the cutoff time for simple and non-backtracking random walk}
 \label{sec:comparison}
 
 For an undirected graph $G$ with edges $E$ the non-backtracking random walk is the walk on directed edges of $G$ which evolves according to the transition matrix $P$ defined as follows. For $\{x,y\}\in E$   \[ P((x,y),(z,t))=\frac{1}{\deg(y)-1}\mathds{1}(z=y, \{z,t\}\in E),
 \] where for $x,y\in G$ we write $(x,y)$ for a directed edges and $\{x,y\}$ for undirected and $\deg{x}$ for the degree of $x$. 
 In words, this is the simple random walk conditioned on not moving back to the vertex it just came from. 
 
 In~\cite{AnnasPaper} it was shown that under certain additional assumptions of the degree sequences the $\varepsilon$-mixing time of the non-backtracking random walk denoted $t_{\rm{NB}}(\varepsilon)$ satisfies 
 \[
 t_{\rm NB}(\varepsilon)=\frac{\log n}{\frac{1}{N}\sum_{x\in V_k}{\text{deg}(x)\log (\text{deg}(x)-1)}}+o(\log n).
 \]
  We will show that under certain conditions $t(\varepsilon)>\log n/\mathfrak{h}_X$ for $\mathfrak{h}_X$ which satisfies $$\mathfrak{h}_X<\frac{1}{N}\sum_{x\in V_k}{\text{deg}(x)\log (\text{deg}(x)-1)}$$ which gives the following proposition. 
 
 \begin{prop}\label{prop:nbrwmixesfaster} Consider the setting of Theorem~\ref{thrm:cutoffSimple2} for $\alpha_k\gg 1/\log|V_k|$. There is a small constant $c(\Delta)$ depending on the maximal degree $\Delta$ and $N_1/N_2$ such that if  $\alpha_k\le c(\Delta)$ for all large enough $k$ or if the average degree of vertices in the first community equals to the average degree of vertices in the second community then for all $\varepsilon\in(0,1) $ we have
\[
t_{\rm{mix}}(G_k,\varepsilon)>t_{\rm NB}(G_k,\varepsilon),
\] 
where $t_{\mathrm{NB}}$ stands for the mixing time of the non-backtracking random walk. 
 \end{prop}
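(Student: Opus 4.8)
\emph{Step 1: reduction to a comparison of entropy rates.} Recall from Theorem~\ref{thrm:cutoffSimple2} (in the regime $\alpha_k\gg 1/\log|V_k|$) together with Remark~\ref{rmk:lazytwotimessimple} that, for every $\varepsilon\in(0,1)$,
\[
t_{\mathrm{mix}}(G_k,\varepsilon)=\frac{\log|V_k|}{\mathfrak{h}_X}+o(\log|V_k|),\qquad \mathfrak{h}_X:=2\nu\mathfrak{h},
\]
where $\nu$ and $\mathfrak{h}$ are the speed (Lemma~\ref{lemma:Speed}) and the loop–erased entropy rate (Proposition~\ref{prop:LERWLimit}) of the lazy walk on the limiting two–type tree $T$; here $\mathfrak{h}_X$ is the Avez entropy of the \emph{non-lazy} simple random walk on $T$ (the lazy cutoff time is $\log|V_k|/(\nu\mathfrak{h})$ and the simple mixing time is half of it, up to $o(\log|V_k|)$). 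On the other side, by~\cite{AnnasPaper}, $t_{\rm NB}(G_k,\varepsilon)=\frac{\log|V_k|}{\bar\mu_k}+o(\log|V_k|)$ with $\bar\mu_k:=\frac1N\sum_{x\in V_k}\deg(x)\log(\deg(x)-1)$. Since $3\le\deg\le\Delta$, both $\mathfrak{h}_X$ and $\bar\mu_k$ lie in the fixed interval $[\log2,\log(\Delta-1)]$, so it suffices to produce a constant $c=c(\Delta)>0$ with $\mathfrak{h}_X\le\bar\mu_k-c$ for all large $k$; this yields $\log|V_k|/\mathfrak{h}_X>\log|V_k|/\bar\mu_k$ and hence $t_{\mathrm{mix}}>t_{\rm NB}$.

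\emph{Step 2: peeling off the speed.} Write $\mathfrak{h}_X=\nu_{\rm SRW}\cdot\mathfrak{h}^{\mathrm{le}}$, where $\nu_{\rm SRW}:=2\nu$ is the non-lazy speed and $\mathfrak{h}^{\mathrm{le}}:=\mathfrak{h}$ is the entropy rate \emph{per level} of the loop erasure $\xi$ (legitimate because $\xi$ is monotone on a tree, so its $k$-th edge sits at level $k$; the identity follows because at time $t$ the non-lazy walk sits at level $\nu_{\rm SRW}t+o(t)$ on the ray $\xi$, which $\xi$ reaches in $\nu_{\rm SRW}t$ of its own steps, so $-\log\mu_t^T(X_t)=\mathfrak{h}^{\mathrm{le}}\nu_{\rm SRW}t+o(t)$). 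Because every vertex has degree at most $\Delta$, at each step the walk moves toward the root with probability at least $1/\Delta$ regardless of the past, so $\nu_{\rm SRW}\le 1-2/\Delta$. Hence it is enough to prove $\mathfrak{h}^{\mathrm{le}}\le\bar\mu_k$ (which gives $\mathfrak{h}_X\le(1-2/\Delta)\bar\mu_k\le\bar\mu_k-\tfrac2\Delta\log2$), and in the small-$\alpha$ regime it will be enough to prove the weaker $\mathfrak{h}^{\mathrm{le}}\le\bar\mu_k+C(\Delta)\alpha$.

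\emph{Step 3: the entropy comparison (the crux).} Both $\xi$ and the non-backtracking walk ascend $T$ one level per step; at a vertex of degree $d$ the non-backtracking walk chooses the next child uniformly among its $d-1$ forward children (entropy $\log(d-1)$), while $\xi$ chooses it according to the law induced by the escape probabilities of the children's subtrees, whose entropy is at most $\log(d-1)$. Using the ergodic identification of $\mathfrak{h}$ from the proof of Proposition~\ref{prop:LERWLimit} (via the stationary types chain $\Sigma$, together with the environment-seen-from-the-particle, which by Theorem~\ref{claim:StationaryMGW} is stationary, with the current vertex lying in community $i$ with probability $\pi_Q(i)\propto N_i$ and with size-biased degree inside community $i$), one obtains $\mathfrak{h}^{\mathrm{le}}\le \mathbb{E}_{\mathrm{stat}}[\log(\deg-1)]$. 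The delicate issue is that the loop erasure re-biases the environment — high-degree subtrees are entered with larger escape probability — so $\mathbb{E}_{\mathrm{stat}}[\log(\deg-1)]$ is not literally $\bar\mu_k$, and one must exploit that the child-choice is strictly sub-uniform by an amount compensating this re-biasing. Under the hypothesis $\bar d_1=\bar d_2$ the community weights $\pi_Q(i)$ coincide with the vertex fractions $n_i/n$ and the two communities enter symmetrically, reducing the comparison to the single configuration-model statement; under the hypothesis $\alpha_k\le c(\Delta)$ one couples the two-type tree with two decoupled single-type Galton–Watson trees, the cross-edges having density $O(\alpha)$, so $\nu_{\rm SRW}$, $\mathfrak{h}^{\mathrm{le}}$ and the stationary weighting each differ by $O(\alpha)$ from their configuration-model values, and for the configuration model the strict inequality $\mathfrak{h}^{\mathrm{le}}_{\rm CM}\le\bar\mu_{\rm CM}$ (indeed $\mathfrak{h}_X^{\rm CM}<\bar\mu_{\rm CM}$, i.e.\ non-backtracking mixes strictly faster than simple, cf.~\cite{RWonRG} and~\cite{NBRWvsSRW}) is available; this gives $\mathfrak{h}^{\mathrm{le}}\le\bar\mu_k+C(\Delta)\alpha$.

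\emph{Step 4: conclusion and the main obstacle.} Combining Steps 2 and 3: if $\bar d_1=\bar d_2$ then $\mathfrak{h}_X\le(1-2/\Delta)\bar\mu_k\le\bar\mu_k-\tfrac2\Delta\log2$, while if $\alpha_k\le c(\Delta)$ with $c(\Delta)$ small enough then $\mathfrak{h}_X\le(1-2/\Delta)(\bar\mu_k+C(\Delta)\alpha)\le\bar\mu_k-\tfrac1\Delta\log2$. In either case $\mathfrak{h}_X\le\bar\mu_k-c(\Delta)$, so by Step 1, $t_{\mathrm{mix}}(G_k,\varepsilon)>t_{\rm NB}(G_k,\varepsilon)$ for all large $k$ and all $\varepsilon\in(0,1)$. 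The heart of the argument is Step 3: quantitatively controlling the stationary environment seen along the loop-erased (equivalently, simple) random walk on the two-type tree and relating it precisely enough to the size-biased, $\pi_Q$-weighted degree law defining $\bar\mu_k$. This is exactly the step that forces the two hypotheses — without them the simple walk could in principle concentrate its exploration on a high-degree community strongly enough to offset the speed handicap $\nu_{\rm SRW}<1$ — and I expect it to be the main obstacle, while Steps 1, 2 and 4 are essentially bookkeeping on top of Theorem~\ref{thrm:cutoffSimple2}, Remark~\ref{rmk:lazytwotimessimple} and~\cite{AnnasPaper}.
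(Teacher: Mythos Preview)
Your reduction in Step~1 and the factorisation $\mathfrak{h}_X=\nu_{\rm SRW}\cdot\mathfrak{h}^{\rm le}$ in Step~2 are both correct and match the paper's set-up (this is precisely Lemma~\ref{lemma:EntropyInTermsOfWalk}). The crude speed bound $\nu_{\rm SRW}\le 1-2/\Delta$ is also fine. The problem is entirely in Step~3: the inequality $\mathfrak{h}^{\rm le}\le\bar\mu_k$ is never proved, and your own text concedes the obstacle (``the loop erasure re-biases the environment\dots one must exploit that the child-choice is strictly sub-uniform by an amount compensating this re-biasing'') without resolving it. Concretely, bounding the per-step entropy of the loop erasure by $\log(d-1)$ only yields $\mathfrak{h}^{\rm le}\le\mathbb{E}_{\rm harm}[\log(d-1)]$, where the expectation is with respect to the degree distribution \emph{along the harmonic ray}; this distribution is tilted toward high degrees relative to the size-biased law $\pi$, so there is no reason for $\mathbb{E}_{\rm harm}[\log(d-1)]\le\mathbb{E}_\pi[\log(d-1)]=\bar\mu_k$. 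Your appeal to~\cite{NBRWvsSRW} for the configuration-model case is also misdirected: what is proved there is $\mathfrak{h}_X^{\rm CM}<\bar\mu_{\rm CM}$ (Avez entropy of the \emph{simple} walk), not $\mathfrak{h}^{\rm le}_{\rm CM}\le\bar\mu_{\rm CM}$, which is a genuinely different and harder statement.

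The paper does not attempt to bound $\mathfrak{h}^{\rm le}$ at all. Instead it works directly with $\mathfrak{h}_X$ via the entropy increments $h_t=\mathbb{E}[H(X_t\mid T)]$: using the stationarity of the environment seen from the walker (Theorem~\ref{claim:StationaryMGW}), one has $\mathfrak{h}_X=\lim_t h_t/t\le (h_3-h_1)/2$ by the argument of~\cite[Claim~3.1]{NBRWvsSRW}, and the bulk of the work (Appendix~\ref{appendixNBRW}) is an explicit computation of $\mathbb{E}[R_3]$ and a Jensen bound on $\mathbb{E}[R_1]$ showing $h_3-h_1\le 2\mathfrak{h}_Y-c(\Delta)$. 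The hypothesis of equal average degree enters not through any symmetry reduction but through the identity $\mathbb{E}_{\pi_1}[1/d]=n_1/N_1=n_2/N_2=\mathbb{E}_{\pi_2}[1/d]$, which decouples a cross term in the $R_3$ computation; the small-$\alpha$ hypothesis enters as a genuine perturbation of the one-community $h_3-h_1$ bound, with explicit $O(\alpha)$ error terms. Your strategy of separating speed from per-level entropy is conceptually appealing but appears to trade the paper's long-but-elementary calculation for an inequality that is neither available in the literature nor supplied here.
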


To establish the above proposition, we first need the following lemma. 
 
 \begin{lemma}\label{lemma:EntropyInTermsOfWalk} Let  $X$ and $\widetilde{X}$ be two independent lazy simple random walks on $T$ started from the root. Let $\mathfrak{h}$ and $\nu$ be constants from Proposition \ref{prop:LERWLimit} and Lemma~\ref{lemma:Speed}, respectively. We have that almost surely 
\[
                -\frac{1}{k} \log \prcond{X_k=\widetilde{X}_k}{X,T}{} \to \nu\mathfrak{h}, \text{ as } k \to \infty.
\]
\end{lemma}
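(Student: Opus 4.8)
The first observation is that, since $\widetilde X$ is independent of $X$ and has the same conditional law given $T$, we have $\prcond{X_k=\widetilde X_k}{X,T}{}=\mu_k^T(X_k)$, where $\mu_k^T(\cdot):=\prcond{\widetilde X_k=\cdot}{T}{}$ is the law of the walk at time $k$ given $T$. Thus the claim is a Shannon--McMillan--Breiman-type equipartition statement for the walk, and the plan is to derive it from Proposition~\ref{prop:LERWLimit}, which is exactly the analogous statement for the loop-erased walk: on the tree $T$ the loop erasure $\xi$ is a monotone ray with $\xi_j$ at depth $j$, an independent copy $\widetilde\xi$ has the same law, and $\prcond{\xi_j\in\widetilde\xi}{T,\xi}{}$ is nothing but the law of $\widetilde\xi_j$ evaluated at $\xi_j$; so Proposition~\ref{prop:LERWLimit} says that a.s.\ $-\tfrac1j\log\prcond{\xi_j\in\widetilde\xi}{T,\xi}{}\to\mathfrak h$. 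I would establish $\liminf_k(-\tfrac1k\log\mu_k^T(X_k))\ge\nu\mathfrak h$ and $\limsup_k(-\tfrac1k\log\mu_k^T(X_k))\le\nu\mathfrak h$ separately.

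For the lower bound, write $v=X_k$ and $D_k=d(\rho,v)$. If $\widetilde X_k=v$ then $\widetilde X$ visits $v$, and by uniform transience (Lemma~\ref{lemma:ReturnProb}) the walk escapes inside the subtree $T(v)$ after its first visit to $v$ with probability bounded below by a constant; hence $\mu_k^T(v)\le\prcond{\widetilde X\text{ visits }v}{T}{}\lesssim\prcond{e_v\in\widetilde\xi}{T}{}$, where $e_v$ is the edge into $v$. Letting $\ell_k$ be the last depth at which the geodesic $\rho\to v$ agrees with $X$'s own loop-erased backbone $\xi$, the inclusion $\{e_v\in\widetilde\xi\}\subseteq\{\xi_{\ell_k}\in\widetilde\xi\}$ gives $\mu_k^T(v)\lesssim\prcond{\xi_{\ell_k}\in\widetilde\xi}{T,\xi}{}$. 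The crucial input is that the overhang $M_k:=D_k-\ell_k$ has exponentially decaying tails uniformly in $k$: the event $\{M_k>L\}$ forces $X$ to return, after time $k$, to the ancestor of $X_k$ that lies $L$ levels higher, an event of probability at most $(1-c)^{L}$ by iterating Lemma~\ref{lemma:ReturnProb}; Borel--Cantelli then yields $M_k/k\to0$ a.s. Combining this with Lemma~\ref{lemma:Speed} ($D_k/k\to\nu$) and Proposition~\ref{prop:LERWLimit} applied along the (a.s.\ divergent) sequence $\ell_k$ gives $-\tfrac1k\log\mu_k^T(X_k)\ge\tfrac{\ell_k}{k}\big(-\tfrac1{\ell_k}\log\prcond{\xi_{\ell_k}\in\widetilde\xi}{T,\xi}{}\big)+o(1)\to\nu\mathfrak h$ a.s.

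The upper bound is the main obstacle, and I would prove it via a typical-set argument rather than by steering $\widetilde X$ directly to $v$ (the latter fails: a walk conditioned to visit the fixed vertex $v$ reaches it too early, and the remaining return probability is exponentially costly, giving the wrong rate). Fix $\varepsilon>0$. For each depth $d$ let $\mathcal R_d$ be the set of depth-$d$ edges $e$ with $\prcond{\widetilde\xi_d=e}{T}{}\ge e^{-\mathfrak h d(1+\varepsilon)}$; since these probabilities sum to at most $1$, $|\mathcal R_d|\le e^{\mathfrak h d(1+\varepsilon)}$ deterministically given $T$. Set $\mathcal S_k=\bigcup_{d}\bigcup_{e\in\mathcal R_d}\{w:d(w,w_e)\le 2\varepsilon k\}$, the union being over $d\in[\nu k(1-\varepsilon),\nu k(1+\varepsilon)]$, with $w_e$ the deep endpoint of $e$, so that $\log|\mathcal S_k|\le\nu\mathfrak h k+\varepsilon'k$ for all large $k$ with $\varepsilon'\to0$ as $\varepsilon\to0$. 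By Lemma~\ref{lemma:Speed}, the a.s.\ bound $M_k\le\varepsilon k$ eventually from the previous paragraph, and the a.s.\ statement of Proposition~\ref{prop:LERWLimit} (which gives $\xi_{D_k}\in\mathcal R_{D_k}$ for all large $k$), one checks that $X_k\in\mathcal S_k$ for all large $k$ a.s., using $d(X_k,w_{\xi_{D_k}})=2M_k$. Finally, for $\delta>0$ pick $\varepsilon$ with $\varepsilon'<\delta/2$: then $\prcond{X_k\in\mathcal S_k,\ \mu_k^T(X_k)<e^{-(\nu\mathfrak h+\delta)k}}{T}{}\le|\mathcal S_k|e^{-(\nu\mathfrak h+\delta)k}\le e^{-\delta k/2}$, which is summable, so by Borel--Cantelli together with ``$X_k\in\mathcal S_k$ eventually'' we get $\mu_k^T(X_k)\ge e^{-(\nu\mathfrak h+\delta)k}$ for all large $k$ a.s.; letting $\delta\downarrow0$ along the rationals completes the proof.

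The heart of the matter is thus transferring the exponential-scale equipartition of the loop-erased walk (Proposition~\ref{prop:LERWLimit}) to the walk itself: the geodesic/speed correspondence converts depth $\nu k$ into time $k$, while uniform transience (Lemma~\ref{lemma:ReturnProb}) controls how far the walk's position at time $k$ can stray from its loop-erased backbone, and these two facts together confine $X_k$ to a set of the right exponential size $e^{\nu\mathfrak h k(1+o(1))}$, after which a union bound closes the argument.
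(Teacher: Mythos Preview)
Your argument is correct and follows the same overall strategy as the paper: transfer the almost-sure equipartition for the loop erasure (Proposition~\ref{prop:LERWLimit}) to the walk itself, using uniform transience to show that $X_k$ strays only $o(k)$ from the loop-erased ray, and conclude the hard (upper-bound) direction by a Borel--Cantelli argument on a bad set. The executions differ in two respects. First, to bridge between $X_k$ and the ray $\xi$, the paper works in \emph{time}: it shows $k-\ell(k)=o(k)$ where $\ell(k)=\max\{\tau_i^+:\tau_i^+\le k\}$, and uses a path-extension trick to deduce that $-\tfrac{1}{k}\log\prcond{X_k\in\widetilde X}{X,T}{}\to\nu\mathfrak h$ directly; you work in \emph{depth} via the overhang $M_k=D_k-\ell_k$ and compensate for $X_k\ne\xi_{D_k}$ by enlarging the typical set with a ball of radius $2\varepsilon k$. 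Second, for the Borel--Cantelli step the paper compares $\mu_k^T(\cdot)$ to the loop-erasure hitting measure $\prcond{\cdot\in\widetilde\xi}{T}{}$ and exploits that the latter sums to at most $k+1$ over edges of level $\le k$; your version instead bounds the \emph{cardinality} of the typical set and uses a union bound. The paper's comparison is slightly slicker (no explicit counting), while your approach is more transparently in the Shannon--McMillan--Breiman mould; both buy the same conclusion.
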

We prove this lemma in Appendix~\ref{AppendixEntropy}. 

\begin{proof}[Proof of Proposition~\ref{prop:nbrwmixesfaster}] From the proof of Theorem~\ref{thrm:Cutoff} and Remark~\ref{rmk:lazytwotimessimple} we know that the mixing time of the simple random walk on the two community model is up to smaller order terms equal to $\frac{\log n}{2 \nu \mathfrak{h}},$ where~$\mathfrak{h}$ and~$\nu$ are the constants from Proposition \ref{prop:LERWLimit} and Lemma~\ref{lemma:Speed}, respectively. As it is easy to check that the entropy of the simple random walk is  twice the one of the lazy walk, using Lemma~\ref{lemma:EntropyInTermsOfWalk} we have that almost surely 
 \[\frac{\log n}{2 \nu \mathfrak{h}}=\frac{\log n}{-\lim_{k\to \infty}\frac{1}{k} \log \prcond{X_k=\widetilde{X}_k}{X,T}{}}, \]where $X$ and $\widetilde{X}$ are two independent simple random walks on T.  Therefore, in order to prove the statement, it is enough to compare the entropy of the simple random walk on $T$ with 
 \[
 \sum_{x\in V}\frac{1}{N}\text{deg}(x)\log(\text{deg}(x)-1),
 \] which is easily seen to be equal to the entropy of the non-backtracking random walk on~$T$, denoted by $\mathfrak{h}_Y$. We present the calculations comparing $\mathfrak{h}_X$ and $\mathfrak{h}_Y$ in Appendix~\ref{appendixNBRW}. 
 \end{proof}

{\bf Acknowledgements} We are grateful to Anna Ben-Hamou for useful discussions. We are very grateful to the referees for a careful reading and many insightful comments.
Jonathan Hermon's research was supported by an NSERC grant. An\dj ela \v{S}arkovi\'c was supported by DPMMS EPSRC International Doctoral Scholarship and by the Trinity College Internal Graduate Studentship.

\begin{appendix}

\section{Ergodic theory on multi-type trees} \label{appendixA}
{
\proof[Proof of Theorem~\ref{claim:StationaryMGW}]  We follow the proof from \cite{GWTSpeedHarmMeasure}. 
For Borel sets $A$ and $B$ of trees we let $$\widehat{p}_{\text{SRW}}(A,B)=\int_Ap_{\text{SRW}}(T,B)d\text{MGW}_{\pi_Q}(T).$$ We need to show that $\widehat{p}_{\text{SRW}}(A,B)=\widehat{p}_{\text{SRW}}(B,A).$ For two disjoint rooted  multi type  trees we define $[T_1,T_2]$ to be the tree rooted at  the root of $T_1$ obtained by joining the roots of $T_1$ and $T_2$ by an edge. We extend this operation to sets $C,D$ of rooted multi type trees by letting $$[C,D]=\{[T_1,T_2]: T_1\in C, T_2\in D\}.$$ 
It is enough to show that $\widehat{p}_{\text{SRW}}(A,B)=\widehat{p}_{\text{SRW}}(B,A),$ for sets of form $A=[C,D],$ $B=[D,C]$ where $C,D$ are disjoint sets of trees with fixed types of root and first level, as such sets generate the $\sigma$-algebra up to sets of measure $0.$  

For trees $T_1,\ldots, T_{d}$ we let $\bigvee_{i=1}^{d}T_i$ be the tree rooted at some new vertex $v$ obtained by joining the roots of the trees $T_1,\ldots, T_{d}$ to $v$. We extend these definitions to the sets of trees $C_1,\ldots, C_{d}$ by $\bigvee_{i=1}^{d}C_i=\{\bigvee_{i=1}^{d}T_i: T_i\in C_i\}.$

We let the type of  the root of all of the trees in $C$ be $\theta_1$ and the type of the root of all of the trees in $D$ be $\theta_2.$ We also let $d_1^{\text{out}}$ and $d_1^{\text{int}}$ be the number of outgoing and internal offspring of the root of all of the trees in $C$ and $d_2^{\text{out}},$ $d_2^{\text{int}}$ be the number of outgoing and internal offspring of the root of all of the trees in $D$.  Set $d_1=d_1^{\text{out}}+d_1^{\text{int}}$ and $d_2=d_2^{\text{out}}+d_2^{\text{int}}$. Then we can further assume that $$C=\bigvee_{s=1}^{d_1} C_s, \,\,\, D=\bigvee_{s=1}^{d_2}D_s,$$ and that set $D$ is also disjoint from all $C_1,\ldots C_{d_1}$ and that $C$ is disjoint from $D_1,\ldots, D_{d_2}$, and that for all $i$ the types of roots of all trees in $C_i$ and $D_i$ are fixed, as these sets will also generate the $\sigma$-algebra up to sets of measure zero. 

We define the conditional multi type \random measure $\text{CMGW}_{i,j}$ on $\mathcal{T},$ to be the law of the tree obtained by taking an offspring $v$ of the root of the tree generated according to $\text{MGW}(i)$ and all of its descendants and conditioning on the type of $v$ being $j.$ In other words, if we have a vertex of type $i$ in the multi type \random tree, with an offspring of type $j$, then the tree obtained by taking this offspring and all of its descendants 
has the law of $\text{CMGW}_{i,j}$. 

Label by $\theta_1$ and $\theta_2$ the types of the roots of trees in $A$ and $B$ respectively, and we also let $\Theta(C_j)$ and $\Theta(D_j)$ be the types of the roots of all trees in $C_j$ and $D_j$ respectively. 
Set $\widehat{V}$ to be the subset of vertices  of $V$ of type $\theta_1$ with the same number of outgoing and internal edges as the roots of trees in $A$. Setting $p_{i,j}=\frac{E_{i,j}}{\sum_{k\ne i}E_{i,k}}$ and $p_{i,i}=1$, we get
  \begin{align*}\text{MGW}_{\pi_Q}(A)=&\text{MGW}_{\pi_Q}([C,D])\\=&\pi_Q(\theta_1) \frac{\sum_{u\in \widehat{V}}|\deg(u)|}{\sum_{u\in V:\Theta(u)=\theta_1}|\deg(u)|}(d_1^{\text{out}}+\mathds{1}(\theta_1\neq \theta_2))!(d_1^{\text{int}}+\mathds{1}(\theta_1=\theta_2))!\\&\cdot p_{\theta_1,\theta_2}\text{CMGW}_{\theta_1,\theta_2}(D) \prod_{s=1}^{d_1} p_{\theta_1,\Theta(C_s)}\text{CMGW}_{\theta_1,\Theta(C_s)}(C_s)  \\=&\pi_{Q}(\theta_1)\frac{(d_1+1)|\widehat{V}|}{\sum_{j=1}^mE_{\theta_1,j}} (d_1^{\text{out}}+\mathds{1}(\theta_1\neq \theta_2))!(d_1^{\text{int}}+\mathds{1}(\theta_1=\theta_2))!\\&\cdot p_{\theta_1,\theta_2}\text{CMGW}_{\theta_1,\theta_2}(D) \prod_{s=1}^{d_1} p_{\theta_1,\Theta(C_s)}\text{CMGW}_{\theta_1,\Theta(C_s)}(C_s). 
 \end{align*}
 The above expression holds as the first two terms represent the probability to choose the root of the type we want and the first layer of the tree with the wanted number of internal and outgoing edges. The factorial terms represent the number of ways to reorder the trees from the sets $C_1,\ldots C_{d_1}, D$ (they are all disjoint) and then these trees as well as the exact type of their roots in the outgoing case are sampled according to the $\text{CMGW}$ law with the appropriate type parameters.
 
 We can now notice that if $\theta_1\ne \theta_2$ $(d_1^{\text{out}}+1)|\widehat{V}|=\left|\left\{u\in \widehat{V}, v\in V: u\sim v,\Theta(v)\neq \theta_1\right\}\right|,$ as for each $u\in \widehat{V}$ there are $d_1^{\text{out}}+1$ choices for a neighbour $v$ of type which is not $\theta_1.$ We can now notice that $\frac{|\left\{u\in \widehat{V}, v\in V: u\sim v,\Theta(v)\neq \theta_1\right\}|}{\sum_{j\ne \theta_1}E_{\theta_1,j}}$ gives the probability of the number of offspring in the first level in $\text{CMGW}_{\theta_2,\theta_1}$ which are outgoing and internal  being $d_1^{\text{out}}$ and $d_1^{\text{int}},$ respectively. This gives that \begin{align*}&\text{CMGW}_{\theta_2,\theta_1}(C)\\&=\frac{\left|\left\{u\in \widehat{V}, v\in V: u\sim v,\Theta(v)\neq\theta_1\right\}\right|}{\sum_{j\ne \theta_1}E_{\theta_1,j}}d_1^{\text{int}}!d_1^{\text{out}}!\prod_{s=1}^{d_1}p_{\theta_1,\Theta(C_s)}\text{CMGW}_{\theta_1,\Theta(C_s)}(C_s).\end{align*}
 Therefore when $\theta_1\ne \theta_2$, 
 \begin{align*}&\text{MGW}_{\pi_Q}(A)\\&=\pi_{Q}(\theta_1)\frac{(d_1+1)\sum_{j\ne \theta_1}E_{\theta_1,j}}{(d_1^{\text{out}}+1)\sum_{j=1}^m E_{\theta_1,j}}(d_1^{\text{out}}+1)p_{\theta_1,\theta_2}\text{CMGW}_{\theta_1,\theta_2}(D)\text{CMGW}_{\theta_2,\theta_1}(C)  
\\& = \frac{\sum_{j=1}^m E_{\theta_1,j}}{\sum_{u\in V}|\deg(u)|}\frac{(d_1+1)E_{\theta_1,\theta_2}}{\sum_{j=1}^m E_{\theta_1,j}}\text{CMGW}_{\theta_1,\theta_2}(D)\text{CMGW}_{\theta_2,\theta_1}(C) \\&= \frac{E_{\theta_1,\theta_2}}{\sum_{u\in V}|\deg(u)|}(d_1+1)\text{CMGW}_{\theta_1,\theta_2}(D)\text{CMGW}_{\theta_2,\theta_1}(C). \end{align*} 
Similarly, when $\theta_1=\theta_2$ we get that
 \begin{align*}\text{MGW}_{\pi_Q}(A)&=\pi_{Q}(\theta_1)\frac{(d_1+1)E_{\theta_1,\theta_1}}{(d_1^{\text{int}}+1)\sum_{j=1}^m E_{\theta_1,j}}(d_1^{\text{int}}+1)\text{CMGW}_{\theta_1,\theta_2}(D)\text{CMGW}_{\theta_2,\theta_1}(C)  \\&= \frac{E_{\theta_1,\theta_2}}{\sum_{u\in V}|\deg(u)|}(d_1+1)\text{CMGW}_{\theta_1,\theta_2}(D)\text{CMGW}_{\theta_2,\theta_1}(C). \end{align*}

Given that the sets of trees we considered above are disjoint the probability to move from a tree in $A$ to a tree in $B$ is only possible if the root moves to the root of a tree from the set $D$ rather than $C_1,\ldots C_{d_1}$ and this then has probability $\frac{1}{d_1+1}$ giving that 
 $$\widehat{p}_{\text{SRW}}(A,B)=\frac{E_{\theta_1,\theta_2}}{\sum_{u\in V}|\deg(u)|}\text{CMGW}_{\theta_1,\theta_2}(D)\text{CMGW}_{\theta_2,\theta_1}(C). $$ As this expression is symmetric the result follows. 
\qed}
\begin{remark} 
\rm{\label{rmk:stationarity2types} The proof of Theorem~\ref{claim:StationaryMGW} also works (and is simpler) when the tree is generated according to the two type random tree from Definition \ref{def:2TGWTree}.}
\end{remark}

\begin{proof}[Proof of Lemma~\ref{lemma:Speed}]The first two claims follow easily from Lemma \ref{lemma:ConcentrationPhi}. For the last claim, let $C$ be such that the first probability bound holds. Then 
\begin{align*} &\pr{\sup_{s:s\le t}d\left(\rho,X_s\right)>\nu t+2C\left(\sqrt{\frac{t}{\alpha}}+\frac{1}{\alpha}\right)}
\\ &\le \pr{d\left(\rho,X_t\right)<\nu t+C\left(\sqrt{\frac{t}{\alpha}}+\frac{1}{\alpha}\right),\sup_{s:s\le t}d\left(\rho,X_s\right)>\nu t+2C\left(\sqrt{\frac{t}{\alpha}}+\frac{1}{\alpha}\right)}+\varepsilon
\\ &\le \sum_{s:s\le t}\pr{d\left(\rho,X_t\right)<\nu t+C\left(\sqrt{\frac{t}{\alpha}}+\frac{1}{\alpha}\right),d\left(\rho,X_s\right)>\nu t+2C\left(\sqrt{\frac{t}{\alpha}}+\frac{1}{\alpha}\right)} +\varepsilon
\\ &\le t\cdot e^{-c\left(\sqrt{\frac{t}{\alpha}}+\frac{1}{\alpha}\right)} +\varepsilon,\end{align*} 
for a positive constant $c$, where for the last bound we used Lemma~\ref{lemma:ReturnProb}. 
\end{proof}

\begin{proof}[Proof of Lemma~\ref{lemma:EandVar}] The bounds~\eqref{eq:MomentSigma0} and~\eqref{eq:MomentY} follow in exactly the same was as the bounds on the moments of $(Y_i)$ in~\cite[Lemma~3.14]{PerlasPaper} with the only difference being that we need to condition on the type. More precisely, in our setting we have that for $\theta \in \left\{0,1\right\}$, $Y_k$ conditioned on $\Theta\left(X_{\sigma_{k-1}}\right)=\theta$  and $Y_\ell $ conditioned on $\Theta\left(X_{\sigma_{\ell  -1}}\right)=\theta,$ have the same distribution. Also $Y_k$ is independent of $\mathcal{B}_K\left(\rho\right)$ conditioned on the type of the vertex $X_{\sigma_1}$.

We only need to prove the variance bound~\eqref{eq:VarY}. We will show that there is a positive constant $c$ such that for $i<j<k$ 
\begin{equation}\label{eq:CovBigi+j}\econd{\left(Y_i-\estart{Y_2}{\pi_{\Sigma}}\right)\left(Y_j-\estart{Y_2}{\pi_{\Sigma}}\right)}{\mathcal{B}_{K}(\rho)=T_0}\lesssim 2^{-c\alpha(j-i)}+e^{-c(j-i)}\end{equation}
The result then follows, as given that all moments of $Y_i$ are bounded \begin{align*}&\econd{\left(\sum_{i=1}^k\left(Y_i-\estart{Y_2}{\pi_{\Sigma}}\right)\right)^2}{ \mathcal{B}_K\left(\rho\right)=T_0}{}&\\&\lesssim k+\sum_{i\neq j\le k}\econd{\left(Y_i-\estart{Y_2}{\pi_{\Sigma}}\right)\left(Y_j-\estart{Y_2}{\pi_{\Sigma}}\right)}{\mathcal{B}_K\left(\rho\right)=T_0}{} \\&\lesssim k +\sum_{i< j\le k}\left(2^{-c\alpha(j-i)}+e^{-c(j-i)}\right) \lesssim \frac{k}{\alpha}.\end{align*}

 We now turn to proving \eqref{eq:CovBigi+j}. We first recall some definitions from~\cite[Lemma~3.14]{PerlasPaper}. For each $i$ we let ${\xi}\left(i\right)$ be a loop erased random walk on $T\left(X_{\sigma_{i-1}}\right)$ started from the root $X_{\sigma_{i-1}}$ 
 and we take it to be independent from $X$.
 For each $i$ let $X^i$ be the walk that generates the loop erased path $\xi\left(i\right)$. Now for $i<j$, let $\xi\left(i,j\right)$ be the loop erased path obtained from the path $X^i$ when we run it until the first time that $X^i$ reaches the level of $X_{\sigma_{\lfloor\frac{i+j}{2}\rfloor}}$. Set 
 \begin{align*} & Z_i=\prcond{X_{\sigma_i}\in \xi\left(i\right)}{T\left(X_{\sigma_{i-1}}\right), X}{}, \,\, \\& Z_{i,j}=\prcond{X_{\sigma_i}\in\xi\left(i,j\right)}{T\left(X_{\sigma_{i-1}}\right),X}{}\text{  and   } Y_{i,j}=-\log Z_{i,j}. 
 \end{align*}
 Let $A\left(i,j\right)$ be the event that $X^i$ returns to $X_{\sigma_i}$ after reaching the level of $X_{\sigma_{\lfloor\frac{i+j}{2}\rfloor}}$ for the first time. Then as in~\cite{PerlasPaper}
\[|Z_i-Z_{i,j}|\leq \prcond{A\left(i,j\right)}{ T\left(X_{\sigma_{i-1}}\right),X}{}. 
\] 
Using Lemma \ref{lemma:ReturnProb} gives that there exists a constant $c$ so that 
\[\prcond{A\left(i,j\right)}{T\left(X_{\sigma_{i-1}}\right),X}{}\leq e^{-c\left(j-i\right)}. \] 
Using that $|\log x-\log y|\leq \frac{|x-y|}{x\wedge y}$ gives 
\[|Y_{i,j}-Y_{i}|=|\log Z_{i,j}-\log Z_{i}|\leq \frac{|Z_{i,j}-Z_i|}{|Z_{i,j}\wedge Z_i|}. 
\]
 Let $B\left(i,j\right)=\left\{d\left(X_{\sigma_i},X_{\sigma_{i-1}}\right)\leq \lfloor\frac{j-i}{C} \rfloor \right\}$ for some large positive constant $C$. On $B\left(i,j\right)$ lower bounding the probability that $X^i$ visits $X_{\sigma_i}$ for the first time without backtracking and then escapes we get  $$Z_{i,j}\wedge Z_i \geq c\left(2\Delta\right)^{-\frac{\left(j-i\right)}{C}},$$ where $c$ is the positive constant from Lemma \ref{lemma:ReturnProb} and $\Delta$ is the maximum degree. 
So for $C$ sufficiently large and some positive constant $c''$ 
\[ |Y_{i,j}-Y_i|\mathds{1}\left(B\left(i,j\right)\right)\leq e^{-c''\left(j-i\right)}. 
\]
Using the exponential tails of $\phi_i-\phi_{i-1}$ from Lemma \ref{lemma:RegIndep}, we get that for a positive constant~$c_1$ 
\[\prstart{B\left(i,j\right)^c}{T_0}\leq e^{-c_1\lfloor\frac{j-i}{C}\rfloor}.
\]  
Using Cauchy-Schwartz, \eqref{eq:MomentY} and the bound above we get that for $j>i$ 
\begin{align*}
&\estart{\left(Y_i-\estart{Y_2}{\pi_{\Sigma}}\right)\left(Y_j-\estart{Y_2}{\pi_{\Sigma}}\right)}{T_0} \\&\lesssim \estart{\left(Y_i-\estart{Y_2}{\pi_{\Sigma}}\right)\left(Y_j-\estart{Y_2}{\pi_{\Sigma}}\right)\mathds{1}\left(B\left(i,j\right)\right)}{T_0}+e^{-c_1\cdot \frac{j-i}{2C}}.  
\end{align*}  
We can write the last expectation appearing above as 
\begin{align*}\estart{\left(Y_i-Y_{i,j}\right)Y_j\mathds{1}\left(B\left(i,j\right)\right)}{T_0}&-\estart{\left(Y_i-Y_{i,j}\right)\mathds{1}\left(B\left(i,j\right)\right)}{T_0}\estart{Y_2}{\pi_{\Sigma}}
\\&+\estart{\left(Y_{i,j}-\estart{Y_2}{\pi_{\Sigma}}\right)\left(Y_j-\estart{Y_2}{\pi_{\Sigma}}\right)\mathds{1}\left(B\left(i,j\right)\right)}{T_0}.\end{align*} 
The first and second terms are bounded by $e^{-c''\left(j-i\right)}$, as $\estart{Y_2}{\pi_{\Sigma}}$ is bounded by a constant. We note that given the type of $X_{\sigma_{\lfloor\frac{i+j}{2}\rfloor}}$, $Y_{i,j}$ is independent of $Y_j$. Using this independence and the tower property we get that the last term appearing above is equal to
 \begin{align*}&\estart{\escond{\left(Y_{i,j}-\estart{Y_2}{\pi_{\Sigma}}\right)\left(Y_j-\estart{Y_2}{\pi_{\Sigma}}\right)\mathds{1}\left(B\left(i,j\right)\right)}{ \Theta\left(X_{\sigma_{\lfloor\frac{i+j}{2}\rfloor}}\right), \Theta\left(X_{\sigma_{j-1}}\right)}{T_0}}{T_0} 
 \\&= \estart{\escond{\left(Y_{i,j}-\estart{Y_2}{\pi_{\Sigma}}\right)\mathds{1}\left(B\left(i,j\right)\right)}{ \Theta\left(X_{\sigma_{\lfloor\frac{i+j}{2}\rfloor}}\right)}{T_0}\escond{Y_j-\mathbb{E}_{\pi_{\Sigma}}\left[Y_2\right]}{ \Theta\left(X_{\sigma_{j-1}}\right)}{T_0}}{T_0} 
 \end{align*}
For $\theta\in S$ define $f,g: \{1,\ldots, m\}\to \mathbb{R}$ by 
\begin{align*}
 &f\left(\theta\right) =\escond{\left(Y_{i,j}-\estart{Y_2}{\pi_{\Sigma}}\right)\mathds{1}\left(B\left(i,j\right)\right)}{\Theta\left(X_{\sigma_{\lfloor\frac{i+j}{2}\rfloor}}\right)=\theta}{T_0} \quad \mathrm{and}
\\ &g\left(\theta\right) =\escond{Y_j-\estart{Y_2}{\pi_{\Sigma}}}{\Theta\left(X_{\sigma_{j-1}}\right)=\theta}{T_0}.
 \end{align*}
It is enough to bound for $\theta'\in S$ 
\[\escond{f\left(\Theta\left(X_{\sigma_{\left\lfloor\frac{i+j}{2}\right\rfloor}}\right)\right)g\left(\Theta\left(X_{\sigma_{j-1}}\right)\right)}{\Theta(X_{\sigma_1})=\theta'}{T_0}. 
\]
 We need to show that the conditions from Lemma \ref{lemma:Decorrelation} for $f$ and $g$ are satisfied and then as direct consequence we get that this is bounded by $\lesssim 2^{-c\alpha(j-i)},$ for some constant $c$, where we also use Lemma~\ref{lemma:2comMixTypes} and Proposition~\ref{thrm:MixingOfW} to bound the mixing time. This will then complete the proof.
 \newline
 Since $Y_j$ has bounded moments, it follows that $g$ is a bounded function and we also have that its mean under $\pi_\Sigma$ is $0$. It remains to prove that $f$ is also a bounded function. 
By Cauchy-Schwartz, it is enough to bound the second moment of $Y_{i,j}$. 
This can be done in the same way as for $Y_i$ in~\cite{PerlasPaper}. In fact, the same approach as for the $Y_i$'s gives that $Y_{i,j}$ conditioned on $\Theta(X_{\sigma_{i-1}})=\theta$ and $Y_{2,j-i+2}$ conditioned on $\Theta\left(X_{\sigma_{1}}\right)=\theta$ have the same distribution. Therefore, it is enough to bound the moments of $Y_{2,j}$ and this follows as the proof of bounded moments of the $Y_i$'s in~\cite{PerlasPaper}.
 \end{proof}

\begin{proof}[Proof of Proposition~\ref{prop:LERWLimit}] Let $\sigma_i$ and $\phi_i$, $i\ge 0$ be as in Definition \ref{def:RegTimes} and $Y_i$, $i\ge 1$, as defined in Lemma \ref{lemma:EandVar}. Let $X$ be a simple random walk on $T$ generating the loop erasure $\xi$. We have that 
\[-\log\prcond{\left(X_{\sigma_k-1},X_{\sigma_k}\right)\in\widetilde{\xi}}{ X,T}{}=\sum_{i=1}^kY_i -\log\prcond{\left(X_{\sigma_0-1},X_{\sigma_0}\right)\in \widetilde{\xi}}{X,T}{}.
\]
By Lemma~\ref{lemma:EandVar} we have that the variables $Y_i$ have all moments bounded by constants and in~\cite[Lemma~3.14]{PerlasPaper} it was also shown that $Y_k$ is a measurable function of $\left(T\left(X_{\sigma_{k-1}}\right), \left(X_t\right)_{t \ge \sigma_{k-1}}\right)$. If $\Theta\left(X_{\sigma_k}\right)\sim {\pi_{\Sigma}}$ then the sequence $\left(T\left(X_{\sigma_{k-1}}\right), \left(X_t\right)_{t \ge \sigma_{k-1}}\right)$ is stationary, and so is $Y_k$ as well. Letting $\theta\sim {\pi_{\Sigma}}$ and $\tau_{\theta}=\inf\left\{t:\Theta(X_{\sigma_t})=\theta\right\},$ then Birkhoff's  ergodic theorem gives that for a constant $\gamma=\mathbb{E}_{\pi_{\Sigma}}\left[Y_2\right]$, almost surely as $k\to \infty$
\[\frac{\sum_{i=\tau_\theta}^{k}Y_i}{k-\tau_{\theta}}\to \gamma.
\] 
Since $\tau_\theta$ is finite almost surely, we obtain that as $k \to \infty$ almost surely 
\[ \frac{-\log\prcond{\left(X_{\sigma_k-1},X_{\sigma_k}\right)\in \widetilde{\xi}}{X,T}{}}{k}\to \gamma.
\]
Notice that $\xi_{\phi_k}=\left(X_{\sigma_{k-1}},X_{\sigma_k}\right),$ so almost surely as $k\to \infty$
\[ -\frac{\log\prcond{\xi_{\phi_k}\in \widetilde{\xi}}{X,T}{}}{k}\to \gamma.
\]
Lemma \ref{lemma:RegIndep} and the ergodic theorem give that $\frac{\phi_k}{k}\to \estart{\phi_2-\phi_1}{\pi_{\Sigma}}$, as $k\to \infty$ and $\estart{\phi_2-\phi_1}{\pi_{\Sigma}}<\infty$. Therefore, 
\[-\frac{\log\prcond{\xi_k\in \widetilde{\xi}}{\xi,T}{}}{k}\to \frac{\gamma}{\estart{\phi_2-\phi_1}{\pi_{\Sigma}}}=: \mathfrak{h} 
\]
In order to prove the bound on the fluctuations, we use Lemma \ref{lemma:EandVar}. Using the bound from \eqref{eq:VarY} and the bound on the second moment of $-\log\prcond{\left(X_{\sigma_0-1},X_{\sigma_0}\in \widetilde{\xi}\right)}{X,T}{}$ from~\eqref{eq:MomentSigma0} with Markov's inequality give that for all $\varepsilon>0$ there is a constant $C$ such that for all $k\ge K^2$
\begin{align*}&\prstart{\left|\sum_{i=1}^kY_i-\gamma k\right|\ge \frac{C}{2}\left(\sqrt{\frac{k}{\alpha}}+\frac{1}{\alpha}\right)}{T_0}\le \frac{4C'\frac{k}{\alpha}}{C^2\left(\sqrt{\frac{k}{\alpha}}+\frac{1}{\alpha}\right)^2}\le  \frac{\varepsilon}{2} \quad \text{and}
\\&\prstart{\left|-\log\prcond{\left(X_{\sigma_0-1},X_{\sigma_0}\in \widetilde{\xi}\right)}{X,T}{}\right| \ge \frac{C}{2}\sqrt{k}}{T_0}\le \frac{4C_2K^2}{C^2k} \le \frac{\varepsilon}{2}, \end{align*} where $C_2$ and $C'$ are the constants from Lemma~\ref{lemma:EandVar}.  
It now follows that 
\[\prstart{\left|-\log\prcond{\xi_{\phi_k}\in \widetilde{\xi}}{ \xi, T}{}-\gamma k\right|\ge C\left(\sqrt{\frac{k}{\alpha}}+\frac{1}{\alpha}\right)}{T_0}\le \varepsilon.
\]
As in Lemma \ref{lemma:ConcentrationPhi} we let for each $k\in \mathbb{N}$ 
\[N_k=\max\left\{i\ge 0: \phi_i\le k\right\}.
\] 
Then 
\begin{align*}& \prstart{\left|-\log\prcond{\xi_k\in \widetilde{\xi}}{\xi, T}{}-\mathfrak{h} k\right|\ge C\left(\sqrt{\frac{k}{\alpha}}+\frac{1}{\alpha}\right)}{T_0}
\\&\le \prstart{-\log\prcond{\xi_{\phi_{N_k+1}}\in \widetilde{\xi}}{\xi, T}{}>\mathfrak{h}k+ C\left(\sqrt{\frac{k}{\alpha}}+\frac{1}{\alpha}\right)}{T_0}
\\&+\prstart{-\log\prcond{\xi_{\phi_{N_k}}\in \widetilde{\xi}}{\xi, T}{}< \mathfrak{h}k-C\left(\sqrt{\frac{k}{\alpha}}+\frac{1}{\alpha}\right)}{T_0}. \end{align*} 
The concentration of $N_k$ from Lemma~\ref{lemma:ConcentrationPhi} finishes the proof. 
\end{proof}

\section{Proofs of some combinatorial lemmas}\label{appendixSmallSets}
We now present the proof of Lemma~\ref{lemma:SmallSetsBoundary},~\ref{lemma:ConfigurationModelNotBipartite} and~~\ref{rmk:2ComNotBipartite}:

\begin{proof}[Proof of Lemma~\ref{lemma:SmallSetsBoundary}] Consider a subset of vertices $D$ and recall that we labelled the two communities as $V_1$ and $V_2$. Suppose that there are $d_1+d_2$ half-edges in total corresponding to the vertices in $D$, where $d_1$ come from the vertices in the first community and $d_2$ from the second. 
Rewiring the outgoing edges of each community as explained in the proof of Lemma~\ref{lemma:DirEVCom1} gives a configuration model of minimum degree at least 3, and therefore it is an expander with high probability \cite{expander}. This gives that there is a constant $\delta'$ such that with high probability all sets with $x\le \frac{N_i}{2}$ half-edges, in community $i$, for $i \in\{1,2\}$, have boundary whose size is greater then $\delta'x$ in the rewired graph. Fix $c\le \frac{\delta'}{2}$. We first show that with high probability, the graph $G_n$ is such that all sets of vertices $D$ for which $d_1$ and $d_2$ are such that $d_1<cd_2$ or $d_2<cd_1$ have boundary of size at least $\frac{\delta'}{4}\left(d_1+d_2\right).$
Indeed, without loss of generality, if $d_1< cd_2$, the rewiring can increase the boundary in the second community by at most $d_1\le \frac{\delta'}{2}d_2$, meaning that the size of the boundary before rewiring was already at least $\frac{\delta'}{2}d_2\ge\frac{\delta'}{4}\left(d_1+d_2\right)$. 

Therefore from now on we can consider sets $D$ for which both $d_1\ge cd_2$ and $d_2\ge cd_1$ hold. Further, suppose that out of those $d_1$ half-edges  $d_1^o$ are outgoing (i.e.\ go to community 2), while $d^i_1=d_1-d^o_1$ are internal (i.e.\ go into community 1) and $d^i_2$ and $d^o_2$ are corresponding values for the second community. Suppose that $\ell$ is the number of edges with one end in $D\cap V_1$ and the other one in $D\cap V_2$. Suppose $k_i$, for $i\in\{1, 2\}$ is the number of edges with exactly one end in $D$ but also with both ends in the same community $i$. The edge boundary of $D$ then has size $k_1+k_2+\left(d^o_1-l\right)+\left(d^o_2-l\right)$  (therefore the vertex boundary has size at least $\frac{k_1+k_2+\left(d^0_1-l\right)+\left(d^0_2-l\right)}{\Delta}$). If we have a set $D$ with fixed $d_1 $ and $d_2 $ the probability to choose the types of edges and connect $G_n$ in such a way to get the above described values $d^o_1, d^i_1, d^o_2, d^i_2$, $\ell$, $k_1$ and $k_2$ is: 

 \begin{align}\label{djubre1}
 &\frac{{d_1 \choose d^o_1}{d_2\choose d^o_2}{N_1-d_1 \choose p-d^o_1}{N_2-d_2\choose p-d^o_2}}{{N_1\choose p}{N_2\choose p}}\times\frac{{d^o_1\choose \ell}{d^o_2\choose \ell}\ell!{p-d^o_2\choose d^o_1-\ell}{p-d^o_1 \choose d^o_2-\ell}\left(d^o_1-\ell\right)!\left(d^o_2-\ell\right)!\left(p-d^o_1-d^o_2+\ell\right)!}{p! } \\&\label{djubre2}\times \left({d^i_1\choose k_1}k_1!{N_1-p-d^i_1 \choose k_1}{d^i_2\choose k_2}k_2!{N_2-p-d^i_2\choose k_2}\right) \\&\label{djubre3}
\times  \frac{\left(d^i_1-k_1-1\right)!!\left(d^i_2-k_2-1\right)!!\left(N_1-p-d^i_1-k_1-1\right)!!\left(N_2-p-d^i_2-k_2-1\right)!!}{\left(N_1-p-1\right)!!\left(N_2-p-1\right)!!}.\end{align}

The first term in the product in~\eqref{djubre1} corresponds to the probability of choosing the internal and outgoing edges, such that $d^o_i$  is the number of outgoing edges of $D$ in community $i$. The second term in~\eqref{djubre1} is the probability of connecting outgoing edges so that exactly $\ell$ of them have both ends in $D.$ The product in~\eqref{djubre2} and~\eqref{djubre3} corresponds to the probability of connecting the internal edges such that the internal boundaries of $D$ are of size $k_1$ and $k_2$. 
 We will further show that the product of~\eqref{djubre2} and~\eqref{djubre3}  can be bounded above by 
$$\left(d^i_1\right)^{\frac{1}{4}}\left(\frac{{d^i_1\choose k_1}{N_1-p-d^i_1\choose k_1}}{{N_1-p\choose d^i_1}}\right)^{\frac{1}{2}}\left(d^i_2\right)^{\frac{1}{4}}\left(\frac{{d^i_2\choose k_2}{N_2-p-d^i_2\choose k_2}}{{N_2-p\choose d^i_2}}\right)^{\frac{1}{2}}.  $$ 
Indeed, by  Stirling's formula, $m!! \asymp\left(\sqrt{m!}\right)m^{\frac{1}{4}}$, hence $m!!\asymp m^{\frac{1}{2}}(m-1)!!$ and also using $\sqrt{m}\asymp \sqrt{m-1}$ we get
\begin{align*} &\frac{{d^i_1 \choose k_1}k_1!{N_1-p-d^i_1 \choose k_1}\left(d^i_1-k_1-1\right)!!\left(N_1-p-d^i_1-k_1-1\right)!!}{\left(N_1-p-1\right)!!}
\\&\lesssim \frac{{d^i_1 \choose k_1}k_1!{N_1-p-d^i_1 \choose k_1}\left(d^i_1-k_1\right)^{-\frac{1}{4}}\left(N_1-p-d^i_1-k_1\right)^{-\frac{1}{4}}\sqrt{\left(d^i_1-k_1\right)!\left(N_1-p-d^i_1-k_1\right)!}}{\left(N_1-p\right)^{-\frac{1}{4}}\sqrt{\left(N_1-p\right)!}} 
 \\&\lesssim \left(d^i_1\right)^{\frac{1}{4}} \left(\frac{{d^i_1\choose k_1}{N_1-p-d^i_1\choose k_1}}{{N_1-p\choose d^i_1}}\right)^{\frac{1}{2}},\end{align*}
where the last inequality follows as $k_1\le d^i_1$ implies $N_1-p \lesssim d^i_1\left(N_1-p-d^i_1-k_1\right).$
The product of~\eqref{djubre1} is
equal to
\[ \frac{{N_1-p \choose d^i_1}{N_2-p\choose d^i_2}p!}{{N_1\choose d_1}{N_2\choose d_2}\ell!\left(d^o_1-\ell\right)!\left(d^o_2-\ell\right)!\left(p-d^o_1-d^o_2+\ell\right)!}.\] 
Therefore, for small $\delta$, once we have fixed the vertices that lie in the set $D$, the probability that the boundary is smaller than $\delta \left(d_1+d_2\right)$ is \begin{align}\label{eq:BigSumExpansion}\sum_{\substack{d^o_1\le d_1,\, d^o_2\le d_2\\ \ell\le \min\left\{d^o_1,d^o_2\right\}\\ k_1\le d^i_1,  k_2\le d^i_2\\k_1+k_2+d^o_1-\ell+d^o_2-\ell<\delta \left(d_1+d_2\right)}}\frac{(d^i_1)^{\frac{1}{4}}\left({d^i_1\choose k_1}{N_1-p-d^i_1\choose k_1}{N_1-p\choose d^i_1}\right)^{\frac{1}{2}}(d^i_2)^{\frac{1}{4}}\left({d^i_2\choose k_2}{N_2-p-d^i_2\choose k_2}{N_2-p\choose d^i_2}\right)^{\frac{1}{2}}p!}{{N_1\choose d_1}{N_2\choose d_2}\ell!\left(d^o_1-\ell\right)!\left(d^o_2-\ell\right)!\left(p-d^o_1-d^o_2+\ell\right)!}.\end{align}
As the sizes of the communities are comparable and the degrees are bounded between $3$ and $\Delta$, we can choose the constant $\hat{c}$ to be small enough, such that all sets $D$ of at most $\hat{c}n$ vertices have at most $\frac{N_i}{2}$ half-edges in community $i$, for $i\in \left\{1,2\right\}$.  We now bound the number of ways to choose the set of vertices $D$ which have $d_1$ half-edges in the first and $d_2$ in the second community. First, as the minimum degree is 3, we choose at most $\frac{d_1}{3}$ vertices from at most $\frac{N_1}{3}$ vertices in total in the first community. We recall that an antichain is a collection of sets with the property that no two sets are contained in each other. 
Notice that if a certain set of vertices has $d_1$ half-edges then none of its subsets or sets containing it can have $d_1$ half-edges, so the family of sets of vertices in the first community, with exactly $d_1$ half-edges is an antichain. By LYM inequality \cite[Maximal Antichain]{probm}, if $\frac{d_1}{3}\le \frac{n_1}{2}$, the antichain can have size at most ${n_1\choose\lfloor\frac{d_1}{3}\rfloor}$, where $n_1$ is the number of vertices in the first community. If $\frac{d_1}{3}>\frac{n_1}{2}$, then the size is at most ${n_1\choose\lfloor \frac{n_1}{2}\rfloor}\le {\lfloor\frac{N_1}{3}\rfloor\choose\lfloor\frac{n_1}{2}\rfloor} \le {\lfloor\frac{N_1}{3}\rfloor\choose\lfloor\frac{d_1}{3}\rfloor}$, as $d_1\le \frac{N_1}{2}$. Therefore the number of ways to choose the initial set of vertices $D$ which has $d_1$ half-edges in the first and $d_2$ in the second community is bounded above by ${\lfloor \frac{N_1}{3}\rfloor\choose\lfloor \frac{d_1}{3}\rfloor}{\lfloor\frac{N_2}{3}\rfloor\choose \lfloor\frac{d_2}{3}\rfloor}$. So in order to find the bound on the probability that there is a small set $D$, with small boundary we need to multiply the above sum \eqref{eq:BigSumExpansion} by ${\lfloor \frac{N_1}{3}\rfloor\choose\lfloor \frac{d_1}{3}\rfloor}{\lfloor\frac{N_2}{3}\rfloor\choose \lfloor\frac{d_2}{3}\rfloor}$ (to get all sets of $d_1$ half-edges in community $1$, $d_2$ in community $2$) and also sum over all values for $d_1$ and $d_2$, which are suitably bounded from above and for which $d_1\ge cd_2$ and $d_2\ge cd_1$.

We now bound from above the following expression  \[{\lfloor \frac{N_1}{3}\rfloor\choose\lfloor \frac{d_1}{3}\rfloor }{\lfloor\frac{N_2}{3}\rfloor\choose \lfloor\frac{d_2}{3}\rfloor }\frac{(d^i_1)^{\frac{1}{4}}\left({d^i_1\choose k_1}{N_1-p-d^i_1\choose k_1}{N_1-p\choose d^i_1}\right)^{\frac{1}{2}}(d^i_2)^{\frac{1}{4}}\left({d^i_2\choose k_2}{N_2-p-d^i_2\choose k_2}{N_2-p\choose d^i_2}\right)^{\frac{1}{2}}p!}{{N_1\choose d_1}{N_2\choose d_2}\ell!\left(d^o_1-\ell\right)!\left(d^o_2-\ell\right)!\left(p-d^o_1-d^o_2+\ell\right)!}\] when $k_1+k_2+d^o_1-\ell+d^o_2-\ell<\delta \left(d_1+d_2\right)$ and by using the approximation ${n\choose k}\asymp \sqrt{\frac{n}{k\left(n-k\right)}}\exp\left(nH\left(\frac{k}{n}\right)\right)$ where $H\left(x\right)=-x\log\left(x\right)-\left(1-x\right)\log\left(1-x\right)$. Notice that this function is increasing on $\left(0,\frac{1}{2}\right)$ and also that it is concave on $\left(0,1\right)$. Also notice that 
\[\frac{p!}{\ell!\left(d^o_1-\ell\right)!\left(d^o_2-\ell\right)!\left(p-d^o_1-d^o_2+\ell\right)!}=\sqrt{{p\choose d^o_1}{p\choose d^o_2}{d^o_1 \choose \ell}{d^o_2\choose \ell}{p-d^o_1\choose d^o_2-\ell}{p-d^o_2\choose d^o_1-\ell}}.
\] 
We bound the exponential part which appears after these approximations, which means that for $k_1+k_2+d^o_1-\ell+d^o_2-\ell<\delta \left(d_1+d_2\right)$  we bound
\begin{align*}
& \exp\left(\frac{1}{2}\left(d^i_1H\left(\frac{k_1}{d^i_1}\right)+\left(N_1-p-d^i_1\right)H\left(\frac{k_1}{N_1-p-d^i_1}\right)+\left(N_1-p\right)H\left(\frac{d^i_1}{N_1-p}\right)\right. \right.\\&+d^i_2H\left(\frac{k_2}{d^i_2}\right)+\left(N_2-p-d^i_2\right)H\left(\frac{k_2}{N_2-p-d^i_2}\right)+\left(N_2-p\right)H\left(\frac{d^i_2}{N_2-p}\right)\\ &+pH\left(\frac{d^o_1}{p}\right)+pH\left(\frac{d^o_2}{p}\right)+d^o_1H\left(\frac{\ell}{d^o_1}\right)+d^o_2H\left(\frac{\ell}{d^o_2}\right)+\left(p-d^o_1\right)H\left(\frac{d^o_2-\ell}{p-d^o_1}\right)\\& \left.\left.+\left(p-d^o_2\right)H\left(\frac{d^o_1-\ell}{p-d^o_2}\right)\right)+\frac{N_1}{3}H\left(\frac{d_1}{N_1}\right)+\frac{N_2}{3}H\left(\frac{d_2}{N_2}\right)-N_1H\left(\frac{d_1}{N_1}\right)-N_2H\left(\frac{d_2}{N_2}\right)\right)\\&
\le \exp\left(\frac{1}{2}\left(\left(N_1-p\right)\left(H\left(\frac{2k_1}{N_1-p}\right)+H\left(\frac{d^i_1}{N_1-p}\right)\right)\right. \right.\\&+\left(N_2-p\right)\left(H\left(\frac{2k_2}{N_2-p}\right)+H\left(\frac{d^i_2}{N_2-p}\right)\right)  +pH\left(\frac{d^o_1}{p}\right)+pH\left(\frac{d^o_2}{p}\right) \\&\left.+d^o_1H\left(\frac{\ell}{d^o_1}\right)+d^o_2H\left(\frac{\ell}{d^o_2}\right)+\left(p-d^o_1\right)H\left(\frac{d^o_2-\ell}{p-d^o_1}\right)+\left(p-d^o_2\right)H\left(\frac{d^o_1-\ell}{p-d^o_2}\right)\right)\\&\left.-\frac{2N_1}{3}H\left(\frac{d_1}{N_1}\right)-\frac{2N_2}{3}H\left(\frac{d_2}{N_2}\right)\right), \end{align*} where the inequality holds as the function $H$ is concave. Again using the fact that $H$ is concave we get that the above is upper bounded by \begin{align*}& \exp\left(\frac{1}{2}\left(\left(N_1-p\right)H\left(\frac{2k_1}{N_1-p}\right)+N_1H\left(\frac{d_1}{N_1}\right)+\left(N_2-p\right)H\left(\frac{2k_2}{N_2-p}\right)\right. \right.\\&+N_2H\left(\frac{d_2}{N_2-p}\right)+d^o_1H\left(\frac{\ell}{d^o_1}\right)+d^o_2H\left(\frac{\ell}{d^o_2}\right)+\left(p-d^o_1\right)H\left(\frac{d^o_2-\ell}{p-d^o_1}\right)\\&\left.\left.+\left(p-d^o_2\right)H\left(\frac{d^o_1-\ell}{p-d^o_2}\right)\right)-\frac{2N_1}{3}H\left(\frac{d_1}{N_1}\right)-\frac{2N_2}{3}H\left(\frac{d_2}{N_2}\right)\right).
\end{align*}
Using that $H(1-x)=H(x)$ for all $x\in (0,1)$ gives
\begin{align*}
&=\exp\left(\frac{1}{2}\left(\left(N_1-p\right)H\left(\frac{2k_1}{N_1-p}\right)+\left(N_2-p\right)H\left(\frac{2k_2}{N_2-p}\right)\right.\right.\\&\left.+d^o_1H\left(\frac{d^o_1-\ell}{d^o_1}\right)+d^o_2H\left(\frac{d^o_2-\ell}{d^o_2}\right)+\left(p-d^o_1\right)H\left(\frac{d^o_2-\ell}{p-d^o_1}\right)+\left(p-d^o_2\right)H\left(\frac{d^o_1-\ell}{p-d^o_2}\right)\right)\\& \left.-\frac{N_1}{6}H\left(\frac{d_1}{N_1}\right)-\frac{N_2}{6}H\left(\frac{d_2}{N_2}\right)\right).\end{align*}
Again using concavity this is bounded by
\begin{align*}& \exp\left(\frac{1}{2}\left(\left(N_1-p\right)H\left(\frac{2k_1}{N_1-p}\right)+\left(N_2-p\right)H\left(\frac{2k_2}{N_2-p}\right)+2pH\left(\frac{d^o_1+d^o_2-2\ell}{p}\right)\right)\right.\\& \left.-\frac{N_1}{6}H\left(\frac{d_1}{N_1}\right)-\frac{N_2}{6}H\left(\frac{d_2}{N_2}\right)\right) \\&\leq \exp\left(\frac{1}{2}\left(N_1H\left(\frac{2k_1+d^o_1-\ell+d^o_2-\ell}{N_1}\right)+N_2H\left(\frac{2k_2+d^o_1-\ell+d^o_2-\ell}{N_2}\right)\right)\right.\\&\left.-\frac{N_1}{6}H\left(\frac{d_1}{N_1}\right)-\frac{N_2}{6}H\left(\frac{d_2}{N_2}\right)\right) \end{align*} 
As $d_1\ge cd_2$ and $d_2\ge cd_1$ then we can choose $\delta<\frac{c}{2}$ (so $\le \frac{\delta'}{4}$) such that for $k_1+k_2+d^o_1+d^o_2-2\ell\le \delta\left(d_1+d_2\right)$ we have that $H\left(\frac{2k_1+d^o_1-\ell+d^o_2-\ell}{N_1}\right)\le \frac{1}{100}H\left(\frac{d_1}{N_1}\right)$ and $H\left(\frac{2k_2+d^o_1-\ell+d^o_2-\ell}{N_2}\right)\le \frac{1}{100}H\left(\frac{d_2}{N_2}\right)$ (this follows exactly as in the proof that the configuration model is an expander, as we have that $2k_1+d_1^o+d_2^0-2\ell\le 2\delta\left(1+\frac{1}{c}\right)d_1$ and $2k_2+d_1^o+d_2^0-2\ell\le 2\delta\left(1+\frac{1}{c}\right)d_2$ and $\delta$ can be as small as we wish) so that our expression is bounded above by $$\exp\left(-\frac{N_1}{12}H\left(\frac{d_1}{N_1}\right)-\frac{N_2}{12}H\left(\frac{d_2}{N_2}\right)\right)\le \exp\left(-\frac{1}{12}\left(d_1\log\left(\frac{d_1}{N_1}\right)+d_2\log\left(\frac{d_2}{N_2}\right)\right)\right).$$ Adding the non-exponential parts which we omitted at the start and summing over all suitable $d^o_1,d^o_2,d^i_1,d^i_2, k_1,k_2$ and $\ell$, the obtained bound gives that the sum in~\eqref{eq:BigSumExpansion} multiplied by ${\lfloor \frac{N_1}{3}\rfloor\choose\lfloor \frac{d_1}{3}\rfloor}{\lfloor\frac{N_2}{3}\rfloor\choose \lfloor\frac{d_2}{3}\rfloor}$ is bounded above by $$d_1^{4+\frac{1}{4}}d_2^{4+\frac{1}{4}} \exp\left(-\frac{1}{12}\left(d_1\log\left(\frac{d_1}{N_1}\right)+d_2\log\left(\frac{d_2}{N_2}\right)\right)\right).$$ Summing over all $d_1$ and $d_2$ up to size of $\frac{1}{2}N_1$ and $\frac{1}{2}N_2$ gives the sum which converges to $0$ as $N_1$ and $N_2$ converge to infinity. This completes the proof that with high probability $G_n$ is such that all sets $D$ for which $d_1\ge cd_2$ and $d_2\ge cd_1$ have boundary of size at least $\delta\left(d_1+d_2\right),$ which finishes the proof of the lemma. 
\end{proof}

\begin{proof}[Proof of Lemma~\ref{lemma:ConfigurationModelNotBipartite}]
We need to calculate the probability that the graph $G$ is such that the vertex set can be split into two sets $A$ and $B$ for which the proportion of edges between them is at least~$1-\delta$.
Let $N=\sum_{x\in V}\text{deg}(x)$ be the total degree of $G$. First consider a set $A$ such that the total degree of all vertices in $A$ is $d=\sum_{x\in A}\text{deg}(x)\leq N/2$. We have \begin{align*} &\pr{\frac{|\{x\sim y, (x,y)\in (A\times B)\cup (B\times A)\}|}{N}>1-\delta}\\&=\sum_{s>(1-\delta)\frac{N}{2}}\pr{|\{x\sim y, (x,y)\in (A\times B)\cup (B\times A)\}|=2s}.\end{align*}
The terms with $s\leq d$ in the sum above are the only non-zero terms. For $s<d$ we have that
\begin{align*}&\pr{|\{x\sim y, (x,y)\in A\times B\cup B\times A\}|=2s}= \frac{{d \choose s} {N-d \choose s} s!(d-s-1)!!(N-d-s-1)!!}{(N-1)!!}
\\&\lesssim \frac{d!(N-d)!\sqrt{(d-s)!(N-d-s)!}(d-s)^{-1/4}(N-d-s)^{-1/4}}{s!(d-s)!(N-d-s)!\sqrt{N!} N^{-1/4}}
\\&=\left(\frac{N}{(d-s)(N-d-s)}\right)^{1/4}\sqrt{\frac{{d\choose s}{N-d\choose s}}{{N \choose d}}}
\end{align*}

Indeed, this holds as by  Stirling's formula, $m!! \asymp\left(\sqrt{m!}\right)m^{\frac{1}{4}}$, and  $m!!\asymp m^{\frac{1}{2}}(m-1)!!.$
For $s=d<N/2$ the probability above is equal to $\frac{{N-d \choose d}d!(N-2d-1)!!}{(N-1)!!}\lesssim \left(\frac{N}{N-2d}\right)^{1/4}\sqrt{\frac{{N-d\choose d}}{{N \choose d}}}$ and for $s=d=N/2$ this is $\lesssim N^{1/4}\sqrt{\frac{1}{{N \choose N/2}}}.$ So for fixed $d<N/2$, summing over $s$ the desired bound is 

 \begin{align*} &\left(\frac{N}{N-2d}\right)^{1/4}\sqrt{\frac{{N-d\choose d}}{{N \choose d}}}+\sum_{d> s>(1-\delta)\frac{N}{2}}\left(\frac{N}{(d-s)(N-d-s)}\right)^{1/4}\sqrt{\frac{{d\choose s}{N-d\choose s}}{{N \choose d}}}
 \\& \lesssim \left(\frac{N}{N-2d}\right)^{1/4} \delta \frac{N}{2} \sqrt{\frac{{d\choose (1-\delta)\frac{N}{2}}{N-d\choose (1-\delta)\frac{N}{2}}}{{N \choose d}}},
 \end{align*}
as it is easy to check that for small $\delta$ the binomial coefficients are maximised when $s$ takes its minimal allowed value. We can now sum over $d$ and all sets $A$ with $d$ edges (there are at most ${N/3\choose d/3}$ by LYM's inequality and the fact that sets of $d$ edges make an anti-chain) we get that we just need to bound 
\[ \sum_{d=(1-\delta)\frac{N}{2}}^{\frac{N}{2}}{\frac{N}{3}\choose \frac{d}{3}}\left({N}\right)^{1/4} \delta \frac{N}{2} \sqrt{\frac{{d\choose (1-\delta)\frac{N}{2}}{N-d\choose (1-\delta)\frac{N}{2}}}{{N \choose d}}}\leq  {\frac{N}{3}\choose \frac{N}{6}}\left({N}\right)^{1/4} (\delta \frac{N}{2})^2 \sqrt{\frac{{\frac{N}{2}\choose (1-\delta)\frac{N}{2}}{(1+\delta)\frac{N}{2}\choose (1-\delta)\frac{N}{2}}}{{N \choose \frac{N}{2}}}}.
\]
Using the approximation ${n\choose k}\asymp \sqrt{\frac{n}{k\left(n-k\right)}}\exp\left(nH\left(\frac{k}{n}\right)\right)$ where $H\left(x\right)=-x\log\left(x\right)-\left(1-x\right)\log\left(1-x\right)$ we get that the above is bounded by 
\begin{align*}  &N^4\exp\left(\frac{N}{3}H\left(\frac{1}{2}\right)+\frac{N}{4}H(1-\delta)+(1+\delta)\frac{N}{4}H\left(\frac{1-\delta}{1+\delta}\right)-\frac{N}{2}H\left(\frac{1}{2}\right)\right)
\\& =N^4\exp\left(\frac{N}{3}H\left(\frac{1}{2}\right)+\frac{N}{4}H(\delta)+(1+\delta)\frac{N}{4}H\left(\frac{2\delta}{1+\delta}\right)-\frac{N}{2}H\left(\frac{1}{2}\right)\right)
\\& \leq N^4\exp\left(-\frac{N}{6} \log(2)+\frac{N}{4}\cdot\frac{1}{100}\right)\leq N^4\exp\left(-\frac{N}{12}\right),
\end{align*}
since $H(\delta)$ and $H(\frac{2\delta}{1+\delta})$ can be made as small as we want by picking $\delta$ sufficiently small, as $H(x) \to 0$ as $x\to 0$. The above expression tends to $0$ when $N\to \infty$, and hence this completes the proof. 
 \end{proof}

\begin{proof}[Proof of Lemma~\ref{rmk:2ComNotBipartite}] 
Let $c$ be a small constant to be determined. Consider a set $A$ with $d_1$ and $d_2$ half edges in communities $1$ and $2$, respectively. We first deal with the sets for which $cd_1>d_2$ or $cd_2>d_1$.  Without loss of generality assume that $cd_1>d_2$. As $d_2/c<d_1\le d_1+d_2\le N/2\le CN_2/2$, we get that for every $c'$ there is a small enough $c$ such that $d_2<c'N_2/2$. This implies that on the rewired graph in community $2$ we have that the boundary of $A^c\cap V_2$ is at most $c'N_2/2$ and therefore there are at least $(1-c')N_2$ edges with both ends in $A$ or $A^c$. This implies that after rewiring back there are still at least $(1-c'-\alpha_2)N_2$ edges inside $A^c\cap V_2$ or $A\cap V_2$. If $1-c'-\alpha_2>0$ then we can choose $\delta$ to be small enough such that $(1-c'-\alpha_2)N_2\ge \delta N$ and the proof follows for such $\delta$. As~$c'$ is an arbitrary small constant, we only need to show that the condition $\alpha\le 1$ gives us that $\alpha_2$ is bounded away from $1$. Indeed, this follows since $1\ge \alpha=\alpha_1+\alpha_2=\frac{p}{N_1}+\frac{p}{N_2}\ge (1+\frac{1}{C})\frac{p}{N_2}$. Therefore for a small enough $\delta$ we know that there is a small constant $c$ such that with high probability $G_n$ is such that when $cd_1>d_2$ or $cd_2>d_1$ hold, then there are at least $\delta N$ edges with both ends in $A$ or $A^c$.

We now turn to the case when for some small constant $c$ we know that $cd_1\le d_2$ and $cd_2\le d_1$. We will use similar estimates as in the proof of Lemma~\ref{lemma:SmallSetsBoundary}. First, we see that here we need to bound just a slight modification of equation~\eqref{eq:BigSumExpansion} which gives the probability that the vertex set $A$ with $d_1$ half-edges in the first community and $d_2$ half-edges in the second one has a given size of the boundary (this is the last condition in the sum and only part of the sum which we need to change).  
As we want to bound the probability that the boundary is greater than $(1-\delta)N/2$, recalling the definition of $d^i_1, d_2^i, d_1^o, d_2^o, k_1, k_2$ and $\ell$ we see that we need to change the last condition in the sum to $k_1+k_2+d^o_1-\ell+d^o_2-\ell\ge (1-\delta)\frac{N}{2}$. In particular, we need to control
\begin{align}\label{eq:BigSumBipartite}\sum_{\substack{d^o_1\le d_1,\, d^o_2\le d_2\\ \ell\le \min\left\{d^o_1,d^o_2\right\}\\ k_1\le d^i_1,  k_2\le d^i_2\\k_1+k_2+d^o_1-\ell+d^o_2-\ell\ge (1-\delta)\frac{N}{2}}}\frac{(d^i_1)^{\frac{1}{4}}\left({d^i_1\choose k_1}{N_1-p-d^i_1\choose k_1}{N_1-p\choose d^i_1}\right)^{\frac{1}{2}}(d^i_2)^{\frac{1}{4}}\left({d^i_2\choose k_2}{N_2-p-d^i_2\choose k_2}{N_2-p\choose d^i_2}\right)^{\frac{1}{2}}p!}{{N_1\choose d_1}{N_2\choose d_2}\ell!\left(d^o_1-\ell\right)!\left(d^o_2-\ell\right)!\left(p-d^o_1-d^o_2+\ell\right)!}.\end{align}
Further, we need to take the sum of the above expression over all sets which have  $d_1$ half-edges in the first and $d_2$ in the second community satisfying $\frac{N}{2}\geq d_1+d_2\ge (1-\delta)\frac{N}{2}$. Notice that the sets of vertices with exactly $d_1$ half-edges in the first and $d_2$ in the second community can be chosen in at most $N^2{\lfloor \frac{N_1}{3}\rfloor\choose\lfloor \frac{d_1}{3}\rfloor }{\lfloor\frac{N_2}{3}\rfloor\choose \lfloor\frac{d_2}{3}\rfloor }$ ways as it makes an anti-chain and so we can apply LYM's inequality as before (the extra factor of $N^2$ comes because if $d_1>N_1/2$ we can count instead the number of ways of choosing $D^c$ in $V_1$ of size $N_1-d_1$ and then use that ${a\choose b}={a\choose a-b}$ and the extra $N$ factor comes from taking a ceiling instead of a floor). Therefore we need to bound the sum of
  \[N^2{\lfloor \frac{N_1}{3}\rfloor\choose\lfloor \frac{d_1}{3}\rfloor }{\lfloor\frac{N_2}{3}\rfloor\choose \lfloor\frac{d_2}{3}\rfloor }\frac{(d^i_1)^{\frac{1}{4}}\left({d^i_1\choose k_1}{N_1-p-d^i_1\choose k_1}{N_1-p\choose d^i_1}\right)^{\frac{1}{2}}(d^i_2)^{\frac{1}{4}}\left({d^i_2\choose k_2}{N_2-p-d^i_2\choose k_2}{N_2-p\choose d^i_2}\right)^{\frac{1}{2}}p!}{{N_1\choose d_1}{N_2\choose d_2}\ell!\left(d^o_1-\ell\right)!\left(d^o_2-\ell\right)!\left(p-d^o_1-d^o_2+\ell\right)!}
  \] 
  over $k_1+k_2+d^o_1-\ell+d^o_2-\ell>(1-\delta) \frac{N}{2}$. In exactly the same way as in the proof of Lemma~\ref{lemma:SmallSetsBoundary}, the sum above can be bounded by a constant power of $N$ and an exponential part which is bounded by 
\begin{align*}&\leq \exp\left(\frac{1}{2}\left(N_1H\left(\frac{2k_1+d^o_1-\ell+d^o_2-\ell}{N_1}\right)+N_2H\left(\frac{2k_2+d^o_1-\ell+d^o_2-\ell}{N_2}\right)\right)\right.\\&\left.-\frac{N_1}{6}H\left(\frac{d_1}{N_1}\right)-\frac{N_2}{6}H\left(\frac{d_2}{N_2}\right)\right).\end{align*} 
Since we have that for some constant $c$, $d_1\ge cd_2$ and $d_2\ge cd_1$, we claim that we can choose~$\delta$ such that for $\frac{N}{2}-k_1-k_2-d^o_1-d^o_2+2\ell \le \delta\frac{N}{2}$ we have that $H\left(\frac{N_1-(2k_1+d^o_1-\ell+d^o_2-\ell)}{N_1}\right) \leq \frac{1}{100}H\left(\frac{d_1}{N_1}\right)$ and $H\left(\frac{N_2-(2k_2+d^o_1-\ell+d^o_2-\ell)}{N_2}\right) \leq \frac{1}{100}H\left(\frac{d_2}{N_2}\right).$ Indeed, this would hold if ${N_1-(2k_1+d^o_1-\ell+d^o_2-\ell)}\le \delta'd_1$ for some small enough $\delta'$ and similarly for the expression involving $k_2$.
It is easy to see that $N/2-k_1-k_2=(N_1-2k_1+N_2-2k_2)/2$ and therefore ${N_1-(2k_1+d^o_1-\ell+d^o_2-\ell)}\le \delta N$. Using also $d_1(1+\frac{1}{c})\ge d_1+d_2>(1-\delta)N/2$ gives that we can choose small enough $\delta$ in terms of $c$ and $\delta'$ such that the required bound holds. Using the symmetry of the problem, the corresponding  bound by $\delta' d_2$ also holds. 
 
Using the above and that $H(x)=H(1-x)$, our expression is bounded from above by 
\[\exp\left(-\frac{N_1}{12}H\left(\frac{d_1}{N_1}\right)-\frac{N_2}{12}H\left(\frac{d_2}{N_2}\right)\right)\le \exp\left(-\frac{1}{12}\left(d_1\log\left(\frac{d_1}{N_1}\right)+d_2\log\left(\frac{d_2}{N_2}\right)\right)\right).\]

Adding the non-exponential parts and summing over all suitable $d^o_1,d^o_2,d^i_1,d^i_2, k_1,k_2$ and $\ell$, the obtained bound gives that the sum in~\eqref{eq:BigSumBipartite} multiplied by $N^2 {\lfloor \frac{N_1}{3}\rfloor\choose\lfloor \frac{d_1}{3}\rfloor}{\lfloor\frac{N_2}{3}\rfloor\choose \lfloor\frac{d_2}{3}\rfloor}$ is bounded from above by $$N^{10+\frac{1}{2}} \exp\left(-\frac{1}{12}\left(d_1\log\left(\frac{d_1}{N_1}\right)+d_2\log\left(\frac{d_2}{N_2}\right)\right)\right).$$ Summing over all possible $d_1\in (c_1N_1,c_2N_1)$ and $d_2\in (c_1N_2,c_2N_2)$ for suitable $c_1$ and $c_2$ depending on $c$ and $\delta$ gives that the sum converges to $0$ as $N_1$ and $N_2$ converge to infinity. This completes the proof that with high probability $G_n$ is such that all sets $A$ have boundary of size at most~$(1-\delta)\frac{N}{2}$.
\end{proof}

\section{Entropy in terms of the simple random walk on the tree}\label{AppendixEntropy}
We prove here the following lemma. 

In this proof we use ideas from~\cite{GWTSpeedHarmMeasure}.
\begin{proof}[Proof of Lemma~\ref{lemma:EntropyInTermsOfWalk}]
		
		Writing $\widetilde{\xi}$ for the loop erasure of $\widetilde{X}$, $\widetilde{\tau}_e$ for the first hitting time of the endpoint of $e$ furthest from the root by $\widetilde{X}$, $\widetilde{\tau}_e^{(2)}$ for the first return time to $e$ after time $\widetilde{\tau}_e$	
	and using the uniform drift from Lemma~\ref{lemma:ReturnProb}
	\[ 
	\prcond{e\in \widetilde{X}}{T}{} = 	\prcond{\widetilde{\tau}_e<\infty}{T}{} \asymp	\prcond{\widetilde{\tau}_e<\infty, \widetilde{\tau}_e^{(2)}=\infty}{T}{} = 	\prcond{e\in \widetilde{\xi}}{T}{}.
	\]
	
For $k\in \mathbb{N}$ let $\tau_k$ be the first time walk visits level $k$ and $\tau^+_k$ be the last time walk visits level $k$. We know that $X_{\tau_k^+}=\xi_k$ and above then implies using Proposition~\ref{prop:LERWLimit} that
\[
-\frac{1}{k} \log \prcond{X_{\tau_k^+}\in \widetilde{X}}{X, T}{} \to \mathfrak{h}.
\]
As by Lemma~\ref{lemma:Speed} we have that the speed of the walk converges to $\nu$ almost surely, which gives that almost surely $\frac{k}{\tau_k^+}\to \nu$ and therefore almost surely 
\[ -\frac{1}{\tau_k^+} \log \prcond{X_{\tau_k^+}\in \widetilde{X}}{X, T}{} \to\nu  \mathfrak{h}.
\]
Let $\ell(k)=\max\{\tau_i^+: \tau_i^+\le k\}$.  Notice that $k-\ell(k)\le \tau_{i+1}^+-\tau_{i}^+$ for some $i$ and that we can bound this difference by a random variable which is the bound on the time it takes on any given tree with degrees at least $3$ for the walk to visit level $1$ for the last time, conditional on never returning to the root. As this random variable has bounded expectation and variance, Chebyshev's inequality and the Borel Cantelli lemma give  that $k-\ell(k)=o(k)$ as $k\to \infty$ almost surely.

 Therefore, we get that
\[ -\frac{1}{k} \log \prcond{X_{\ell(k)}\in \widetilde{X}}{X, T}{} \to\nu  \mathfrak{h}.
\]
As we can extend a path which goes through $X_{\ell(k)}$ to the path visiting $X_k$, and vice versa, by just adding the path $X_{\ell(k)},X_{\ell(k)+1},\ldots X_k$ or $X_k, X_{k-1},\ldots, X_{\ell(k)}$, respectively, we have that
\begin{align*}&\prcond{X_{k}\in \widetilde{X}}{X, T}{}\ge \prcond{X_{\ell(k)}\in \widetilde{X}}{X, T}{}\prod_{j=\ell(k)}^k\frac{1}{\text{deg}\left(X_j\right)} \,\,\,\,\,\, {\rm       and}
\\&\prcond{X_{\ell(k)}\in \widetilde{X}}{X, T}{}\ge \prcond{X_{k}\in \widetilde{X}}{X, T}{}\prod_{j=\ell(k)}^k\frac{1}{\text{deg}\left(X_j\right)}.
\end{align*} 
As $k-\ell(k)=o(k)$ and  $\prod_{j=\ell(k)}^k{\text{deg}\left(X_j\right)}\le \Delta^{k-\ell(k)}$ we have that $\frac{1}{k}\log\left(\prod_{j=\ell(k)}^k{\text{deg}\left(X_j\right)}\right)\to 0$, hence the above gives that 
\begin{align} \label{eq:convxi} 
-\frac{1}{k} \log \prcond{X_{k}\in \widetilde{X}}{X, T}{} \to\nu  \mathfrak{h}.
\end{align}

We have that
\[
 \prcond{\widetilde{X}_k=X_k}{X, T}{} \leq \prcond{X_{k}\in \widetilde{X}}{X, T}{},
\]
hence by~\eqref{eq:convxi} we get almost surely as $k\to\infty$
\begin{align*}
-\frac{1}{k} \log  \prcond{\widetilde{X}_k=X_k}{X, T}{} \geq \nu\mathfrak{h}.
\end{align*}
We now notice that  \begin{align}\label{eq:XinXiLimit}\frac{-\log\prcond{X_k\in \widetilde{\xi}}{T,X}{}}{k}\to\nu \mathfrak{h}.\end{align}
Indeed, this follows from~\eqref{eq:convxi}, as \[c\cdot \prcond{X_k\in \widetilde{X}}{T,X}{}\le \prcond{X_k\in \widetilde{\xi}}{T,X}{}\le \prcond{X_k\in \widetilde{X}}{T,X}{},\] where $c$ is the constant from Lemma~\ref{lemma:ReturnProb} and where the lower bound holds as $c$ is the lower bound for the probability that~$\widetilde{X}$ never returns to the parent of $X_k$ after it's first visits to vertex $X_k$, which would imply that $X_k\in\widetilde{\xi}$. 

Following further the idea of~\cite{GWTSpeedHarmMeasure} we let $\gamma> \nu \mathfrak h$ and $\varepsilon>0$ such that $\varepsilon<(\gamma - \nu\mathfrak h)/2$ and we define a subset of undirected edges of the tree via
\[
B_k=\left\{ e: \|{e}\|\leq k, \prcond{\widetilde{X}_{k}=e}{ T}{}<e^{-(\gamma -\varepsilon)k} \ \text{ and } \ \prcond{e\in \widetilde{\xi}}{T}{}>e^{-(\nu\mathfrak h +\varepsilon)k} \right\},
\] where $\|e\|$ stands for the distance of this edge from the root. 
Then we have
\begin{align*}
k+1 \geq \sum_{\|e\|\le k} \prcond{e\in {\xi}}{T}{}\geq \sum_{e\in B_k} \prcond{{X}_{k}=e}{T}{}e^{(\gamma - \nu\mathfrak{ h} -2\varepsilon) k}  = \prcond{{X}_{k}\in B_k}{T}{} e^{(\gamma - \nu\mathfrak h -2\varepsilon) k}.
\end{align*}
Therefore, using the assumption on $\varepsilon$ we deduce that
\[
\sum_k \prcond{{X}_{k}\in B_k}{T}{}<\infty,
\]
and hence by the Borel-Cantelli Lemma, almost surely
\[
{X}_{k} \notin B_k \quad \text{eventually in } k.  
\]
So as~\eqref{eq:XinXiLimit} gives that for all $k$ large enough, $\prcond{{X}_{k}\in \widetilde{\xi}}{T,X}{}>e^{-(\nu\mathfrak h +\varepsilon) k}$,  which implies that for all sufficiently large $k$
\[
\prcond{\widetilde{X}_{k}= X_{k}}{X,T}{} \geq e^{-(\gamma - \varepsilon)k}.
\]
By taking logarithms of both sides and using the assumption on  $\gamma$ and $\varepsilon$ we finally conclude that almost surely as $k\to\infty$
\begin{align*}
-\frac{1}{k} \log \prcond{\widetilde{X}_k=X_k}{T, X}{} \leq \nu\mathfrak{h}
\end{align*}
and this finishes the proof.
\end{proof}

\section{Mixing time comparison for simple  and non-backtracking random walk on two communities model}\label{appendixNBRW}

\begin{proof}[Proof of Proposition~\ref{prop:nbrwmixesfaster}](comparison of $\mathfrak{h}_Y$ and $\mathfrak{h}_X$)
Recall that we only need to compare \[\mathfrak{h}_Y=\frac{1}{N}\sum_{x\in G_n}\log\left(\text{deg}(x)-1\right)\] and \[\mathfrak{h}_X=\lim_{t\to \infty}-\frac{1}{t}\log\left(\prcond{X_t=\widetilde{X}_t}{T,X}{}\right),\] where $\widetilde{X}$ and $X$ are independent simple random walks on $T$. Note that the limit above is an a.s.\ limit jointly over $X$ and $T$. Using the assumption on the maximum degree being bounded and the dominated convergence theorem, taking expectation over $X$ gives \[\mathfrak{h}_X=\lim_{t\to \infty}-\frac{1}{t}\sum_{x\in T}\prcond{\widetilde{X}_t=x}{T}{\rho}\log\left(\prcond{\widetilde{X}_t=x}{T}{\rho}\right).
\]
 Following the work of \cite{NBRWvsSRW} we show that  under the assumptions of Proposition~\ref{prop:nbrwmixesfaster} we have $\mathfrak{h}_X<\mathfrak{h}_Y$. 

For $t\in \mathbb{N}$ and  a simple random walk $X$ on $T$ we let \[h_t=\E{-\sum_{x\in T}\prcond{{X}_t=x}{T}{\rho}\log\left(\prcond{{X}_t=x}{T}{\rho}\right)}.\] We see that $\mathfrak{h}_X=\lim_{t\to \infty}\frac{h_t}{t}.$
We now notice that  \cite[Claim 3.1]{NBRWvsSRW} holds in our setting as  the stationarity of the environment was established in Remark~\ref{rmk:stationarity2types}. Therefore, in exactly the same way as in~\cite{NBRWvsSRW}, we only need to establish that $h_3-h_1<2\mathfrak{h}_Y$.

 For $x,y$ vertices in our tree notation we write $y< x$ to mean that $y$ is a child of $x$. With a slight abuse of notation we will write $d(x)$ for the degree of~$x$ and $d(x,y)$ for the graph distance between~$x$ and $y$. We now calculate $h_3$ by first looking at the sum over the vertices in the third level of the tree. Let \begin{align*}R_3&=-\sum_{x\in T, d(x,\rho)=3}\prcond{X_3= x}{T}{\rho}\log(\prcond{X_3= x}{T}{\rho})\\&=\sum_{y<x<\rho}\frac{(d(y)-1)\left(\log(d(y))+\log(d(x))+\log(d(\rho))\right)}{d(\rho)d(x)d(y)}.\end{align*} 
 Define for $i\in \{1,2\}$ \[\beta_i=\estart{\frac{d(\rho)-1}{d(\rho)}\log(d(\rho))}{\pi_i}\text{  and  } \gamma_i=\estart{\frac{d(\rho)-1}{d(\rho)}}{\pi_i}\] where $\pi_i$ means that the root is chosen from $\pi_1$, i.e.\ it corresponds to a vertex of type $1$ with degree~$d$ with probability $d\sum_{v \in V_1}\mathds{1}(d(v)=d)/N_1.$
  We will also abuse notation and write $x$ in the index to represent the type of $x$ and ${3-x}$ for the opposite type. We recall that $\alpha_i=p/N_i$ is the ratio of outgoing edges from community $i$.
 We have that
\begin{align*} &\econd{R_3}{\B_{2}(\rho)}\\&= \sum_{x<\rho}\frac{1}{d(\rho)d(x)}\econd{\sum_{y<x}\frac{(d(y)-1)\left(\log(d(y))+\log(d(x))+\log(d(\rho))\right)}{d(y)}}{\B_{2}(\rho)}.
\end{align*}
As a vertex $y<x$ has the same type as $x$ with probability $1-\alpha_x$ and the opposite type with probability $\alpha_x$, the above conditional expectation is further equal to \begin{align*} &\sum_{x<\rho}\frac{d(x)-1}{d(\rho)d(x)}[\alpha_x\left(\beta_{3-x}+(\log d(x)+\log d(\rho))\gamma_{3-x}\right)\\& +(1-\alpha_x)\left(\beta_x+(\log d(x)+\log d(\rho))\gamma_x\right)].
\end{align*} Therefore using the definition of $\beta_x$ and $\gamma_x$ it can easily be seen that
\begin{align*} &\econd{R_3}{\theta(\rho), d(\rho)}=\econd{\econd{R_3}{\B_2(\rho)}}{\Theta(\rho),d(\rho)}
\\&=(1-\alpha_\rho)\left(\left(\alpha_\rho\beta_{3-\rho}+(1-\alpha_\rho)\beta_\rho\right)\gamma_\rho+(\alpha_\rho\gamma_{3-\rho}+(1-\alpha_\rho)\gamma_\rho)\beta_\rho
\right)
\\&+\alpha_\rho\left(\left(\alpha_{3-\rho}\beta_{\rho}+(1-\alpha_{3-\rho})\beta_{3-\rho}\right)\gamma_{3-\rho}+\left(\alpha_{3-\rho}\gamma_{\rho}+(1-\alpha_{3-\rho})\gamma_{3-\rho}\right)\beta_{3-\rho}\right)
\\&+\log(d(\rho))(1-\alpha_\rho)\gamma_\rho\left(\alpha_\rho\gamma_{3-\rho}+(1-\alpha_\rho)\gamma_\rho\right)\\&+\log(d(\rho))\alpha_\rho\gamma_{3-\rho}\left(\alpha_{3-\rho}\gamma_{\rho}+(1-\alpha_{3-\rho})\gamma_{3-\rho}\right).
\end{align*}
We let $c_1=\pi(V_1)$ and $c_2=\pi(V_2)$ and we obtain after plugging in the above, rearranging and  using $c_1\alpha_1=\frac{c_1p}{N_1}=\frac{p}{N}=\alpha=c_2\alpha_2$ and $c_1(1-\alpha_1)=c_1-\alpha$ (abusing notation and writing $\alpha$ for $p/N$ instead of the previous definition as they are up to constants the same) that 
\begin{align*} \E{R_3}&=2\alpha\beta_{2}\gamma_1+2(c_1-\alpha)\beta_1\gamma_1+2\alpha\gamma_{2}\beta_1+2(c_2-\alpha)\beta_{2}\gamma_{2}
\\&+c_1\estart{\log(d(\rho))}{\pi_1}((1-\alpha_1)\gamma_1\left(\alpha_1\gamma_{2}+(1-\alpha_1)\gamma_1\right)+\alpha_1\gamma_{2}\left(\alpha_{2}\gamma_{1}+(1-\alpha_{2})\gamma_{2}\right))
\\&+c_2\estart{\log(d(\rho))}{\pi_2}\left((1-\alpha_2)\gamma_2\left(\alpha_2\gamma_{1}+(1-\alpha_2)\gamma_2\right)+\alpha_2\gamma_{1}\left(\alpha_{1}\gamma_{2}+(1-\alpha_{1})\gamma_{1}\right)\right).
\end{align*}

We now calculate $\E{R_1}$  where \begin{align*}R_1&=-\sum_{x\in T, d(x,\rho)=1}\prcond{X_3= x}{T}{\rho}\log(\prcond{X_3= x}{T}{\rho})\\&=\sum_{x<\rho}\frac{1}{d(\rho)}\left(\sum_{y<x}\frac{1}{d(x)d(y)}+\sum_{x'<\rho}\frac{1}{d(x')d(\rho)}\right)\\&\cdot\left(\log{d(\rho)}-\log\left(\sum_{y<x}\frac{1}{d(x)d(y)}+\sum_{x'<\rho}\frac{1}{d(x')d(\rho)}\right)\right).\end{align*} 
By convexity of $x\to x\log x$ and Jensen's inequality we get \begin{align*}&\econd{R_1}{d(\rho),\theta(\rho)}\\&\le -\sum_{x<\rho}\econd{\prcond{X_3=x}{T}{\rho}}{d(\rho),\theta(\rho)}\log\left(\econd{\prcond{X_3=x}{T}{\rho}}{d(\rho),\theta(\rho)}\right).\end{align*}
We have that \begin{align*}&\econd{\prcond{X_3=x}{T}{\rho}}{\B_2(\rho)}\\&=\frac{1}{d(\rho)}\left(\frac{d(x)-1}{d(x)}((1-\alpha_x)(1-\gamma_x)+\alpha_x(1-\gamma_{3-x}))+\sum_{x'<\rho}\frac{1}{d(\rho)d(x')}\right). \end{align*} Therefore, by the tower property of conditional expectation 
\begin{align*}\econd{\prcond{X_3=x}{T}{\rho}}{d(\rho),\theta(\rho)}&=\frac{1}{d(\rho)}((1-\alpha_\rho)(1-\gamma_\rho)+\alpha_\rho (1-\gamma_{3-\rho}))
\\&+\frac{1}{d(\rho)}(1-\alpha_\rho)\gamma_\rho((1-\alpha_\rho)(1-\gamma_\rho)+\alpha_\rho(1-\gamma_{3-\rho}))
\\&+\frac{1}{d(\rho)}\alpha_\rho\gamma_{3-\rho}((1-\alpha_{3-\rho})(1-\gamma_{3-\rho})+\alpha_{3-\rho}(1-\gamma_{\rho})).
\end{align*} 
We label $\phi_\rho=d(\rho)\econd{\prcond{X_3=x}{T}{\rho}}{d(\rho),\theta(\rho)}$ and notice that the expression only depends on the type of $\rho$. As the sum over all $x<\rho$ cancels with $1/d(\rho)$ we get \[\E{R_1}\le c_1\phi_1(\estart{\log(d(\rho))}{\pi_1}-\log(\phi_1))+c_2\phi_2(\estart{\log(d(\rho))}{\pi_2}-\log(\phi_2)).\] We first notice that the terms in $\E{R_1}$ and $\E{R_3}$ being multiplied by $c_i\estart{\log(d(\rho))}{\pi_i}$ for $i \in \{1,2\}$ add up to $1$. Indeed, for $i\in \{1,2\}$
\begin{align*}&\phi_i+\left((1-\alpha_i)\gamma_i\left(\alpha_i\gamma_{3-i}+(1-\alpha_i)\gamma_i\right)+\alpha_i\gamma_{3-i}\left(\alpha_{3-i}\gamma_{i}+(1-\alpha_{3-i})\gamma_{3-i}\right)\right)
\\&=((1-\alpha_i)(1-\gamma_i)+\alpha_i (1-\gamma_{3-i}))
+(1-\alpha_i)\gamma_i((1-\alpha_i)(1-\gamma_i)+\alpha_i(1-\gamma_{3-i}))
\\&+\alpha_i\gamma_{3-i}((1-\alpha_{3-i})(1-\gamma_{3-i})+\alpha_{3-i}(1-\gamma_{i}))\\&+\left((1-\alpha_i)\gamma_i\left(\alpha_i\gamma_{3-i}+(1-\alpha_i)\gamma_i\right)+\alpha_i\gamma_{3-i}\left(\alpha_{3-i}\gamma_{i}+(1-\alpha_{3-i})\gamma_{3-i}\right)\right)
\\&=((1-\alpha_i)(1-\gamma_i)+\alpha_i (1-\gamma_{3-i}))
\\&+(1-\alpha_i)\gamma_i((1-\alpha_i)(1-\gamma_i)+\alpha_i(1-\gamma_{3-i})+\alpha_i\gamma_{3-i}+(1-\alpha_{i})\gamma_{i})
\\&+\alpha_i\gamma_{3-i}((1-\alpha_{3-i})(1-\gamma_{3-i})+\alpha_{3-i}(1-\gamma_{i})+\alpha_{3-i}\gamma_{i}+(1-\alpha_{3-i})\gamma_{3-i})
\\&= ((1-\alpha_i)(1-\gamma_i)+\alpha_i (1-\gamma_{3-i}))+(1-\alpha_i)\gamma_i+\alpha_i\gamma_{3-i}=1.\end{align*}

It is easy to check that \[h_1=c_1\estart{\log(d(\rho))}{\pi_1}+c_2\estart{\log(d(\rho))}{\pi_2},\] 
and so we get that 
\begin{align*}&h_3-h_1\\&\le2\alpha\beta_{2}\gamma_1+2(c_1-\alpha)\beta_1\gamma_1+2\alpha\gamma_{2}\beta_1+2(c_2-\alpha)\beta_{2}\gamma_{2}-c_1\phi_1\log(\phi_1)-c_2\phi_2\log(\phi_2). 
 \end{align*}
 We first consider the case when $\alpha<c$ where $c$ will be taken to be a sufficiently small constant. 
 In this case, we rewrite the last expression as
 \[ 2\alpha(\beta_{2}\gamma_1-\beta_1\gamma_1+\gamma_2\beta_1 -\beta_2\gamma_2)+2c_1\beta_1\gamma_1+2c_2\beta_{2}\gamma_{2}-c_1\phi_1\log(\phi_1)-c_2\phi_2\log(\phi_2). 
\]
Notice that $\beta_i\leq \log \Delta$ and $\frac{2}{3}\le \gamma_i\le\frac{\Delta-1}{\Delta}\le 1$. Also $\phi_i=1-\gamma_i^2+\alpha_i\widetilde{\phi}_i\le 1$ where $\widetilde{\phi_i}$ is a function which is bounded by $\pm 2$. 
Indeed, we have \begin{align*}\phi_i&=1-\left[(1-\alpha_i)\gamma_i\left(\alpha_i\gamma_{3-i}+(1-\alpha_i)\gamma_i\right)+\alpha_i\gamma_{3-i}\left(\alpha_{3-i}\gamma_{i}+(1-\alpha_{3-i})\gamma_{3-i}\right)\right]
\\&=1-\gamma_i^2+2\gamma_i\alpha_i-\alpha_i^2\gamma_i^2-\gamma_i\gamma_{3-i}\alpha_i(1-\alpha_i)-\alpha_i\gamma_{3-i}(\alpha_{3-i}\gamma_i+(1-\alpha_{3-i})\gamma_{3-i})
\\&=1-\gamma_i^2+\gamma_i\alpha_i(2-[\alpha_i\gamma_i+\gamma_{3-i}(1-\alpha_i)])-\alpha_i\gamma_{3-i}[\alpha_{3-i}\gamma_i+(1-\alpha_{3-i})\gamma_{3-i}],
\end{align*} and using that $\gamma_i,\gamma_{3-i}\le 1$ we obtain that both  $[\alpha_i\gamma_i+\gamma_{3-i}(1-\alpha_i)]$ and $[\alpha_{3-i}\gamma_i+(1-\alpha_{3-i})\gamma_{3-i}]$ are positive and bounded above by $1$, and thus giving the desired expression for $\phi_i$. If $\widetilde{\phi}_i\ge 0$ then $-\log(\phi_i)\le -\log(1-\gamma_i^2)$ so $-\phi_i\log(\phi_i)\le -2\alpha_i\log(\phi_i)-(1-\gamma_i^2)\log(1-\gamma_i^2)$. If $\widetilde{\phi}_i<0$ we have from the mean value theorem applied to the $\log$ function that there exists  $x\in [1-\gamma_i^2-|\widetilde{\phi}_i|\alpha_i,1-\gamma_i^2]$ such that
\[\frac{-\log(1-\gamma_i^2-|\widetilde{\phi}_i|\alpha_i)+\log(1-\gamma_i^2)}{|\widetilde{\phi}_i|\alpha_i}=\frac{1}{x}\implies -\log({\phi}_i)\le -\log(1-\gamma_i^2)+\frac{|\widetilde{\phi}_i|\alpha_i}{\phi_i}.\]
This gives that $-\phi_i\log(\phi_i)\le -(1-\gamma_i^2)\log(1-\gamma_i^2)+2\alpha_i$ Therefore, in both cases we have $-\phi_i\log(\phi_i)\le -2\alpha_i(\log(\phi_i)-1)-(1-\gamma_i^2)\log(1-\gamma_i^2).$ We can also bound $-\log(\phi_i)$ by a constant which only depends on $\Delta$.
As  $\mathfrak{h}_Y=c_1\estart{\log(d(\rho)-1)}{\pi_1}+c_2\estart{\log(d(\rho)-1)}{\pi_2},$ it would be enough to show that  for $i\in \{1,2\}$ we have \[2\beta_i\gamma_i-(1-\gamma_i^2)\log(1- \gamma_i^2)-2\estart{\log(d(\rho)-1)}{\pi_i}<c'\] where $c'<0$ depends on $\Delta$. This is because, as $c_1+c_2=1$ and $c_i\alpha_i=\alpha$, we would have
\[h_3-h_1-2\mathfrak{h}_Y\le c'+\alpha(2\beta_{2}\gamma_1-2\beta_1\gamma_1+2\gamma_2\beta_1 -2\beta_2\gamma_2-\log(\phi_1)-\log(\phi_2)+4)\le c'+\alpha C'\] where $C'$ is some large constant also depending only on $\Delta$. Therefore, there is a small constant $c$ such that for $\alpha\le c$ the above bound is always negative. The existence of the constant  $c'$ follows directly from the proof of \cite{NBRWvsSRW} as their proof gives that $h_3-h_1-2\mathfrak{h}_Y$ can be bounded by a negative constant when all the degrees of the graph are between $3$ and $\Delta$.

We now get back to the case where there is no bound on $\alpha$ but we have an assumption that the average degree is the same in both communities. 
By applying Jensen's inequality again to the function $x\to x\log x$ and to the random variable which has probability $c_1$ to be $\phi_1$ and $c_2=1-c_1$ to be $\phi_2$ we have that  \[-c_1\phi_1\log(\phi_1)-c_2\phi_2\log(\phi_2)\le -(c_1\phi_1+c_2\phi_2)\log(c_1\phi_1+c_2\phi_2).\]
Therefore, we have that \begin{align*}&h_3-h_1\\&\le2\alpha\beta_{2}\gamma_1+2(c_1-\alpha)\beta_1\gamma_1+2\alpha\gamma_{2}\beta_1+2(c_2-\alpha)\beta_{2}\gamma_{2}-(c_1\phi_1+c_2\phi_2)\log(c_1\phi_1+c_2\phi_2). 
 \end{align*}
 Using that $c_i\alpha_i=\alpha$ we have that  \begin{align*}c_1\phi_1+c_2\phi_2&=1-c_1\gamma_1(\alpha_1\gamma_2+(1-\alpha_1)\gamma_1)-c_2\gamma_2(\alpha_2\gamma_1+(1-\alpha_2)\gamma_2)
 \\&=1-\estart{\frac{(d(x)-1)(d(\rho)-1)}{d(x)d(\rho)}}{\pi}
\end{align*}
 where $\rho$ is a root (chosen according to $\pi$) and $x$ is a neighbour of it and the last equality holds, since if the type of $\rho$ is $i$, which happens with probability $c_i$, then the type of $x$ is $i$ with probability $1-\alpha_i$ and $3-i$ with probability $\alpha_i$, and once the types have been decided the degrees are independent. Similarly, we have that
 \begin{align*}&\alpha(\beta_1\gamma_2+\beta_2\gamma_1)+(c_1-\alpha)\beta_1\gamma_1+(c_2-\alpha)\beta_2\gamma_2 \\&=\estart{\frac{(d(\rho)-1)\log(d(\rho))}{d(\rho)}}{\pi}-\estart{\frac{(d(\rho)-1)\log(d(\rho))}{d(\rho)d(x)}}{\pi}
 \end{align*}
 This gives that \begin{align*}h_3-h_1&\le2\estart{\frac{(d(\rho)-1)\log(d(\rho))}{d(\rho)}}{\pi}-2\estart{\frac{(d(\rho)-1)\log(d(\rho))}{d(\rho)d(x)}}{\pi}\\&-\left(1-\estart{\frac{(d(x)-1)(d(\rho)-1)}{d(x)d(\rho)}}{\pi}\right)\log\left(1-\estart{\frac{(d(x)-1)(d(\rho)-1)}{d(x)d(\rho)}}{\pi}\right). \end{align*}
We also notice that \[\estart{\frac{(d(x)-1)(d(\rho)-1)}{d(x)d(\rho)}}{\pi}\le \estart{\frac{(d(\rho)-1)^2}{d(\rho)^2}}{\pi},\] since this is equivalent to 
\begin{align*}&c_1(1-\alpha_1)\gamma_1^2+(c_1\alpha_1+c_2\alpha_2)\gamma_1\gamma_2+c_2(1-\alpha_2)\gamma_2^2\\&\le c_1 \estart{\frac{(d(\rho)-1)^2}{d(\rho)^2}}{\pi_1}+c_2 \estart{\frac{(d(\rho)-1)^2}{d(\rho)^2}}{\pi_2}\end{align*}
which holds as $\gamma_i^2\le \estart{\frac{(d(\rho)-1)^2}{d(\rho)^2}}{\pi_i}$ and
$2\alpha \gamma_1\gamma_2\le \alpha \estart{\frac{(d(\rho)-1)^2}{d(\rho)^2}}{\pi_1}+\alpha  \estart{\frac{(d(\rho)-1)^2}{d(\rho)^2}}{\pi_2}.$ Indeed, this last inequality is true, because if $Z_i\sim \pi_i$ are two independent random variables, then 
\[  \estart{\frac{(d(\rho)-1)^2}{d(\rho)^2}}{\pi_1}-\gamma_1\gamma_2+  \estart{\frac{(d(\rho)-1)^2}{d(\rho)^2}}{\pi_2}= \E{\left(\frac{Z_1-1}{Z_1}-\frac{Z_2-1}{Z_2}\right)^2}>0.\]
This with Jensen's inequality applied to $\log(x)$ gives that 
 \begin{align*}h_3-h_1&\le2\estart{\frac{(d(\rho)-1)\log(d(\rho))}{d(\rho)}}{\pi}-2\estart{\frac{(d(\rho)-1)\log(d(\rho))}{d(\rho)d(x)}}{\pi}\\&-\left(1-\estart{\frac{(d(x)-1)(d(\rho)-1)}{d(x)d(\rho)}}{\pi}\right)\estart{\log\left(1-\frac{(d(\rho)-1)^2}{d(\rho)^2}\right)}{\pi}. \end{align*}
 Using that the distribution of $d(x)$ and $d(\rho)$ is the same when $\rho$ starts from $\pi$ we have  \begin{align*}\estart{\frac{(d(x)-1)(d(\rho)-1)}{d(x)d(\rho)}}{\pi}&=1-\estart{\frac{2d(\rho)-1}{d(x)d(\rho)}}{\pi}.\end{align*} 
 This implies that 
  \begin{align*}h_3-h_1&\le2\estart{\frac{(d(\rho)-1)\log(d(\rho))}{d(\rho)}}{\pi}-2\estart{\frac{(d(\rho)-1)\log(d(\rho))}{d(\rho)d(x)}}{\pi}\\&-\estart{\frac{2d(\rho)-1}{d(x)d(\rho)}}{\pi}\estart{\log\left(\frac{(2d(\rho)-1)}{d(\rho)^2}\right)}{\pi}. \end{align*} 
We notice that $\estart{\frac{1}{d(x)}}{x\sim\pi_1}=\sum_{x\in V_1}\frac{d(x)}{N_1}\frac{1}{d(x)}=\frac{n_1}{N_1}=\frac{n_1}{\sum_{x\in V_1}d(x)}=\frac{n_2}{\sum_{x\in V_2}d(x)}=\estart{\frac{1}{d(x)}}{x\sim\pi_2}$ by the assumption that the average degree is equal. As $d(x)$ and $d(\rho)$ are independent if the types of $x$ and $\rho$ are given, we now have
  \begin{align*}h_3-h_1&\le2\estart{\frac{(d(\rho)-1)\log(d(\rho))}{d(\rho)}}{\pi}-2\estart{\frac{1}{d(x)}}{\pi}\estart{\frac{(d(\rho)-1)\log(d(\rho))}{d(\rho)}}{\pi}\\&-\estart{\frac{1}{d(x)}}{\pi}\estart{\frac{2d(\rho)-1}{d(\rho)}}{\pi}\estart{\log\left(\frac{(2d(\rho)-1)}{d(\rho)^2}\right)}{\pi}  
  \\&=2\estart{\frac{(d(\rho)-1)\log(d(\rho))}{d(\rho)}}{\pi}-2\estart{\frac{1}{d(\rho)}}{\pi}\estart{\frac{(d(\rho)-1)\log(d(\rho))}{d(\rho)}}{\pi}\\&-\estart{\frac{1}{d(\rho)}}{\pi}\estart{\frac{2d(\rho)-1}{d(\rho)}}{\pi}\estart{\log\left(\frac{(2d(\rho)-1)}{d(\rho)^2}\right)}{\pi}, 
  \end{align*}
where the last equality holds, since $d(x)$ and $d(\rho)$ have the same distribution when starting from~$\pi$. The proof that the above is smaller than $\estart{\log(d(\rho)-1)}{\pi}$ is given in~\cite{NBRWvsSRW} and this completes the proof of the proposition.
\end{proof}
\end{appendix}

\bibliography{communities_arxiv}
\bibliographystyle{plain}



\end{document}